\newtheorem{thm}{Theorem}[section]
\newtheorem{lemma}[thm]{Lemma}
\newtheorem{cor}[thm]{Corollary}
\newtheorem{prop}[thm]{Proposition}
\newtheorem{conj}[thm]{Conjecture}
\newtheorem{Definition}[thm]{Definition}
\newenvironment{definition}
  {\begin{Definition}\rm}{\end{Definition}}
\newtheorem{Example}[thm]{Example}
\newenvironment{example}
  {\begin{Example}\rm}{\end{Example}}
\newtheorem{Remark}[thm]{Remark}
\newenvironment{remark}
  {\begin{Remark}\rm}{\end{Remark}}
\numberwithin{equation}{section}
\apptocmd{\sloppy}{\hbadness 10000\relax}{}{}
\def\SetFancyGraph {
	\SetVertexMath
	\GraphInit[vstyle=Art]
	\SetUpVertex[MinSize=2pt]
	\SetVertexLabel
	\tikzset{VertexStyle/.style = {shape = circle,shading = ball,ball color = black,inner sep = 1.5pt}}
	\SetUpEdge[color=black]
	\tikzset{->-/.style={decoration={ markings, mark=at position 0.8 with {\arrow{>}}},postaction={decorate}}}
	\tikzset{->--/.style={decoration={ markings, mark=at position 0.55 with {\arrow{>}}},postaction={decorate}}}
}
\title[The CDE property for minuscule lattices]{The CDE property for minuscule lattices}
\author{Sam Hopkins}
\begin{document}

\begin{abstract}
Reiner, Tenner, and Yong recently introduced the coincidental down-degree expectations (CDE) property for finite posets and showed that many nice posets are CDE. In this paper we further explore the CDE property, resolving a number of conjectures about CDE posets put forth by Reiner-Tenner-Yong.  A consequence of our work is the completion of a case-by-case proof that any minuscule lattice is CDE. We also explain two major applications of the study of CDE posets: formulas for certain classes of set-valued tableaux; and homomesy results for rowmotion and gyration acting on sets of order ideals.
\end{abstract}

\maketitle

Reiner, Tenner, and Yong~\cite{reiner2016poset} recently introduced the coincidental down-degree expectations (CDE) property for posets and observed that many posets familiar to, and beloved by, algebraic combinatorialists are CDE. In the present paper we further explore the CDE property. We will mostly be interested in the case of distributive lattices, in particular, in intervals of Young's lattice and the shifted version of Young's lattice. Our aim is to resolve a number of conjectures about CDE posets put forth by Reiner-Tenner-Yong. A consequence of our work is the completion of a case-by-case proof that the distributive lattice $J(P)$ associated to any minuscule poset~$P$ is CDE.

The best way to motivate the study of the CDE property is to give an example of the kind of combinatorial ``coincidence'' we are interested in. Here is such an example. Treat the set of lattice paths from the southwest corner to the northeast corner of a~$2 \times 2$ grid consisting of north and east steps as a discrete probability space. Recall that a standard Young tableau (SYT) of shape $2 \times 2$ is a filling of the boxes of this grid with the numbers $1,2,3,4$ so that the entries strictly increase in rows and columns. We say that a lattice path is \emph{compatible} with an SYT if all the entries to the northwest of the lattice path are less than all the entries to the southeast. For example, the following lattice path (drawn in red) is compatible with both SYTs of shape~$2\times 2$:
\begin{center}
\begin{tikzpicture}
\node at (0,0) {\begin{ytableau} 1 & 2 \\ 3 & 4 \end{ytableau}};
\def\x{0.6}
\draw[red,line width=3pt] (-1*\x,-1*\x) -- (-1*\x,0*\x) -- (0*\x,0*\x) -- (0*\x,1*\x) -- (1*\x,1*\x);
\end{tikzpicture} \qquad  \begin{tikzpicture}
\node at (0,0) {\begin{ytableau} 1 & 3 \\ 2 & 4 \end{ytableau}};
\def\x{0.6}
\draw[red,line width=3pt] (-1*\x,-1*\x) -- (-1*\x,0*\x) -- (0*\x,0*\x) -- (0*\x,1*\x) -- (1*\x,1*\x);
\end{tikzpicture}
\end{center}
while the following lattice path is compatible with only one of these SYTs:
\begin{center}
\begin{tikzpicture}
\node at (0,0) {\begin{ytableau} 1 & 2 \\ 3 & 4 \end{ytableau}};
\def\x{0.6}
\draw[red,line width=3pt] (-1*\x,-1*\x) -- (-1*\x,0*\x) -- (1*\x,0*\x) -- (1*\x,1*\x);
\end{tikzpicture} 
\end{center}
Consider the probability distribution on our probability space in which each lattice path occurs with probability proportional to the number of SYTs with which it is compatible. These probabilities are recorded in the table in Figure~\ref{fig:introextable}, which also records the number of turns for each lattice path. One can easily check that the expected number of turns in a lattice path drawn with respect to this SYT-weighted distribution is $2$. Perhaps surprisingly, this $2$ is also the same as the expected number of turns if we had drawn with respect to the uniform distribution on lattice paths instead. This coincidence was first observed by Chan, L\'{o}pez Mart\'{i}n, Pflueger and Teixidor i Bigas~\cite[Remark 2.16]{chan2015genera}, who proved~\cite[Corollary 2.15]{chan2015genera} that for a general $a \times b$ grid the SYT-weighted and uniform distributions always give the same number of expected turns, namely $2\frac{ab}{a+b}$. Their motivation for considering this combinatorial problem came from algebraic geometry, more specifically, from Brill-Noether theory. Indeed, computing the expected number of turns with respect to this SYT-weighted distribution was the key combinatorial result needed to reprove a formula of Eisenbud-Harris~\cite{eisenbud1987kodaira} and Pirola~\cite{pirola1985chern} for the genus of Brill-Noether curves (i.e., Brill-Noether loci of dimension~$1$). Subsequently, Chan, Haddadan, Hopkins and Moci~\cite{chan2015expected} significantly generalized this combinatorial result by showing that the same coincidence of expected number of turns holds for a broader class of skew shapes than $a \times b$ rectangles (so-called ``balanced'' shapes), and for a broader class of probability distribution than the uniform and SYT-weighted distributions (so-called ``toggle-symmetric'' distributions).

\begin{figure}
\begin{tabular}{c | c | c | c | c | c | c }
\parbox{1in} {\begin{center} Lattice path\end{center}} & \begin{tikzpicture} \node at (0,0) {\begin{ytableau} \, & \, \\ \, & \, \end{ytableau}};
\def\x{0.6}
\draw[red,line width=3pt] (-1*\x,-1*\x) -- (-1*\x,1*\x) -- (1*\x,1*\x); \end{tikzpicture} & \begin{tikzpicture} \node at (0,0) {\begin{ytableau} \, & \, \\ \, & \, \end{ytableau}};
\def\x{0.6}
\draw[red,line width=3pt] (-1*\x,-1*\x) -- (-1*\x,0*\x) -- (0*\x,0*\x) -- (0*\x,1*\x) -- (1*\x,1*\x); \end{tikzpicture} & \begin{tikzpicture} \node at (0,0) {\begin{ytableau} \, & \, \\ \, & \, \end{ytableau}};
\def\x{0.6}
\draw[red,line width=3pt] (-1*\x,-1*\x) -- (-1*\x,0*\x) -- (1*\x,0*\x)  -- (1*\x,1*\x); \end{tikzpicture} & \begin{tikzpicture} \node at (0,0) {\begin{ytableau} \, & \, \\ \, & \, \end{ytableau}};
\def\x{0.6}
\draw[red,line width=3pt] (-1*\x,-1*\x) -- (0*\x,-1*\x) -- (0*\x,1*\x)  -- (1*\x,1*\x); \end{tikzpicture} & \begin{tikzpicture} \node at (0,0) {\begin{ytableau} \, & \, \\ \, & \, \end{ytableau}};
\def\x{0.6}
\draw[red,line width=3pt] (-1*\x,-1*\x) -- (0*\x,-1*\x) -- (0*\x,0*\x) -- (1*\x,0*\x) -- (1*\x,1*\x); \end{tikzpicture} & \begin{tikzpicture} \node at (0,0) {\begin{ytableau} \, & \, \\ \, & \, \end{ytableau}};
\def\x{0.6}
\draw[red,line width=3pt]  (-1*\x,-1*\x) -- (1*\x,-1*\x) -- (1*\x,1*\x); \end{tikzpicture}  \\ \hline
\parbox{1in} {\begin{center} SYT-weighted probability\end{center}}  & \Large $\frac{2}{10}$ & \Large $\frac{2}{10}$ & \Large $\frac{1}{10}$ & \Large $\frac{1}{10}$ & \Large $\frac{2}{10}$ & \Large $\frac{2}{10}$  \\ \hline
\parbox{1in} {\begin{center}Number of turns\end{center}} & $1$ & $3$ & $2$ & $2$ & $3$ & $1$
\end{tabular}
\caption{Table of probabilities according to SYT-weighted distribution and number of turns for lattice paths in a $2\times 2$ grid.} \label{fig:introextable}
\end{figure}

Meanwhile, Reiner-Tenner-Yong offered a poset-theoretic perspective on this expected number of turns coincidence. The set of lattice paths in a grid can be given the structure of a poset by declaring one lattice path to be greater than another if the first is weakly southwest of the second. In fact, these lattice paths correspond to order ideals in the product of two chain posets $\mathbf{a}\times\mathbf{b}$ and this poset of lattice paths is precisely the distributive lattice $J(\mathbf{a}\times\mathbf{b})$ of such order ideals.\footnote{Please forgive the unfortunate collision of terminology: the ``lattice'' in ``lattice path'' is totally unrelated to the ``lattice'' in ``distributive lattice.''} (An order ideal of a poset is a downwards-closed subset of elements; a distributive lattice $J(P)$ is the set of order ideals of some other poset $P$ partially ordered by containment.) Figure~\ref{fig:introexposet} depicts the Hasse diagram of this poset for the~$2 \times 2$ grid. Observe that the number of turns of a lattice path is its degree in this Hasse diagram. Also note that the SYT-weighted distribution on lattice paths has an alternate description in terms of this poset: it is the distribution in which each element of the poset occurs with probability proportional to the number of maximal chains passing through that element. This poset-theoretic description of the expected number of turns coincidence for lattice paths can easily be generalized to other finite posets. Reiner-Tenner-Yong introduced precisely this generalization. Actually, for technical reasons they were interested not in the expected degree but rather in the expected \emph{down-degree}, that is, the number of downwards edges adjacent to an element in the Hasse diagram. (The down-degree of a poset element is the same as the number of elements it covers.) It is not hard to see, for instance because of the global $180^{\circ}$ rotational symmetry, that in the lattice path setting the expected down-degree will always be half the expected degree. At any rate, Reiner-Tenner-Yong defined~\cite[Definition 2.1]{reiner2016poset} an arbitrary finite poset to have \emph{coincidental down-degree expectations} (CDE) if the expected down-degree is the same for the uniform distribution as for the distribution that weights each element proportional to the number of maximal chains passing through that element. They also referred to the poset as \emph{being} CDE in this case.

\begin{figure}
\begin{tikzpicture}
\node (1) at (0,0) {\scalebox{0.75}{ \begin{tikzpicture} \node at (0,0) {\begin{ytableau} \, & \, \\ \, & \, \end{ytableau}};
\def\x{0.6}
\draw[red,line width=3pt] (-1*\x,-1*\x) -- (-1*\x,1*\x) -- (1*\x,1*\x); \end{tikzpicture}}};
\node (2) at (0,2) {\scalebox{0.75}{\begin{tikzpicture} \node at (0,0) {\begin{ytableau} \, & \, \\ \, & \, \end{ytableau}};
\def\x{0.6}
\draw[red,line width=3pt] (-1*\x,-1*\x) -- (-1*\x,0*\x) -- (0*\x,0*\x) -- (0*\x,1*\x) -- (1*\x,1*\x); \end{tikzpicture}}};
\node (3) at (-2,4) {\scalebox{0.75}{\begin{tikzpicture} \node at (0,0) {\begin{ytableau} \, & \, \\ \, & \, \end{ytableau}};
\def\x{0.6}
\draw[red,line width=3pt] (-1*\x,-1*\x) -- (-1*\x,0*\x) -- (1*\x,0*\x)  -- (1*\x,1*\x); \end{tikzpicture}}};
\node (4) at (2,4) {\scalebox{0.75}{\begin{tikzpicture} \node at (0,0) {\begin{ytableau} \, & \, \\ \, & \, \end{ytableau}};
\def\x{0.6}
\draw[red,line width=3pt] (-1*\x,-1*\x) -- (0*\x,-1*\x) -- (0*\x,1*\x)  -- (1*\x,1*\x); \end{tikzpicture}}};
\node (5) at (0,6) {\scalebox{0.75}{\begin{tikzpicture} \node at (0,0) {\begin{ytableau} \, & \, \\ \, & \, \end{ytableau}};
\def\x{0.6}
\draw[red,line width=3pt] (-1*\x,-1*\x) -- (0*\x,-1*\x) -- (0*\x,0*\x) -- (1*\x,0*\x) -- (1*\x,1*\x); \end{tikzpicture}}};
\node (6) at (0,8) {\scalebox{0.75}{\begin{tikzpicture} \node at (0,0) {\begin{ytableau} \, & \, \\ \, & \, \end{ytableau}};
\def\x{0.6}
\draw[red,line width=3pt]  (-1*\x,-1*\x) -- (1*\x,-1*\x) -- (1*\x,1*\x); \end{tikzpicture}}};
\draw (1) -- (2);
\draw (2) -- (3);
\draw (2) -- (4);
\draw (3) -- (5);
\draw (4) -- (5);
\draw (5) -- (6);
\end{tikzpicture}
\caption{The poset of lattice paths in a $2 \times 2$ grid.} \label{fig:introexposet}
\end{figure}

As mentioned, Reiner-Tenner-Yong observed that many nice posets (e.g., posets coming from algebra) are CDE. Beyond the distributive lattices associated to rectangles and other balanced skew shapes studied in~\cite{chan2015genera} and~\cite{chan2015expected} (these are certain intervals of Young's lattice of partitions), they showed that arbitrary products of chain posets are CDE~\cite[Corollary 2.19]{reiner2016poset}, that self-dual posets of constant Hasse diagram degree like weak Bruhat orders and Tamari lattices are CDE~\cite[Proposition 2.21]{reiner2016poset}, and that certain intervals of the weak Bruhat order on the symmetric group are CDE~\cite[Theorem~1.1]{reiner2016poset}. Moreover, they proved~\cite[Theorem 2.10]{reiner2016poset} that all connected minuscule posets are CDE. Minuscule posets are posets arising in the representation theory of semisimple Lie algebras that enjoy many remarkable combinatorial properties (see~\cite{proctor1984bruhat},~\cite{stembridge1994minuscule}, and~\cite{green2013combinatorics}). The minuscule posets have been classified and the proof that connected minuscule posets are CDE given by Reiner-Tenner-Yong, like some proofs we will present in Section~\ref{sec:minuscule} below, relies on this classification.

Reiner-Tenner-Yong also offered some tantalizing conjectures about CDE posets. Here are a few of their conjectures that will interest us in the present paper: they made two conjectures about CDE intervals of the shifted analog of Young's lattice~\cite[Conjectures~2.24 and~2.25]{reiner2016poset}, and they conjectured~\cite[Theorem 2.11]{reiner2016poset} that the distributive lattice of order ideals~$J(P)$ associated to an arbitrary minuscule poset $P$ is CDE. (Such distributive lattices are called minuscule lattices in~\cite{proctor1984bruhat} and we adopt that terminology here.) In this paper we prove one of the shifted Young's lattice conjectures and complete the case-by-case proof of the minuscule lattice conjecture.

Let us now outline the structure of the paper. In Section~\ref{sec:basics} we provide a formal definition of the CDE property. Reiner-Tenner-Yong in fact studied two closely related properties of posets, the CDE property and the mCDE property. The mCDE property requires that an infinite family of distributions on our poset, defined in terms of multichains rather than maximal chains, all yield the same expected down-degree. For graded posets, the mCDE property implies the CDE property. For us the mCDE property will be a strengthened version of the CDE property because whenever we show a poset is CDE, the poset in question will be graded and we will actually show that this poset is mCDE. In Section~\ref{subsec:defs} we formally define both the CDE and mCDE properties. In Section~\ref{subsec:remarks} we make some general remarks about these properties. In particular, answering a question of Reiner-Tenner-Yong~\cite[Question 2.20]{reiner2016poset}, we show in Section~\ref{subsec:remarks} that the mCDE property is preserved under direct products.

In Section~\ref{sec:distlat} we study the CDE and mCDE properties for distributive lattices. We show that in this case the distributions appearing in the definitions of the CDE and mCDE properties are toggle-symmetric in the sense of~\cite{chan2015expected}. (The word ``toggle'' comes from Striker and Williams~\cite{striker2012promotion}: for a given order ideal of a poset, \emph{toggling} an element of the poset is the act of adding or removing that element to the order ideal, so long as the resulting subset remains an order ideal; a distribution on the set of order ideals is \emph{toggle-symmetric} if for every element we are as likely to be able to toggle that element in as to toggle it out.) We therefore propose an even stronger property for distributive lattices which we call tCDE: a distributive lattice is tCDE if the expected down-degree is the same with respect to any toggle-symmetric distribution. In Sections~\ref{sec:younglat} and~\ref{sec:shiftyounglat} we describe several families of tCDE distributive lattices. Section~\ref{sec:younglat}, which concerns intervals of Young's lattice, is actually just a recap of the methods and results of~\cite{chan2015expected}. In Section~\ref{sec:shiftyounglat} we extend these ``toggle-theoretic'' methods to the shifted setting. In Section~\ref{subsec:shiftyounglatconj1} we apply these toggle methods to prove one of the shifted Young's lattice conjectures of Reiner-Tenner-Yong. In Section~\ref{subsec:shiftyounglatconj2} we discuss their second shifted Young's lattice conjecture: we explain how our toggle methods fail to prove this second conjecture, while we also affirmatively resolve the conjecture in a very special case.

In Section~\ref{sec:minuscule} we complete the proof that minuscule lattices are CDE. In fact, we show that such distributive lattices are tCDE. Most of the work for this case-by-case proof is actually done in Sections~\ref{sec:younglat} and~\ref{sec:shiftyounglat}. In Section~\ref{sec:minuscule} we also discuss what a uniform (that is, classification-free) proof of this result would look like. Such a uniform proof seems highly conceivable in light of work of Rush and Shi~\cite{rush2013orbits} giving a Lie-theoretic interpretation of toggles acting on a minuscule lattice. A uniform proof would certainly be desirable in so much as it would hint at the algebraic significance of the CDE property. On the other hand, our elementary, classification-based approach has also been worthwhile because it lead us to find a lot of tCDE posets beyond the minuscule lattices. At the end of Section~\ref{sec:minuscule} we point out that the connected minuscule posets, which themselves are distributive lattices, are in fact also tCDE.

In the final two sections we discuss some applications of our results. In Section~\ref{sec:tableaux} we explain how the study of the CDE property for intervals of Young's lattice and the shifted Young's lattice leads to formulas enumerating certain classes of set-valued tableaux. As Reiner-Tenner-Yong pointed out~\cite[Corollary 3.7]{reiner2016poset} (and as was implicitly noted in~\cite{chan2015genera}), computing the expected down-degree for the maximal chain distribution in an interval $[\nu,\lambda]$ of Young's lattice is the same as counting the number of a certain kind of set-valued tableaux, which they called \emph{standard barely set-valued tableaux}, of shape $\lambda/\nu$. In turn, thanks to Buch's combinatorial definition of stable Grothendieck polynomials~\cite{buch2002littlewood}, this number of tableaux is also just (negative one times) a particular coefficient of the corresponding stable Grothendieck polynomial $G_{\lambda/\nu}(x_1,x_2,\ldots)$.  In Section~\ref{sec:tableaux} we also go through the shifted analog of this story:  computing the expected down-degree for the maximal chain distribution in an interval of the shifted Young's lattice is the same as counting a certain kind of shifted set-valued tableaux, which in turn is the same as extracting a particular coefficient of the Type~B/C analogs of stable Grothendieck polynomials introduced by Ikeda and Naruse~\cite{ikeda2013ktheoretic}. 

In Section~\ref{sec:homomesy} we explain the connection between the tCDE property for distributive lattices and the homomesy paradigm of Propp and Roby~\cite{propp2015homomesy}. A triple~$(\mathcal{S},\Phi,f)$, where~$\mathcal{S}$ is a finite set of combinatorial objects, $\Phi\colon \mathcal{S} \to \mathcal{S}$ is an invertible operator acting on $\mathcal{S}$, and $f\colon \mathcal{S} \to \mathbb{R}$ is a combinatorial statistic, is said to \emph{exhibit homomesy} if the average of $f$ along every $\Phi$-orbit is the same. One of the main examples~\cite[Theorem 27]{propp2015homomesy} of homomesy discovered by Propp and Roby takes $\mathcal{S} := J(\mathbf{a}\times\mathbf{b})$ to be the set of order ideals of the product of two chain posets, $f(I) := \#\mathrm{max}(I)$ to be the \emph{antichain cardinality statistic}, and $\Phi$ to be \emph{rowmotion}~\cite{striker2012promotion}, a certain natural and well-studied action on the set of order ideals of a poset. A recent result of Striker~\cite[Lemma 6.2]{striker2015toggle} says that any distribution that is uniform on a rowmotion orbit is always toggle-symmetric for any poset. This result of Striker means that whenever we show that a distributive lattice~$J(P)$ is tCDE we can conclude that the antichain cardinality statistic is homomesic with respect to rowmotion acting on~$J(P)$. In particular we recapture a recent result (proved uniformly) of Rush and Wang~\cite[Theorem 1.4]{rush2015orbits} saying that the antichain cardinality statistic is homomesic with respect to rowmotion acting on a minuscule lattice. Striker~\cite[Theorem 6.7]{striker2015toggle} also showed that any distribution that is uniform on a gyration orbit is always toggle-symmetric for any ranked poset; here \emph{gyration}~\cite{wieland2000large} is another action on the set of order ideals of a poset. We explain how Striker's argument shows that the same is true for any ``rank-permuted'' version of rowmotion and thus conclude that the antichain cardinality statistic is homomesic for these rank-permuted rowmotions acting on~$J(P)$ whenever~$J(P)$ is tCDE. In this way we find a lot of new instances of homomesy.

\medskip

\noindent {\bf Acknowledgements}: We thank Thomas McConville and Vic Reiner for helpful conversations related to this work. The author was supported by NSF grant \#1122374.

\section{Basics on CDE posets} \label{sec:basics}

Reiner-Tenner-Yong~\cite{reiner2016poset} studied two closely related properties (CDE and mCDE) of finite posets. In this section we formally define these CDE and mCDE properties, we show that our definitions agree with those of~\cite{reiner2016poset}, and we make some general remarks about these properties, including a discussion of how they behave under various poset operations. We use the notation~$[n] := \{1,2,\ldots,n\}$ and $\binom{X}{k} := \{S \subseteq X\colon \#S = k\}$ throughout.

\subsection{CDE definitions} \label{subsec:defs}

All posets considered in this paper are finite unless stated otherwise. We use standard notation for posets (as laid out in e.g.~\cite[\S3]{stanley1996ec1}) which we nevertheless now briefly review. Let $P= (P,\leq)$ be a poset. We sometimes write~$\leq_P$ for clarity when there could be multiple partial orders in play. We write~$p < q$ to mean~$p \leq q$ but $p \neq q$; we write~$p \lessdot q$ to mean that~$q$ \emph{covers}~$p$, i.e.,~that $p < q$ and there is no $p' \in P$ with~$p < p' < q$. The reflected symbols~$\geq$, $>$, and~$\gtrdot$ have their obvious meanings. For a subset $S \subseteq P$, we use $\mathrm{max}(S)$ to denote the set of maximal elements in $S$ and $\mathrm{min}(S)$ the set of minimal elements. For $p \leq q$ the \emph{(closed) interval}~$[p,q]$ of $P$ is the induced subposet of all~$p' \in P$ with~$p \leq p' \leq q$. We represent~$P$ by its~\emph{Hasse diagram}, which is the directed graph with vertices elements of $P$ and directed edges~$(p,q)$ for $p \lessdot q$. We draw the Hasse diagram of $P$ in the plane with $q$ above~$p$ if~$(p,q)$ is an edge and thus we do not draw arrows on edges when depicting a Hasse diagram. We say $P$ is~\emph{connected} if the underlying undirected graph of its Hasse diagram is connected. We say that $P$ is \emph{ranked} if there exists a \emph{rank function}~$\mathrm{rk}\colon P \to \mathbb{N}$ satisfying the following:
\begin{itemize}
\item $0$ is in the image of $\mathrm{rk}$;
\item for $p,q \in P$ with~$p \lessdot q$ we have $\mathrm{rk}(q)=\mathrm{rk}(p) + 1 $.
\end{itemize}
For a connected poset, a rank function is unique if it exists. If $P$ is ranked with $r:=\mathrm{max}_{p\in P}\{\mathrm{rk}(p)\}$ we use $P_i$ for $i = 0,1,\ldots,r$ to denote the subset of elements of $P$ of rank $i$; these $P_i$ are called the \emph{ranks} of $P$. Similarly, we use $P_{\leq i} := \cup_{j=0}^{i} P_i$ to denote the subset of elements of $P$ of rank less than or equal to $i$.

An \emph{antichain} of $P$ is a subset of elements of $P$ which are pairwise incomparable. We denote the set of antichains of $P$ by $A(P)$. A \emph{chain of~$P$ of length~$k$} (or a~\emph{$k$-chain} of $P$) is a strictly increasing sequence $c = c_0 < c_1 < \cdots < c_k$ of elements of $P$. We write $p \in c$ to mean that~$p = c_i$ for some $i$. We say that a chain~$c$ of~$P$ is~\emph{maximal} if~$c$ is not a strict subsequence of any other chain of~$P$. We say $P$ is~\emph{graded of rank $r$} if all its maximal chains have the same length $r$. A graded poset is always ranked, but not conversely. A \emph{multichain of~$P$ of length~$m$} (or an~\emph{$m$-multichain} of~$P$) is a weakly increasing sequence~$c = c_0 \leq c_1 \leq \cdots \leq c_m$ of elements of $P$. Again we write~$p \in c$ to mean $p = c_i$ for some~$i$.  

An important class of graded posets are the distributive lattices, whose structure we now recall. An~\emph{order ideal} of a poset $P$ is a a subset $I \subseteq P$ such that if $y \in I$ and~$x \in P$ with $x \leq y$ then $x \in I$. The set of all order ideals of $P$ is denoted $J(P)$. Observe that~$J(P)$ is in bijection with $A(P)$ via the map $I \mapsto \mathrm{max}(I)$ that sends an order ideal to its antichain of maximal elements. The set of order ideals~$J(P)$ is naturally a poset with the partial order of containment. A~\emph{distributive lattice} is a poset of the form $J(P)$ for some (in fact, unique) poset $P$. (There is another algebraic definition of distributive lattice that we will not need here; this characterization of finite distributive in terms of order ideals is Birkhoff's \emph{fundamental theorem for finite distributive lattices}. We can recover $P$ from $J(P)$ as the induced subposet of ``join-irreducible elements.'' See~\cite[\S3.4]{stanley1996ec1} for more details.) Whether or not $P$ is graded, the distributive lattice $J(P)$ is always graded of rank $\#P$ with rank function~$\mathrm{rk}(I) := \#I$. Distributive lattices are also always connected. We use $\mathbf{a}$ to denote the \emph{chain poset} (i.e. the totally ordered poset) on $a$ elements. Observe that $J(\mathbf{a})=\mathbf{b}$ where $b=a+1$. Thus for all $a\geq 1$ the chain poset $\mathbf{a}$ is a distributive lattice.

We will consider various probability distributions on posets. Let $\mu$ be a probability distribution on a poset $P$. When it is not clear from context we will write $\mu_P$ to stress that $\mu$ is a distribution on $P$. We use some slightly nonstandard probability notation. If~$\mathbf{P}(X)$ is a proposition about a variable $X$ we use $\mathbb{P}(X\sim\mu; \mathbf{P}(X))$ to denote the probability that $\mathbf{P}(X)$ is true given that $X \sim \mu$, i.e., that~$X$ is distributed like $\mu$. For~$p\in P$ we use~$\mathbb{P}(\mu;p)$ as shorthand for $\mathbb{P}(X\sim\mu;X=p)$. We often identify the distribution $\mu$ on~$P$ with the formal sum $\sum_{p\in P} \mathbb{P}(\mu;p) [p]$ so that we may formally scale and add distributions. For any statistic (that is, arbitrary function)~$f\colon P \to \mathbb{R}$ we use~$\mathbb{E}(\mu;f) := \langle \sum_{p \in P} f(p)[p], \mu \rangle = \sum_{p \in P} f(p)\cdot\mathbb{P}(\mu;p)$ to denote the expectation of~$f(X)$ given that $X \sim \mu$. We use $\mathbb{1}\colon P \to \mathbb{R}$ for the statistic that is constantly $1$ so that $\mathbb{E}(\mu;\mathbb{1}) = 1$ for any distribution~$\mu$. We are now ready to define the CDE and mCDE properties. 

\begin{definition}  \label{def:cde} ({See Reiner-Tenner-Yong~\cite[Definition 2.1]{reiner2016poset}}).
Let $P$ be a poset. The \emph{down-degree} $\mathrm{ddeg}(p)$ of~$p \in P$ is the indegree of~$p$ in the Hasse diagram of $P$; that is, $\mathrm{ddeg}(p) := \#\{q \in P\colon q \lessdot p\}$ is the number of elements that $p$ covers. Let $\mathrm{uni}$ denote the uniform distribution on $P$. Let $\mathrm{maxchain}$ denote the distribution on $P$ in which each $p \in P$ occurs with probability proportional to the number of maximal chains that pass through $p$; that is, $\mathrm{maxchain}$ is defined by
\[\mathbb{P}(\mathrm{maxchain};p) := \frac{\#\{c\colon\textrm{$c$ is a maximal chain of $P$ and $p \in c$}\}}{\#\{(c,q)\colon\textrm{$c$ is a maximal chain of $P$ and $q \in c$}\}}\]
for all $p \in P$. We say that $P$ has \emph{coincidental down-degree expectations} (\emph{CDE}) if~$\mathbb{E}(\mathrm{maxchain};\mathrm{ddeg})=\mathbb{E}(\mathrm{uni};\mathrm{ddeg})$. In this case we also refer to~$P$ as \emph{being} CDE.
\end{definition}

Note that the quantity $\mathbb{E}(\mathrm{uni}_P;\mathrm{ddeg})$ is equal to the number of edges of the Hasse diagram of $P$ divided by its number of vertices. Therefore we also refer to $\mathbb{E}(\mathrm{uni}_P;\mathrm{ddeg})$ as the \emph{edge density} of the poset $P$.

\begin{definition} \label{def:mcde} ({See Reiner-Tenner-Yong~\cite[Definition 2.3]{reiner2016poset}}).
Let $P$ be a poset. Suppose that the longest chains of $P$ have length~$r$. Then for $k=0,1,\ldots,r$ define the \emph{$k$-chain distribution} $\mathrm{chain}(k)$ on $P$ by
\[\mathbb{P}(\mathrm{chain}(k); p) := \frac{\#\{c\colon c = c_0 < c_1 < \cdots < c_k\textrm{ is a $k$-chain of $P$ and $p \in c$}\}}{(k+1)\cdot\#\{c\colon \textrm{$c$ is a $k$-chain of $P$}\}}\]
for all $p \in P$. In other words, in $\mathrm{chain}(k)$ each $p \in P$ occurs with probability proportional to the number of $k$-chains passing through $p$. Observe that $\mathrm{chain}(0) = \mathrm{uni}$. Also observe that if $P$ is graded then $\mathrm{chain}(r) = \mathrm{maxchain}$. We say that $P$ is \emph{mCDE} if $\mathbb{E}(\mathrm{chain}(k);\mathrm{ddeg})=\mathbb{E}(\mathrm{uni};\mathrm{ddeg})$ for~$k = 0,1,\ldots,r$.
\end{definition}

The reader may be wondering why there is an~``m'' in the term ``mCDE.'' It stands for ``multichain.'' Actually, the definition~\cite[Definition 2.3]{reiner2016poset} of mCDE given by Reiner-Tenner-Yong in terms of multichains is not obviously the same as our Definition~\ref{def:mcde}. We now show that these definitions are indeed equivalent. To do that we need to talk about multichain-weighted distributions.

\begin{definition} 
Let $P$ be a poset. For $m \geq 0$, define the \emph{$m$-multichain distribution $\mathrm{mchain}(m)$} and \emph{modified $m$-multichain distribution $\widehat{\mathrm{mchain}}(m)$} on $P$ by
\begin{align*}
\mathbb{P}(\mathrm{mchain}(m); p) &:= \frac{\#\{c\colon c = c_0 \leq \cdots \leq c_m \textrm{ is an $m$-multichain of $P$ and $p\in c$}\}}{\#\{(c,q)\colon \textrm{$c$ is an $m$-multichain of $P$ and $q\in c$\}}}; \\
\mathbb{P}(\widehat{\mathrm{mchain}}(m); p) &:= \frac{\#\{(c,i)\colon c = c_0 \leq \cdots \leq c_m \textrm{ is an $m$-multichain of $P$ and $p=c_i$}\}}{(m+1)\cdot \#\{c\colon \textrm{$c$ is an $m$-multichain of $P$\}}}
\end{align*}
for all $p \in P$. In other words, in $\mathrm{mchain}(m)$ each $p \in P$ occurs with probability proportional to the number of $m$-multichains passing through $p$, while in $\widehat{\mathrm{mchain}}(m)$ each $p \in P$ occurs with probability proportional to the number of times $p$ appears in an $m$-multichain. Note that $\mathrm{mchain}(0) = \widehat{\mathrm{mchain}}(0) = \mathrm{uni}$.
\end{definition}

Reiner-Tenner-Yong~\cite[Definition 2.3]{reiner2016poset} defined a poset $P$ to be \emph{multichain-CDE} (\emph{mCDE}) if~$\mathbb{E}(\mathrm{mchain}(m);\mathrm{ddeg})=\mathbb{E}(\mathrm{uni};\mathrm{ddeg})$ for all $m \geq 0$. On the other hand, in the case where $P = J(Q)$ is a distributive lattice, the distribution~$\widehat{\mathrm{mchain}}(m)$ is the same as the ``weak distribution'' $\mu_{m+1,\leq}$ on $J(Q)$ defined and studied in~\cite{chan2015expected}. In fact, Chan, Haddadan, Hopkins and Moci~\cite[Definition 2.4]{chan2015expected} defined $\mu_{m+1,\leq}$ in terms of ``weak reverse $Q$-partitions'' but there is a well-known correspondence between these objects and multichains of~$J(Q)$ (see~\cite[Proposition 3.5.1]{stanley1996ec1}). At any rate, the next proposition says that we need not worry about the technical differences between these distributions when it comes to expected down-degree: the three distributions $\mathrm{chain}(k)$, $\mathrm{mchain}(m)$ and $\widehat{\mathrm{mchain}}(m)$ all lead to the same notion of strong~CDE.\footnote{The definition of mCDE given in~\cite{reiner2016poset} resulted from a miscommunication between the author and V.~Reiner. I meant to suggest the use of the distribution $\widehat{\mathrm{mchain}}(m)$ rather than $\mathrm{mchain}(m)$. It is a kind of happy accident that the definition given in~\cite{reiner2016poset} ended up being equivalent to my suggestion.}

\begin{prop} \label{prop:mcde}
Let $P$ be a poset. Suppose the longest chains of $P$ have length~$r$. Then the following are equivalent:
\begin{enumerate}
\item $\mathbb{E}(\mathrm{chain}(k);\mathrm{ddeg})=\mathbb{E}(\mathrm{uni};\mathrm{ddeg})$ for all~$k=0,1,\ldots,r$ (in other words, $P$ is mCDE according to Definition~\ref{def:mcde});
\item $\mathbb{E}(\mathrm{mchain}(m);\mathrm{ddeg})=\mathbb{E}(\mathrm{uni};\mathrm{ddeg})$ for all $m \geq 0$ (in other words, $P$ is mCDE according to~\cite[Definition 2.3]{reiner2016poset});
\item $\mathbb{E}(\mathrm{mchain}(m);\mathrm{ddeg})=\mathbb{E}(\mathrm{uni};\mathrm{ddeg})$ for all $m =0,1,\ldots,r$;
\item $\mathbb{E}(\widehat{\mathrm{mchain}}(m);\mathrm{ddeg})=\mathbb{E}(\mathrm{uni};\mathrm{ddeg})$ for all $m \geq 0$;
\item $\mathbb{E}(\widehat{\mathrm{mchain}}(m);\mathrm{ddeg})=\mathbb{E}(\mathrm{uni};\mathrm{ddeg})$ for all $m =0,1,\ldots,r$.
\end{enumerate}
\end{prop}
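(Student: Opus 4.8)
The plan is to realize both multichain distributions as explicit convex combinations of the chain distributions $\mathrm{chain}(0),\ldots,\mathrm{chain}(r)$ and then exploit the triangular structure of these combinations. Since $\mathrm{ddeg}$ enters every formula linearly, it suffices to track how the weight placed on each element $p$ decomposes. Write $N_k := \#\{k\text{-chains of }P\}$ and $N_k(p) := \#\{k\text{-chains through }p\}$, so that $\mathbb{P}(\mathrm{chain}(k);p) = N_k(p)/((k+1)N_k)$. I also write $M_m(p)$ for the number of $m$-multichains containing $p$ and $\widehat{M}_m(p)$ for the number of pairs $(c,i)$ with $c$ an $m$-multichain and $c_i = p$, so that $\mathrm{mchain}(m)$ and $\widehat{\mathrm{mchain}}(m)$ are the normalizations of these counts.

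The combinatorial heart of the argument is to sort every $m$-multichain by its \emph{support}, i.e.\ the strictly increasing chain $d_0 < \cdots < d_k$ of its distinct values. For a fixed $k$-chain serving as support, the $m$-multichains with that support correspond to compositions of $m+1$ into $k+1$ positive parts (the multiplicities), of which there are $\binom{m}{k}$. This immediately gives the pointwise identity $M_m(p) = \sum_{k}\binom{m}{k}N_k(p)$. For the modified distribution I instead count pairs $(c,i)$ landing on $p$: fixing the support to be a $k$-chain through $p$ and summing the multiplicity of $p$ over all compositions gives, by symmetry among the $k+1$ parts, the value $\binom{m+1}{k+1} = \tfrac{m+1}{k+1}\binom{m}{k}$, independent of where $p$ sits in the support; hence $\widehat{M}_m(p) = \sum_k \binom{m+1}{k+1}N_k(p)$. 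I expect this symmetry computation behind the clean coefficient $\binom{m+1}{k+1}$ to be the one step that needs genuine care; everything downstream is bookkeeping.

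Feeding $N_k(p) = (k+1)N_k\,\mathbb{P}(\mathrm{chain}(k);p)$ into these two identities and normalizing, one finds that $\mathrm{mchain}(m)$ and $\widehat{\mathrm{mchain}}(m)$ are each a convex combination $\sum_{k=0}^m w^{(m)}_k\,\mathrm{chain}(k)$ with strictly positive weights $w^{(m)}_k$, proportional to $\binom{m}{k}(k+1)N_k$ and to $\binom{m}{k}N_k$ respectively. Taking expectations of $\mathrm{ddeg}$ and setting $d := \mathbb{E}(\mathrm{uni};\mathrm{ddeg})$, this produces a triangular linear system relating the numbers $\mathbb{E}(\mathrm{mchain}(m);\mathrm{ddeg})$ and $\mathbb{E}(\widehat{\mathrm{mchain}}(m);\mathrm{ddeg})$ to the numbers $E_k := \mathbb{E}(\mathrm{chain}(k);\mathrm{ddeg})$ for $k \leq m$.

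The equivalences then fall out of this triangular structure. For $(1)\Rightarrow(2)$ and $(1)\Rightarrow(4)$: if every $E_k = d$ then every convex combination of the $\mathrm{chain}(k)$ also has expected down-degree $d$, and this even covers $m > r$ since $N_k = 0$ for $k > r$, so no new chain distributions enter. The implications $(2)\Rightarrow(3)$ and $(4)\Rightarrow(5)$ are trivial restrictions. For the converses $(3)\Rightarrow(1)$ and $(5)\Rightarrow(1)$, I would induct on $m = 0,1,\ldots,r$: in the relation at level $m$ the coefficient of $E_m$ is a positive multiple of $N_m$, and $N_m \neq 0$ because truncating a longest chain yields an $m$-chain for every $m \leq r$; so knowing $E_0 = \cdots = E_{m-1} = d$ together with the level-$m$ hypothesis $\mathbb{E}(\,\cdot\,;\mathrm{ddeg}) = d$ forces $E_m = d$. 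This closes the cycle of implications and shows all five statements are equivalent.
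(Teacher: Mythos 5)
Your proof is correct, and its overall architecture is the same as the paper's: express $\mathrm{mchain}(m)$ and $\widehat{\mathrm{mchain}}(m)$ as convex combinations of $\mathrm{chain}(0),\ldots,\mathrm{chain}(\min(m,r))$ with positive weights, then run a triangular induction for the converse directions (your induction on $m$ using $N_m\neq 0$ is exactly the invertibility of the paper's upper-triangular matrix, made explicit). The one place you genuinely diverge is the coefficient of $\mathrm{chain}(k)$ in $\widehat{\mathrm{mchain}}(m)$. The paper groups the compositions of $m+1$ into $k+1$ positive parts into orbits under cyclic rotation (the map $\zeta$) and asserts that each orbit contributes exactly $m+1$ to the count of pairs $(c,i)$ with $c_i=p$, yielding a coefficient $(m+1)f(m,k)$ with $f(m,k)$ the number of rotation orbits. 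You instead use that the set of such compositions is invariant under permuting the positions of the parts, so the sum of the $(t+1)$-st part over \emph{all} compositions is independent of $t$ and equals $\tfrac{1}{k+1}(m+1)\binom{m}{k}=\binom{m+1}{k+1}$. Your version is tidier: it gives a closed-form coefficient and is insensitive to the degenerate rotation orbits of size smaller than $k+1$ (e.g.\ the composition $(2,2)$ of $4$ is fixed by rotation), for which the orbit-by-orbit accounting in the paper actually contributes less than $m+1$. Since only positivity of the coefficients and triangularity are ever used downstream, this discrepancy is harmless to the proposition in either treatment, but your symmetry computation is the cleaner route through the only delicate step.
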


\begin{proof} 
Let $P$ be as in the statement of the proposition. Also, let $\mathrm{Comp}(n,k)$ denote the set of compositions of $n$ into~$k$ nonempty parts; that is,
\[ \mathrm{Comp}(n,k) := \{\alpha=(\alpha_1,\alpha_2,\ldots,\alpha_k)\colon \textrm{$\alpha_i \geq 1$ are integers and $\sum_{i=1}^{k}\alpha_i = n$}\}.\]
Recall that there is a well-known bijection
\begin{align*}
 \alpha\colon \binom{[n]}{k} &\xrightarrow{\sim} \mathrm{Comp}(n+1,k+1), \\
 S=\{s_1<s_2<\cdots<s_k\} &\mapsto (s_1-0,s_2-s_1,s_3-s_2,\ldots,s_{k}-s_{k+1},n+1-s_{k}).
 \end{align*} 
We proceed to prove all the necessary implications.

$(1) \Rightarrow (2)$: We claim that for all $m\geq0$,
\begin{equation} \label{eqn:chaintomchain}
\mathrm{mchain}(m) = \frac{\sum_{k=0}^{\mathrm{min}(m,r)} (k+1)\cdot\#\{c\colon \textrm{$c$ is a $k$-chain of $P$}\} \cdot \binom{m}{k} \cdot \mathrm{chain}(k)}{\sum_{k=0}^{\mathrm{min}(m,r)} (k+1)\cdot\#\{c\colon \textrm{$c$ is a $k$-chain of $P$}\} \cdot \binom{m}{k}}.
\end{equation}
To prove~(\ref{eqn:chaintomchain}), first let us show that the left- and right-hand sides of this equality of formal sums are \emph{proportional}, i.e., that $\mathrm{LHS} = \kappa \cdot \mathrm{RHS}$ for some scalar~$\kappa$. This amounts to showing that for any~$p,p' \in P$ we have
\[ \frac{\#\{c\colon \textrm{$c$ is an $m$-multichain, $p\in c$}\}}{\#\{c\colon \textrm{$c$ is an $m$-multichain, $p'\in c$}\}} = \frac{\sum_{k=0}^{m}\binom{m}{k}\cdot\#\{c\colon \textrm{$c$ is a $k$-chain, $p \in c$}\}}{\sum_{k=0}^{m}\binom{m}{k}\cdot\#\{c\colon \textrm{$c$ is a $k$-chain, $p' \in c$}\}}.\]
In fact we claim, what is stronger, that for any $p \in P$ we have
\[\#\{c\colon \textrm{$c$ is an $m$-multichain, $p\in c$}\} = \sum_{k=0}^{m}\binom{m}{k}\cdot\#\{c\colon \textrm{$c$ is a $k$-chain, $p \in c$}\}.\]
This last claim follows from the existence of the bijection
\[\varphi\colon\left\{(c,S)\colon \textrm{$c$ is a $k$-chain, $S \in \binom{[m]}{k}$ } \right\} \xrightarrow{\sim} \{c\colon \textrm{$c$ is an $m$-multichain}\}  \]
which sends the pair $(c,S)$ with $c=c_1<c_2<\cdots<c_k$ and $S \in \binom{[m]}{k}$ to the following $m$-multichain:
\[\scriptstyle \overbrace{c_0 = \cdots = c_0}^{\alpha(S)_1} < \overbrace{c_1= \cdots = c_1}^{\alpha(S)_2} < \overbrace{c_2 = \cdots = c_2}^{\alpha(S)_3} < \cdots < \overbrace{c_{k-1} = \cdots = c_{k-1}}^{\alpha(S)_{k}} < \overbrace{c_k = \cdots = c_k}^{\alpha(S)_{k+1}}. \]
In particular, note that $p \in \varphi(c,S) \Leftrightarrow p \in c$. Now having established that the left- and right-hand sides of~(\ref{eqn:chaintomchain}) are proportional, it follows by taking the expectation of $\mathbb{1}$ that they must in fact be equal. Taking the expectation  of $\mathrm{ddeg}$ on both sides of~(\ref{eqn:chaintomchain}) then yields~$\mathbb{E}(\mathrm{mchain}(m);\mathrm{ddeg}) = \mathbb{E}(\mathrm{uni};\mathrm{ddeg})$.

$(2) \Rightarrow (3)$: This is trivial.

$(3) \Rightarrow (1)$: The matrix representing the system of equations~(\ref{eqn:chaintomchain}) for $m=0,1,\ldots,r$ is upper-triangular with nonzero entries along the diagonal, hence invertible. This means that for~$k=0,1,\ldots,r$ we can write
\begin{equation} \label{eqn:mchaintochain}
\mathrm{chain}(k) = \sum_{m=0}^{k} \kappa_{k,m} \; \mathrm{mchain}(m)
\end{equation}
for some coefficients $\kappa_{k,m} \in \mathbb{R}$. By taking the expectation of $\mathbb{1}$ on both sides of~(\ref{eqn:mchaintochain}) we see that $\sum_{m=0}^{k}\kappa_{k,m} = 1$. Then taking the expectation of $\mathrm{ddeg}$ on both sides of~(\ref{eqn:mchaintochain}) gives $\mathbb{E}(\mathrm{chain}(k);\mathrm{ddeg}) = \mathbb{E}(\mathrm{uni};\mathrm{ddeg})$.

$(1) \Rightarrow (4)$: We will make use of the bijection $\varphi$ from $(1) \Rightarrow (2)$. We will also make use of the map $\zeta \colon \binom{[n]}{k} \to \binom{[n]}{k}$ defined by
\[\zeta(S) := \{m+1-s_k < m+1-s_k+s_1 < m+1-s_k+s_2 < \cdots < m+1-s_k + s_{k-1}\}\]
for $S = \{s_1 < s_2 <\cdots <s_k\}$. Observe that applying $\zeta$ to $S$ corresponds to rotating~$\alpha(S)$, i.e.~$\alpha(\zeta(S)) = (\alpha(S)_{k+1},\alpha(S)_{1},\alpha(S)_{2},\cdots,\alpha(S)_{k})$. Thus $\zeta^{(k+1)}(S) = S$. Let~$f(n,k)$ denote the number of $\zeta$-orbits of $\binom{[n]}{k}$. We claim that for all $m\geq0$,
\begin{equation} \label{eqn:chaintommchain}
\widehat{\mathrm{mchain}}(m) = \frac{\sum_{k=0}^{\mathrm{min}(m,r)} (k+1)\cdot\#\{c\colon \textrm{$c$ is a $k$-chain of $P$}\} \cdot f(m,k) \cdot \mathrm{chain}(k)}{\sum_{k=0}^{\mathrm{min}(m,r)} (k+1)\cdot \#\{c\colon \textrm{$c$ is a $k$-chain of $P$}\} \cdot f(m,k)\cdot \binom{m}{k}}.
\end{equation}
As in the proof of~(\ref{eqn:chaintomchain}), to prove~(\ref{eqn:chaintommchain}) it suffices to show that the left- and right-hand sides are proportional. This amounts to showing that for any $p,p' \in P$ we have 
\[ \frac{\#\{(c,i)\colon \textrm{$c$ is an $m$-multichain, $p = c_i$}\}}{\#\{(c,i)\colon \textrm{$c$ is an $m$-multichain, $p' = c_i$}\}} = \frac{\sum_{k=0}^{m}f(m,k) \cdot\#\{c\colon \textrm{$c$ is a $k$-chain, $p \in c$}\}}{\sum_{k=0}^{m}f(m,k) \cdot\#\{c\colon \textrm{$c$ is a $k$-chain, $p' \in c$}\}}.\]
This last equality holds because for any $p \in P$ we have
\begin{align*}
\#\left\{(c,i)\colon \parbox{1.4in}{\begin{center}$c$ is an $m$-multichain,\\ $p = c_i$\end{center}}\right\} &= \sum_{\substack{\textrm{$m$-multichain $c$}, \\ p \in c}} \#\{i\colon c_i = p\} \\
&= \sum_{k=0}^{m} \; \sum_{\substack{\textrm{$k$-chain $c$}, \\ p \in c}}  \sum_{S \in \binom{[m]}{k}} \#\{i\colon \varphi(c,S)_i = p\} \\
&= \sum_{k=0}^{m} \; \sum_{\substack{\textrm{$k$-chain $c$}, \\ p \in c}} \sum_{\substack{\textrm{$\mathcal{O}$ a $\zeta$-orbit}\\ \textrm{of $\binom{[m]}{k}$} }} \sum_{S \in \mathcal{O}} \hspace{-0.1cm} \#\{i\colon \varphi(c,S)_i = p\} \\
&=  \sum_{k=0}^{m} \; \sum_{\substack{\textrm{$k$-chain $c$}, \\ p \in c}} \sum_{\substack{\textrm{$\mathcal{O}$ a $\zeta$-orbit}\\ \textrm{of $\binom{[m]}{k}$} }} (m+1) \\
&= (m+1) \sum_{k=0}^{m} f(m,k)\cdot \#\{c\colon \textrm{$c$ is a $k$-chain, $p \in c$}\}
\end{align*}
In the above computation we used the fact that for any fixed $k$-chain $c$ with $p = c_t$ and any fixed $\zeta$-orbit $\mathcal{O}$ of $\binom{[m]}{k}$ we have
\begin{align*} 
 \sum_{S \in \mathcal{O}} \#\{i\colon \varphi(c,S)_i = p\}  &= \sum_{S \in \mathcal{O}} \alpha(S)_{t+1} \\
&=\alpha(S)_1 + \alpha(S)_2 + \cdots + \alpha(S)_{k+1} \textrm{ for a fixed $S \in \mathcal{O}$} \\
&= m+1.
\end{align*}
That~$\mathbb{E}(\widehat{\mathrm{mchain}}(m);\mathrm{ddeg}) = \mathbb{E}(\mathrm{uni};\mathrm{ddeg})$ then follows by taking the expectation  of $\mathrm{ddeg}$ on both sides of~(\ref{eqn:chaintommchain}).

$(4) \Rightarrow (5)$: This is trivial.

$(5) \Rightarrow (1)$: This is directly analogous to $(3) \Rightarrow (1)$. The matrix representing the system of equations~(\ref{eqn:chaintommchain}) for $m=0,1,\ldots,r$ is upper-triangular with nonzero entries along the diagonal, hence invertible. So for~$k=0,1,\ldots,r$ we can write
\begin{equation} \label{eqn:mmchaintochain}
\mathrm{chain}(k) = \sum_{m=0}^{k} \widehat{\kappa}_{k,m} \; \widehat{\mathrm{mchain}}(m)
\end{equation}
for some coefficients $\widehat{\kappa}_{k,m} \in \mathbb{R}$ with $\sum_{m=0}^{k}\widehat{\kappa}_{k,m}=1$. Taking the expectation of $\mathrm{ddeg}$ on both sides of~(\ref{eqn:mmchaintochain}) gives $\mathbb{E}(\mathrm{chain}(k);\mathrm{ddeg}) = \mathbb{E}(\mathrm{uni};\mathrm{ddeg})$.
\end{proof}

\subsection{General remarks and poset operations} \label{subsec:remarks}

Having established that we are working with the same properties studied in~\cite{reiner2016poset}, let us make some general remarks about these properties. First let us address the relationship between the CDE and mCDE properties. In general, there is no direct relationship between $P$ being CDE and $P$ being mCDE, as we can see from the following example.
\begin{example} \label{ex:cdevsmcde}
Let $P$ be the following poset with $5$ elements:
\begin{center}
 \begin{tikzpicture}
	\SetFancyGraph
	\Vertex[NoLabel,x=0,y=0]{v_1}
	\Vertex[NoLabel,x=1,y=0]{v_2}
	\Vertex[NoLabel,x=0,y=1]{v_3}
	\Vertex[NoLabel,x=1,y=1]{v_4}
	\Vertex[NoLabel,x=0,y=2]{v_5}
	\Edges[style={thick}](v_1,v_3)
	\Edges[style={thick}](v_1,v_4)
	\Edges[style={thick}](v_2,v_3)
	\Edges[style={thick}](v_2,v_4)
	\Edges[style={thick}](v_3,v_5)
\end{tikzpicture}
\end{center}
Then $P$ is CDE but not mCDE: we have $\mathbb{E}(\mathrm{uni}_P;\mathrm{ddeg})=\mathbb{E}(\mathrm{maxchain}_P;\mathrm{ddeg})=1$ but~$\mathbb{E}(\mathrm{chain}(1)_P;\mathrm{ddeg})=\frac{13}{14}$. Now let $P$ be the following poset with $8$ elements:
\begin{center}
 \begin{tikzpicture}
	\SetFancyGraph
	\Vertex[NoLabel,x=0,y=0]{v_1}
	\Vertex[NoLabel,x=1,y=0]{v_2}
	\Vertex[NoLabel,x=0,y=1]{v_3}
	\Vertex[NoLabel,x=1,y=1]{v_4}
	\Vertex[NoLabel,x=0,y=2]{v_5}
	\Vertex[NoLabel,x=2,y=0]{v_6}
	\Vertex[NoLabel,x=2,y=1]{v_7}
	\Vertex[NoLabel,x=3,y=1]{v_8}
	\Edges[style={thick}](v_1,v_3)
	\Edges[style={thick}](v_1,v_4)
	\Edges[style={thick}](v_2,v_3)
	\Edges[style={thick}](v_2,v_4)
	\Edges[style={thick}](v_3,v_5)
	\Edges[style={thick}](v_6,v_4)
	\Edges[style={thick}](v_6,v_7)
	\Edges[style={thick}](v_6,v_8)
\end{tikzpicture}
\end{center}
Then $P$ is mCDE but not CDE: we have $\mathbb{E}(\mathrm{uni}_P;\mathrm{ddeg})=\mathbb{E}(\mathrm{chain}(1)_P;\mathrm{ddeg})=\mathbb{E}(\mathrm{chain}(2)_P;\mathrm{ddeg})=1$ but~$\mathbb{E}(\mathrm{maxchain}_P;\mathrm{ddeg})=\frac{17}{16}$.
\end{example}
However, if $P$ is graded then it is immediate from Definitions~\ref{def:cde} and~\ref{def:mcde} that~$P$ being mCDE implies that $P$ is CDE. The converse implication need not hold even for graded posets: Reiner-Tenner-Yong~\cite[Remark 2.7]{reiner2016poset} gave an example of an interval~$[e,w]$ in the weak Bruhat order on the symmetric group which is CDE but not mCDE; the following example shows that distributive lattices can be CDE without being mCDE.
\begin{example}
Let $P$ be the following poset with $6$ elements:
\begin{center}
 \begin{tikzpicture}
	\SetFancyGraph
	\Vertex[NoLabel,x=0,y=2]{v_3}
	\Vertex[NoLabel,x=0,y=1]{v_2}
	\Vertex[NoLabel,x=0,y=0]{v_1}
	\Vertex[NoLabel,x=-1,y=2]{v_6}
	\Vertex[NoLabel,x=-1,y=1]{v_5}
	\Vertex[NoLabel,x=1,y=1]{v_4}
	\Edges[style={thick}](v_1,v_2)
	\Edges[style={thick}](v_2,v_3)
	\Edges[style={thick}](v_1,v_5)
	\Edges[style={thick}](v_5,v_6)
	\Edges[style={thick}](v_1,v_4)
	\Edges[style={thick}](v_2,v_6)
	\Edges[style={thick}](v_5,v_3)
\end{tikzpicture}
\end{center}
Then the distributive lattice $J(P)$ of rank $6$ is CDE but is not mCDE: we have $\mathbb{E}(\mathrm{chain}(0)_{J(P)};\mathrm{ddeg}) = \mathbb{E}(\mathrm{chain}(6)_{J(P)};\mathrm{ddeg})=\frac{8}{5}$, but~$\mathbb{E}(\mathrm{chain}(1)_{J(P)};\mathrm{ddeg}) = \frac{83}{52}$.
\end{example}
Nevertheless we will see in Sections~\ref{sec:younglat} and~\ref{sec:shiftyounglat} below that for some nice classes of distributive lattices (initial intervals of Young's lattice and the shifted Young's lattice) the CDE and mCDE properties apparently coincide. See Remarks~\ref{rem:younglatcde} and~\ref{rem:shiftyounglatcde}.

Now let us address how these properties behave under poset operations. The most important unary poset operation is duality. The \emph{dual poset}~$P^*$ to a poset $P$ is the poset on the same elements as $P$ with $p \leq_{P^*} p'$ iff $p \geq_{P} p'$. As Reiner-Tenner-Yong noted~\cite[Example 2.15]{reiner2016poset}, the CDE property is not preserved under duality. The following example shows that, even for graded posets, neither the CDE nor mCDE property is preserved under duality.

\begin{example}
Let $P$ be the following graded poset of rank $3$ with $8$ elements:
\begin{center}
 \begin{tikzpicture}
	\SetFancyGraph
	\Vertex[NoLabel,x=0,y=0]{v_1}
	\Vertex[NoLabel,x=2,y=0]{v_2}
	\Vertex[NoLabel,x=0,y=1]{v_3}
	\Vertex[NoLabel,x=1,y=1]{v_4}
	\Vertex[NoLabel,x=2,y=1]{v_5}
	\Vertex[NoLabel,x=3,y=1]{v_6}
	\Vertex[NoLabel,x=0.5,y=2]{v_7}
	\Vertex[NoLabel,x=2.5,y=2]{v_8}
	\Edges[style={thick}](v_1,v_3)
	\Edges[style={thick}](v_2,v_4)
	\Edges[style={thick}](v_2,v_5)
	\Edges[style={thick}](v_2,v_6)
	\Edges[style={thick}](v_3,v_7)
	\Edges[style={thick}](v_4,v_7)
	\Edges[style={thick}](v_5,v_8)
	\Edges[style={thick}](v_6,v_8)
\end{tikzpicture}
\end{center}
Then $\mathbb{E}(\mathrm{uni}_P;\mathrm{ddeg}) = \mathbb{E}(\mathrm{chain}(1)_P;\mathrm{ddeg}) = \mathbb{E}(\mathrm{chain}(2)_P;\mathrm{ddeg})=1$ and hence $P$ is mCDE and CDE. But $\mathbb{E}(\mathrm{uni}_{P^*};\mathrm{ddeg}) = 1 \neq \frac{7}{6} = \mathbb{E}(\mathrm{chain}(2)_{P^*};\mathrm{ddeg})$ so $P^*$ is neither mCDE nor CDE.
\end{example}

By way of contrast, Proposition~\ref{prop:tcdedual} in the next section will show that the CDE and mCDE properties are preserved under duality of distributive lattices.

The two most important binary poset operations are disjoint union and direct product. The~\emph{disjoint union} $P+Q$ of two posets $P$ and $Q$ is the poset whose underly set is the disjoint union~$P \sqcup Q$ with $x\leq_{P+Q} y$ iff $x,y \in P$ and $x \leq_P y$ or~$x,y \in Q$ and~$x \leq_Q y$. Every poset is the disjoint union of some number of connected posets which are called its \emph{connected components}. The~\emph{direct product} $P\times Q$ of two posets $P$ and $Q$ is the poset whose underlying set is the Cartesian product $P\times Q$ with~$(p,q) \leq_{P\times Q} (p',q')$ iff~$p \leq_P p'$ and~$q \leq_Q q'$. Note that if~$P$ is graded of rank~$r$ and $Q$ is graded of rank $s$ then $P\times Q$ is graded of rank~$r+s$.

Reiner-Tenner-Yong~\cite[Example 2.8]{reiner2016poset} observed that a disjoint union of any number of chain posets~$\mathbf{a_1} + \mathbf{a_2} + \cdots + \mathbf{a_k}$ is always CDE but in general is not mCDE unless $a_1 = a_2 = \cdots = a_k$. Moreover, they showed~\cite[Example 2.15]{reiner2016poset} that neither the CDE nor the mCDE property is preserved under disjoint unions, even for graded posets of the same rank. Nevertheless, we do at least have the following simple observation which says that the disjoint union of two~(m)CDE posets \emph{with the same edge densities} remains~(m)CDE.

\begin{prop}
Let $P$ and $Q$ be posets with $\mathbb{E}(\mathrm{uni}_P;\mathrm{ddeg}) = \mathbb{E}(\mathrm{uni}_Q;\mathrm{ddeg})$. If~$P$ and~$Q$ are both CDE then $P+Q$ is also CDE. Similarly, if~$P$ and $Q$ are both mCDE then $P+Q$ is also mCDE.
\end{prop}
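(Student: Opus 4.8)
The plan is to exploit the fact that every chain of $P+Q$ lies entirely within one of the two components, so that each of the relevant distributions on $P+Q$ is literally a convex combination of the corresponding distributions on $P$ and on $Q$. First I would record two elementary observations. The down-degree is component-local: for $p\in P$ the elements covered by $p$ in $P+Q$ are exactly those covered in $P$, so $\mathrm{ddeg}(p)$ computed in $P+Q$ agrees with $\mathrm{ddeg}(p)$ computed in $P$, and symmetrically for $Q$. And because no element of $P$ is comparable to any element of $Q$, every chain of $P+Q$---in particular every maximal chain and every $k$-chain---is a chain lying wholly in $P$ or wholly in $Q$.

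For the CDE assertion, write $d:=\mathbb{E}(\mathrm{uni}_P;\mathrm{ddeg})=\mathbb{E}(\mathrm{uni}_Q;\mathrm{ddeg})$, and let $n_P,n_Q$ be the numbers of elements and $e_P,e_Q$ the numbers of Hasse edges of the two posets. Since edge density is $\mathbb{E}(\mathrm{uni};\mathrm{ddeg})$, the hypothesis says $e_P/n_P=e_Q/n_Q=d$, whence $\mathbb{E}(\mathrm{uni}_{P+Q};\mathrm{ddeg})=(e_P+e_Q)/(n_P+n_Q)=d$. Next, letting $M_P$ and $M_Q$ denote the number of (maximal chain, element) incidence pairs in $P$ and in $Q$, the local-chain observation gives $\mathbb{P}(\mathrm{maxchain}_{P+Q};p)=\frac{M_P}{M_P+M_Q}\,\mathbb{P}(\mathrm{maxchain}_P;p)$ for $p\in P$, and symmetrically for $q\in Q$; in the formal-sum notation of Section~\ref{subsec:defs} this reads
\[
\mathrm{maxchain}_{P+Q}=\tfrac{M_P}{M_P+M_Q}\,\mathrm{maxchain}_P+\tfrac{M_Q}{M_P+M_Q}\,\mathrm{maxchain}_Q .
\]
Taking the expectation of $\mathrm{ddeg}$ on both sides---and using the first observation so that the component expectations may be evaluated with the ambient down-degree---gives $\mathbb{E}(\mathrm{maxchain}_{P+Q};\mathrm{ddeg})=\frac{M_P}{M_P+M_Q}\,d+\frac{M_Q}{M_P+M_Q}\,d=d$, since $P$ and $Q$ are CDE. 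As this equals $\mathbb{E}(\mathrm{uni}_{P+Q};\mathrm{ddeg})$, the poset $P+Q$ is CDE.

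The mCDE assertion is the same argument run for each $\mathrm{chain}(k)$ in place of $\mathrm{maxchain}$: if $N_k^P,N_k^Q$ count the $k$-chains of the two components, then exactly as above $\mathrm{chain}(k)_{P+Q}=\frac{N_k^P}{N_k^P+N_k^Q}\,\mathrm{chain}(k)_P+\frac{N_k^Q}{N_k^P+N_k^Q}\,\mathrm{chain}(k)_Q$, and taking $\mathbb{E}(\cdot;\mathrm{ddeg})$ yields $d$ by the mCDE hypothesis on each component. The one point requiring a little care---and which I would flag as the main, though minor, obstacle---is the bookkeeping when $k$ exceeds the rank of one component. Writing $r=\max(r_P,r_Q)$ for the length of the longest chains of $P+Q$, for $k$ with $\min(r_P,r_Q)<k\le r$ one of $N_k^P,N_k^Q$ vanishes, the convex combination degenerates to the chain distribution on the taller component, and we need only that this single component's $\mathrm{chain}(k)$-expectation is $d$---which still holds because such a $k$ does not exceed that component's own rank. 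For $k>r$ there are no $k$-chains and nothing to verify. Hence $\mathbb{E}(\mathrm{chain}(k)_{P+Q};\mathrm{ddeg})=d$ for all $k=0,1,\ldots,r$, so $P+Q$ is mCDE.
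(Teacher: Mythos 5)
Your proof is correct and follows essentially the same route as the paper's: both decompose $\mathrm{uni}$, $\mathrm{maxchain}$, and each $\mathrm{chain}(k)$ on $P+Q$ as a convex combination of the corresponding distributions on the components (using that every chain lives in one component) and then take $\mathbb{E}(\cdot;\mathrm{ddeg})$. Your explicit handling of the degenerate case where $k$ exceeds the length of one component's longest chains is a welcome bit of care that the paper leaves implicit.
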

\begin{proof}
Let $P$ and $Q$ be as in the statement of the proposition. Suppose the longest chains of $P$ have length~$r$ and the longest chains of $Q$ have length $s$. Since a (maximal) chain of $P+Q$ is nothing but either a (maximal) chain of~$P$ or a (maximal) chain of~$Q$, we have
\begin{align} \label{eqn:disjdist}
\mathrm{chain}_{P+Q}(k) &= \frac{x_k}{x_k+y_k} \mathrm{chain}_{P}(k) +  \frac{y_k}{x_k+y_k}\mathrm{chain}_Q(k), \; \; k=0,1,\ldots,\mathrm{max}(r,s); \\ \nonumber
\mathrm{maxchain}_{P+Q} &= \frac{x_{\mathrm{max}}}{x_{\mathrm{max}}+y_{\mathrm{max}}} \mathrm{maxchain}_{P} +  \frac{y_{\mathrm{max}}}{x_{\mathrm{max}}+y_{\mathrm{max}}}\mathrm{maxchain}_Q,
\end{align}
where
\begin{align*}
x_k &:= \#\{c\colon \textrm{$c$ is a $k$-chain of $P$}\}, \\
y_k &:= \#\{c\colon \textrm{$c$ is a $k$-chain of $Q$}\}, \\
x_{\mathrm{max}} &:= \#\{(c,p)\colon \textrm{$c$ is a maximal chain of $P$, $p \in c$}\}, \\
y_{\mathrm{max}} &:= \#\{(c,q)\colon \textrm{$c$ is a maximal chain of $Q$, $q \in c$}\}.
\end{align*}
The proposition follows by taking the expectation of $\mathrm{ddeg}$ on both sides of~(\ref{eqn:disjdist}).
\end{proof}

As for direct products, Reiner-Tenner-Yong proved~\cite[Corollary 2.19]{reiner2016poset} that a direct product of any number of chain posets~$\mathbf{a_1} \times \mathbf{a_2} \times \cdots \times \mathbf{a_k}$ is both CDE and mCDE. Moreover, they established the following.

\begin{prop}[{Reiner-Tenner-Yong~\cite[Proposition 2.13]{reiner2016poset}}] \label{prop:productcde}
Let $P$ and $Q$ be graded posets. If $P$ and $Q$ are both CDE then $P\times Q$ is also CDE.
\end{prop}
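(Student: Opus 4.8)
The plan is to exploit two structural features of the direct product: the down-degree is additive across the two factors, and maximal chains of $P \times Q$ are shuffles of maximal chains of $P$ with maximal chains of $Q$. First I would record the additivity of the down-degree. For $(p,q) \in P \times Q$, the elements it covers are exactly the $(p',q)$ with $p' \lessdot_P p$ together with the $(p,q')$ with $q' \lessdot_Q q$, so $\mathrm{ddeg}_{P\times Q}(p,q) = \mathrm{ddeg}_P(p) + \mathrm{ddeg}_Q(q)$. Writing $\mathrm{ddeg}_P$ also for the statistic $(p,q) \mapsto \mathrm{ddeg}_P(p)$ on $P \times Q$, linearity of expectation reduces the whole problem to comparing $\mathbb{E}(\mathrm{maxchain}_{P\times Q}; \mathrm{ddeg}_P)$ with $\mathbb{E}(\mathrm{uni}_{P\times Q}; \mathrm{ddeg}_P)$, plus the symmetric statements with $P$ and $Q$ swapped. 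The uniform side is immediate: since $\mathrm{uni}_{P\times Q} = \mathrm{uni}_P \times \mathrm{uni}_Q$, summing out the $q$-coordinate gives $\mathbb{E}(\mathrm{uni}_{P\times Q}; \mathrm{ddeg}_P) = \mathbb{E}(\mathrm{uni}_P; \mathrm{ddeg})$.

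The heart of the proof is the maxchain side. Suppose $P$ is graded of rank $r$ and $Q$ of rank $s$, so $P \times Q$ is graded of rank $r+s$. I would describe a maximal chain of $P \times Q$ as a triple $(c_P, c_Q, w)$ consisting of a maximal chain $c_P$ of $P$, a maximal chain $c_Q$ of $Q$, and a shuffle $w$ (a word with $r$ letters $P$ and $s$ letters $Q$) recording the order in which the up-steps are taken. Letting $N_P(p)$ denote the number of maximal chains of $P$ through $p$, a count of shuffles shows that, with $i := \mathrm{rk}_P(p)$ and $j := \mathrm{rk}_Q(q)$, the number of maximal chains of $P \times Q$ passing through $(p,q)$ equals $N_P(p)\,N_Q(q)\binom{i+j}{i}\binom{r+s-i-j}{r-i}$. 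Normalizing by the total $(r+s+1)\binom{r+s}{r}M_P M_Q$, where $M_P, M_Q$ count maximal chains, and weighting by $\mathrm{ddeg}_P(p)$, the $q$-sum collapses via $\sum_{q \in Q_j} N_Q(q) = M_Q$, leaving for each fixed $i$ the inner sum $\sum_{j} \binom{i+j}{i}\binom{r+s-i-j}{r-i}$.

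The main obstacle, and the one genuinely non-formal step, is the evaluation of this inner sum. The key is the Vandermonde convolution $\sum_{k} \binom{k}{i}\binom{n-k}{r-i} = \binom{n+1}{r+1}$, which with $n = r+s$ gives $\binom{r+s+1}{r+1}$ \emph{independently of} $i$. This independence is exactly what makes the argument work: it decouples the rank $i$ from the shuffle weighting. Since the prefactor $\frac{\binom{r+s+1}{r+1}}{(r+s+1)\binom{r+s}{r}}$ reduces to $\frac{1}{r+1}$, one obtains $\mathbb{E}(\mathrm{maxchain}_{P\times Q}; \mathrm{ddeg}_P) = \frac{1}{r+1}\sum_{i=0}^{r} D_P(i) = \mathbb{E}(\mathrm{maxchain}_P; \mathrm{ddeg})$, where $D_P(i)$ denotes the maxchain-weighted average down-degree on rank $i$.

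With the symmetric computation for $Q$ in hand, additivity of $\mathrm{ddeg}$ yields $\mathbb{E}(\mathrm{maxchain}_{P\times Q}; \mathrm{ddeg}) = \mathbb{E}(\mathrm{maxchain}_P;\mathrm{ddeg}) + \mathbb{E}(\mathrm{maxchain}_Q;\mathrm{ddeg})$, and likewise $\mathbb{E}(\mathrm{uni}_{P\times Q}; \mathrm{ddeg}) = \mathbb{E}(\mathrm{uni}_P;\mathrm{ddeg}) + \mathbb{E}(\mathrm{uni}_Q;\mathrm{ddeg})$; invoking the CDE hypothesis on each factor then finishes the proof. It is worth emphasizing that the maxchain distribution on $P \times Q$ does \emph{not} itself factor as a product of the maxchain distributions on $P$ and $Q$—the rank-dependent binomial weights obstruct this—so it is essential that only the marginal \emph{expected} down-degree, rather than the full distribution, needs to be matched.
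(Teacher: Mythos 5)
Your proof is correct, and at its core it uses the same two ingredients as the paper's argument (which appears as a remark at the end of the proof of Proposition~\ref{prop:productmcde}): the additivity $\mathrm{ddeg}_{P\times Q}(p,q)=\mathrm{ddeg}_P(p)+\mathrm{ddeg}_Q(q)$, and the shuffle description of chains of $P\times Q$. The difference is in how the marginal of $\mathrm{maxchain}_{P\times Q}$ is identified. The paper establishes the general identity~(\ref{eqn:projdist}), valid for every $k$ and for arbitrary (not necessarily graded) factors, expressing $\pi_P(\mathrm{chain}(k)_{P\times Q})$ as a convex combination of the $\mathrm{chain}(i)_P$; the multiplicity bookkeeping there is done by averaging over $\zeta$-rotation orbits of shuffle words, and gradedness enters only at the end to collapse the $k=r+s$ case to the single term $\mathrm{chain}(r)_P=\mathrm{maxchain}_P$. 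You instead specialize to maximal chains from the outset: the explicit count $N_P(p)N_Q(q)\binom{i+j}{i}\binom{r+s-i-j}{r-i}$ together with the Vandermonde convolution $\sum_k\binom{k}{i}\binom{r+s-k}{r-i}=\binom{r+s+1}{r+1}$ (whose independence of $i$ is indeed the crux) does in one closed-form computation what the orbit-averaging does in general. Your route is more elementary and self-contained for the graded CDE statement, but it does not by itself yield the mCDE version (Proposition~\ref{prop:productmcde}), which is what the paper's more general identity buys. One small remark: your calculation in fact proves the stronger statement that $\pi_P(\mathrm{maxchain}_{P\times Q})=\mathrm{maxchain}_P$ as distributions --- exactly as the paper notes --- so your closing caveat is really only that the \emph{joint} maxchain distribution is not the product of its marginals, which is true but plays no role in the argument.
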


They showed~\cite[Example 2.14]{reiner2016poset} that the gradedness assumption in Proposition~\ref{prop:productcde} is essential. And they asked~\cite[Question 2.20]{reiner2016poset} if the mCDE property is also preserved under direct products, without the gradedness assumption. This is indeed the case.

\begin{prop} \label{prop:productmcde}
Let $P$ and $Q$ be posets. If $P$ and $Q$ are both mCDE then $P\times Q$ is also mCDE.
\end{prop}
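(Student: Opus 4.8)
The plan is to reduce to the modified multichain characterization of mCDE. By Proposition~\ref{prop:mcde}, a poset is mCDE if and only if $\mathbb{E}(\widehat{\mathrm{mchain}}(m);\mathrm{ddeg}) = \mathbb{E}(\mathrm{uni};\mathrm{ddeg})$ for all $m \geq 0$. This formulation is ideal for direct products for two reasons: it carries no gradedness hypothesis (so I need not track the rank of $P\times Q$, which would be the sum of the two ranks), and multichains interact cleanly with products.

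First I would record two structural facts. (a) Because the order on $P\times Q$ is componentwise, an $m$-multichain $c = c_0 \leq \cdots \leq c_m$ of $P\times Q$ is exactly a pair $(a,b)$ where $a = (a_0 \leq \cdots \leq a_m)$ and $b = (b_0 \leq \cdots \leq b_m)$ are $m$-multichains of $P$ and $Q$ respectively, with $c_i = (a_i,b_i)$; in particular the index $i$ is \emph{shared} between the two coordinates. (b) For any $(p,q) \in P \times Q$, the elements covered by $(p,q)$ are the $(p',q)$ with $p' \lessdot p$ together with the $(p,q')$ with $q' \lessdot q$, so $\mathrm{ddeg}_{P\times Q}(p,q) = \mathrm{ddeg}_P(p) + \mathrm{ddeg}_Q(q)$.

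The main computation then uses fact (a) to sum the down-degree over all pairs $(c,i)$ with $c$ an $m$-multichain of $P\times Q$ and $0 \leq i \leq m$. Writing $N_P(m)$ and $N_Q(m)$ for the numbers of $m$-multichains of $P$ and $Q$ (so $P\times Q$ has $N_P(m)N_Q(m)$ of them), fact (b) gives $\mathrm{ddeg}_{P\times Q}(c_i) = \mathrm{ddeg}_P(a_i) + \mathrm{ddeg}_Q(b_i)$, and the sum splits into two pieces. In the first piece $\mathrm{ddeg}_P(a_i)$ does not depend on $b$, so summing over $b$ contributes a factor $N_Q(m)$ and leaves $\sum_{a,i}\mathrm{ddeg}_P(a_i) = (m+1)N_P(m)\,\mathbb{E}(\widehat{\mathrm{mchain}}(m)_P;\mathrm{ddeg})$; the second piece is symmetric. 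Dividing by the normalizing factor $(m+1)N_P(m)N_Q(m)$ yields the additivity relation
\[
\mathbb{E}(\widehat{\mathrm{mchain}}(m)_{P\times Q};\mathrm{ddeg}) = \mathbb{E}(\widehat{\mathrm{mchain}}(m)_P;\mathrm{ddeg}) + \mathbb{E}(\widehat{\mathrm{mchain}}(m)_Q;\mathrm{ddeg}).
\]
Applying this with $m=0$ (where $\widehat{\mathrm{mchain}}(0) = \mathrm{uni}$) shows that edge density is additive under products, and for general $m$ the mCDE hypotheses on $P$ and $Q$ let me replace each summand by the corresponding uniform expectation. Combining the two gives $\mathbb{E}(\widehat{\mathrm{mchain}}(m)_{P\times Q};\mathrm{ddeg}) = \mathbb{E}(\mathrm{uni}_{P\times Q};\mathrm{ddeg})$ for all $m\geq 0$, which is exactly mCDE by Proposition~\ref{prop:mcde}.

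The step requiring the most care — and where a naive approach goes wrong — is the choice of distribution. With the ordinary $\mathrm{mchain}(m)$ one would instead count $m$-multichains through a fixed point $(p,q)$, but the event ``$(p,q)$ lies on $c$'' does \emph{not} factor as a product of the events ``$p$ lies on $a$'' and ``$q$ lies on $b$'', since the two coordinates may be attained at different indices. The modified distribution $\widehat{\mathrm{mchain}}(m)$ remembers the index $i$, forcing $a_i = p$ and $b_i = q$ simultaneously, which makes the whole computation factor; this is precisely why passing through Proposition~\ref{prop:mcde} is essential.
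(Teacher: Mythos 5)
Your proof is correct, and it takes a genuinely different and notably shorter route than the paper's. The paper works with the $k$-chain distributions $\mathrm{chain}(k)_{P\times Q}$ directly: it decomposes each $k$-chain of $P\times Q$ as a shuffle of an $i$-chain of $P$ with a $(k-i)$-chain of $Q$ via a bijection indexed by compositions, and then shows that the projection $\pi_P(\mathrm{chain}(k)_{P\times Q})$ is an explicit convex combination of the distributions $\mathrm{chain}(i)_P$, with coefficients involving the orbit counts $f(k,i)$ of a rotation action on $\binom{[k]}{i}$. You instead pass through the equivalence of Proposition~\ref{prop:mcde} to the modified multichain characterization, where everything factors trivially: an $m$-multichain of $P\times Q$ is exactly a synchronized pair of $m$-multichains of the factors, $\mathrm{ddeg}$ is additive, and the weighted sum over pairs $(c,i)$ splits, giving $\mathbb{E}(\widehat{\mathrm{mchain}}(m)_{P\times Q};\mathrm{ddeg}) = \mathbb{E}(\widehat{\mathrm{mchain}}(m)_{P};\mathrm{ddeg}) + \mathbb{E}(\widehat{\mathrm{mchain}}(m)_{Q};\mathrm{ddeg})$ in a few lines. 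Your closing remark about why the unmodified $\mathrm{mchain}(m)$ would not factor (the event ``$(p,q)\in c$'' does not decompose coordinatewise) is exactly the right diagnosis and is the reason the index-remembering distribution is the correct one to use here. What you give up relative to the paper is the finer structural information: the paper's projection identity (its equation~(\ref{eqn:projdist})) is also used to re-derive Proposition~\ref{prop:productcde} for graded posets as a byproduct, whereas your argument, while fully sufficient for the stated proposition, does not yield that. Note also that your route silently relies on the full strength of Proposition~\ref{prop:mcde} (the equivalence $(1)\Leftrightarrow(4)$), which the paper has already proved, so there is no circularity.
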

\begin{proof} 
The proof is rather similar to the proof of Proposition~\ref{prop:mcde}. Accordingly, we will use some notation from the proof of Proposition~\ref{prop:mcde}: the bijection $\alpha$ between $\binom{[n]}{k}$ and $\mathrm{Comp}(n+1,k+1)$, the map $\zeta$ that acts on $\binom{[n]}{k}$ which corresponds to rotation of $\alpha(S)$, and the number $f(n,k)$ of $\zeta$-orbits.

Let $P$ and $Q$ be mCDE posets. Suppose the longest chains of $P$ have length~$r$ and the longest chains of $Q$ have length $s$. It is clear that the longest chains of~$P\times Q$ have length $r+s$. In fact, as Reiner-Tenner-Yong observed~\cite[Proposition 2.13]{reiner2016poset}, the chains of $P\times Q$ are merely ``shuffles'' of chains of $P$ and chains of $Q$. That is to say, we have a bijection
\[\phi\colon \left\{(c_P,c_Q,S)\colon \parbox{2.3in}{\begin{center}$c_P$ is an $i$-chain of $P$, \\ $c_Q$ is a $(k-i)$-chain of $Q$, $S \in \binom{[k]}{i}$\end{center}} \right\} \xrightarrow{\sim} \{c\colon \textrm{$c$ is a $k$-chain of $P\times Q$}\}\]
which sends the triple $(c_P,c_Q,S)$ with $c_P = p_0 < p_1 < \cdots < p_i$, $c_Q = q_0 < \cdots < q_{k-i}$ and $S \in \binom{[k]}{i}$ to the following $k$-chain of $P\times Q$:
\begin{gather*} 
\overbrace{(p_0,q_0) < (p_0,q_1) < \cdots < (p_0,q_{s_1-1})}^{\alpha(S)_1} < \overbrace{(p_1,q_{s_1-1}) < (p_1,s_{s_1}) <  \cdots < (p_1,q_{s_2-2})}^{\alpha(S)_2} \\
< \overbrace{(p_2,q_{s_2-2}) < \cdots < (p_2,q_{s_3-3})}^{\alpha(S)_3} < \cdots < \overbrace{(p_{i-1},q_{s_{i-1}-(i-1)}) < \cdots < (p_{i-1},q_{s_i-i})}^{\alpha(S)_i} \\
<  \overbrace{(p_i,q_{s_i-i}) < \cdots < (p_i,q_{k-i})}^{\alpha(S)_{i+1}}.
\end{gather*}

Let $\pi_P\colon P \times Q \to P$ and $\pi_Q \colon P \times Q \to P$ denote the projection maps. Note that the projection $\pi_P(\mu)$ of a probability distribution $\mu$ on $P\times Q$ is naturally a probability distribution on $P$. Also note that if $c$ is a chain of $P\times Q$, then the projection $\pi_P(c)$ is naturally a multichain of $P$.  For any $p \in P$, we have $p \in \pi_P(\phi(c_P,c_Q,S)) \Leftrightarrow p \in c_P$.

Observe that for any $(p,q) \in P\times Q$, $\mathrm{ddeg}(p,q) = \mathrm{ddeg}(\pi_P(p,q)) + \mathrm{ddeg}(\pi_Q(p,q))$. Thus by linearity of expectation and because of the symmetry between $P$ and $Q$ it suffices to show that 
\begin{equation} \label{eqn:projddeg}
\mathbb{E}(\pi_P(\mathrm{chain}(k)_{P\times Q});\mathrm{ddeg})=\mathbb{E}(\mathrm{uni}_P;\mathrm{ddeg})
\end{equation}
for all~$k =0,1,\ldots,r+s$. We claim that in fact for all~$k=0,1\ldots,r+s$,
\begin{equation} \label{eqn:projdist}
\pi_P(\mathrm{chain}(k)_{P\times Q}) = \frac{\sum_{i=\mathrm{max}(0,k-s)}^{\mathrm{min}(k,r)}x_{k,i}(i+1) \hspace{-0.05cm} \cdot \hspace{-0.05cm} \#\{c_P\colon\textrm{$c_Q$ is an $i$-chain of $P$}\}\cdot\mathrm{chain}(i)_P }{\sum_{i=\mathrm{max}(0,k-s)}^{\mathrm{min}(k,r)}x_{k,i}(i+1)\cdot\#\{c_P\colon\textrm{$c_P$ is an $i$-chain of $P$}\}}
\end{equation}
where
\[ x_{k,i} := f(k,i)\cdot\#\{c_Q\colon\textrm{$c_Q$ is a $(k-i)$-chain of $Q$}\}.\]
As in the proof of~(\ref{eqn:chaintomchain}), to prove~(\ref{eqn:projdist}) it suffices to show that the left- and right-hand sides are proportional. This amounts to showing that for $p,p' \in P$ we have 
\[ \frac{\#\left\{(c,j)\colon \parbox{1.3in}{\begin{center}\text{\small$c$ a $k$-chain of $P\times Q$}, \\ $\pi_P(c)_j = p$\end{center}}\right\}}{\#\left\{(c,j)\colon \parbox{1.3in}{\begin{center}\text{\small$c$ a $k$-chain of $P\times Q$}, \\ $\pi_P(c)_j = p'$\end{center}}\right\}} = \frac{\sum_{i=\mathrm{max}(0,k-s)}^{\mathrm{min}(k,r)}x_{k,i}\cdot\#\{c\colon \textrm{$c$ an $i$-chain of $P$, $p \in c$}\}}{\sum_{i=\mathrm{max}(0,k-s)}^{\mathrm{min}(k,r)}x_{k,i} \cdot\#\{c\colon \textrm{$c$ an $i$-chain of $P$, $p' \in c$}\}}\]
This last equality holds becase for any $p \in P$ we have
\begin{align*}
\#\left\{(c,j)\colon \parbox{0.71in}{\begin{center} $k$-chain $c$ \\ of $P\times Q$, \\ $\pi_P(c)_j = p$\end{center}}\right\} &= \sum_{\substack{\textrm{$k$-chain $c$ of $P\times Q$},\\ p\in \pi_P(c)}} \#\{j\colon \pi_P(c)_j = p\} \\
&= \sum_{i=0}^{k} \sum_{\substack{\textrm{$(k-i)$-chain $c_Q$}, \\ \textrm{$i$-chain $c_P$},\\p\in c_P }} \sum_{S \in \binom{[k]}{i}} \#\{j\colon \pi_P(\phi(c_P,c_Q,S))_j = p\} \\
&= \sum_{i=0}^{k} \sum_{\substack{\textrm{$(k-i)$-chain $c_Q$}, \\ \textrm{$i$-chain $c_P$},\\p\in c_P }}\sum_{\substack{\textrm{$\mathcal{O}$ a $\zeta$-orbit}\\\textrm{of $\binom{[k]}{i}$}}} \sum_{S \in \mathcal{O}} \#\{j\colon \pi_P(\phi(c_P,c_Q,S))_j = p\} \\
&=  \sum_{i=0}^{k} \sum_{\substack{\textrm{$(k-i)$-chain $c_Q$}, \\ \textrm{$i$-chain $c_P$},\\p\in c_P }}\sum_{\substack{\textrm{$\mathcal{O}$ a $\zeta$-orbit}\\\textrm{of $\binom{[k]}{i}$}}} (k+1) \\
&= (k+1) \sum_{i=\mathrm{max}(0,k-s)}^{\mathrm{min}(k,r)}x_{k,i}\cdot\#\{c\colon \textrm{$c$ an $i$-chain of $P$, $p \in c$}\}.
\end{align*}
In the above computation we used the fact that for any fixed $(k-i)$-chain $c_Q$ of $Q$, $i$-chain $c_P$ of $P$ with $p = c_t$, and $\zeta$-orbit $\mathcal{O}$ of $\binom{[k]}{i}$ we have
\begin{align*} 
 \sum_{S \in \mathcal{O}} \#\{j\colon \pi_P(\varphi(c_P,c_Q,S))_j = p\}  &= \sum_{S \in \mathcal{O}} \alpha(S)_{t+1} \\
&=\alpha(S)_1 + \alpha(S)_2 + \cdots + \alpha(S)_{i+1} \textrm{ for a fixed $S \in \mathcal{O}$} \\
&= k+1.
\end{align*}
Finally, taking the expectation of $\mathrm{ddeg}$ on both sides of~(\ref{eqn:projdist}) yields~(\ref{eqn:projddeg}).

Let us briefly remark that the above reasoning also leads to a proof Proposition~\ref{prop:productcde}. Suppose $P$ and $Q$ are graded. Since~$\mathrm{ddeg}(p,q) = \mathrm{ddeg}(\pi_P(p,q)) + \mathrm{ddeg}(\pi_Q(p,q))$, to establish the conclusion of Proposition~\ref{prop:productcde} it suffices by linearity of expectation to show
\begin{align} \label{eqn:projddeggraded}
\mathbb{E}(\pi_P(\mathrm{uni}_{P\times Q});\mathrm{ddeg})&=\mathbb{E}(\mathrm{uni}_P;\mathrm{ddeg}); \\  \nonumber
\mathbb{E}(\pi_Q(\mathrm{uni}_{P\times Q});\mathrm{ddeg}) &=\mathbb{E}(\mathrm{uni}_Q;\mathrm{ddeg});\\ \nonumber
\mathbb{E}(\pi_P(\mathrm{maxchain}_{P\times Q});\mathrm{ddeg}) &=\mathbb{E}(\mathrm{maxchain}_P;\mathrm{ddeg});\\ \nonumber
\mathbb{E}(\pi_Q(\mathrm{maxchain}_{P \times Q});\mathrm{ddeg}) &=\mathbb{E}(\mathrm{maxchain}_Q;\mathrm{ddeg}).
\end{align}
Taking $k=0$ in~(\ref{eqn:projdist}) yields $\pi_P(\mathrm{uni}_{P\times Q})  = \mathrm{uni}_P$, which anyways is obvious. By symmetry we also have $\pi_Q(\mathrm{uni}_{P\times Q})  = \mathrm{uni}_Q$. Meanwhile, taking $k=r+s$ in~(\ref{eqn:projdist}) yields~$\pi_P(\mathrm{chain}(r+s)_{P\times Q})  = \mathrm{chain}(r)_P$ and~$\pi_Q(\mathrm{chain}(r+s)_{P\times Q})  = \mathrm{chain}(s)_Q$ by symmetry. But since $P$ and $Q$ are graded we in fact have $\pi_P(\mathrm{maxchain}_{P\times Q}) = \mathrm{maxchain}_P$ and $\pi_Q(\mathrm{maxchain}_{P\times Q}) = \mathrm{maxchain}_Q$. The equations~(\ref{eqn:projddeggraded}) follow from the preceding equalities of distributions by taking the expectation of $\mathrm{ddeg}$. We should note, however, that the argument we gave above to establish~(\ref{eqn:projdist}) is basically the same the proof of Proposition~\ref{prop:productcde} given in~\cite[Proposition 2.13]{reiner2016poset}.
\end{proof}

\section{The CDE property for distributive lattices and toggle-symmetry} \label{sec:distlat}

Classifying CDE posets in general is surely an intractable problem. As mentioned in the introduction, we are mostly interested in understanding the CDE property for distributive lattices. Actually we will investigate the stronger mCDE property. (The mCDE property is stronger than the CDE property for distributive lattices because all distributive lattices are graded.) So throughout this section $P$ is some fixed poset and our aim is to understand the distributions $\mathrm{chain}(k)_{J(P)}$ on~$J(P)$ for $k=0,1,\ldots,\#P$. It turns out that these distributions belong to a family of distributions studied by Chan, Haddadan, Hopkins and Moci (hereafter referred to as CHHM) in~\cite{chan2015expected} called the ``toggle-symmetric'' distributions. We now review what these distributions are.

Let $I \in J(P)$ be an order ideal and $p \in P$ some element. We say that $p$ can be \emph{toggled into} $I$ if $p \notin I$ and $p$ is a minimal element of $P \setminus I$. We say that $p$ can be \emph{toggled out of} $I$ if $p \in I$ and $p$ is a maximal element of $I$. This terminology comes from the toggle group. The ``toggle group'' is the name given by Striker and Williams~\cite{striker2012promotion} to a group introduced by Cameron and Fon-der-Flaass~\cite{cameron1995orbits} in their study of a certain action on $J(P)$ now referred to as rowmotion. We will revisit rowmotion in Section~\ref{sec:homomesy}. For now let us just discuss toggles. We define the \emph{toggle at~$p$} $\tau_p\colon J(P) \to J(P)$ by
\[ \tau_p(I) := \begin{cases} I \cup \{p\} &\textrm{if $p$ can be toggled into $I$}, \\ I \setminus \{p\} &\textrm{if $p$ can be toggled out of $I$}, \\ $I$ &\textrm{otherwise}. \end{cases}\]  
The \emph{toggle group} is the subgroup of the permutation group $\mathfrak{S}_{J(P)}$ generated by the toggles $\tau_p$ for all $p\in P$. Granting that toggles and the toggle group are worth studying, it makes sense to consider the following toggleability statistics $\mathcal{T}^{+}_p, \mathcal{T}^{-}_p\colon J(P) \to J(P)$ for~$p \in P$:
\begin{align*}
\mathcal{T}^{+}_p &:= \begin{cases} 1 &\textrm{if $p$ can be toggled into $I$}, \\ 0 &\textrm{otherwise}; \end{cases} \\
\mathcal{T}^{-}_p &:= \begin{cases} 1 &\textrm{if $p$ can be toggled out of $I$}, \\ 0 &\textrm{otherwise}. \end{cases} \\
\end{align*}
The toggle-symmetric property is defined in terms of~$\mathcal{T}^{+}_p$ and~$\mathcal{T}^{-}_p$.

\begin{definition} \label{def:togsym} (CHHM~\cite[Definition 2.2]{chan2015expected}).
A distribution $\mu$ on $J(P)$ is called \emph{toggle-symmetric} if for each $p \in P$ the probability that $p$ can be toggled into $I$ is equal to the probability that it can be toggled out of $I$ when $I \sim \mu$. In other words, $\mu$ is toggle-symmetric if $\mathbb{E}(\mu;\mathcal{T}^{+}_p ) = \mathbb{E}(\mu;\mathcal{T}^{-}_p)$ for all $p \in P$.
\end{definition}

One reason to think toggle-symmetry could be related to the CDE property is because of the following simple observation: for~$I \in J(P)$ we have~$\mathrm{ddeg}(I) = \sum_{p \in P} \mathcal{T}^{-}_p(I)$. (This quantity is also equal to $\#\mathrm{max}(I)$.) CHHM~\cite[Definition 2.1]{chan2015expected} defined the \emph{jaggedness} $\mathrm{jag}(I)$ of an order ideal $I \in J(P)$ to be  $\mathrm{jag}(I) := \sum_{p \in P} \mathcal{T}^{+}_p(I) + \mathcal{T}^{-}_p(I)$. The jaggedness of $I \in J(P)$ is the same as its degree in the Hasse diagram of $J(P)$. CHHM were interested in computing the expected jaggedness for toggle-symmetric distributions on certain distributive lattices~$J(P)$. But knowing the expected jaggedness is the same as knowing the expected down-degree:  it follows immediately from the definition of toggle-symmetry and linearity of expectation that~$\mathbb{E}(\mu;\mathrm{ddeg}) = \frac{1}{2}\mathbb{E}(\mu;\mathrm{jag})$ for any toggle-symmetric distribution~$\mu$ on~$J(P)$.

CHHM~\cite[Corollary 3.8]{chan2015expected} showed that for some special distributive lattices~$J(P)$ the expected jaggedness $\mathbb{E}(\mu;\mathrm{jag})$ is the same for all toggle-symmetric distributions~$\mu$ on~$J(P)$. Such a result is very close to showing that these~$J(P)$ are mCDE. What still needs to be addressed is whether the relevant distributions (i.e.,~the distributions appearing in Proposition~\ref{prop:mcde}) are toggle-symmetric. In fact they are. CHHM showed that the modified $m$-multichain distributions on~$J(P)$ are toggle-symmetirc.

\begin{lemma}[{CHHM~\cite[Lemma 2.8]{chan2015expected}}] \label{lem:mmchaintogsym}
For any $m\geq 0$ the distribution $\widehat{\mathrm{mchain}}(m)$ on~$J(P)$ is toggle-symmetric.
\end{lemma}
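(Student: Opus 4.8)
The plan is to convert the asserted equality of expectations into an equality of two integer counts and then establish that equality by a fiberwise summation. Since both $\mathbb{E}(\widehat{\mathrm{mchain}}(m);\mathcal{T}^{+}_p)$ and $\mathbb{E}(\widehat{\mathrm{mchain}}(m);\mathcal{T}^{-}_p)$ carry the same normalizing denominator $(m+1)\cdot\#\{c\colon c \text{ is an }m\text{-multichain of }J(P)\}$, toggle-symmetry at a fixed $p \in P$ is equivalent to $\#X^{+}_p = \#X^{-}_p$, where
\[ X^{\pm}_p := \{(c,i)\colon c = I_0 \leq \cdots \leq I_m \text{ is an }m\text{-multichain of }J(P),\ 0 \leq i \leq m,\ \mathcal{T}^{\pm}_p(I_i) = 1\}. \]
So it suffices to fix $p$ and prove $\#X^{+}_p = \#X^{-}_p$.

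First I would pass to the standard encoding of multichains by monotone labelings. By the correspondence between multichains of $J(P)$ and weak reverse $P$-partitions (see~\cite[Proposition 3.5.1]{stanley1996ec1}), an $m$-multichain $I_0 \subseteq \cdots \subseteq I_m$ corresponds bijectively to an order-preserving map $f\colon P \to \{0,1,\ldots,m+1\}$ via $f(q) := \#\{i\colon q \notin I_i\}$, with inverse $I_i = \{q\colon f(q) \leq i\}$. Under this dictionary $q \in I_i \Leftrightarrow i \geq f(q)$, so the local toggle conditions become arithmetic conditions on $f$: since $I_i$ is an order ideal, $p$ can be toggled into $I_i$ exactly when $p \notin I_i$ and every lower cover of $p$ lies in $I_i$, i.e. when $\max_{q \lessdot p} f(q) \leq i < f(p)$; dually, $p$ can be toggled out of $I_i$ exactly when $f(p) \leq i < \min_{q \gtrdot p} f(q)$. (Here I use the conventions $\max \emptyset = 0$ and $\min \emptyset = m+1$ to cover the cases where $p$ has no lower covers, resp.\ no upper covers.) Consequently, for the multichain attached to a fixed $f$, the number of indices $i$ witnessing a toggle-in at $p$ is $f(p) - \max_{q \lessdot p} f(q)$, and the number witnessing a toggle-out at $p$ is $\min_{q \gtrdot p} f(q) - f(p)$; both are nonnegative because $f$ is order-preserving.

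The heart of the argument is then a fiberwise cancellation. I would group the order-preserving maps $f\colon P \to \{0,\ldots,m+1\}$ according to their restriction $g := f|_{P \setminus \{p\}}$, which is itself an order-preserving map on the induced subposet $P \setminus \{p\}$. Writing $a := \max_{q \lessdot p} g(q)$ and $b := \min_{q \gtrdot p} g(q)$, order-preservation forces $a \leq b$ (any lower cover of $p$ lies below any upper cover of $p$, and these comparabilities survive in $P \setminus \{p\}$), and the order-preserving extensions of $g$ to $P$ are exactly the maps obtained by choosing $f(p) \in \{a, a+1, \ldots, b\}$. Summing the two per-multichain counts over this single free parameter gives
\[ \sum_{v=a}^{b} \bigl(v - a\bigr) = \binom{b-a+1}{2} = \sum_{v=a}^{b} \bigl(b - v\bigr), \]
so the toggle-in and toggle-out contributions of each fiber agree. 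Summing over all restrictions $g$ then yields $\#X^{+}_p = \#X^{-}_p$.

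I expect the delicate points to be bookkeeping rather than conceptual: correctly reducing the minimality/maximality conditions defining $\mathcal{T}^{\pm}_p$ to conditions on the cover values of $f$ (and handling the empty-cover boundary cases via the stated conventions), and verifying that every order-preserving $g$ on $P \setminus \{p\}$ really extends, i.e. that $a \leq b$ always holds, so that the fibers partition the set of all $f$ cleanly. Once the encoding is in place, the decisive identity is the elementary observation that $\sum_{v=a}^{b}(v-a)$ and $\sum_{v=a}^{b}(b-v)$ coincide under the symmetry $v \mapsto a+b-v$, and it is precisely this symmetry that drives the toggle-symmetry of $\widehat{\mathrm{mchain}}(m)$.
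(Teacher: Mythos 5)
Your argument is correct. Every step checks out: the reduction of toggle-symmetry to the equality of the two counts $\#X^{+}_p=\#X^{-}_p$ (both expectations share the same denominator), the dictionary between $m$-multichains of $J(P)$ and order-preserving maps $f\colon P\to\{0,\ldots,m+1\}$, the translation of the toggle-in/toggle-out conditions into the cover-value inequalities (using that $I_i$ is an order ideal to replace ``all $q<p$'' by ``all $q\lessdot p$,'' and dually), the fact that $a\leq b$ so that the fibers over restrictions $g$ genuinely partition the set of all $f$, and the closing identity $\sum_{v=a}^{b}(v-a)=\sum_{v=a}^{b}(b-v)$.

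Your route is genuinely different in presentation from the proof in the paper, though the two are driven by the same symmetry. The paper works directly with multichains of $J(P)$: it sets $\mathcal{T}_p:=\mathcal{T}^{+}_p-\mathcal{T}^{-}_p$, observes that along a multichain the values $\mathcal{T}_p(I_t)$ form a block pattern $0,\ldots,0,1,\ldots,1,-1,\ldots,-1,0,\ldots,0$, and constructs an explicit sign-reversing involution $\tau_p$ that applies the toggle at $p$ to a contiguous block of the $I_t$'s so as to negate $\mathcal{T}_p(c)$. In your coordinates that involution is exactly the map $f(p)=v\mapsto a+b-v$ acting within each fiber, i.e.\ the symmetry you identify at the end; you simply realize it as a fiberwise summation rather than as a pairing of multichains. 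Amusingly, the paper states that it is re-presenting the CHHM proof precisely because the original was phrased ``in the slightly different language of $P$-partitions rather than multichains of $J(P)$'' --- so your proof is essentially a return to the original CHHM formulation. What your version buys is cleaner bookkeeping (a single closed-form count per fiber, no case analysis on where the block of $1$'s ends); what the paper's version buys is that it never leaves the multichain language used throughout Section 1 and makes the involution, and hence the bijective content, explicit.
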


Because the proof of Lemma~\ref{lem:mmchaintogsym} is so short, and because in~\cite[Lemma 2.8]{chan2015expected} the proof was presented in the slightly different language of $P$-partitions rather than multichains of~$J(P)$, we include a proof of this lemma here.

\begin{proof}[Proof of Lemma~\ref{lem:mmchaintogsym}]
Fix $m \geq 0$. Let $p \in P$. Set $\mathcal{T}_p := \mathcal{T}^{+}_p-\mathcal{T}^{-}_p$. We want to show
\[\mathbb{E}(\widehat{\mathrm{mchain}}(m)_{J(P)};\mathcal{T}^{+}_p) - \mathbb{E}(\widehat{\mathrm{mchain}}(m)_{J(P)};\mathcal{T}^{-}_p) = \mathbb{E}(\widehat{\mathrm{mchain}}(m)_{J(P)};\mathcal{T}_p) = 0.\] 
For an $m$-multichain $c = I_0 \subseteq I_1 \subseteq \cdots \subseteq I_m$ of $J(P)$, set $\mathcal{T}_p(c) := \sum_{i=0}^{m} \mathcal{T}_p(I_i)$. What we want to show is equivalent to $\sum_{c} \mathcal{T}_p(c) = 0$ where the sum is over all $m$-multichains of $J(P)$. To show that this sum is zero we will define an involution
\[\tau_p\colon \{c \colon \textrm{$c$ an $m$-multichain of $J(P)$}\} \to  \{c \colon \textrm{$c$ an $m$-multichain of $J(P)$}\}\]
satisfying $\mathcal{T}_p(c) = -\mathcal{T}_p(\tau_p(c))$. So let $c = I_0 \subseteq I_1 \subseteq \cdots \subseteq I_m$ be an $m$-multichain of~$J(P)$. If $\mathcal{T}_p(c) = 0$ then we set $\tau_p(c) := c$. Thus asssume $\mathcal{T}_p(c) \neq 0$. As we move up this multichain, the statistic $\mathcal{T}_p$ changes in the way depicted in the following table:
\begin{center}
\begin{tabular}{c | c | c | c | c | c | c | c | c | c | c | c | c | c | c }
$I$ & $I_0$ & $I_1$ & $\cdots$ & $I_{i-1}$ & $I_i$ & $I_{i+1}$ & $\cdots$ & $I_{j}$ & $I_{j+1}$ & $\cdots$ & $I_k$ & $I_{k+1}$ & $\cdots$ & $I_m$  \\ \hline
$\mathcal{T}_p(I)$ & $0$ & $0$ & $\cdots$ & $0$ & $1$ & $1$ & $\cdots$ & $1$ & $-1$ & $\cdots$ & $-1$ & $0$ & $\cdots$ & $0$
\end{tabular}
\end{center}
Let $i$, $j$, $k$ be as in that table. (So $i := \mathrm{min}\{t\colon \mathcal{T}_p(I_t) \neq 0\}$, $k := \mathrm{max}\{t\colon \mathcal{T}_p(I_t) \neq 0\}$, and $j := \mathrm{max}\{t\colon \mathcal{T}_p(I_t) = 1\}$ or $j := i-1$ if there is no $t$ for which $\mathcal{T}_p(I_t) = 1$.) Observe that $\mathcal{T}_p(c) = (j+1-i) - (k-j)$. Define a new $m$-multichain $c' := I'_0 \subseteq I'_1 \subseteq \cdots \subseteq I'_m$ of $J(P)$ by
\[ I'_t := \begin{cases}\tau_p(I_t) & t \in  \{j-\mathcal{T}_p(c)+1,j-\mathcal{T}_p(c)+2,\ldots,j\}, \mathcal{T}_p(c) > 0\\
\tau_p(I_t) & t \in  \{j+1,j+2,\ldots,j-\mathcal{T}_p(c)\}, \mathcal{T}_p(c) < 0\\
I_t &\textrm{otherwise}. \end{cases}\]
Set $\tau_p(c) := c'$. It is easy to see that $\mathcal{T}_p(c) = -\mathcal{T}_p(\tau_p(c))$ and that this $\tau_p$ indeed defines an involution on the set of $m$-multichains of $J(P)$.
\end{proof}

Meanwhile, Reiner-Tenner-Yong showed the $m$-multichain distributions on $J(P)$ are toggle-symmetric.

\begin{lemma}[{Reiner-Tenner-Yong~\cite[Proposition 2.5]{reiner2016poset}}] \label{lem:mchaintogsym}
For any $m\geq 0$ the distribution $\mathrm{mchain}(m)$ on $J(P)$ is toggle-symmetric.
\end{lemma}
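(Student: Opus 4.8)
The plan is to deduce this lemma from Lemma~\ref{lem:mmchaintogsym} together with the distributional identities already established in the proof of Proposition~\ref{prop:mcde}, rather than rerunning an involution argument directly on $\mathrm{mchain}(m)$. The key structural observation is that toggle-symmetry is a \emph{linear} condition on distributions. Indeed, writing $\mathcal{T}_p := \mathcal{T}^{+}_p - \mathcal{T}^{-}_p$, Definition~\ref{def:togsym} says exactly that $\mu$ is toggle-symmetric if and only if $\mathbb{E}(\mu;\mathcal{T}_p) = 0$ for every $p \in P$. Since $\mu \mapsto \mathbb{E}(\mu;\mathcal{T}_p)$ is a linear functional on the space of formal sums $\sum_{I} \mathbb{P}(\mu;I)[I]$ (using the convention from Section~\ref{subsec:defs} that we may formally scale and add distributions), it follows immediately that whenever $\nu_1,\ldots,\nu_n$ are toggle-symmetric and $\lambda_1,\ldots,\lambda_n$ are scalars, the combination $\sum_j \lambda_j \nu_j$ satisfies $\mathbb{E}(\sum_j \lambda_j \nu_j;\mathcal{T}_p) = 0$ as well; in particular any such combination that happens to be a genuine probability distribution is again toggle-symmetric.

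First I would invoke Lemma~\ref{lem:mmchaintogsym} to get that $\widehat{\mathrm{mchain}}(m)$ is toggle-symmetric for every $m \geq 0$. Next I would apply the expansion~(\ref{eqn:mmchaintochain}) from the proof of Proposition~\ref{prop:mcde}, which writes each $\mathrm{chain}(k)$ (for $k=0,1,\ldots,r$) as an affine combination $\sum_{m=0}^{k}\widehat{\kappa}_{k,m}\,\widehat{\mathrm{mchain}}(m)$ with $\sum_m \widehat{\kappa}_{k,m} = 1$; by the linearity observation this shows $\mathrm{chain}(k)$ is toggle-symmetric for every $k$. Finally I would feed this into the expansion~(\ref{eqn:chaintomchain}), which realizes $\mathrm{mchain}(m)$ as a convex combination of the distributions $\mathrm{chain}(k)$ for $k=0,\ldots,\mathrm{min}(m,r)$; applying the linearity observation once more yields that $\mathrm{mchain}(m)$ is toggle-symmetric, as desired.

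There is essentially no computational obstacle here, since all the nontrivial combinatorics---the bijection $\varphi$, the rotation map $\zeta$, and the verification that the matrices of change-of-distribution coefficients are upper-triangular and hence invertible---was already carried out in establishing~(\ref{eqn:chaintomchain}) and~(\ref{eqn:mmchaintochain}). The only point that needs care is the bookkeeping of which identities are genuine equalities of probability distributions (so that the linearity principle may be applied without worrying about normalization): both~(\ref{eqn:chaintomchain}) and~(\ref{eqn:mmchaintochain}) are such equalities, the former by construction and the latter because the $\widehat{\kappa}_{k,m}$ were chosen so that the two sides agree as distributions. An alternative, more self-contained route would mimic the involution $\tau_p$ of Lemma~\ref{lem:mmchaintogsym} directly; the difficulty with that route is that $\mathrm{mchain}(m)$ weights an element by the number of multichains \emph{containing} it rather than by the number of (multichain, position) pairs, so the clean position-by-position cancellation used for $\widehat{\mathrm{mchain}}(m)$ no longer applies and one would be forced to sum $\mathcal{T}_p$ over the distinct values of a multichain---precisely the complication the reduction above sidesteps.
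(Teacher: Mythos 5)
Your reduction is correct. The paper itself does not prove this lemma---it is quoted from Reiner--Tenner--Yong's Proposition~2.5 without argument---but the derivation you give is exactly the chain of equivalences the paper records elsewhere: in the proof of Lemma~\ref{lem:chaintogsym} the author notes that toggle-symmetry of $\mathrm{chain}(k)$ is equivalent to Lemma~\ref{lem:mmchaintogsym} via~(\ref{eqn:chaintommchain}) and~(\ref{eqn:mmchaintochain}), and equivalent to Lemma~\ref{lem:mchaintogsym} via~(\ref{eqn:chaintomchain}) and~(\ref{eqn:mchaintochain}); you have simply composed these two equivalences in the direction $\widehat{\mathrm{mchain}}(m)\Rightarrow\mathrm{chain}(k)\Rightarrow\mathrm{mchain}(m)$. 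The argument is non-circular, since the identities~(\ref{eqn:chaintomchain}) and~(\ref{eqn:mmchaintochain}) are established in the proof of Proposition~\ref{prop:mcde} as unconditional equalities of distributions (only the final down-degree conclusions there use hypothesis~(1)), and Lemma~\ref{lem:mmchaintogsym} is proved by a self-contained involution. Two small points of bookkeeping: you are right that linearity of $\mu\mapsto\mathbb{E}(\mu;\mathcal{T}_p)$ is all that is needed, so the signs of the $\widehat{\kappa}_{k,m}$ are irrelevant (the paper's phrase ``convex combination'' in the proof of Lemma~\ref{lem:chaintogsym} is slightly loose on this point, and your ``affine combination'' is the more accurate statement); and~(\ref{eqn:chaintomchain}) is not an equality of distributions ``by construction'' but rather is proved by first establishing proportionality and then normalizing by taking the expectation of $\mathbb{1}$---though this does not affect your use of it.
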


And, as promised, the $k$-chain distributions on $J(P)$ are toggle-symmetric as well.

\begin{lemma} \label{lem:chaintogsym}
For any $k=0,1,\ldots,\#P$ the distribution $\mathrm{chain}(k)$ on $J(P)$ is toggle-symmetric.
\end{lemma}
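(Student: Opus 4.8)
The plan is to deduce this essentially for free from the linear-algebraic relationship between the chain distributions and the modified multichain distributions that was already worked out in the proof of Proposition~\ref{prop:mcde}, together with Lemma~\ref{lem:mmchaintogsym}.

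First I would record the key structural observation that toggle-symmetry is a \emph{linear} condition on the distribution. Setting $\mathcal{T}_p := \mathcal{T}^{+}_p - \mathcal{T}^{-}_p$, a distribution $\mu$ on $J(P)$ is toggle-symmetric if and only if $\mathbb{E}(\mu;\mathcal{T}_p) = 0$ for every $p \in P$. Since $\mathbb{E}(-;\mathcal{T}_p)$ is a linear functional of the (formal) distribution $\mu$, it follows that if $\mu_1,\ldots,\mu_n$ are all toggle-symmetric then so is any linear combination $\sum_i c_i\,\mu_i$, whatever the coefficients $c_i$ are.

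Next I would apply equation~(\ref{eqn:mmchaintochain}) from the proof of Proposition~\ref{prop:mcde}, taking the poset there to be the distributive lattice $J(P)$ itself (so that the role of $r$ is played by $\#P$, the length of the longest chains of $J(P)$). That equation expresses, for each $k=0,1,\ldots,\#P$,
\[ \mathrm{chain}(k) = \sum_{m=0}^{k} \widehat{\kappa}_{k,m}\; \widehat{\mathrm{mchain}}(m) \]
as a linear combination of the modified $m$-multichain distributions on $J(P)$. The crucial point is that this identity was derived purely from the invertibility of the upper-triangular change-of-basis matrix appearing in~(\ref{eqn:chaintommchain}), and in turn~(\ref{eqn:chaintommchain}) is an unconditional equality of distributions valid for every poset (established via the bijection $\varphi$ and the $\zeta$-orbit count). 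In particular, invoking~(\ref{eqn:mmchaintochain}) does \emph{not} presuppose that $J(P)$ is mCDE, so it may be used freely here.

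Finally, Lemma~\ref{lem:mmchaintogsym} guarantees that each $\widehat{\mathrm{mchain}}(m)$ on $J(P)$ is toggle-symmetric, whence by the linearity observation of the first paragraph the combination $\mathrm{chain}(k)$ is toggle-symmetric as well. There is essentially no obstacle to overcome: the argument is a one-line consequence of already-proved facts, and the only subtlety worth flagging is the observation that~(\ref{eqn:mmchaintochain}) holds unconditionally. As an equally clean alternative, one could run the identical argument using equation~(\ref{eqn:mchaintochain}) together with Lemma~\ref{lem:mchaintogsym} in place of~(\ref{eqn:mmchaintochain}) and Lemma~\ref{lem:mmchaintogsym}.
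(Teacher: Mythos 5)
Your proposal is correct and is essentially identical to the paper's own proof: the paper also deduces the lemma from Lemma~\ref{lem:mmchaintogsym} via equations~(\ref{eqn:chaintommchain}) and~(\ref{eqn:mmchaintochain}) (noting that combinations of toggle-symmetric distributions remain toggle-symmetric), and likewise records the alternative route through Lemma~\ref{lem:mchaintogsym} and equations~(\ref{eqn:chaintomchain}), (\ref{eqn:mchaintochain}). Your explicit remark that these identities hold unconditionally, independent of any mCDE hypothesis, is a worthwhile clarification but not a departure from the paper's argument.
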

\begin{proof}
This lemma is equivalent to Lemma~\ref{lem:mmchaintogsym} thanks to the equations~(\ref{eqn:chaintommchain}) and~(\ref{eqn:mmchaintochain}): any convex combination of toggle-symmetric distributions remains toggle-symmetric. It is similarly equivalent to Lemma~\ref{lem:mchaintogsym} thanks to the equations~(\ref{eqn:chaintomchain}) and~(\ref{eqn:mchaintochain}).
\end{proof}

Lemma~\ref{lem:chaintogsym} motivates the following strengthened version of the CDE property for distributive lattices.

\begin{definition}
We say that a distributive lattice $J(P)$ is \emph{toggle-CDE} (\emph{tCDE}) if $\mathbb{E}(\mu;\mathrm{ddeg})=\mathbb{E}(\mathrm{uni}_{J(P)};\mathrm{ddeg})$ for every toggle-symmetric distribution $\mu$ on~$J(P)$.
\end{definition}

Thanks to Lemma~\ref{lem:chaintogsym}, and because distributive lattices are graded, we have the following implications: $\textrm{$J(P)$ is tCDE $\Rightarrow$ $J(P)$ is mCDE $\Rightarrow$ $J(P)$ is CDE}$.  

\begin{example} \label{ex:chaintcde}
For any $a \geq 1$, the chain poset $\mathbf{a}$ is tCDE because there is only one toggle-symmetric distribution on $\mathbf{a}$, namely, the uniform distribution.
\end{example}

Let us now discuss how these three CDE properties behave under poset operations within the class of distributive lattices. The class of distributive lattices is closed under duality: we have~$J(P)^* = J(P^*)$ for any distributive lattice $J(P)$. Unlike in the case of arbitrary posets, duality preserves the three CDE properties for distributive lattices.

\begin{prop} \label{prop:tcdedual}
Let $J(P)$ be a distributive lattice.
\begin{itemize}
\item If $J(P)$ is CDE then $J(P)^*$ is CDE.
\item If $J(P)$ is mCDE then $J(P)^*$ is mCDE.
\item If $J(P)$ is tCDE then $J(P)^*$ is tCDE.
\end{itemize}
\end{prop}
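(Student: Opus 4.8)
The plan is to establish the proposition for all three properties simultaneously by exhibiting a single order-reversing bijection $J(P) \to J(P)^*$ that behaves well with respect to down-degree, toggleability, and the relevant distributions. The key structural fact is that $J(P)^* = J(P^*)$, so that an order ideal $I \in J(P)$ corresponds to its complement $P \setminus I$, which is an order ideal of $P^*$ (equivalently, a filter of $P$). Let $\iota\colon J(P) \to J(P^*)$ denote this complementation map $\iota(I) := P \setminus I$; it is an anti-automorphism of $J(P)$ and hence a poset isomorphism $J(P) \xrightarrow{\sim} J(P)^*$. The heart of the argument is to track exactly how $\iota$ transforms the toggleability statistics and the down-degree.

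\textbf{The main steps.} First I would observe the crucial local fact: for each $p \in P$, an element $p$ can be toggled \emph{into} $I$ (as an order ideal of $P$) precisely when $p$ can be toggled \emph{out of} $\iota(I) = P \setminus I$ (as an order ideal of $P^*$), and vice versa. Concretely, $\mathcal{T}^{+}_{p,\,J(P)}(I) = \mathcal{T}^{-}_{p,\,J(P^*)}(\iota(I))$ and $\mathcal{T}^{-}_{p,\,J(P)}(I) = \mathcal{T}^{+}_{p,\,J(P^*)}(\iota(I))$. This is because $p$ being a maximal element of $I$ in $P$ is the same as $p$ being a minimal element of the complement of $\iota(I)$ in $P^*$. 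Summing over $p$ and using $\mathrm{ddeg}(I) = \sum_p \mathcal{T}^{-}_p(I)$, this immediately gives the key down-degree relation
\[ \mathrm{ddeg}_{J(P)}(I) + \mathrm{ddeg}_{J(P^*)}(\iota(I)) = \mathrm{jag}_{J(P)}(I), \]
where $\mathrm{jag}$ is the jaggedness (total Hasse-degree), which is invariant under the isomorphism $\iota$.

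\textbf{Leveraging toggle-symmetry.} The tCDE case is then the cleanest and I would do it first. If $\mu$ is any toggle-symmetric distribution on $J(P^*)$, pulling it back along $\iota$ gives a distribution $\iota^*\mu$ on $J(P)$, and the toggleability-swapping identity above shows that $\iota^*\mu$ is again toggle-symmetric (the roles of $\mathcal{T}^{+}$ and $\mathcal{T}^{-}$ simply exchange, and toggle-symmetry is symmetric in these). Since $J(P)$ is tCDE, every toggle-symmetric distribution on $J(P)$ gives edge density $\mathbb{E}(\mathrm{uni}_{J(P)};\mathrm{ddeg})$, and using $\mathbb{E}(\mu;\mathrm{ddeg}) = \tfrac{1}{2}\mathbb{E}(\mu;\mathrm{jag})$ for toggle-symmetric $\mu$ (established earlier in the excerpt) together with the $\iota$-invariance of jaggedness, one concludes $\mathbb{E}(\mu;\mathrm{ddeg}_{J(P^*)}) = \mathbb{E}(\mathrm{uni}_{J(P^*)};\mathrm{ddeg})$, so $J(P^*)$ is tCDE. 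For the mCDE case I would check that $\iota$ carries the distribution $\mathrm{chain}(k)_{J(P)}$ to $\mathrm{chain}(k)_{J(P)^*}$: since $\iota$ is a poset isomorphism, it sends $k$-chains to $k$-chains bijectively, so it sends the $k$-chain distribution to the $k$-chain distribution on the dual, and the $\tfrac12\mathrm{jag}$ identity (valid since $k$-chain distributions are toggle-symmetric by Lemma~\ref{lem:chaintogsym}) transfers the mCDE equalities across. The CDE case is the same argument restricted to $k=0$ and $k=r$ (equivalently $\mathrm{uni}$ and $\mathrm{maxchain}$), noting $\iota$ exchanges the top and bottom but fixes the uniform distribution and carries $\mathrm{maxchain}$ to $\mathrm{maxchain}$.

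\textbf{Expected obstacle.} The genuinely delicate point is verifying the toggleability-swap identity at the level of individual elements $p$, since one must be careful that ``minimal in $P^*$'' versus ``maximal in $P$'' align correctly once we complement \emph{and} dualize, and that the Hasse-diagram down-degree really does split as stated. Everything else — pushing distributions through the isomorphism $\iota$ and invoking the $\mathbb{E}(\mu;\mathrm{ddeg}) = \tfrac12\mathbb{E}(\mu;\mathrm{jag})$ reduction — is then formal. In fact, because all the distributions in play are toggle-symmetric and $\mathrm{jag}$ is manifestly preserved by the isomorphism $\iota$, the cleanest uniform proof of all three bullets may simply be: the edge density and all the relevant expected down-degrees are each half of an expected jaggedness, and expected jaggedness is invariant under the poset isomorphism $J(P) \cong J(P)^*$; I would present the argument in this streamlined form once the elementary toggleability identity is in hand.
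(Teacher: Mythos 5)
Your argument is correct and is essentially the paper's own proof, just phrased through the complementation map $I \mapsto P\setminus I$ rather than directly on the common underlying set of order ideals: both rest on the swap $\mathcal{T}^{+}_p \leftrightarrow \mathcal{T}^{-}_p$ under dualization, the observation that $\mathrm{ddeg}_{J(P)^*} = \sum_{p}\mathcal{T}^{+}_p$ has the same expectation as $\mathrm{ddeg}_{J(P)}$ for any toggle-symmetric distribution, and the fact that the $\mathrm{chain}(k)$ distributions (hence $\mathrm{uni}$ and $\mathrm{maxchain}$) are toggle-symmetric and carried to their counterparts on the dual. One cosmetic slip: your $\iota$ is an order-reversing bijection (anti-isomorphism) $J(P)\to J(P^*)$, not a poset isomorphism $J(P)\to J(P)^*$, but since an anti-isomorphism still preserves Hasse degrees, $k$-chains, maximal chains, and jaggedness, nothing in your argument is affected.
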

\begin{proof}
The down-degree of an order ideal $I \in J(P)$ when considered as an element of the dual poset $J(P)^*$ is~$\sum_{p \in P} \mathcal{T}^{+}_p(I)$. But for any toggle-symmetric distribution~$\mu$ on $J(P)$ we clearly have~$\mathbb{E}(\mu;\sum_{p \in P} \mathcal{T}^{+}_p) = \mathbb{E}(\mu;\sum_{p \in P} \mathcal{T}^{-}_p) = \mathbb{E}(\mu;\mathrm{ddeg})$. The third bullet point now follows because $\mu$ is toggle-symmetric as a distribution on $J(P)$ iff~$\mu$ is toggle-symmetric as a distribution on $J(P)^*$. For the first two bullet points, observe that $\mathrm{chain}(i)_{J(P)} = \mathrm{chain}(i)_{J(P)^*}$ for any $i=0,1,\ldots,\#P$ and then use that these distributions are indeed toggle-symmetric thanks to Lemma~\ref{lem:chaintogsym}.
\end{proof}

The class of distributive lattices is also closed under direct products: for any two distributive lattices $J(P)$ and $J(Q)$ we have~$J(P) \times J(Q) = J(P + Q)$. The three CDE properties are preserved under direct products of distributive lattices, as we show in the following proposition. In light of this proposition we will mostly be interested in studying~$J(P)$ for $P$ connected.

\begin{prop} \label{prop:tcdeproduct}
Let $J(P)$ and $J(Q)$ be distributive lattices. 
\begin{itemize}
\item  If $J(P)$ and $J(Q)$ are both CDE then $J(P) \times J(Q)$ is also CDE.
\item If $J(P)$ and $J(Q)$ are both mCDE then $J(P) \times J(Q)$ is also mCDE.
\item If $J(P)$ and $J(Q)$ are both tCDE then $J(P) \times J(Q)$ is also tCDE.
\end{itemize}
\end{prop}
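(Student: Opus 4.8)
The plan is to dispatch the first two bullet points by citing results already in hand, and then to give a direct toggle-theoretic argument for the tCDE bullet, which is the only genuinely new content. For the CDE and mCDE bullets, recall that every distributive lattice is graded and that $J(P) \times J(Q) = J(P+Q)$ is again a distributive lattice (hence graded). Thus the CDE bullet is immediate from Proposition~\ref{prop:productcde} applied to the graded posets $J(P)$ and $J(Q)$, and the mCDE bullet is immediate from Proposition~\ref{prop:productmcde}, which requires no gradedness hypothesis whatsoever.

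For the tCDE bullet I would start from a decomposition of the down-degree. Identify $J(P)\times J(Q)$ with $J(P+Q)$ and let $\pi_P\colon J(P+Q) \to J(P)$ and $\pi_Q\colon J(P+Q) \to J(Q)$ be the projections, so that an order ideal $I$ of $P+Q$ corresponds to the pair $(\pi_P(I),\pi_Q(I))$ obtained by intersecting with the two connected components. Since no element of $P$ is comparable to any element of $Q$ in $P+Q$, the maximal elements of $I$ are precisely the maximal elements of $\pi_P(I)$ together with those of $\pi_Q(I)$; using $\mathrm{ddeg}(I)=\#\mathrm{max}(I)$ this gives $\mathrm{ddeg}(I) = \mathrm{ddeg}(\pi_P(I)) + \mathrm{ddeg}(\pi_Q(I))$. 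By linearity of expectation it therefore suffices, for a given toggle-symmetric distribution $\mu$ on $J(P+Q)$, to show
\[\mathbb{E}(\pi_P(\mu);\mathrm{ddeg}) = \mathbb{E}(\mathrm{uni}_{J(P)};\mathrm{ddeg}) \quad\text{and symmetrically for } Q,\]
where $\pi_P(\mu)$ denotes the pushforward distribution on $J(P)$. Summing these two identities, and using that the same decomposition holds for the uniform distribution together with $\pi_P(\mathrm{uni}_{J(P+Q)}) = \mathrm{uni}_{J(P)}$ (the marginals of a uniform distribution on a product are uniform), then yields $\mathbb{E}(\mu;\mathrm{ddeg}) = \mathbb{E}(\mathrm{uni}_{J(P+Q)};\mathrm{ddeg})$, which is exactly the tCDE conclusion.

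The key step, and the one I expect to be the crux of the argument, is to show that $\pi_P(\mu)$ is toggle-symmetric on $J(P)$ whenever $\mu$ is toggle-symmetric on $J(P+Q)$. This rests on the observation that for an element $p \in P \subseteq P+Q$, whether $p$ can be toggled into or out of $I$ depends only on $\pi_P(I)$: indeed $p$ is a minimal element of $(P+Q)\setminus I$ iff it is a minimal element of $P \setminus \pi_P(I)$, and likewise $p$ is a maximal element of $I$ iff it is a maximal element of $\pi_P(I)$. Hence $\mathcal{T}^{+}_p$ and $\mathcal{T}^{-}_p$, viewed as statistics on $J(P+Q)$, factor through $\pi_P$, giving $\mathbb{E}(\mu;\mathcal{T}^{+}_p) = \mathbb{E}(\pi_P(\mu);\mathcal{T}^{+}_p)$ and the analogous equality for $\mathcal{T}^{-}_p$. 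Since $\mu$ is toggle-symmetric we have $\mathbb{E}(\mu;\mathcal{T}^{+}_p) = \mathbb{E}(\mu;\mathcal{T}^{-}_p)$ for every $p \in P+Q$, in particular for every $p \in P$, and therefore $\pi_P(\mu)$ is toggle-symmetric on $J(P)$.

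Once this is established, the remainder is a short invocation of the hypotheses. Because $J(P)$ is assumed tCDE and $\pi_P(\mu)$ is toggle-symmetric, we get $\mathbb{E}(\pi_P(\mu);\mathrm{ddeg}) = \mathbb{E}(\mathrm{uni}_{J(P)};\mathrm{ddeg})$, which is precisely the identity required above; the argument for $Q$ is identical. I anticipate no difficulty in the down-degree decomposition or the pushforward bookkeeping — the only place that requires care is confirming that the toggleability statistics for $p \in P$ genuinely see only the $P$-part of the order ideal, which is where the disjointness of the two components of $P+Q$ does all the work.
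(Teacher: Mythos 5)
Your proposal is correct and follows essentially the same route as the paper: the first two bullets are dispatched by citing Propositions~\ref{prop:productcde} and~\ref{prop:productmcde}, and the tCDE bullet is handled by decomposing $\mathrm{ddeg}$ over the projections and observing that the pushforward of a toggle-symmetric distribution under $\pi_{J(P)}$ is toggle-symmetric because the toggleability of $p \in P$ in $I \in J(P+Q)$ depends only on $\pi_{J(P)}(I)$. This is exactly the paper's argument, with the bookkeeping spelled out in slightly more detail.
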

\begin{proof}
The first bullet point is a consequence of Proposition~\ref{prop:productcde} since $J(P)$ is necessarily graded. The second bullet is a consequence of Proposition~\ref{prop:productmcde}. So let us address the third bullet point. We use $\pi_{J(P)}$ and~$\pi_{J(Q)}$ for the projection maps from $J(P) \times J(Q)$ as in the proof of Proposition~\ref{prop:productmcde}. As in that proof, it suffices to show that
\[ \mathbb{E}(\pi_{J(P)}(\mu);\mathrm{ddeg}) = \mathbb{E}(\mathrm{uni}_{J(P)};\mathrm{ddeg}) \]
for any toggle-symmetric distribution $\mu$ on $J(P) \times J(Q)$. But this indeed holds because if $\mu$ is a toggle-symmetric distribution on $J(P) \times J(Q) = J(P + Q)$ then $\pi_{J(P)}(\mu)$ is a toggle-symmetric distribution on $J(P)$: $p \in P$ can be toggled into (out of) $I \in  J(P + Q)$ iff $p$ can be toggled into (out of) $\pi_{J(P)}(I)$.
\end{proof}

Let us conclude this section with a discussion of how the study of the tCDE property leads to nice formulas for edge densities. As we will see in a moment in Section~\ref{sec:younglat}, a consequence of the main result of~\cite{chan2015expected} is the assertion that certain special distributive lattices~$J(P)$ (intervals~$[\nu,\lambda]$ of Young's lattice where $\lambda/\nu$ is ``balanced'') are tCDE. Not only did CHHM show that such~$J(P)$ are tCDE, they also offered a simple expression for the edge densities of these~$J(P)$. In fact, under an additional assumption (namely, that~$P$ is graded) we can conclude that there has to be a simple expression for the edge density of~$J(P)$ if~$J(P)$ is to be~tCDE.

\begin{prop} \label{prop:togsymedgedensity}
Let $P$ be a graded poset of rank $r$ for which $J(P)$ is tCDE. Then the edge density of $J(P)$ is
\[ \mathbb{E}(\mathrm{uni}_{J(P)};\mathrm{ddeg}) = \frac{\#P}{r+2}.\]
\end{prop}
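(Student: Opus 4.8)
The plan is to exploit the defining feature of the tCDE property: since $J(P)$ is tCDE, the edge density $\mathbb{E}(\mathrm{uni}_{J(P)};\mathrm{ddeg})$ equals $\mathbb{E}(\mu;\mathrm{ddeg})$ for \emph{every} toggle-symmetric distribution $\mu$ on $J(P)$. So rather than compute with the uniform distribution directly, I would choose a single cleverly adapted toggle-symmetric distribution for which the expected down-degree is transparent. The gradedness hypothesis on $P$ is exactly what makes such a distribution available.

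Concretely, I would use the rank filtration of $P$. Since $P$ is graded of rank $r$, each ``sublevel set'' $P_{\leq i} = \bigcup_{j=0}^{i} P_j$ is downward-closed, hence an order ideal, and together with $P_{\leq -1} := \emptyset$ these give a strictly increasing chain
\[ \emptyset = P_{\leq -1} \subsetneq P_{\leq 0} \subsetneq P_{\leq 1} \subsetneq \cdots \subsetneq P_{\leq r} = P \]
of exactly $r+2$ order ideals (the containments are strict because every rank of a graded poset of rank $r$ is nonempty). Let $\nu$ be the uniform distribution on this $(r+2)$-element set of ideals.

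The key computation is that $\nu$ is toggle-symmetric, and this is where gradedness does all the work. Fix $p \in P$ with $\mathrm{rk}(p)=j$. Because the minimal elements of $P \setminus P_{\leq i}$ are precisely the rank-$(i+1)$ elements and the maximal elements of $P_{\leq i}$ are precisely the rank-$i$ elements, one checks that $p$ can be toggled \emph{into} $P_{\leq i}$ exactly when $i=j-1$, and can be toggled \emph{out of} $P_{\leq i}$ exactly when $i=j$. Each of these occurs for exactly one ideal in the chain (note $j-1 \geq -1$ and $j \leq r$), so $\mathbb{E}(\nu;\mathcal{T}^{+}_p) = \tfrac{1}{r+2} = \mathbb{E}(\nu;\mathcal{T}^{-}_p)$ for every $p$; thus $\nu$ is toggle-symmetric. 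Finally, using $\mathrm{ddeg} = \sum_{p\in P}\mathcal{T}^{-}_p$ and linearity of expectation,
\[ \mathbb{E}(\nu;\mathrm{ddeg}) = \sum_{p\in P}\mathbb{E}(\nu;\mathcal{T}^{-}_p) = \frac{\#P}{r+2}, \]
and the tCDE property upgrades this to $\mathbb{E}(\mathrm{uni}_{J(P)};\mathrm{ddeg}) = \#P/(r+2)$, as desired.

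The substantive step is the second one: guessing that the uniform distribution on the rank-filtration chain is the right test distribution. Once that is in hand, verifying toggle-symmetry and extracting the edge density are routine bookkeeping, driven entirely by the fact that in a graded poset the minimal (resp.\ maximal) elements outside (resp.\ inside) a sublevel set form a single rank. I do not expect any genuine obstacle beyond identifying this distribution, though one should double-check the boundary cases $j=0$ (toggling in at $\emptyset = P_{\leq -1}$) and $j=r$ (toggling out at $P_{\leq r}=P$) to confirm the counts are exactly one throughout.
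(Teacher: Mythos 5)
Your proposal is correct and is essentially identical to the paper's proof: the paper also takes the toggle-symmetric distribution that is uniform on the chain $\varnothing = P_{\leq -1} \subseteq P_{\leq 0} \subseteq \cdots \subseteq P_{\leq r}$ and computes its expected down-degree to be $\#P/(r+2)$. The only cosmetic difference is that the paper sums $\mathrm{ddeg}(P_{\leq i}) = \#P_i$ over the ideals in the chain, whereas you sum $\mathbb{E}(\nu;\mathcal{T}^{-}_p)$ over $p \in P$; these are the same double count.
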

\begin{proof}
Let $P$ be as in the statement of the proposition. Recall the notation for the ranks of $P$: $P_i := \{p \in P\colon \mathrm{rk}(p) = i\}$ and $P_{\leq i} := \cup_{j=0}^{i}P_j$. Set $P_{-1} := P_{\leq -1} := \varnothing$ by convention. Define a distribution $\mathrm{rank}$ on $J(P)$ by
\[ \mathbb{P}(\mathrm{rank};I) := \begin{cases} \frac{1}{r+2} &\textrm{if $I = P_{\leq i}$ for some $i=-1,0,1,\ldots,r$}, \\ 0 &\textrm{otherwise}.\end{cases} \]
Observe that $\mathrm{rank}$ is toggle-symmetric: $p \in P$ can be toggled into $P_{\leq i}$ iff $p$ can be toggled out of $P_{\leq i+1}$. But also observe that
\begin{align*}
\mathbb{E}(\mathrm{rank};\mathrm{ddeg}) &= \frac{1}{r+2} \sum_{i=-1}^{r} \mathrm{ddeg}(P_{\leq i})\\
&= \frac{1}{r+2}\sum_{i=-1}^{r} \#P_{i} \\
&= \frac{\#P}{r+2}. 
\end{align*}
Since $J(P)$ is tCDE we conclude $\mathbb{E}(\mathrm{uni}_{J(P)};\mathrm{ddeg}) = \mathbb{E}(\mathrm{rank};\mathrm{ddeg}) =  \frac{\#P}{r+2}$.
\end{proof}

Proposition~\ref{prop:togsymedgedensity} says that if $P$ is a graded poset for which $J(P)$ is tCDE, then the average size of an antichain of $P$ is equal to the average size of a rank of $P$ (once we include the extra ``empty rank'' $P_{-1}$). In the next several sections we will provide many examples of tCDE distributive lattices, culminating with Section~\ref{sec:minuscule} where we show that the distributive lattice $J(P)$ associated to a minuscule poset $P$ is always tCDE. Note that if $P$ is a connected minuscule poset then $P$ is graded (see e.g.~\cite{proctor1984bruhat}) and thus Proposition~\ref{prop:togsymedgedensity} applies and tells us that a~priori we should expect a nice formula for the edge density of $J(P)$. Heuristically, even when Proposition~\ref{prop:togsymedgedensity} does not apply we still get a nice formula for the edge density of a tCDE distributive lattice.

\section{Intervals of Young's lattice} \label{sec:younglat}

Having dealt with generalities for so long, in this section we finally get to some nontrivial examples of CDE (in fact tCDE) distributive lattices. This section, which concerns Young's lattice and ordinary Young diagrams, is actually just a summary of the methods and results of CHHM~\cite{chan2015expected}. We feel compelled to review material in~\cite{chan2015expected} because in the next section will extend these methods and results to the shifted setting.

We use standard notation for partitions (as in~e.g.~\cite[\S7]{stanley1999ec2}) which we nevertheless review. A \emph{partition} is a an infinite sequence~$\lambda = (\lambda_1, \lambda_2, \lambda_3, \cdots)$ of nonnegative integers that is weakly decreasing ($\lambda_i \geq \lambda_{i+1}$ for all $i \geq 1$) and eventually zero ($\lambda_N = 0$ for all~$N \gg 0$ sufficiently large). The $\lambda_i$ are called the \emph{parts} of $\lambda$. The \emph{length} of $\lambda$ is the smallest nonnegative integer $\ell(\lambda)$ such that $\lambda_i = 0$ for all $i > \ell(\lambda)$; i.e.,~$\ell(\lambda)$ is the number of nonzero parts of~$\lambda$. We also write $\lambda = (\lambda_1,\lambda_2,\ldots,\lambda_{\ell(\lambda)})$. The \emph{size} of~$\lambda$ is~$|\lambda| := \sum_{i=1}^{\infty} \lambda_i$. If $|\lambda| = n$ then we say $\lambda$ is a \emph{partition of $n$} and write~$\lambda \vdash n$. Associated to any $\lambda \vdash n$ is its \emph{Young diagram}, which is a collection of $n$ boxes in $\ell(\lambda)$ left-justified rows, with $\lambda_i$ boxes in row $i$. (Thus we use so-called ``English notation.'')  For example, the Young diagram of $\lambda = (4,3,3,3)$ is
\[\ydiagram{4,3,3,3} \]
Two families of partitions that will be important to us are the following:
\begin{itemize}
\item \emph{$a\times b$ rectangles}: $b^a := (\overbrace{b,b,\ldots,b}^{a})$ for $a,b \geq 0$;
\item \emph{length $d$ staircases}: $\delta_d := (d,d-1,d-2,\ldots,1)$ for $d \geq 0$.
\end{itemize}
For partitions $\nu,\lambda$ we write $\nu \subseteq \lambda$ if the Young diagram of $\nu$ is contained in $\lambda$; in other words, $\nu \subseteq \lambda$ iff $\nu_i \leq \lambda_i$ for all $i \geq 1$. \emph{Young's lattice} is the infinite poset of all partitions with the partial order $\subseteq$. There is a unique minimal element of Young's lattice called the \emph{empty partition} and denoted $\varnothing$. Every interval $[\nu,\lambda]$ in Young's lattice is a (finite) distributive lattice. Let us explain how to write $[\nu,\lambda] = J(P)$ for some poset $P$.

For any partitions~$\nu \subseteq \lambda$ we define the \emph{skew shape} $\lambda/\nu$ to be the set-theoretical differences of the Young diagrams of $\lambda$ and $\nu$. For example, taking~$\lambda = (4,3,3,3)$ and~$\nu = (2,2)$, the skew shape $\lambda/\nu$ is
\[\ydiagram{2+2,2+1,3,3}\]
We use the standard compass directions to refer to the relative positions of boxes of a skew shape. We turn the skew shape $\lambda/\nu$ into a poset $P_{\lambda/\nu}$ by defining the following partial order on the boxes of $\lambda/\nu$: for boxes $u,v$ of $\lambda/\nu$ we have $u \leq v$ iff $v$ is weakly southeast of~$u$. Note that $P_{\lambda/\nu}$ is always ranked but may or may not be graded. Also observe that~$[\nu,\lambda] = J(P_{\lambda/\nu})$: the order ideals of $P_{\lambda/\nu}$ are exactly the skew shapes~$\rho/\nu$ for partitions $\rho$ satisfying $\nu \subseteq \rho \subseteq \lambda$. We say $\lambda/\nu$ is \emph{connected} if $P_{\lambda/\nu}$ is connected. In the special case~$\nu = \varnothing$, the skew shape $\lambda/\nu$ is called a \emph{straight shape} and is also denoted simply by~$\lambda$ with a corresponding poset denoted~$P_{\lambda}$. A straight shape~$\lambda$ is always connected.

For the next two paragraphs let $\lambda/\nu$ be a fixed connected skew shape. We define the \emph{height}~$a$ and \emph{width}~$b$ to be the least nonnegative integers~$a$ and~$b$ such that $\lambda/\nu$ can be put (via translation) inside an $a \times b$ rectangle Young diagram. A straight shape $\lambda$ has height equal to its length~$\ell(\lambda)$ and width equal to its first part~$\lambda_1$. We define a coordinate system (``matrix coordinates'') on the lattice points of this $a \times b$ rectangle whereby the northwestern-most lattice point of the rectangle is $(0,0)$ and the southeastern-most is~$(a,b)$. We extend this to a coordinate system on the boxes of this $a \times b$ rectangle by letting~$[i,j]$ denote the box whose southeast corner is $(i,j)$. Thus the northwestern-most box is~$[1,1]$ and the southeastern-most is $[a,b]$. We write~$[i,j] \in \lambda/\nu$ to mean that $[i,j]$ is a box of $\lambda/\nu$. Necessarily~$[a,1] \in \lambda/\nu$ and $[1,b] \in \lambda/\nu$.

\begin{figure}
\begin{center}
\begin{tikzpicture}
\node at (0,0) {\begin{ytableau}\none & \none & *(yellow) \bullet & \\ \none & \none &   \\ *(yellow) \bullet &  &  \\ *(yellow) \bullet & & \end{ytableau}};
\def\x{0.6}
\draw[red,line width=3pt] (-2*\x,-2*\x) -- (-1*\x,-2*\x) -- (-1*\x,0*\x) -- (0*\x,0*\x) -- (0*\x,1*\x) -- (1*\x,1*\x) -- (1*\x,2*\x) -- (2*\x,2*\x);
\end{tikzpicture}
\end{center}
\caption{With $\lambda/\nu = (4,3,3,3) / (2,2)$, we have shaded the boxes of~$(3,2,1,1) / (2,2) \in J(P_{\lambda/\nu})$ in yellow (and marked them with black dots) and drawn its associated lattice path in red.} \label{fig:latticepath}
\end{figure}

As mentioned, the order ideals of $\lambda/\nu$ correspond to $\rho/\nu$ for partitions $\nu \subseteq \rho \subseteq \lambda$. These order ideals are also in bijective correspondence with lattice paths from~$(a,0)$ to~$(0,b)$ with steps of the form $(-1,0)$ (\emph{north steps}) and $(0,1)$ (\emph{east step}) that stay inside the boxes of~$\lambda/\nu$: we associate a skew shape~$\rho/\nu$ to the lattice path from~$(a,0)$ to $(0,b)$ that traces the southeast border of $\rho$. See Figure~\ref{fig:latticepath} for an example of this correspondence. The \emph{northwest border of $\lambda/\nu$} is the lattice path associated to the order ideal $\nu/\nu$. The \emph{southeast border of~$\lambda/\nu$} is the lattice path associated to the order ideal~$\lambda/\nu$. A \emph{northwest outward corner} of $\lambda/\nu$ is an east step followed by a north step on the northwest border of $\lambda/\nu$. A \emph{southeast outward corner} is a north step followed by an east step on the southeast border. The northwest and southeast outward corners of $\lambda/\nu$ are collectively referred to as simply the \emph{outward corners} of $\lambda/\nu$. The set of outward corners is denoted $C(\lambda/\nu)$. We say that an outward corner $c \in C(\lambda/\nu)$ \emph{occurs} at $(i,j)$ if that is the lattice point in common between the two steps of~$c$. The \emph{main anti-diagonal} of $\lambda/\nu$ is the line connecting $(a,0)$ to $(0,b)$. We say that $\lambda/\nu$ is \emph{balanced} if $c$ occurs at a lattice point on the main anti-diagonal for every $c \in C(\lambda/\nu)$. (By convention saying that~$\lambda/\nu$ is balanced implies it is connected.)

\begin{example} \label{ex:balanced}
Let $\lambda/\nu = (4,3,3,3)/(2,2)$ as depicted below:
\begin{center}
\begin{tikzpicture}
\node at (0,0) {\ydiagram{2+2,2+1,3,3}};
\def\x{0.6}
\draw[blue,line width=2pt] (-2*\x,-2*\x) -- (2*\x,2*\x);
\filldraw (1*\x,1*\x) circle (3pt);
\filldraw (0*\x,0*\x) circle (3pt);
\end{tikzpicture}
\end{center}
Observe that $\lambda/\nu$ has one northwest outward corner and one southeast outward corner. For each outward corner $c \in C(\lambda/\nu)$, the point $(i,j)$ at which $c$ occurs is marked with a black circle. The main anti-diagonal of $\lambda/\nu$ is drawn in blue. We see that all outward corners occur on the main anti-diagonal and thus $\lambda/\nu$ is balanced.
\end{example}

The straight shapes $b^a$ and $\delta_{d}$ are balanced for all $a,b,d \geq 0$. For a skew shape~$\lambda/\nu$ let us use~$(\lambda/\nu) \circ b^a$ to denote the skew shape obtained from~$\lambda/\nu$ by replacing each box of~$\lambda/\nu$ with an $a \times b$ rectangle. If $\lambda/\nu$ is balanced then $(\lambda/\nu) \circ b^a$ is also balanced. In particular, the ``stretched staircase''~$\delta_d \circ b^a$ is always balanced. For any~$a,b \geq 1$ there are up to translation in the plane~$3^{\mathrm{gcd}(a,b)-1}$ balanced skew shapes of height~$a$ and width~$b$ and~$2^{\mathrm{gcd}(a,b)-1}$ such balanced straight shapes. The point of considering balanced skew shapes is the following result of CHHM~\cite{chan2015expected}.

\begin{thm}[{CHHM~\cite[Corollary 3.8]{chan2015expected}}] \label{thm:younglatcde}
Let $\lambda/\nu$ be a balanced skew shape of height $a$ and width $b$ with~$a,b \geq 1$. Then the distributive lattice $[\nu,\lambda]$ is tCDE with edge density
\[\mathbb{E}(\mathrm{uni}_{[\nu,\lambda]};\mathrm{ddeg}) = \frac{ab}{a+b}.\]
\end{thm}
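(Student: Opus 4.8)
The plan is to prove the stronger, purely pointwise statement that a suitable linear combination of the toggleability statistics equals $\mathrm{ddeg}$ up to an additive constant. Concretely, for a balanced $\lambda/\nu$ I would exhibit real weights $(\gamma_p)_{p}$ and a constant $c$ such that, as functions on $J(P_{\lambda/\nu})=[\nu,\lambda]$,
\begin{equation*}
\mathrm{ddeg}(I) = c + \sum_{p} \gamma_p\bigl(\mathcal{T}^{+}_p(I) - \mathcal{T}^{-}_p(I)\bigr)\qquad\text{for every } I \in J(P_{\lambda/\nu}). \tag{$\star$}
\end{equation*}
Granting $(\star)$, the theorem is immediate: for any toggle-symmetric $\mu$ the defining property $\mathbb{E}(\mu;\mathcal{T}^{+}_p)=\mathbb{E}(\mu;\mathcal{T}^{-}_p)$ annihilates the sum, so $\mathbb{E}(\mu;\mathrm{ddeg})=c$; and since $\mathrm{uni}=\mathrm{chain}(0)$ is toggle-symmetric by Lemma~\ref{lem:chaintogsym}, this simultaneously proves that $[\nu,\lambda]$ is tCDE and identifies its edge density as $c$. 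Everything thus reduces to finding the weights and computing $c$.

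For the weights I would work in the lattice-path model. An order ideal $I$ is a monotone path from $(a,0)$ to $(0,b)$ inside $\lambda/\nu$, whose north-then-east (NE) turns are exactly the addable boxes ($\mathcal{T}^{+}$) and whose east-then-north (EN) turns are the removable boxes ($\mathcal{T}^{-}$). Let $L(x,y):=bx+ay-ab$ be the affine functional cutting out the main anti-diagonal, so that $L\equiv 0$ on the segment from $(a,0)$ to $(0,b)$, and set $\gamma_{[i,j]}:=\tfrac{1}{a+b}\,L(i-1,j-1)$, i.e. $1/(a+b)$ times the value of $L$ at the northwest corner of the box $[i,j]$. Checking $(\star)$ at the empty and full order ideals pins down $c=\tfrac{ab}{a+b}$, and unwinding the definitions turns $(\star)$ into the single path identity
\begin{equation*}
\sum_{\text{NE toggle turns } T} L(T)\;-\;\sum_{\text{EN toggle turns } T} L(T)\;=\;-ab. \tag{$\dagger$}
\end{equation*}

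To prove $(\dagger)$ I would first prove the analogous statement summed over \emph{all} turns of the path. Writing the path as points $P_0=(a,0),\dots,P_{a+b}=(0,b)$ with step-indicators $e_t=\mathbb{1}[\text{step } t \text{ is east}]$, the signed sum $\sum_{\text{NE}}L-\sum_{\text{EN}}L$ is exactly $\sum_t L(P_t)(e_{t+1}-e_t)$, which a one-step summation by parts collapses to $-ab$ plus boundary contributions at the two endpoints; these boundary terms vanish precisely because $P_0=(a,0)$ and $P_{a+b}=(0,b)$ lie on the main anti-diagonal, where $L=0$. The remaining task is to discard the non-toggle turns: every turn of the path that is not a toggle is an outward corner of $\lambda/\nu$ (an EN turn on the northwest border is a northwest outward corner, an NE turn on the southeast border is a southeast outward corner), and by the \emph{balanced} hypothesis every such corner occurs on the main anti-diagonal, hence contributes $L=0$ to the signed sum. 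Therefore the sum over toggle turns coincides with the sum over all turns, which establishes $(\dagger)$.

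The main obstacle is this last bookkeeping step. One must verify carefully that the turns of an arbitrary path in $\lambda/\nu$ decompose as (genuine toggles) $\sqcup$ (outward corners of the fixed shape), so that balancedness can be invoked to delete exactly the unwanted terms; and one must fix the matrix-coordinate conventions so that the NE/EN turn dictionary and the functional $L$ are aligned correctly. Once this is done, the telescoping in $(\dagger)$ and the deduction of tCDE from $(\star)$ are routine.
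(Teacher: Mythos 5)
Your proposal is correct, and it takes a genuinely different route from the paper. The paper (following CHHM) works with the rook statistics $R_{ij}$: Lemma~\ref{lem:rooktogsym} converts each rook, in expectation against a toggle-symmetric distribution, into a row-plus-column sum of $\mathbb{E}(\mu;\mathcal{T}^{-})$'s; Lemma~\ref{lem:rookexpectation} gives the pointwise evaluation $R_{ij}=\mathbb{1}+\#C_{ij}(\lambda/\nu;\cdot)$; and Lemma~\ref{lem:rookplacement} supplies an integral rook placement with constant row and column sums whose outward-corner contributions cancel when $\lambda/\nu$ is balanced. You instead produce a single explicit pointwise certificate $\mathrm{ddeg}=\frac{ab}{a+b}+\sum_p\gamma_p(\mathcal{T}^{+}_p-\mathcal{T}^{-}_p)$ with closed-form coefficients $\gamma_{[i,j]}=\frac{1}{a+b}L(i-1,j-1)$, proved by Abel summation along the lattice path. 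I checked the key steps: the dictionary (addable box $\leftrightarrow$ NE turn at its northwest corner, removable box $\leftrightarrow$ EN turn at its southeast corner, and the non-toggle turns are precisely the outward corners of $\lambda/\nu$, disjointly from the toggle turns) is right; the telescoping gives $\sum_{\mathrm{NE}}L-\sum_{\mathrm{EN}}L=-ab$ with vanishing boundary terms since $L(a,0)=L(0,b)=0$; and the shift $L(i,j)=L(i-1,j-1)+(a+b)$ at removable boxes is exactly what produces the $-(a+b)\,\mathrm{ddeg}$ term, so $(\dagger)$ is equivalent to $(\star)$ with $c=\frac{ab}{a+b}$. (The identity also checks out numerically on the $2\times 2$ square.) What each approach buys: yours is more self-contained and explicit, replacing the existence argument of Lemma~\ref{lem:rookplacement} by a formula, and it is the same style of certificate the paper uses for $P(E_6)$ and $P(E_7)$ in Theorem~\ref{thm:minusculecde}; the rook approach, on the other hand, yields a formula with corner error terms valid for arbitrary connected skew shapes (not just balanced ones) and is the version that transfers to the shifted setting in Section~\ref{sec:shiftyounglat}, where the main-diagonal boxes break the clean row/column symmetry your affine functional exploits.
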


Let us take a moment to review the history of Theorem~\ref{thm:younglatcde}. First of all, CHHM stated their result in terms of expected jaggedness rather than expected down-degree but as mentioned in the previous section these are easily seen to be equivalent. They were preceded and motivated by Chan, L\'{o}pez Mart\'{i}n, Pflueger and Teixidor i Bigas~\cite[Corollary 2.15]{chan2015genera} who established (in different language) that $[\varnothing,b^a]$ is CDE with edge-density $\frac{ab}{a+b}$. Indeed, the computation of~$\mathbb{E}(\mathrm{maxchain}_{[\varnothing,b^a]};\mathrm{ddeg})$ in~\cite{chan2015genera} was the key combinatorial result needed to reprove an algebro-geometric formula of Eisenbud-Harris~\cite{eisenbud1987kodaira} and Pirola~\cite{pirola1985chern} for the genus of Brill-Noether curves. Theorem~\ref{thm:younglatcde} is also directly related to the work of Reiner-Tenner-Yong: one part of their main result~\cite[Theorem 1.1(c)]{reiner2016poset} is the assertion that $[\varnothing,\delta_d \circ b^a]$ is CDE. Since~$\delta_d\circ b^a$ is balanced this assertion is a consequence of Theorem~\ref{thm:younglatcde}. However, we should note that the approach of Reiner-Tenner-Yong to finding CDE intervals of Young's lattice was rather different than that of CHHM: Reiner-Tenner-Yong used symmetric function and tableaux theory (which they were able to apply also to intervals of the weak Bruhat order); as we have seen, CHHM instead adopted the perspective of ``toggle theory.'' We will return to tableaux in Section~\ref{sec:tableaux}. For now let us sketch the proof of Theorem~\ref{thm:younglatcde} in~\cite[\S3]{chan2015expected}.

Fix $\lambda/\nu$, a connected skew shape of height~$a$ and width~$b$ with coordinates as described earlier. The proof of Theorem~\ref{thm:younglatcde} follows from consideration of certain ``rook'' statistics on $J(P_{\lambda/\nu})$ that are linear combinations of the toggleability statistics: for~$[i,j] \in \lambda/\nu$ we define the rook $R_{ij}\colon J(P_{\lambda/\nu})\to \mathbb{R}$ by
\[R_{ij} := \sum_{\substack{i'\leq i, \; \, j' \leq j \\ [i',j']\in\lambda/\nu }}\!\mathcal{T}_{[i',j']}^+ + \sum_{\substack{i'\geq i, \; \, j' \geq j \\ [i',j']\in\lambda/\nu}}\!\mathcal{T}_{[i',j']}^- - \sum_{\substack{i'< i, \; \, j'< j \\ [i',j']\in\lambda/\nu}}\!\mathcal{T}_{[i',j']}^- - \sum_{\substack{i'> i, \; \, j'> j \\ [i',j']\in\lambda/\nu}}\!\mathcal{T}_{[i',j']}^+.\]
The reason this statistic is called a rook is because for a toggle-symmetric distribution~$\mu$ on $J(P_{\lambda/\nu})$ the expectation $\mathbb{E}(\mu;R_{ij})$ depends only the toggleability statistics for boxes in either the same column or row as~$[i,j]$. In other words, for a toggle-symmetric distribution the rook $R_{ij}$ ``attacks'' in expectation the boxes in its row and column:

\begin{lemma}[{CHHM~\cite[Lemma 3.5]{chan2015expected}}] \label{lem:rooktogsym}
For any toggle-symmetric distribution $\mu$ on $J(P_{\lambda/\nu})$ and any $[i,j] \in \lambda/\nu$,
\[\mathbb{E}(\mu;R_{ij}) = \sum_{[i',j] \in \lambda/\nu} \mathbb{E}(\mu;\mathcal{T}^{-}_{[i',j]}) + \sum_{[i,j'] \in \lambda/\nu} \mathbb{E}(\mu;\mathcal{T}^{-}_{[i,j']}).\] 
\end{lemma}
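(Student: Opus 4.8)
The plan is to pass to expectations and exploit toggle-symmetry at once. By linearity of expectation applied to the four-term definition of $R_{ij}$, I would first write
\[\mathbb{E}(\mu;R_{ij}) = \sum_{i'\leq i,\, j'\leq j} \mathbb{E}(\mu;\mathcal{T}^+_{[i',j']}) + \sum_{i'\geq i,\, j'\geq j} \mathbb{E}(\mu;\mathcal{T}^-_{[i',j']}) - \sum_{i'<i,\, j'<j} \mathbb{E}(\mu;\mathcal{T}^-_{[i',j']}) - \sum_{i'>i,\, j'>j} \mathbb{E}(\mu;\mathcal{T}^+_{[i',j']}),\]
with all four sums taken over boxes of $\lambda/\nu$. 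Since $\mu$ is toggle-symmetric, $\mathbb{E}(\mu;\mathcal{T}^+_p)=\mathbb{E}(\mu;\mathcal{T}^-_p)$ for every $p$, so I would set $d_p := \mathbb{E}(\mu;\mathcal{T}^-_p)$ and replace every expectation above by $d_{[i',j']}$. The whole point of this step is that it renders the four quadrant-sums directly comparable, erasing the distinction between the $+$ and $-$ toggleability statistics attached to each box.

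The core of the argument is then a coefficient count: for each box $[i',j']\in\lambda/\nu$, determine the net multiplicity with which $d_{[i',j']}$ occurs in the resulting signed sum of the four regions. I would organize this by the nine cases given by the signs of $i'-i$ and $j'-j$. The weakly-northwest and strictly-northwest regions differ exactly in the boxes with $i'=i$ or $j'=j$, and similarly for the southeast pair; consequently the two strictly-interior quadrants (strict NW and strict SE) have their $+$ and $-$ contributions cancel, while the two off-diagonal quadrants (NE and SW) never appear at all. What survives is coefficient $+1$ for each box strictly in the same row or same column as $[i,j]$, and coefficient $+2$ for the box $[i,j]$ itself, since it lies in the closure of both the northwest and the southeast regions.

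Finally I would match this against the right-hand side $\sum_{[i',j]} d_{[i',j]} + \sum_{[i,j']} d_{[i,j']}$, which counts every box of the column of $[i,j]$ once and every box of its row once; because $[i,j]$ belongs to both sums it is counted with multiplicity $2$, while every other row- or column-box is counted once—precisely the coefficients found above. I do not expect a genuine obstacle: the identity is essentially inclusion–exclusion on quadrants that becomes transparent once toggle-symmetry equalizes $\mathcal{T}^+$ and $\mathcal{T}^-$. The only point requiring care is the bookkeeping at the overlaps—the box $[i,j]$ itself, and the fact that its row and column meet in exactly that one box—which is what yields the matching multiplicity $2$ on both sides.
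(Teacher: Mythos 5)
Your proof is correct and is exactly the argument the paper intends: the paper simply asserts that the lemma ``follows immediately from the definition of toggle-symmetry,'' and your box-by-box coefficient count (strict NW/SE quadrants cancel, off-diagonal quadrants never appear, row and column boxes survive with coefficient $1$ and $[i,j]$ with coefficient $2$) is the spelled-out version of that one-line justification.
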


Lemma~\ref{lem:rooktogsym} follows immediately from the definition of toggle-symmetry. The rooks end up being useful because there is another, more subtle expression for $\mathbb{E}(\mu;R_{ij})$ for any probability distribution (toggle-symmetric or otherwise) $\mu$ on $J(P_{\lambda/\nu})$. To state this expression we need a little bit more notation. For an order ideal $\rho/\nu \in J(P_{\lambda/\nu})$ we say the $\rho/\nu$ \emph{contains} an outward corner $c \in C(\lambda/\nu)$, written $c \in \rho/\nu$, if the lattice path associated to $\rho/\nu$ contains the two steps that make up $c$. For instance, in Figure~\ref{fig:latticepath}, the depicted order ideal $\rho/\nu$ contains the northwest corner that occurs at $(2,2)$ but does not contain the southeast corner that occurs at $(1,3)$. For a box $[i,j] \in \lambda/\nu$ let $C_{ij}(\lambda/\nu)$ denote the set of outward corners $c \in C(\lambda/\nu)$ that occur strictly northeast or strictly southwest of the center of the box $[i,j]$. For example, with $\lambda/\nu$ as in Example~\ref{ex:balanced}, $C_{3,3}(\lambda/\nu)$ is just the northwest corner that occurs at $(2,2)$, while $C_{1,3}(\lambda/\nu)$ is just the southeast corner that occurs at $(1,3)$. With this notation, we have,

\begin{lemma}[{CHHM~\cite[Lemma 3.6]{chan2015expected}}] \label{lem:rookexpectation}
For any probability distribution $\mu$ on $J(P_{\lambda/\nu})$ and any $[i,j] \in \lambda/\nu$,
\[\mathbb{E}(\mu;R_{ij}) = 1+\sum_{c \in C_{ij}(\lambda/\nu)} \mathbb{P}(\rho/\nu\sim \mu; c \in \rho/\nu).\] 
\end{lemma}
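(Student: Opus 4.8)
The plan is to prove the pointwise refinement of the identity and then average. Both sides are affine-linear in $\mu$: the left-hand side is $\mathbb{E}(\mu;R_{ij})$ and the right-hand side is $1 + \sum_{c \in C_{ij}(\lambda/\nu)} \mathbb{P}(\rho/\nu \sim \mu;\, c \in \rho/\nu)$. So by linearity of expectation it suffices to prove, for every individual order ideal $\rho/\nu \in J(P_{\lambda/\nu})$, the deterministic identity
\[ R_{ij}(\rho/\nu) = 1 + \#\{c \in C_{ij}(\lambda/\nu) \colon c \in \rho/\nu\}. \]
Taking $\mu$ to be the point mass at $\rho/\nu$ recovers each instance, and summing against a general $\mu$ then gives the lemma.

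First I would set up the dictionary between toggleability statistics and turns of the lattice path $P$ associated to $\rho/\nu$ (the path from $(a,0)$ to $(0,b)$ tracing the southeast border of $\rho$): we have $\mathcal{T}^{+}_{[i',j']}(\rho/\nu)=1$ exactly when $P$ makes a north-then-east turn at the northwest corner $(i'-1,j'-1)$ of the box $[i',j']$, and $\mathcal{T}^{-}_{[i',j']}(\rho/\nu)=1$ exactly when $P$ makes an east-then-north turn at the southeast corner $(i',j')$. Under this dictionary the four sums defining $R_{ij}$ collapse into a signed count of the turns of $P$ lying in the two opposite quadrants of lattice points $Q^{\mathrm{NW}} := \{(p,q)\colon p\leq i-1,\ q\leq j-1\}$ and $Q^{\mathrm{SE}} := \{(p,q)\colon p\geq i,\ q\geq j\}$, where only turns whose associated box actually lies in $\lambda/\nu$ are counted (these are the boxes indexing the sums): in $Q^{\mathrm{NW}}$ a north-then-east turn counts $+1$ and an east-then-north turn $-1$, and in $Q^{\mathrm{SE}}$ the signs are reversed. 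One checks on the worked example that $Q^{\mathrm{NW}}\cup Q^{\mathrm{SE}}$ is precisely the set of positions collected by $C_{ij}(\lambda/\nu)$.

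The heart of the argument is a telescoping computation, applied first to \emph{all} turns of $P$ (ignoring the in-shape restriction). Because $P$ is monotone, its intersection with each quadrant is a single contiguous subpath, which — since $1\leq i\leq a$ and $1\leq j\leq b$ — is always entered by a north step and exited by an east step in the $Q^{\mathrm{NW}}$ case, and entered by an east step and exited by a north step in the $Q^{\mathrm{SE}}$ case. Assigning to each step the potential $g(\text{step})=1$ if it is east and $0$ if it is north, the signed count (north-then-east as $+1$, east-then-north as $-1$) of the turns at the lattice points of a contiguous subpath telescopes to $g(\text{exit step})-g(\text{entry step})$. Hence the all-turns signed count is $+1$ over $Q^{\mathrm{NW}}$ and $-1$ over $Q^{\mathrm{SE}}$ whenever the respective sojourn is nonempty, and $0$ otherwise. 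A short case analysis on the row at which $P$ crosses the vertical line separating columns $j-1$ and $j$ shows that exactly one of the two sojourns is nonempty; with the quadrant sign conventions of $R_{ij}$ this contributes the constant $1$.

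Finally I would account for the gap between all turns and the in-shape turns that $R_{ij}$ actually sees, i.e. subtract the contribution of the \emph{excluded} turns (those whose box is not in $\lambda/\nu$). Using $\nu\subseteq\rho\subseteq\lambda$, such a turn is either a north-then-east turn whose box lies outside $\lambda$ or an east-then-north turn whose box lies in $\nu$; these are exactly the southeast, respectively northwest, outward corners of $\lambda/\nu$ that are contained in $\rho/\nu$. Two applications of partition down-closure then show that no southeast outward corner can occur in $Q^{\mathrm{NW}}$ and no northwest outward corner can occur in $Q^{\mathrm{SE}}$, so the only surviving correction terms are northwest outward corners in $Q^{\mathrm{NW}}$ and southeast outward corners in $Q^{\mathrm{SE}}$, each contributing $+1$ precisely when contained in $\rho/\nu$; these are exactly the corners of $C_{ij}(\lambda/\nu)$ lying in $\rho/\nu$. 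Adding this to the constant $1$ from the previous paragraph yields the identity. I expect the main obstacle to be the careful sign-and-quadrant bookkeeping of this last step, including the verification of the entry/exit directions and of the ``exactly one nonempty sojourn'' dichotomy in the boundary cases; it is precisely the partition-geometry vanishing of the two cross terms that lets the two types of outward corner recombine into the single unsigned sum indexed by $C_{ij}(\lambda/\nu)$.
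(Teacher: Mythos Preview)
Your argument is correct. Both you and the paper reduce to the same pointwise identity $R_{ij}(\rho/\nu)=1+\#\{c\in C_{ij}(\lambda/\nu): c\in\rho/\nu\}$, but you prove it differently. The paper (as made explicit in its proof of the shifted analogue, Lemma~\ref{lem:shiftrookexpectation}) verifies the identity by induction on the order ideal: one checks it for the empty ideal and then shows it is preserved under toggling in a single box, which amounts to a case analysis. Your proof is instead a direct structural computation on the lattice path: you translate the four sums in $R_{ij}$ into a signed turn count over the two quadrants $Q^{\mathrm{NW}}$ and $Q^{\mathrm{SE}}$, telescope each sojourn to get the constant $1$, and then identify the discrepancy between ``all turns'' and ``in-shape turns'' with the outward corners of $C_{ij}(\lambda/\nu)$ contained in $\rho/\nu$. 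The inductive approach is more mechanical and ports easily to variants like the shifted case; your telescoping argument is cleaner and explains \emph{why} the constant is $1$ and why only the corners in the NW/SE quadrants survive. One small point worth making explicit in your write-up: the claim that no southeast outward corner lies in $Q^{\mathrm{NW}}$ and no northwest outward corner lies in $Q^{\mathrm{SE}}$ is exactly what forces the correction terms to agree with $C_{ij}(\lambda/\nu)$ as defined (corners strictly to the northwest or southeast of the center of $[i,j]$), so you should state those two partition-geometry facts and their one-line proofs rather than leaving them as ``two applications of partition down-closure.''
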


For a proof of Lemma~\ref{lem:rookexpectation} see~\cite[Lemma 3.6]{chan2015expected}. We do not go into details here because we will have to provide a very similar proof of an analogous lemma in the shifted setting in Section~\ref{sec:shiftyounglat}. To combine Lemmas~\ref{lem:rooktogsym} and~\ref{lem:rookexpectation} to prove Theorem~\ref{thm:younglatcde} all we have to do is find a way to place (positive and negative) rooks on the boxes of $\lambda/\nu$ so that every box is attacked the same number of times and so that in aggregate the error terms from Lemma~\ref{lem:rookexpectation} cancel out for every outward corner. We can do this when $\lambda/\nu$ is balanced. That is,

\begin{lemma}[{CHHM~\cite[Lemma 3.7]{chan2015expected}}] \label{lem:rookplacement}
For any connected skew shape $\lambda/\nu$ with height~$a$ and width~$b$ there exist integral coefficients $r_{ij} \in \mathbb{Z}$ for $[i,j] \in \lambda/\nu$ such that
\begin{itemize}
\item for all $1 \leq i \leq a$, $\sum_{[i,j'] \in \lambda/\nu}r_{i,j'} = b$;
\item for all $1 \leq j \leq b$, $\sum_{[i',j] \in \lambda/\nu}r_{i',j} = a$.
\end{itemize}
When $\lambda/\nu$ is balanced these coefficients additionally satisfy $\sum_{[i,j] \in \lambda/\nu, \; c \in C_{ij}(\lambda/\nu)} r_{ij} = 0$ for all outward corners $c \in C(\lambda/\mu)$.
\end{lemma}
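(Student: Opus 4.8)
The plan is to decouple the two parts of the statement. First I would produce integer coefficients $r_{ij}$ realizing the prescribed row sums $b$ and column sums $a$ (the two bullet points), using only connectedness of $\lambda/\nu$; then I would show that once a balanced shape has all its outward corners on the main anti-diagonal, the corner condition $\sum_{[i,j],\, c\in C_{ij}(\lambda/\nu)} r_{ij} = 0$ is an \emph{automatic} consequence of the marginal conditions, requiring no further adjustment of the $r_{ij}$.

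For the first part I would encode the problem as a labelling problem on the bipartite graph $G$ whose vertices are the $a$ rows and $b$ columns of the bounding rectangle and whose edges are the boxes $[i,j]\in\lambda/\nu$, with $[i,j]$ joining row $i$ to column $j$. Assigning $r_{ij}\in\mathbb{Z}$ to each edge, the two bullet points say exactly that the labels at each row-vertex sum to $b$ and at each column-vertex sum to $a$. Crucially the $r_{ij}$ may be negative, so there are no inequalities to satisfy, only the linear equations. Since $\lambda/\nu$ is connected, $G$ is connected (horizontally or vertically adjacent boxes share a vertex), and because a connected skew shape of height $a$ and width $b$ occupies every row and every column of its bounding box, every vertex of $G$ is present. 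Choosing a spanning tree $T$, setting $r_{ij}=0$ on the non-tree edges, and peeling leaves of $T$ inward determines the remaining labels one at a time as integer differences of the targets; the single global compatibility needed at the last vertex is $\sum_{\text{rows}} b = ab = \sum_{\text{cols}} a$, which holds. This produces integer $r_{ij}$ with the required row and column sums.

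For the balanced part, fix an outward corner $c$ occurring at $(p,q)$; since $\lambda/\nu$ is balanced, $(p,q)$ lies on the main anti-diagonal, i.e.\ $bp+aq=ab$. Split the boxes of $\lambda/\nu$ into the four blocks cut out by the grid lines through $(p,q)$, and let $W_{\mathrm{NW}},W_{\mathrm{NE}},W_{\mathrm{SW}},W_{\mathrm{SE}}$ be the sums of $r_{ij}$ over the boxes with $(i\le p,\,j\le q)$, $(i\le p,\,j>q)$, $(i>p,\,j\le q)$, $(i>p,\,j>q)$ respectively. Matching the examples in the text, $c\in C_{ij}(\lambda/\nu)$ holds precisely for the boxes lying in the NW and SE blocks, so the quantity to be shown to vanish is $W_{\mathrm{NW}}+W_{\mathrm{SE}}$. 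The row-sum condition gives $W_{\mathrm{NW}}+W_{\mathrm{NE}}=pb$, the column-sum condition gives $W_{\mathrm{NW}}+W_{\mathrm{SW}}=qa$, and the total is $ab$; combining these with $bp+aq=ab$ yields
\[ W_{\mathrm{NW}}+W_{\mathrm{SE}} = ab - W_{\mathrm{NE}} - W_{\mathrm{SW}} = 2W_{\mathrm{NW}} = 2W_{\mathrm{SE}}. \]
Thus it suffices that one of the two blocks be empty, which is exactly the geometric content of being an outward corner: at a northwest outward corner $\nu$ fills the entire NW block (reading off the two boundary steps gives $\nu_p\ge q$, hence $\nu_i\ge\nu_p\ge q$ for $i\le p$ since $\nu$ is weakly decreasing), so $W_{\mathrm{NW}}=0$; at a southeast outward corner $\lambda$ excludes the entire SE block (similarly $\lambda_{p+1}\le q$, hence $\lambda_i\le\lambda_{p+1}\le q$ for $i>p$), so $W_{\mathrm{SE}}=0$. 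Either way $W_{\mathrm{NW}}+W_{\mathrm{SE}}=0$.

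The one place that looks like an obstacle before the setup is clarified is the worry that the corner conditions, one per outward corner, over-determine the system on top of the $a+b$ marginal equations. The identity $W_{\mathrm{NW}}+W_{\mathrm{SE}}=2W_{\mathrm{NW}}=2W_{\mathrm{SE}}$ dissolves this worry: for a balanced shape the corner conditions are not extra constraints at all, since they hold for \emph{every} integer solution of the marginal equations. The only genuinely geometric input is the emptiness of one block at each outward corner, which I would verify directly from the definitions of the two kinds of outward corner together with the monotonicity of $\nu$ and $\lambda$. (If one preferred to avoid the spanning-tree construction, the existence of integer $r_{ij}$ with the right marginals also follows from total unimodularity of the row--column incidence matrix once any rational solution is exhibited.)
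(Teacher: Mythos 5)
Your argument is correct, but note that the paper does not actually prove this lemma: it cites CHHM for the proof and only exhibits one example of the coefficients, so the real comparison is with CHHM's argument, which constructs the $r_{ij}$ explicitly and then verifies the corner condition for that particular construction. Your route is genuinely different and, in my view, cleaner. The first half (integer solutions of the marginal equations) is a standard transportation-problem argument; you correctly use minimality of $a$ and $b$ to ensure every row and column of the bounding rectangle is occupied, and connectedness of $\lambda/\nu$ to get connectedness of the row--column bipartite graph, after which the spanning-tree peeling and the single compatibility $ab=ab$ do the rest. The more substantive contribution is the second half: the identity $W_{\mathrm{NW}}+W_{\mathrm{SE}}=2W_{\mathrm{NW}}=2W_{\mathrm{SE}}$, valid for \emph{every} solution of the marginal equations once $(p,q)$ lies on the main anti-diagonal, reduces the corner condition to the purely geometric fact that one of the two relevant blocks is empty (the NW block lies inside $\nu$ at a northwest outward corner, the SE block lies outside $\lambda$ at a southeast outward corner), and your monotonicity arguments for that fact are right. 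This decoupling shows the corner conditions are not additional constraints at all for balanced shapes, which is not visible in the explicit-construction approach. One small point in your favor worth flagging: you read $C_{ij}(\lambda/\nu)$ as the set of corners occurring strictly \emph{northwest} or strictly \emph{southeast} of the center of $[i,j]$, following the paper's worked examples (e.g.\ $C_{3,3}$ containing the corner at $(2,2)$) and its sample coefficient table, rather than the literal wording ``strictly northeast or strictly southwest,'' which is inconsistent with both of those and appears to be a slip in the text; under the literal wording the displayed coefficients would give $8+8\neq 0$ at the corner $(2,2)$, so your reading is the intended one.
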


An example of coefficients which satisfy Lemma~\ref{lem:rookplacement} for the skew shape $\lambda/\nu$ from Example~\ref{ex:balanced} is
\begin{center}
\begin{ytableau}\none & \none & 0 & 4 \\ \none & \none & 4  \\ 0 & 0 & 4 \\ 4 & 4 & -4 \end{ytableau}
\end{center} 
where $r_{ij}$ is written inside box $[i,j]$. Theorem~\ref{thm:younglatcde} follows easily from Lemmas~\ref{lem:rooktogsym},~\ref{lem:rookexpectation} and~\ref{lem:rookplacement}: let $r_{ij}$ be as in Lemma~\ref{lem:rookplacement} and consider the statistic $\sum_{[i,j] \in \lambda/\nu}r_{ij}R_{ij}$; on the one hand, for a toggle-symmetric distribution $\mu$ on $J(P_{\lambda/\nu})$ we have
\[ \mathbb{E}(\mu; \hspace{-0.2cm} \sum_{[i,j] \in \lambda/\nu} \hspace{-0.2cm} r_{ij}R_{ij}) =  \hspace{-0.2cm} \sum_{[i,j] \in \lambda/\nu} (a+b) \cdot \mathbb{E}(\mu;\mathcal{T}^{-}_{[i,j]}) = (a+b) \cdot \mathbb{E}(\mu;\mathrm{ddeg}) \]
thanks to Lemma~\ref{lem:rooktogsym}; on the other hand,
\[ \mathbb{E}(\mu; \hspace{-0.2cm} \sum_{[i,j] \in \lambda/\nu} \hspace{-0.2cm} r_{ij}R_{ij}) = ab +  \hspace{-0.2cm} \sum_{c \in C(\lambda/\mu)} \mathbb{P}(\rho/\nu\sim \mu; c \in \rho/\nu)\cdot \hspace{-1.1cm} \sum_{[i,j] \in \lambda/\nu, \; c \in C_{ij}(\lambda/\nu)}  \hspace{-0.8cm} r_{ij} = ab\]
thanks to Lemma~\ref{lem:rookexpectation}; comparing these two expressions for $\mathbb{E}(\mu; \sum_{[i,j] \in \lambda/\nu} r_{ij}R_{ij})$ yields the claimed formula for $\mathbb{E}(\mu;\mathrm{ddeg})$.

\begin{remark} \label{rem:younglatcde}
It is natural to ask if Theorem~\ref{thm:younglatcde} has a converse. In other words, it is natural to try to understand \emph{exactly} when $[\nu,\lambda]$ is tCDE, or better yet, CDE. Computation with SAGE mathematical software tells us that for all partitions $\lambda$ with~$|\lambda| \leq 30$ if the interval~$[\varnothing,\lambda]$ is CDE then~$\lambda$ is balanced. It thus seems reasonable to conjecture that the only initial intervals (i.e., intervals of the form $[\varnothing,\lambda]$) of Young's lattice that are CDE correspond to balanced straight shapes. If this conjecture were true then the CDE, mCDE, and tCDE properties would all coincide for initial intervals of Young's lattice. The main result of CHHM~\cite[Theorem 3.4]{chan2015expected} is a formula for $\mathbb{E}(\mu;\mathrm{ddeg})$ for any toggle-symmetric distribution~$\mu$ on~$J(P_{\lambda/\nu})$ where~$\lambda/\nu$ is an arbitrary connected skew shape. In principle one could analyze this formula to see when it is possible to have~$\mathbb{E}(\mathrm{uni}_{[\varnothing,\lambda]};\mathrm{ddeg}) = \mathbb{E}(\mathrm{maxchain}_{[\varnothing,\lambda]};\mathrm{ddeg})$. However, such analysis does not appear to be totally straightforward. For instance, one might hope that $\mathbb{E}(\mathrm{uni}_{[\varnothing,\lambda]};\mathrm{ddeg})$ is always less than or equal to $\mathbb{E}(\mathrm{maxchain}_{[\varnothing,\lambda]};\mathrm{ddeg})$ or vice-versa. This is not the case; for example, with $\lambda = (3,1)$ we have
\[\mathbb{E}(\mathrm{uni}_{[\varnothing,(3,1)]};\mathrm{ddeg}) = \frac{8}{7} > \frac{17}{15} = \mathbb{E}(\mathrm{maxchain}_{[\varnothing,(3,1)]};\mathrm{ddeg})\]
but with $\lambda = (3,2)$ we have
\[\mathbb{E}(\mathrm{uni}_{[\varnothing,(3,2)]};\mathrm{ddeg}) = \frac{11}{9} < \frac{37}{30} = \mathbb{E}(\mathrm{maxchain}_{[\varnothing,(3,2)]};\mathrm{ddeg}).\]
These examples also show that $[\nu,\lambda]$ can be CDE without each connected component of the skew shape~$\lambda/\nu$ being balanced. This is because, as we have seen in the proof of Proposition~\ref{prop:productmcde}, we have~$\mathbb{E}(\mathrm{uni}_{P\times Q};\mathrm{ddeg}) =\mathbb{E}(\mathrm{uni}_{P};\mathrm{ddeg}) + \mathbb{E}(\mathrm{uni}_{Q};\mathrm{ddeg})$ and~$\mathbb{E}(\mathrm{maxchain}_{P\times Q};\mathrm{ddeg}) =\mathbb{E}(\mathrm{maxchain}_{P};\mathrm{ddeg}) + \mathbb{E}(\mathrm{maxchain}_{Q};\mathrm{ddeg})$; and because for any two intervals $[\nu^1,\lambda^1]$ and $[\nu^2,\lambda^2]$ of Young's lattice there is an interval $[\nu,\lambda]$ with~$[\nu,\lambda] = [\nu^1,\lambda^1] \times [\nu^2,\lambda^2]$: we form the skew shape~$\lambda/\nu$ by putting~$\lambda^2/\nu^2$ completely to the southwest of $\lambda^1/\nu^1$. So by bringing together many copies of the straight shapes~$(3,1)$ and~$(3,2)$ in this way we can get a CDE interval of Young's lattice corresponding to a skew shape whose connected components are all $(3,1)$'s or $(3,2)$'s. Classifying arbitrary intervals of Young's lattice that are CDE therefore appears intractable. Perhaps something could be said about intervals~$[\nu,\lambda]$ with~$\lambda/\nu$ connected.
\end{remark}

\section{Intervals of the shifted Young's lattice} \label{sec:shiftyounglat}

We now extend the results of Section~\ref{sec:younglat} to the shifted setting. A \emph{strict partition} is a partition $\lambda$ with $\lambda_i > \lambda_{i+1}$ whenever $\lambda_i \neq 0$. So $\lambda = (4,3,1)$ is strict but~$\lambda = (4,3,3)$ is not. The \emph{shifted Young's lattice} is the restriction of Young's lattice to the subset of all strict partitions. If $\lambda$ and $\nu$ are strict partitions with $\nu \subseteq \lambda$ we use the notation~$[\nu,\lambda]_{\mathrm{shift}}$ to denote the interval between $\nu$ and $\lambda$ in the shifted Young's lattice to avoid confusion with intervals in the ordinary Young's lattice. It turns out that every interval $[\nu,\lambda]_{\mathrm{shift}}$ of the shifted Young's lattice is also a distributive lattice. In this section we will in fact restrict our attention to initial intervals~$[\varnothing,\lambda]_{\mathrm{shift}}$. So let us show how to write~$[\varnothing,\lambda]_{\mathrm{shift}} = J(P)$ for some poset $P$. Associated to any strict partition $\lambda$ is its \emph{shifted Young diagram}, which is the same as the Young diagram of $\lambda$ except that each row after the first is indented by one box from the row above it. For example, the shifted Young diagram of $\lambda = (8,6,5,2,1)$ is
\[\ydiagram{8,1+6,2+5,3+2,4+1}\]
We again use compass directions to describe the relative positions of boxes of shifted Young diagrams. We turn the shifted Young diagram of $\lambda$ into a poset $P^{\mathrm{shift}}_{\lambda}$ via the same partial order as in the ordinary Young diagram case: for boxes $u,v$ of the shifted Young diagram of $\lambda$ we have $u \leq v$ iff $v$ is weakly southeast of~$u$. Note that once again~$P^{\mathrm{shift}}_{\lambda}$ is always ranked but may or may not be graded. Also observe that~$[\varnothing,\lambda]_{\mathrm{shift}} = J(P^{\mathrm{shift}}_{\lambda})$: the order ideals of $P^{\mathrm{shift}}_{\lambda}$ are exactly the shifted Young diagrams of strict partitions~$\nu$ with $\nu \subseteq \lambda$.

For two partitions $\nu$ and $\lambda$ we use $\nu+\lambda$ to denote the partition whose parts are the sums of the corresponding parts of $\nu$ and $\lambda$; that is, $\nu+\lambda := (\nu_1+\lambda_1,\nu_2+\lambda_2,\nu_3+\lambda_3,\ldots)$. Observe that every strict partition $\lambda$ is $\lambda = \delta_n + \nu$ for a unique $n$ and (not necessarily strict) partition $\nu$ with $\ell(\nu) \leq n$. For example, in the shifted Young diagram below for~$\lambda = \delta_3 + (2,2)$ we have shaded the boxes of the~$\delta_3$ ``section'' in yellow:
\begin{center}
\begin{ytableau}  *(yellow) \bullet &  *(yellow) \bullet  &  *(yellow) \bullet &  & \\  \none &  *(yellow) \bullet  &  *(yellow) \bullet &  &   \\ \none & \none &  *(yellow) \bullet \end{ytableau}
\end{center} 

\subsection{The first shifted Young's lattice conjecture} \label{subsec:shiftyounglatconj1}

Reiner-Tenner-Yong made two conjectures about CDE initial intervals of the shifted Young's lattice~\cite[\S2.3.1]{reiner2016poset}. Here we prove one of these conjectures.

\begin{conj}[{Reiner-Tenner-Yong~\cite[Conjecture 2.25]{reiner2016poset}}] \label{conj:stretchstair}
Let $\lambda := \delta_n + \delta_m\circ a^a$ where $n > am$ with $a,m,n \geq 1$. Then $[\varnothing,\lambda]_{\mathrm{shift}}$ is CDE with edge density
\[\mathbb{E}(\mathrm{uni}_{[\varnothing,\lambda]_{\mathrm{shift}}};\mathrm{ddeg}) = \frac{n+1+am}{4}\]
\end{conj}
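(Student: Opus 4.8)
The plan is to prove the stronger statement that $[\varnothing,\lambda]_{\mathrm{shift}}$ is tCDE and then read off the edge density, paralleling the toggle-theoretic proof of Theorem~\ref{thm:younglatcde}. Since $[\varnothing,\lambda]_{\mathrm{shift}} = J(P^{\mathrm{shift}}_{\lambda})$ and the relevant maximal-chain and uniform distributions are toggle-symmetric by Lemma~\ref{lem:chaintogsym}, it suffices to show that $\mathbb{E}(\mu;\mathrm{ddeg})$ takes the same value $\frac{n+1+am}{4}$ for every toggle-symmetric distribution $\mu$ on $J(P^{\mathrm{shift}}_{\lambda})$. First I would fix matrix coordinates $[i,j]$ on the shifted diagram and introduce shifted rook statistics $R_{ij}$, i.e.\ signed sums of the toggleability statistics $\mathcal{T}^{\pm}_{[i',j']}$ analogous to those of Section~\ref{sec:younglat}, with one crucial modification: the boxes lying on the main diagonal of the shifted diagram must be treated separately, because reflecting the shifted diagram across its diagonal identifies the row through $[i,j]$ with the column through the mirror box. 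Concretely I expect two flavors of rook --- one anchored off the diagonal and one anchored on it --- so that, after unfolding the reflection, each rook attacks a full ``doubled'' row-and-column.

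The two technical lemmas then need shifted analogs. The shifted rook-toggle-symmetry lemma (the analog of Lemma~\ref{lem:rooktogsym}) is again immediate from the definition of toggle-symmetry: for toggle-symmetric $\mu$ the expectation $\mathbb{E}(\mu;R_{ij})$ collapses to a sum of $\mathbb{E}(\mu;\mathcal{T}^{-}_{[\cdot,\cdot]})$ over the boxes the rook attacks. The real work is the shifted rook-expectation lemma (the analog of Lemma~\ref{lem:rookexpectation}), which must assert that for \emph{any} distribution $\mu$ one has $\mathbb{E}(\mu;R_{ij}) = \kappa + \sum_{c} \mathbb{P}(\rho \sim \mu; c \in \rho)$, where $\kappa$ is an explicit constant and $c$ ranges over the outward corners of $\lambda$ lying strictly northeast or strictly southwest of $[i,j]$. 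This is precisely the lemma the excerpt promises to prove in detail, and I would establish it by the same telescoping/lattice-path bookkeeping as in~\cite{chan2015expected}, being careful about how corners and the two step-types interact with the diagonal.

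Finally I would formulate a shifted notion of \emph{balanced} --- all outward corners of $\lambda$ occur on the main anti-diagonal --- and check that $\lambda = \delta_n + \delta_m\circ a^a$ is balanced precisely because the hypothesis $n > am$ forces connectivity and places every outward corner of the stretched-staircase part symmetrically on the anti-diagonal. For such $\lambda$ I would exhibit explicit integral rook coefficients $r_{ij}$ so that (i) every box is attacked the same total number of times and (ii) the corner error terms from the rook-expectation lemma cancel in aggregate. Evaluating $\mathbb{E}(\mu;\sum_{[i,j]} r_{ij}R_{ij})$ in two ways --- once via the toggle-symmetry lemma, giving a fixed multiple of $\mathbb{E}(\mu;\mathrm{ddeg})$, and once via the expectation lemma, giving a pure constant --- then pins down $\mathbb{E}(\mu;\mathrm{ddeg})$. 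The main obstacle, and the source of both the denominator $4$ and the additive $+1$ in $\frac{n+1+am}{4}$, is the diagonal: the reflection symmetry doubles the effective height and width (producing the $4$ in place of the $a+b$ of Theorem~\ref{thm:younglatcde}), whereas the diagonal boxes are counted once rather than twice, contributing the $+1$. Arranging the rook attack-multiplicities and the corner cancellation to hold simultaneously in the presence of these diagonal boxes is the crux of the argument.
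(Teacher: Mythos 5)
Your proposal matches the paper's proof essentially exactly: the paper establishes the stronger Theorem~\ref{thm:shiftyounglatcde} (that $[\varnothing,\lambda]_{\mathrm{shift}}$ is tCDE) via shifted rook statistics $R^{\mathrm{shift}}_{ij}$ with special treatment of the main-diagonal boxes, the two lemmas you describe (Lemmas~\ref{lem:shiftrooktogsym} and~\ref{lem:shiftrookexpectation}), and an explicit rook placement (Lemma~\ref{lem:shiftrookplacement}) in which every box is attacked $4$ times and the corner error terms cancel, giving $\mathbb{E}(\mu;\mathrm{ddeg}) = (\lambda_1+1)/4 = (n+1+am)/4$ for every toggle-symmetric $\mu$. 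The only minor deviations are details you flagged as needing care anyway: in the shifted setting the error term runs over outward corners strictly \emph{southeast} of $[i,j]$ (a straight shifted shape has no northwest-border corners), and the paper's ``shifted-balanced'' condition is phrased as $\lambda = \delta_n+\nu$ with $\nu$ balanced as a straight shape rather than as a condition on the corners of $\lambda$ itself.
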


We prove a stronger version of Conjecture~\ref{conj:stretchstair}. Namely,

\begin{thm} \label{thm:shiftyounglatcde}
Let $0 \leq k < n$. Let $\nu$ be a (not necessarily strict) partition which as a straight shape is balanced with height and width both equal to $k$.
\begin{enumerate}
\item Setting $\lambda:= \delta_{n} + \nu$, the distributive lattice $[\varnothing,\lambda]_{\mathrm{shift}}$ is tCDE with edge density
\[\mathbb{E}(\mathrm{uni}_{[\varnothing,\lambda]_{\mathrm{shift}}};\mathrm{ddeg}) = \frac{n+1+k}{4}.\]
\item Setting $\lambda:= \delta_{n} + \nu + (n-1-k)^{n}$, the distributive lattice $[\varnothing,\lambda]_{\mathrm{shift}}$ is tCDE with edge density
\[\mathbb{E}(\mathrm{uni}_{[\varnothing,\lambda]_{\mathrm{shift}}};\mathrm{ddeg}) = \frac{n}{2}.\]
\end{enumerate}
\end{thm}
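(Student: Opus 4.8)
The plan is to carry the ``toggle theory'' of Section~\ref{sec:younglat} over to the shifted diagram poset $P^{\mathrm{shift}}_\lambda$, by establishing shifted analogs of Lemma~\ref{lem:rooktogsym}, Lemma~\ref{lem:rookexpectation}, and Lemma~\ref{lem:rookplacement} and then combining them exactly as in the derivation of Theorem~\ref{thm:younglatcde}. For each box $[i,j]$ of the shifted Young diagram of $\lambda$ I would define a shifted rook $R_{ij}\colon J(P^{\mathrm{shift}}_\lambda)\to\mathbb{R}$ as a signed sum of the toggleability statistics $\mathcal{T}^{\pm}_{[i',j']}$ over the boxes weakly northwest and weakly southeast of $[i,j]$, in direct imitation of the $R_{ij}$ of Section~\ref{sec:younglat}. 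The aim is to prove two expressions for $\mathbb{E}(\mu;R_{ij})$: one valid for any toggle-symmetric $\mu$, showing that the expectation collapses to a sum of $\mathbb{E}(\mu;\mathcal{T}^{-})$ over a controlled set of ``attacked'' boxes; and one valid for every $\mu$, showing that the expectation equals $1$ plus an error term supported on the outward corners of $\lambda$ lying in a region $C_{ij}$ attached to $[i,j]$. Recalling that $\mathrm{ddeg}(I)=\sum_{p}\mathcal{T}^{-}_p(I)$, these two expressions are precisely what is needed to pin down $\mathbb{E}(\mu;\mathrm{ddeg})$.

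The new feature, and the step I expect to be the main obstacle, is the main diagonal of the shifted diagram, which the staircase part $\delta_n$ straddles. In the straight-shape case the telescoping behind Lemma~\ref{lem:rooktogsym} proceeds independently along rows and columns; in the shifted case a column, upon reaching the diagonal, is reflected into a row, so the set of boxes that a shifted rook attacks in expectation is a hook that ``bounces off'' the diagonal, picking up the diagonal reflections of its row and column. I would first fix matrix coordinates adapted to $\lambda=\delta_n+\nu$, define the shifted outward corners as the corners of the boundary lattice path, and then carefully redo the boundary telescoping near the diagonal. This should yield the shifted form of Lemma~\ref{lem:rookexpectation}, namely $\mathbb{E}(\mu;R_{ij})=1+\sum_{c\in C_{ij}}\mathbb{P}(\rho\sim\mu;\,c\in\rho)$, where $C_{ij}$ now records the outward corners strictly northeast or southwest of $[i,j]$ together with their diagonal reflections.

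Granting these lemmas, the theorem reduces to a rook-placement problem: produce integer coefficients $r_{ij}$ such that (i) the statistic $\sum_{ij}r_{ij}R_{ij}$ attacks every box the same total number $D$ of times, and (ii) for every outward corner $c$ the aggregate error coefficient $\sum_{[i,j]\colon c\in C_{ij}}r_{ij}$ vanishes. Condition (i) makes the toggle-symmetric expression read $\mathbb{E}(\mu;\sum_{ij} r_{ij}R_{ij})=D\cdot\mathbb{E}(\mu;\mathrm{ddeg})$, while condition (ii) makes the corner expression read $\mathbb{E}(\mu;\sum_{ij} r_{ij}R_{ij})=\sum_{ij}r_{ij}$; comparing the two gives $\mathbb{E}(\mu;\mathrm{ddeg})=(\sum_{ij}r_{ij})/D$, a quantity independent of the toggle-symmetric $\mu$, which is exactly the tCDE conclusion. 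The hypotheses are tailored to guarantee such a placement: that $\nu$ is balanced of height and width $k$ lets the $\nu$-corner contributions cancel just as in Lemma~\ref{lem:rookplacement}, while $k<n$ forces $\nu$ to sit within the top of the staircase so its corners stay clear of the diagonal. For part (1) I would adapt the rectangular placement of Section~\ref{sec:younglat}, using the diagonal reflection to balance the staircase, and read off $(\sum_{ij}r_{ij})/D=(n+1+k)/4$. For part (2) the extra block $(n-1-k)^n$ is exactly the padding that makes the outward corners pair off and cancel symmetrically about the diagonal, so that the attack counts become uniform with $(\sum_{ij}r_{ij})/D=n/2$, independent of $k$ and $\nu$.

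As a consistency check, whenever $P^{\mathrm{shift}}_\lambda$ is graded I would confirm the resulting densities against Proposition~\ref{prop:togsymedgedensity}: the formula $\#P^{\mathrm{shift}}_\lambda/(r+2)$ should reproduce $(n+1+k)/4$ and $n/2$ respectively, which both tests the identification of $D$ and $\sum_{ij}r_{ij}$ and confirms that the longest chain length $r$ adjusts with $\nu$ in the way the density formula demands.
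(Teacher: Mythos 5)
Your plan is essentially the paper's proof: define shifted rooks as signed sums of toggleability statistics, prove the toggle-symmetric ``attack'' lemma and the ``$1$ plus corner-error'' lemma, then find an integral placement in which every box (including the main-diagonal boxes) is attacked $D=4$ times and the corner coefficients cancel, so that $\mathbb{E}(\mu;\mathrm{ddeg})=(\sum_{ij}r_{ij})/4=(\lambda_1+1)/4$, which equals $(n+1+k)/4$ in case (1) and $n/2$ in case (2). The one detail that comes out differently from your guess is the corner set: for a straight shifted shape all outward corners lie on the southeast border, and the error term in the shifted analog of Lemma~\ref{lem:rookexpectation} is supported on the corners strictly \emph{southeast} of $[i,j]$ rather than on northeast/southwest corners and their diagonal reflections, but since you explicitly defer this to the boundary telescoping near the diagonal it does not change the structure of the argument.
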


\begin{figure}
\begin{center}
\begin{tikzpicture}
\node at (0,0) {\begin{ytableau}  *(yellow) \bullet & *(yellow) \bullet & *(yellow) \bullet &  & & & & \\ \none &  *(yellow) \bullet &  & & & &   \\  \none &  \none & & & & &  \\ \none & \none & \none & & \\ \none & \none & \none & \none & \end{ytableau}};
\def\x{0.6}
\draw[red,line width=3pt] (1*\x,-2.5*\x) -- (0*\x,-2.5*\x) -- (0*\x,-1.5*\x) -- (-1*\x,-1.5*\x) -- (-1*\x,-0.5*\x) -- (-2*\x,-0.5*\x) -- (-2*\x,1.5*\x) -- (-1*\x,1.5*\x) -- (-1*\x,2.5*\x) -- (4*\x,2.5*\x);
\end{tikzpicture}
\end{center}
\caption{For $\lambda = (8,6,5,2,1)$, the boxes of $(3,1) \in J(P_{\lambda}^{\mathrm{shift}})$ are shaded in yellow and its associated lattice path is drawn in red.} \label{fig:shiftlatticepath}
\end{figure}

Let us call a strict partition \emph{shifted-balanced of Type~(1)} if it is of the form appearing in part~(1) of the statement of Theorem~\ref{thm:shiftyounglatcde}, and define \emph{shifted-balanced of Type~(2)} analogously. The strict partition $\delta_n + \delta_m\circ a^a$ from Conjecture~\ref{conj:stretchstair} is shifted-balanced of Type~(1) and thus Theorem~\ref{thm:shiftyounglatcde} indeed implies Conjecture~\ref{conj:stretchstair}.

The key to proving Theorem~\ref{thm:shiftyounglatcde} is coming up with an appropriate shifted version of the rooks from Section~\ref{sec:younglat}. To define these rooks we need a little more notation. From now on in this subsection let $\lambda$ be a fixed strict partition. We use the same ``matrix coordinates'' for boxes of the shifted Young diagram of $\lambda$ as we did for ordinary Young diagrams in Section~\ref{sec:younglat}. So the northwestern-most box of $\lambda$ is $[1,1]$, the northeastern-most box is $[1,\lambda_1]$, and the western-most box in the last row is $[\ell(\lambda),\ell(\lambda)]$. Observe that $i \leq j$ for all boxes $[i,j] \in P_{\lambda}^{\mathrm{shift}}$. Let us call a box~$[i,j] \in P_{\lambda}^{\mathrm{shift}}$ with $i=j$ a \emph{main diagonal box}; the main diagonal boxes behave somewhat differently than the other boxes. As with order ideals for ordinary skew shapes, the order ideals of $P_{\lambda}^{\mathrm{shift}}$ correspond to lattice paths: in this case we get lattice paths from $(\ell(\lambda),\ell(\lambda))$ to $(\lambda_1,0)$ inside of the shifted Young diagram of $\lambda$ that start off with some (possibly empty) sequence of steps of the form west, north, west, north, et cetera along the southwest main diagonal border and then consists only of north and east steps. See Figure~\ref{fig:shiftlatticepath} for an example of this correspondence of order ideals with lattice paths.

\begin{figure}
\ytableausetup{boxsize=2em}
\begin{tikzpicture} 
\node at (0,0) {\begin{ytableau}
\scalebox{1.0}[1.1]{$^{1}\,\,\,\,\,\,$} & \scalebox{1.0}[1.1]{$^{1}\,\,_{\text{-}1}$} & \scalebox{1.0}[1.1]{$^{1}\,\,_{\text{-}1}$} & \scalebox{1.0}[1.1]{$^{1}\,\,\,\,\,\,$} & & & & \\
\none &  \scalebox{1.0}[1.1]{$^{1}\,\,\,\,\,\,$} &  \scalebox{1.0}[1.1]{$^{1}\,\,\,\,\,\,$} &\scalebox{1.0}[1.1]{$^{1}\,\,\,_{1}$} & \scalebox{1.0}[1.1]{$\,\,\,\,\,\,_{1}$} & \scalebox{1.0}[1.1]{$\,\,\,\,_{1}$} & \scalebox{1.0}[1.1]{$\,\,\,\,_{1}$} \\
\none & \none & & \scalebox{1.0}[1.1]{$\,\,\,\,\,\,_{1}$} & \scalebox{1.0}[1.1]{$^{\text{-}1}\,\,_{1}$} & \scalebox{1.0}[1.1]{$^{\text{-}1}\,\,_{1}$} & \scalebox{1.0}[1.1]{$^{\text{-}1}\,\,_{1}$} \\
 \none & \none & \none & \scalebox{1.0}[1.1]{$\,\,\,\,\,\,_{1}$} & \scalebox{1.0}[1.1]{$^{\text{-}1}\,\,_{1}$}  \\
  \none & \none & \none & \none & \scalebox{1.0}[1.1]{$\,\,\,\,\,\,_{1}$}
\end{ytableau} }; 
\end{tikzpicture} \; \; \begin{tikzpicture}
\node at (0,0) {\ydiagram{8,1+6,2+5,3+2,4+1}};
\def\x{0.8}
\draw[blue,line width=2pt] (-3.5*\x,2*\x) -- (-2.5*\x,1*\x) -- (2.5*\x,1*\x);
\draw[blue,line width=2pt] (-0.5*\x,2*\x) -- (-0.5*\x,-1*\x) -- (0.5*\x,-2*\x);
\filldraw (-0.5*\x,1*\x) circle (3pt);
\end{tikzpicture}
\ytableausetup{boxsize=1.5em}
\caption{On the left we depict the rook $R_{2,4}^{\mathrm{shift}}$; on the right we show the boxes that $R_{2,4}^{\mathrm{shift}}$ ``attacks'' in expectation for a toggle-symmetric distribution (note that $[2,4]$ itself is attacked ``twice'').} \label{fig:shiftrook}
\end{figure}

Here are the shifted rooks: for $[i,j] \in P_{\lambda}^{\mathrm{shift}}$ we define $R_{ij}^{\mathrm{shift}}\colon J(P_{\lambda}^{\mathrm{shift}}) \to \mathbb{R}$ by
\[R^{\mathrm{shift}}_{ij} := \sum_{\substack{i'\leq i, \; \, j' \leq j \\ [i',j']\in P_{\lambda}^{\mathrm{shift}} }}\!\mathcal{T}_{[i',j']}^+ + \sum_{\substack{i'\geq i, \; \, j' \geq j \\ [i',j']\in P_{\lambda}^{\mathrm{shift}} }}\!\mathcal{T}_{[i',j']}^- - \sum_{\substack{i'< i, \; \, j'< j, \; \, i' < j' \\ [i',j']\in P_{\lambda}^{\mathrm{shift}} }}\!\mathcal{T}_{[i',j']}^- - \sum_{\substack{i'> i, \; \, j'> j, \; \, i' < j' \\ [i',j']\in P_{\lambda}^{\mathrm{shift}} }}\!\mathcal{T}_{[i',j']}^+.\]
This formula is complicated. The left of Figure~\ref{fig:shiftrook} presents~$R_{2,4}^{\mathrm{shift}}$ for~$\lambda = (8,6,5,2,1)$ in a way that is easier to comprehend than the above formula: in this diagram we draw the coefficient of $\mathcal{T}_{ij}^{+}$ in the northwest corner of each box  $[i,j]$ and the coefficient of~$\mathcal{T}_{ij}^{-}$ in the southeast corner; if the coefficient is zero we leave the corresponding corner empty. The right of Figure~\ref{fig:shiftrook} depicts the boxes ``attacked'' in expectation by the rook~$R_{2,4}^{\mathrm{shift}}$ for a toggle-symmetric distribution. In contrast to the ordinary rooks, the shifted rook~$R_{ij}^{\mathrm{shift}}$ attacks not just the boxes in the same row and column as $[i,j]$, but also any main diagonal boxes strictly north or strictly east of $[i,j]$. That is,

\begin{lemma} \label{lem:shiftrooktogsym}
For any toggle-symmetric distribution $\mu$ on $J(P^{\mathrm{shift}}_{\lambda})$ and $[i,j] \in P^{\mathrm{shift}}_{\lambda}$,
\[\mathbb{E}(\mu;R^{\mathrm{shift}}_{ij}) = \hspace{-0.4cm} \sum_{[i',j] \in P^{\mathrm{shift}}_{\lambda}} \hspace{-0.4cm} \mathbb{E}(\mu;\mathcal{T}^{-}_{[i',j]}) + \hspace{-0.5cm} \sum_{[i,j'] \in P^{\mathrm{shift}}_{\lambda}} \hspace{-0.4cm} \mathbb{E}(\mu;\mathcal{T}^{-}_{[i,j']}) + \hspace{-0.5cm} \sum_{\substack{i' < i, \\ [i',i'] \in P^{\mathrm{shift}}_{\lambda}}} \hspace{-0.4cm} \mathbb{E}(\mu;\mathcal{T}^{-}_{[i,j']}) + \hspace{-0.5cm} \sum_{\substack{j' > j, \\ [j',j'] \in P^{\mathrm{shift}}_{\lambda}}} \hspace{-0.4cm} \mathbb{E}(\mu;\mathcal{T}^{-}_{[i,j']})  .\] 
\end{lemma}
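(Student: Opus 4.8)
The plan is to mimic the proof of Lemma~\ref{lem:rooktogsym} for the ordinary rooks, the only new feature being the bookkeeping forced by the main-diagonal condition $i' < j'$ that appears in the two subtracted sums. First I would take the expectation of $R^{\mathrm{shift}}_{ij}$ term by term and invoke toggle-symmetry (Definition~\ref{def:togsym}): for every box $p = [i',j'] \in P^{\mathrm{shift}}_\lambda$ the values $\mathbb{E}(\mu;\mathcal{T}^+_p)$ and $\mathbb{E}(\mu;\mathcal{T}^-_p)$ coincide, so I may set $e_p := \mathbb{E}(\mu;\mathcal{T}^-_p) = \mathbb{E}(\mu;\mathcal{T}^+_p)$ and replace every toggleability statistic in the formula by $e_p$. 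This turns $\mathbb{E}(\mu;R^{\mathrm{shift}}_{ij})$ into the signed sum $\sum_{A} e_p + \sum_{B} e_p - \sum_{A'} e_p - \sum_{B'} e_p$, where (all sets intersected with $P^{\mathrm{shift}}_\lambda$) $A := \{[i',j'] : i' \le i,\, j' \le j\}$, $B := \{[i',j'] : i' \ge i,\, j' \ge j\}$, $A' := \{[i',j'] : i' < i,\, j' < j,\, i' < j'\}$ and $B' := \{[i',j'] : i' > i,\, j' > j,\, i' < j'\}$.

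Since $A' \subseteq A$ and $B' \subseteq B$, the subtractions cancel on the overlaps and I would reduce the whole expression to $\sum_{A \setminus A'} e_p + \sum_{B \setminus B'} e_p$. The next step is to describe these two set differences. A box of $A$ fails to lie in $A'$ precisely when it violates one of the three defining inequalities of $A'$, i.e.\ when $i' = i$, or $j' = j$, or $i' = j'$ (recalling that $i' \le j'$ always holds in a shifted diagram, so $i' < j'$ fails exactly when $i' = j'$). Hence $A \setminus A'$ is the set of boxes of $A$ lying in row $i$, in column $j$, or on the main diagonal; symmetrically $B \setminus B'$ is the set of boxes of $B$ lying in row $i$, in column $j$, or on the main diagonal.

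Finally I would add the two contributions box by box to read off the multiplicities, which is where the real content (and the main obstacle) lies. For a box in row $i$ or column $j$ the analysis is identical to the ordinary case and yields total coefficient $1$, except for $[i,j]$ itself, which lies in both $A \setminus A'$ and $B \setminus B'$ and so is attacked twice. The genuinely new contribution comes from the main-diagonal boxes $[c,c]$: such a box lies in $A$ iff $c \le i$ and in $B$ iff $c \ge j$ (using $i \le j$), and because it is never removed by $A'$ or $B'$ it survives with coefficient $1$ whenever $c < i$ (strictly north) or $c > j$ (strictly east), while the diagonal boxes with $i < c < j$ lie in neither $A$ nor $B$ and contribute nothing. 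Collecting these coefficients reproduces exactly the four sums in the statement, the first two being the full column $j$ and full row $i$ and the last two being the main-diagonal boxes strictly north and strictly east of $[i,j]$. The only delicate points to verify are the degenerate configurations---when $[i,j]$ is itself a main-diagonal box ($i=j$), and when one of the regions $A$, $B$ is empty or meets the diagonal---but in each of these the same tally goes through, so I expect no essential difficulty beyond the case analysis.
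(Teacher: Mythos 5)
Your proposal is correct and takes essentially the same (and really the only) route as the paper: expand $R^{\mathrm{shift}}_{ij}$, use toggle-symmetry to identify $\mathbb{E}(\mu;\mathcal{T}^{+}_{p})$ with $\mathbb{E}(\mu;\mathcal{T}^{-}_{p})$, and tally the signed multiplicities of each box. The paper simply declares this immediate from the definitions, whereas you carry out the bookkeeping explicitly, including the correct treatment of the main-diagonal boxes (using that $i'\leq j'$ always holds, so the condition $i'<j'$ fails exactly on the diagonal), which is precisely where the shifted case departs from Lemma~\ref{lem:rooktogsym}.
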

\begin{proof}
This follows immediately from the above definition of $R^{\mathrm{shift}}_{ij}$ together with the definition (Definition~\ref{def:togsym}) of toggle-symmetry.
\end{proof}

Now we need to find the other, more subtle expression which says that for any probability distribution the expectation of $R_{ij}^{\mathrm{shift}}$ is one up to an error term involving a sum over outward corners. For this we need a bit more notation concerning outward corners of shifted Young diagrams. As we are only considering ``straight'' shifted shapes, we can only have southeast outward corners. The \emph{southeast border} of the shifted Young diagram of~$\lambda$ is the lattice path associated to the order ideal $\lambda \in J(P_{\lambda}^{\mathrm{shift}})$. An \emph{outward corner} of the shifted Young diagram of $\lambda$ is a north step followed by an east step on the southeast border. We denote the set of these outward corners by~$C^{\mathrm{shift}}(\lambda)$.  As in the ordinary Young diagram case, we say that $c \in C^{\mathrm{shift}}(\lambda)$ \emph{occurs} at~$(i,j)$ if this is the lattice point in common between the two steps of $c$. And again for an order ideal~$\nu \in J(P_{\lambda}^{\mathrm{shift}})$ we say that~$\nu$ \emph{contains} $c \in C^{\mathrm{shift}}(\lambda)$, written $c \in \nu$, if the lattice path associated to $\nu$ contains the two steps that make up~$c$. For $[i,j] \in P_{\lambda}^{\mathrm{shift}}$ we use~$C_{ij}^{\mathrm{shift}}(\lambda)$ to denote the subset of $c \in C^{\mathrm{shift}}(\lambda)$ that occur strictly southeast of the center of the box $[i,j]$. We can now state the subtler expectation expression for $R_{ij}^{\mathrm{shift}}$.

\begin{figure}
\ytableausetup{boxsize=2em}
\begin{tikzpicture} 
\node at (0,0) {\begin{ytableau}
\scalebox{1.0}[1.1]{$^{1}\,\,\,\,\,\,$} & \scalebox{1.0}[1.1]{$^{1}\,\,_{\text{-}1}$} & \scalebox{1.0}[1.1]{$^{1}\,\,_{\text{-}1}$} & \scalebox{1.0}[1.1]{$^{1}\,\,\,\,\,\,$} & & & & \\
\none &  \scalebox{1.0}[1.1]{$^{1}\,\,\,\,\,\,$} &  \scalebox{1.0}[1.1]{$^{1}\,\,\,\,\,\,$} &\scalebox{1.0}[1.1]{$^{1}\,\,\,_{1}$} & \scalebox{1.0}[1.1]{$\,\,\,\,\,\,_{1}$} & \scalebox{1.0}[1.1]{$\,\,\,\,_{1}$} & \scalebox{1.0}[1.1]{$\,\,\,\,_{1}$} \\
\none & \none & & \scalebox{1.0}[1.1]{$\,\,\,\,\,\,_{1}$} & \scalebox{1.0}[1.1]{$^{\text{-}1}\,\,_{1}$} & \scalebox{1.0}[1.1]{$^{\text{-}1}\,\,_{1}$} & \scalebox{1.0}[1.1]{$^{\text{-}1}\,\,_{1}$} \\
 \none & \none & \none & \scalebox{1.0}[1.1]{$\,\,\,\,\,\,_{1}$} & \scalebox{1.0}[1.1]{$^{\text{-}1}\,\,_{1}$}  \\
  \none & \none & \none & \none & \scalebox{1.0}[1.1]{$\,\,\,\,\,\,_{1}$}
\end{ytableau} }; 
\def\x{0.8}
\draw[blue,line width=3pt] (1*\x,-2.5*\x-0.05*\x) -- (0*\x-0.05*\x,-2.5*\x-0.05*\x) -- (0*\x-0.05*\x,0.5*\x) -- (1*\x,0.5*\x) -- (1*\x,2.5*\x) -- (4*\x,2.5*\x);
\draw[red,line width=3pt] (1*\x,-2.5*\x+0.07*\x) -- (0*\x+0.07*\x,-2.5*\x+0.07*\x) -- (0*\x+0.07*\x,-1.5*\x) -- (1*\x,-1.5*\x) -- (1*\x,-0.5*\x)  -- (2*\x,-0.5*\x) -- (2*\x,0.5*\x) -- (3*\x,0.5*\x) -- (3*\x,1.5*\x) -- (4*\x-0.05*\x,1.5*\x) -- (4*\x-0.05*\x,2.5*\x+0.07*\x);
\node at (1.5*\x,-0.8*\x) {{\footnotesize ``-1''}};
\end{tikzpicture}
\ytableausetup{boxsize=1.5em}
\caption{Examples of the key equality from the proof of Lemma~\ref{lem:shiftrookexpectation}: we depict the lattices paths associated to two different $\nu \in J(P^{\mathrm{shift}}_{\lambda})$ in blue and red; in either case, $R_{2,4}^{\mathrm{shift}}(\nu) -  \#C^{\mathrm{shift}}_{2,4}(\lambda;\nu) =1$.} \label{fig:shiftrookexpectation}
\end{figure}

\begin{lemma} \label{lem:shiftrookexpectation}
For any probability distribution $\mu$ on $J(P_{\lambda}^{\mathrm{shift}})$ and any $[i,j] \in P_{\lambda}^{\mathrm{shift}}$,
\[\mathbb{E}(\mu;R_{ij}^{\mathrm{shift}}) = 1 + \sum_{c \in C^{\mathrm{shift}}_{ij}(\lambda)} \mathbb{P}(\nu \sim \mu; c \in \nu). \]
\end{lemma}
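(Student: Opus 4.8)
The plan is to deduce the statement from a pointwise identity holding for each individual order ideal, namely the equality illustrated in Figure~\ref{fig:shiftrookexpectation}. Writing $\#C^{\mathrm{shift}}_{ij}(\lambda;\nu) := \#\{c \in C^{\mathrm{shift}}_{ij}(\lambda) : c \in \nu\}$, I would first reduce the lemma to showing that
\[ R^{\mathrm{shift}}_{ij}(\nu) = 1 + \#C^{\mathrm{shift}}_{ij}(\lambda;\nu) \]
for every $\nu \in J(P^{\mathrm{shift}}_{\lambda})$. Once this holds, I take $\mathbb{E}(\mu;-)$ of both sides and apply linearity of expectation, using that the $\mu$-expectation of the indicator of the event $c \in \nu$ is $\mathbb{P}(\nu \sim \mu; c \in \nu)$ and that the expectation of the constant statistic $\mathbb{1}$ is $1$. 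This is formally identical to how Lemma~\ref{lem:rookexpectation} is obtained in the ordinary setting, and it is exactly why the conclusion is valid for an \emph{arbitrary} (not necessarily toggle-symmetric) distribution $\mu$.

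To prove the pointwise identity I would translate each toggleability statistic into a feature of the lattice path $L_\nu$ associated to $\nu$. Recall that $\mathcal{T}^-_{[i',j']}(\nu) = 1$ precisely when $[i',j']$ is a maximal box of $\nu$, i.e.\ an outward (convex) corner of $L_\nu$, and $\mathcal{T}^+_{[i',j']}(\nu) = 1$ precisely when $[i',j']$ is a minimal box of $P^{\mathrm{shift}}_{\lambda} \setminus \nu$, i.e.\ an inward (concave) corner of $L_\nu$. Under this dictionary the four quadrant sums defining $R^{\mathrm{shift}}_{ij}$ become signed counts of the inward and outward corners of $L_\nu$ in the regions weakly and strictly northwest and southeast of $[i,j]$. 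The key computation is then a cancellation argument obtained by grouping these signed corner-counts by row and by column and following $L_\nu$ across the boundary lines of the four quadrants: almost all contributions cancel in pairs, leaving only a boundary term governed by how the monotone path $L_\nu$ crosses the hook formed by row $i$ and column $j$. The constant $1$ reflects the topological fact that $L_\nu$, running from the main diagonal out to the far northeast, must cross this hook a net one time; and each outward corner $c$ of the ambient shape $\lambda$ in $C^{\mathrm{shift}}_{ij}(\lambda)$ contributes an additional $1$ exactly when $L_\nu$ realizes $c$, that is, when $c \in \nu$. This is precisely the content of the two paths drawn in Figure~\ref{fig:shiftrookexpectation}.

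The genuinely new feature compared with CHHM's Lemma~\ref{lem:rookexpectation} --- and the step I expect to be the main obstacle --- is the treatment of the main diagonal. In $R^{\mathrm{shift}}_{ij}$ the main-diagonal boxes $[i',i']$ appear in the two positive sums but are deliberately omitted from the two negative sums (the constraints $i' < j'$), which is exactly what makes the rook also ``attack'' the diagonal boxes north of and east of $[i,j]$ recorded in Lemma~\ref{lem:shiftrooktogsym}. In the pointwise identity, however, there is \emph{no} explicit diagonal correction term, so these omitted diagonal terms must be absorbed into the constant $1$. Reconciling this requires using the geometry of $L_\nu$ near the diagonal: every path begins with a (possibly empty) initial west/north zig-zag along the southwest main-diagonal border, and I would verify that the missing negative diagonal contributions are accounted for exactly by this initial portion of the path, so that the constant stays $1$ and no spurious corner terms are introduced.

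To guard against errors in the diagonal casework I would sanity-check the identity on the extreme ideals $\nu = \varnothing$ and $\nu = \lambda$ as well as on the two ideals of Figure~\ref{fig:shiftrookexpectation}. If one prefers to avoid the path bookkeeping altogether, an alternative is an induction on $\#\nu$ along the cover relations of $J(P^{\mathrm{shift}}_{\lambda})$: adjoining a single box $p$ to $\nu$ alters the toggleability statistics only of $p$ and its at most four neighbors, in a controlled local way, and one checks that the resulting change in $R^{\mathrm{shift}}_{ij}$ matches the change in $\#C^{\mathrm{shift}}_{ij}(\lambda;\nu)$ corner by corner, with the main-diagonal boxes again requiring separate attention.
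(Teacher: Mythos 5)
Your proposal follows the paper's proof in essentially the same way: both reduce the lemma to the pointwise identity $R_{ij}^{\mathrm{shift}}(\nu) = 1 + \#C_{ij}^{\mathrm{shift}}(\lambda;\nu)$ for every $\nu \in J(P_{\lambda}^{\mathrm{shift}})$ and then conclude by linearity of expectation, and your fallback induction along cover relations (toggling one box into $\nu$ at a time) is precisely the ``tedious checking of cases'' that the paper sketches and then omits. If anything you are more explicit than the paper, which contents itself with verifying the pointwise identity on the two examples of Figure~\ref{fig:shiftrookexpectation}; your observation that the omitted negative diagonal terms must be absorbed by the initial zig-zag of the path along the main diagonal is exactly the point where the shifted case differs from CHHM's Lemma~\ref{lem:rookexpectation}.
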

\begin{proof}
Let $\mu$ and $[i,j]$ be as in the statement of the lemma. Let $\nu \in J(P_{\lambda}^{\mathrm{shift}})$. Let us use $C^{\mathrm{shift}}_{ij}(\lambda;\nu)$ to denote the subset of outward corners $c \in C^{\mathrm{shift}}_{ij}(\lambda)$ with $c \in \nu$. The key observation, as in the proof of Lemma~\ref{lem:rookexpectation} given in~\cite[Lemma 3.6]{chan2015expected}, is that the following equality always holds:
\begin{equation} \label{eqn:shiftrookpathequality}
R_{ij}^{\mathrm{shift}}(\nu) - \#C^{\mathrm{shift}}_{ij}(\lambda;\nu) = 1.
\end{equation}
Figure~\ref{fig:shiftrookexpectation} depicts two order ideals $\nu \in J(P_{\lambda}^{\mathrm{shift}})$ where we can check that~(\ref{eqn:shiftrookpathequality}) holds. In these examples~$\lambda = (8,6,5,2,1)$ and~$[i,j] = [2,4]$. For the order ideal $\nu \in J(P_{\lambda}^{\mathrm{shift}})$ whose associated lattice path is drawn in blue in Figure~\ref{fig:shiftrookexpectation} we see that
\[R_{2,4}^{\mathrm{shift}}(\nu)  = \mathcal{T}^{-}_{[4,4]}(\nu) - \mathcal{T}^{+}_{[3,5]}(\nu) + \mathcal{T}^{-}_{[2,5]}(\nu) = 1-1+1 = 1.\]
Observe that $T^{\pm}_{[i,j]}(\nu) = 0$ for most $T^{\pm}_{ij}$ appearing in $R_{2,4}^{\mathrm{shift}}$. Because this~$\nu$ contains no outward corners, the equality~(\ref{eqn:shiftrookpathequality}) indeed holds in this case. A more generic example is the order ideal~$\nu \in J(P_{\lambda}^{\mathrm{shift}})$ whose associated lattice path is drawn in red in Figure~\ref{fig:shiftrookexpectation}. This~$\nu$ contains the outward corner $c \in C^{\mathrm{shift}}_{2,4}(\lambda)$ that occurs at $(3,5)$. But for this $\nu$ we still have
\begin{align*}
R_{2,4}^{\mathrm{shift}}(\nu) - \#C^{\mathrm{shift}}_{2,4}(\lambda;\nu)  &= \mathcal{T}^{-}_{[4,5]}(\nu) + \mathcal{T}^{-}_{[3,6]}(\nu) - \mathcal{T}^{+}_{[3,7]}(\nu) +  \mathcal{T}^{-}_{[2,7]}(\nu)  - \#C^{\mathrm{shift}}_{2,4}(\lambda;\nu) \\
&= 1+1-1+1-1=1.
\end{align*}
In Figure~\ref{fig:shiftrookexpectation} we wrote ``$-1$'' next to the outward corner that occurs at $(3,5)$ to emphasize that there would be a component of $\mathcal{T}^{+}_{[4,6]}$ with coefficient $-1$ in  $R_{2,4}^{\mathrm{shift}}$ if $[4,6]$ were a box of the shifted Young diagram of $\lambda$. These ``missing'' $-1$'s explain why the outward corners contained by~$\nu$ contribute an error term to~(\ref{eqn:shiftrookpathequality}). A formal proof of~(\ref{eqn:shiftrookpathequality}) for general~$\nu$ would start with the observation that~(\ref{eqn:shiftrookpathequality}) holds for $\nu = \varnothing$ and proceed to show, via a tedious checking of cases, that if~(\ref{eqn:shiftrookpathequality}) holds for $\nu$ then~(\ref{eqn:shiftrookpathequality}) also holds for~$\tau_{[i,j]}(\nu)$ where~$[i,j]$ is any box that can be toggled into $\nu$. Content with the examples from Figure~\ref{fig:shiftrookexpectation}, we omit this tedious verification. That~(\ref{eqn:shiftrookpathequality}) holds for all $\nu \in J(P_{\lambda}^{\mathrm{shift}})$ is equivalent the following equality of functions $J(P_{\lambda}^{\mathrm{shift}}) \to \mathbb{R}$:
\[ R_{ij}^{\mathrm{shift}} = \mathbb{1} +  \#C^{\mathrm{shift}}_{ij}(\lambda;\cdot). \]
The lemma follows by taking the expectation with respect to $\mu$ of both sides of this equality of functions, where we use that
\[\mathbb{E}(\mu;\#C^{\mathrm{shift}}_{ij}(\lambda;\cdot)) =  \sum_{c \in C^{\mathrm{shift}}_{ij}(\lambda)} \mathbb{P}(\nu \sim \mu; c \in \nu) \]
by linearity of expectation.
\end{proof}

To complete the proof of Theorem~\ref{thm:shiftyounglatcde} we need to show that it is possible to place (positive and negative) shifted rooks on the boxes of $P_{\lambda}^{\mathrm{shift}}$ so that all boxes are attacked the same number of times and so that in aggregate the error terms from Lemma~\ref{lem:shiftrookexpectation} cancel out for every outward corner. This is possible for the shifted-balanced strict partitions $\lambda$ appearing in the statement of Theorem~\ref{thm:shiftyounglatcde}.

\begin{figure}
\begin{ytableau}
-5 & 5 & & & & & & 2 \\ \none & -3 & 3 & & &  & 2 \\ \none & \none & -1 & 1 & & 2 & \\ \none & \none & \none & 1 & 1 \\ \none & \none & \none & \none & 1
\end{ytableau} \qquad \begin{ytableau}
-6 & 6 & & & & & & & 2 \\ \none & -4 & 4 & & & &  & 2 \\ \none & \none & -2 & 4 & & &  \\ \none & \none & \none & -2 & 4  & &\\ \none & \none & \none & \none & -2 & 2 & 2
\end{ytableau}
\caption{Rook placements satisfying Lemma~\ref{lem:shiftrookplacement}: on the left we have a~$\lambda$ of Type~(1), on the right one of Type~(2).} \label{fig:shiftrookplacement}
\end{figure}

\begin{lemma} \label{lem:shiftrookplacement}
Suppose the strict partition $\lambda$ is shifted-balanced of Type (1) or Type (2). Then there exist integral coefficients $r_{ij} \in \mathbb{Z}$ for $[i,j] \in P_{\lambda}^{\mathrm{shift}}$ such that: 
\begin{enumerate}[label=(\alph*)]
\item for any box~$[i,j] \in P_{\lambda}^{\mathrm{shift}}$ with $i \neq j$ we have 
\begin{itemize}
\item $\sum_{[i',j] \in P_{\lambda}^{\mathrm{shift}}} r_{i',j} = 2$,
\item $\sum_{[i,j'] \in P_{\lambda}^{\mathrm{shift}}} r_{i,j'} = 2$;
\end{itemize}
\item for any box $[i,i] \in P_{\lambda}^{\mathrm{shift}}$ we have $\sum_{\substack{i'\leq i, \; \, j' \leq j \\ [i',j'] \in P_{\lambda}^{\mathrm{shift}} }} r_{i',j'} + \sum_{\substack{i'\geq i, \; \, j' \geq j \\ [i',j'] \in P_{\lambda}^{\mathrm{shift}} }} r_{i',j'} = 4$;
\item for any outward corner $c \in C^{\mathrm{shift}}(\lambda)$ we have $\sum_{\substack{ c \in C^{\mathrm{shift}}_{ij}(\lambda) \\ [i,j] \in P_{\lambda}^{\mathrm{shift}}}} r_{ij} = 0$.
\end{enumerate}
\end{lemma}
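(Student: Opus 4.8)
The plan is to prove the lemma by an explicit construction of the coefficients $r_{ij}$, run in parallel for the two types, followed by direct verification of (a), (b), (c). The guiding decomposition splits the shifted diagram of $\lambda$ into its \emph{staircase core} (the $\delta_n$ part, occupying the columns $j\le n$) and its \emph{balanced block} (the copy of $\nu$ hanging off the right of the top $k$ rows, in columns $j>n$), together with the appended rectangle $(n-1-k)^n$ in Type~(2). On the balanced block I would place the weight $2$ on each box lying on the main anti-diagonal of the $k\times k$ square containing $\nu$, and $0$ on every other box of the block; since $\nu$ is balanced with height and width $k$, this anti-diagonal meets every row and column of $\nu$ exactly once (here $\ell(\nu)=k$ is used), so each row and column of the block contributes exactly $2$. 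On the staircase core I would place weights supported on a narrow band near the main diagonal (the figures use the diagonal and super-diagonal), chosen so that the staircase part of each of the top $k$ rows sums to $0$ while each column of the core sums to $2$; the rows below the core form a smaller shifted staircase $\delta_{n-k}$, which receives its own such placement, matched to the super-diagonal weight spilling in from row $k$.

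Verification of (a) is then bookkeeping: a top row's core part sums to $0$ and its block part contributes the single anti-diagonal $2$, giving $2$; each row below the core is arranged to give $2$; and every column sum telescopes to $2$ across the core/block interface. Condition (b) I expect to be essentially automatic once (a) holds. Writing $C_j,R_a$ for the column and row sums, the shifted shape has a unique ``short'' column (column $1$, containing only $[1,1]$) and, in Type~(1), a unique short row (row $n$), with all other rows and columns summing to $2$. The northwest-quadrant sum at a diagonal box $[i,i]$ is the sum over columns $\le i$, and the southeast-quadrant sum is the sum over rows $\ge i$ (since $a\le b$ for every box $[a,b]$); so the quadrant total collapses to $r_{11}+r_{nn}+2n-2$ in Type~(1) and to $r_{11}+2n$ in Type~(2), independent of $i$. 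Thus (b) reduces to the single identity $r_{11}+r_{nn}=6-2n$ (resp.\ $r_{11}=4-2n$), which the construction satisfies and which simultaneously forces $\sum_{[i,j]}r_{ij}=n+1+k$ (resp.\ $2n$); feeding this through Lemmas~\ref{lem:shiftrooktogsym} and~\ref{lem:shiftrookexpectation} yields the claimed edge densities $\frac{n+1+k}{4}$ and $\frac{n}{2}$.

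The conceptual heart is condition (c), and this is where balancedness is used. Reformulated, (c) asks that for each outward corner $c$ the sum of $r_{ij}$ over the boxes lying weakly northwest of $c$ vanish. Every outward corner of $\lambda$ comes from a descent $p$ of the profile $e_i:=\lambda_i+i-1=n+\nu_i$ (the interface descent being $p=k$, from $\nu_k>0=\nu_{k+1}$), and such a corner occurs in column $e_{p+1}=n+\nu_{p+1}\ge n$. Hence the northwest region of any corner contains the \emph{entire} staircase part of each row it meets, and since each of the top $k$ rows has core-sum $0$, those rows contribute nothing; the only possible contributions are the weight-$2$ boxes of the block. A short computation shows that the block weight at the anti-diagonal box $[i,n+k+1-i]$ lies in this region exactly when $p+\nu_{p+1}\ge k+1$. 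But $\nu$ being balanced means precisely that each of its outward corners lies on the main anti-diagonal, i.e.\ $p+\nu_{p+1}=k$ for every descent $p$; so the region contains no block weight, and the corner sum is $0$.

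The main obstacle I anticipate is not this cancellation but the simultaneous bookkeeping needed to produce the core placement in the first place: arranging the staircase weights so that \emph{every} column sums to $2$, the sub-staircase $\delta_{n-k}$ is matched to the super-diagonal weight entering from row $k$, and the diagonal values $r_{11},r_{nn}$ demanded by (b) come out correctly. For Type~(2) one must additionally confirm that the appended rectangle $(n-1-k)^n$ — balanced in its own right — lengthens every row to at least two boxes (removing the short row $n$), shifts $r_{11}$ to $4-2n$, and relocates all corners to the far southeast without creating uncancelled weight. As with the ``tedious checking of cases'' already flagged for Lemma~\ref{lem:shiftrookexpectation}, I would discharge these either by induction on toggling a box into the order ideal, or by sidestepping the explicit construction entirely: symmetrize a rook placement obtained from Lemma~\ref{lem:rookplacement} on the doubled straight shape $D(\lambda)$, which is balanced, and fold it back to the shifted setting.
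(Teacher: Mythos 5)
Your construction is the paper's: the same decomposition into staircase core, $\nu$-block, and (for Type~(2)) appended rectangle; weight $2$ on the anti-diagonal boxes of the block; a diagonal/super-diagonal band on the core with the top $k$ rows summing to $0$ and every column to $2$ (the paper's explicit values, which your constraints force, are $r_{i,i}=1+2i-\lambda_1$ and $r_{i,i+1}=\lambda_1-1-2i$ for $i\le k$, then $r_{i,i}=3+2k-\lambda_1$ and $r_{i,i+1}=\lambda_1-1-2k$ below, with $r_{n,n+i}=2$ on the rectangle); and the same use of $p+\nu_{p+1}=k$ at every descent to kill the corner sums in (c). Your telescoping reduction of (b) to the single identity $r_{11}+r_{nn}=6-2n$ (resp.\ $r_{11}=4-2n$) is valid and slightly slicker than the paper's direct check that the quadrant sum at $[i,i]$ is independent of $i$; the one thing I would not lean on is the closing fallback via the doubled straight shape, since the fold would have to account for the asymmetric treatment of main-diagonal boxes in $R^{\mathrm{shift}}_{ij}$ and does not obviously reproduce the $+1$ in the edge density $\frac{n+1+k}{4}$.
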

\begin{proof}
Examples of rook placements satisfying the conditions of this lemma, for $\lambda$ both of Type~(1) and of Type~(2), are given in Figure~\ref{fig:shiftrookplacement}. In this figure we draw $r_{ij}$ inside of box~$[i,j]$ unless $r_{ij} = 0$ in which case we leave the box empty. Let us explain how we arrived at these placements. Let $\lambda$ be shifted-balanced of Type~(1) or Type~(2) and let $n$, $k$ and~$\nu$ be as in the statement of Theorem~\ref{thm:shiftyounglatcde}.  The shifted Young diagram of $\lambda$ consists of a~$\delta_n$ ``section'' and a $\nu$ ``section,'' as well as a $(n-1-k)^n$ ``section'' in the case that~$\lambda$ is of Type~(2). This sectional decomposition of $\lambda$ looks as follows:
\begin{center}
\parbox{8em}{\begin{center}\ydiagram{5,1+4,2+3,3+2,4+1} \\ $\delta_n$ \end{center}}  \qquad {\huge +} \qquad \parbox{5em}{\begin{center} \ydiagram{2,2,2,2,2} \\ \medskip maybe \\ $(n-1-k)^n$\end{center}} \qquad {\huge +} \qquad \parbox{4em}{\begin{center} \ydiagram{2,1} \\ \medskip $\nu$  \end{center}}
\end{center}
We address these three sections in turn. First we describe how to place rooks in the~$\delta_n$ section. We can easily work out what $r_{1,1}$ has to be: if condition~(a) is to be satisfied then we need~$\sum_{i = 2}^{\lambda_1} \sum_{[i,j] \in P_{\lambda}^{\mathrm{shift}}} r_{ij} = \sum_{i = 2}^{\lambda_1} 2 = 2(\lambda_1-1)$; thus  to satisfy condition~(b) we have to have~$r_{1,1} = \frac{1}{2} \left(4- \sum_{i = 2}^{\lambda_1} \sum_{[i,j] \in P_{\lambda}^{\mathrm{shift}}} r_{ij} \right)=  3-\lambda_1$. We want each of the first~$k$ rows to be attacked a net zero times by rooks placed in $\delta_n$ (because these rows will be attacked two times by boxes in the $\nu$ section) whereas we want the next $n-k-1$ rows to be attacked a net of two times by rooks placed in $\delta_n$. We also want the second through $n$th columns to be attacked a net two times by rooks in~$\delta_n$. We can achieve all of this by setting~$r_{i,i} := 1+2i-\lambda_1$ and~$r_{i,i+1} := \lambda_1-1-2i$ for~$1 \leq i \leq k$, setting~$r_{i,i} := 3+2k-\lambda_1$ and~$r_{i,i+1} := \lambda_1-1-2k$ for $k+1 \leq i \leq n-1$, and setting~$r_{n,n} := 3+2k-\lambda_1$. The resulting placement of rooks looks as follows:
\begin{center}
\begin{tikzpicture}
\node at (0,0) {\begin{ytableau}
-6 & 6 & & & \\ \none & -4 & 4 & & \\ \none & \none & -2 & 4 & \\ \none & \none & \none & -2 & 4 \\ \none & \none & \none & \none & -2
\end{ytableau}};
\def \x{0.6}
\node at (2*\x,3.1*\x) {\rotatebox{90}{$\rightarrow 2$}};
\node at (1*\x,3.1*\x) {\rotatebox{90}{$\rightarrow 2$}};
\node at (0*\x,3.1*\x) {\rotatebox{90}{$\rightarrow 2$}};
\node at (-1*\x,3.1*\x) {\rotatebox{90}{$\rightarrow 2$}};
\node at (3.2*\x,2*\x) {$\rightarrow 0$};
\node at (3.2*\x,1*\x) {$\rightarrow 0$};
\node at (3.2*\x,0*\x) {$\rightarrow 2$};
\node at (3.2*\x,-1*\x) {$\rightarrow 2$};
\node at (4.7*\x,-2*\x) {$\rightarrow 3+2k-\lambda_1$};
\node at (4.5*\x,1.5*\x) {{\huge $\}$} $k$};
\end{tikzpicture}
\end{center}
In this picture we have drawn numbers next to arrows to indicate the net number of times the represented rooks are attacking the corresponding row or column. The next section, which applies only in the case that~$\lambda$ of Type~(2), is the~$(n-1-k)^n$ section. Here we simply put two rooks on each box in the last row; that is, we set~$r_{n,n+i} := 2$ for~$1 \leq i \leq n-1-k$. The resulting placement of rooks looks as follows:
\begin{center}
\begin{tikzpicture}
\node at (0,0) {\begin{ytableau}
\; &  \\ &  \\ &  \\ &  \\ 2 & 2
\end{ytableau}};
\def \x{0.6}
\node at (0*\x,4.5*\x) {\small $(n-1-k)$};
\node at (0*\x,4.0*\x) {\rotatebox{90}{\huge $\}$}};
\node at (0.5*\x,3.1*\x) {\rotatebox{90}{$\rightarrow 2$}};
\node at (-0.5*\x,3.1*\x) {\rotatebox{90}{$\rightarrow 2$}};
\node at (1.6*\x,2*\x) {$\rightarrow 0$};
\node at (1.6*\x,1*\x) {$\rightarrow 0$};
\node at (1.6*\x,0*\x) {$\rightarrow 0$};
\node at (1.6*\x,-1*\x) {$\rightarrow 0$};
\node at (-1.6*\x,2*\x) {$0 \leftarrow$};
\node at (-1.6*\x,1*\x) {$0 \leftarrow$};
\node at (-1.6*\x,0*\x) {$0 \leftarrow$};
\node at (-1.6*\x,-1*\x) {$0 \leftarrow$};
\node at (-3.2*\x,-2*\x) {$2(n-1-k) \leftarrow$};
\end{tikzpicture}
\end{center}
Finally, we consider the~$\nu$ section. It is a simple fact that because~$\nu$ is balanced as a straight shape with height and width both equal to $k$, the $k$ boxes whose centers lie on the anti-main diagonal of the $k \times k$ rectangle containing $\nu$ must actually belong to $\nu$. In other words, the boxes~$[1,\lambda_1], [2,\lambda_1-1], \cdots, [k,\lambda_1-(k-1)]$ all belong to~$P_{\lambda}^{\mathrm{shift}}$. Let us call these the \emph{anti-main diagonal boxes}. Each box in the~$\nu$ section is in the same row as a unique anti-main diagonal box, and likewise for columns. So we place two rooks on each of these anti-main diagonal boxes; that is, we set~$r_{i,\lambda_1+1-i} := 2$ for~$1 \leq i \leq k$ . The resulting placement of rooks looks as follows:
\begin{center}
\begin{tikzpicture}
\node at (0,0) {\begin{ytableau}
\; & 2  \\  2
\end{ytableau}};
\def \x{0.6}
\node at (-1.6*\x,0.5*\x) {$2 \leftarrow$};
\node at (-1.6*\x,-0.5*\x) {$2 \leftarrow$};
\node at (0.5*\x,1.6*\x) {\rotatebox{90}{$\rightarrow 2$}};
\node at (-0.5*\x,1.6*\x) {\rotatebox{90}{$\rightarrow 2$}};
\end{tikzpicture}
\end{center}
For all remaining boxes $[i,j]$ for which $r_{ij}$ has not yet been defined we set $r_{ij} := 0$.

It is easy to see from the above pictures that condition~(a) is satisfied: the only thing to check is that in the case where $\lambda$ is of Type~(2) we have $\lambda_1 = n+(n-1) = 2n-1$ so that in this case the last row is attacked a net of $2(n-1-k) + 3+2k-\lambda_1 = 2$ times. As for condition~(c), let $c \in C^{\mathrm{shift}}(\lambda)$ be an outward corner. Because $\nu$ is balanced as a straight shape, for any anti-main diagonal box $[i,j]$ we have~$c \notin C^{\mathrm{shift}}_{ij}(\lambda)$. Thus, the only boxes~$[i,j]$ with~$r_{ij} \neq 0$ that could have $c \in C^{\mathrm{shift}}_{ij}(\lambda)$ are those of the form~$[i,i]$ or~$[i,i+1]$ for some~$i \leq k$. But these boxes come in pairs $[i,i]$ and $[i,i+1]$ that exactly cancel each other out: $r_{i,i} = -r_{i,i+1}$. Finally, let us check condition~(b). For notational convenience let us set set $\rho(i) := \sum_{i'\leq i, \; \, j' \leq j} r_{i',j'} + \sum_{i'\geq i, \; \, j' \geq j} r_{i',j'}$. We want to verify that $\rho(i) = 4$ for all~$1 \leq i \leq n$. Since (a) holds, we actually already verified above that~$\rho(1) = 4$. But then note that for any $1 \leq i \leq n$ we have
\[ \rho(i) - \rho(1) = r_{i,i} - r_{1,1} - \sum_{\substack{i' \leq i -1, \\ j' \geq i+1}} r_{i',j'}= r_{i,i} - r_{1,1} - \begin{cases} 2(i-1) &\textrm{if $i \leq k$}, \\ 2k &\textrm{otherwise};\end{cases}\]
and $r_{i,i} - r_{1,1}$ precisely equals $2(i-1)$ if $i \leq k$ and $2k$ otherwise.
\end{proof}

\begin{proof}[Proof of Theorem~\ref{thm:shiftyounglatcde} from Lemmas~\ref{lem:shiftrooktogsym},~\ref{lem:shiftrookexpectation},~\ref{lem:shiftrookplacement}]

Let $\lambda$ be a shifted-balanced strict partition of Type~(1) or Type~(2) as in the statement of the theorem and let $\mu$ be a toggle-symmetric distribution on~$J(P_{\lambda}^{\mathrm{shift}})$. Let $r_{ij}$ be the coefficients guaranteed by Lemma~\ref{lem:shiftrookplacement}. The theorem follows from consideration of the statistic $\sum_{[i,j] \in P_{\lambda}^{\mathrm{shift}}} r_{ij}R^{\mathrm{shift}}_{ij}$. If $[i,j] \in P_{\lambda}^{\mathrm{shift}}$ is not a main diagonal box (i.e.,~$i \neq j$) then the ``coefficient'' of~$\mathbb{E}(\mu;\mathcal{T}^{-}_{[i,j]})$ in~$\mathbb{E}(\mu;\sum_{[i,j] \in P_{\lambda}^{\mathrm{shift}}} r_{ij}R^{\mathrm{shift}}_{ij})$ is
\[\sum_{[i',j] \in P_{\lambda}^{\mathrm{shift}}} r_{i',j}  +\sum_{[i,j'] \in P_{\lambda}^{\mathrm{shift}}} r_{i,j'} = 2+2 = 4\]
by Lemma~\ref{lem:shiftrookexpectation} and condition~(a) of Lemma~\ref{lem:shiftrookplacement}. Meanwhile if $[i,i] \in P_{\lambda}^{\mathrm{shift}}$ is a main diagonal box then the ``coefficient'' of $\mathbb{E}(\mu;\mathcal{T}^{-}_{[i,i]})$ in $\mathbb{E}(\mu;\sum_{[i,j] \in P_{\lambda}^{\mathrm{shift}}} r_{ij}R^{\mathrm{shift}}_{ij})$ is
\[\sum_{\substack{i'\leq i, \; \, j' \leq j \\ [i',j'] \in P_{\lambda}^{\mathrm{shift}} }} r_{i',j'} + \sum_{\substack{i'\geq i, \; \, j' \geq j \\ [i',j'] \in P_{\lambda}^{\mathrm{shift}} }} r_{i',j'} = 4\]
by Lemma~\ref{lem:shiftrookexpectation} and condition~(b) of Lemma~\ref{lem:shiftrookplacement}. Hence we conclude
\begin{equation} \label{eqn:shiftrooktogsym}
\mathbb{E}\left(\mu;\sum_{[i,j] \in P_{\lambda}^{\mathrm{shift}}} r_{ij}R^{\mathrm{shift}}_{ij}\right) = \sum_{[i,j] \in P_{\lambda}^{\mathrm{shift}}}4 \cdot \mathbb{E}(\mu;\mathcal{T}^{-}_{[i,j]}) = 4 \cdot \mathbb{E}(\mu;\mathrm{ddeg}).
\end{equation}
As observed in the proof of Lemma~\ref{lem:shiftrookplacement}, we have $\sum_{i = 2}^{\lambda_1} \sum_{[i,j] \in P_{\lambda}^{\mathrm{shift}}} r_{ij} = 2(\lambda_1-1)$ and~$r_{1,1} = 3-\lambda_1$. Thus
\begin{equation} \label{eqn:shiftrooktotal}
\sum_{[i,j] \in P_{\lambda}^{\mathrm{shift}}} r_{ij} =   r_{1,1} + \sum_{i = 2}^{\lambda_1} \sum_{[i,j] \in P_{\lambda}^{\mathrm{shift}}} r_{ij} =3-\lambda_1 + 2(\lambda_1-1) = \lambda_1 + 1.
\end{equation}
Lemma~\ref{lem:shiftrookexpectation} and together with~(\ref{eqn:shiftrooktotal}) and condition~(c) of Lemma~\ref{lem:shiftrookplacement} yields
\begin{align} \label{eqn:shiftrookexpectation}
\mathbb{E}\left(\mu; \hspace{-0.3cm} \sum_{[i,j] \in P_{\lambda}^{\mathrm{shift}}} \hspace{-0.4cm} r_{ij}R^{\mathrm{shift}}_{ij}\right)  &= \sum_{[i,j] \in P_{\lambda}^{\mathrm{shift}}} r_{ij} + \hspace{-0.4cm} \sum_{c \in C^{\mathrm{shift}}(\lambda)} \left(\sum_{\substack{c \in C^{\mathrm{shift}}_{ij}(\lambda), \\ [i,j] \in P_{\lambda}^{\mathrm{shift}}}}r_{ij}\right)\cdot \mathbb{P}(\nu \sim \mu;c \in \nu) \\ \nonumber
&=  \lambda_1 + 1 + \sum_{c \in C^{\mathrm{shift}}(\lambda)} 0\cdot \mathbb{P}(\nu \sim \mu;c \in \nu) \\ \nonumber
&= \lambda_1 + 1.
\end{align}
If $\lambda$ is of Type~(1) then $ \lambda_1 + 1 = n+1+k$, and if $\lambda$ is of Type~(2) then $\lambda_1 + 1 = 2n$. So the theorem follows by comparing~(\ref{eqn:shiftrooktogsym}) and~(\ref{eqn:shiftrookexpectation}).
\end{proof}

\subsection{The second shifted Young's lattice conjecture} \label{subsec:shiftyounglatconj2}

The second conjecture of Reiner-Tenner-Yong about CDE initial intervals of the shifted Young's lattice concerns strict partitions with ``trapezoidal'' shifted Young diagrams.

\begin{conj}[{Reiner-Tenner-Yong~\cite[Conjecture 2.24]{reiner2016poset}}] \label{conj:trap}
For $n \geq 1$, $0 \leq k < n/2$, let $\lambda := (n,n-2,n-4,\ldots,n-2k)$. Then $[\varnothing,\lambda]_{\mathrm{shift}}$ is CDE with edge density
\[\mathbb{E}(\mathrm{uni}_{[\varnothing,\lambda]_{\mathrm{shift}}};\mathrm{ddeg}) = \frac{|\lambda|}{n+1}\]
\end{conj}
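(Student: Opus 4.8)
The plan is to first push the shifted-rook strategy of Theorem~\ref{thm:shiftyounglatcde} as far as it will go, and then to diagnose precisely why it stalls. That strategy would reduce the conjecture (indeed, to the stronger tCDE statement) to producing integer coefficients $r_{ij}$ on the boxes of $P^{\mathrm{shift}}_\lambda$ satisfying conditions (a), (b), (c) of Lemma~\ref{lem:shiftrookplacement}: every box attacked a net of $4$ times and all outward-corner error terms canceling. Granting such a placement, the two evaluations of $\sum_{[i,j]} r_{ij} R^{\mathrm{shift}}_{ij}$ coming from Lemmas~\ref{lem:shiftrooktogsym} and~\ref{lem:shiftrookexpectation} would force $\mathbb{E}(\mu;\mathrm{ddeg}) = \tfrac14 \sum_{[i,j]} r_{ij}$ for every toggle-symmetric $\mu$. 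As in the proof of Lemma~\ref{lem:shiftrookplacement}, conditions (a) and (b) alone already pin down the total: since every column $j \ge 2$ of the trapezoid contains the non-diagonal box $[1,j]$ and hence sums to $2$, one gets $r_{1,1} = 3 - \lambda_1$ and $\sum_{[i,j]} r_{ij} = \lambda_1 + 1 = n+1$.

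Here is the obstacle. If the rook method succeeded it would yield edge density $\frac{n+1}{4}$, whereas the conjecture predicts $\frac{|\lambda|}{n+1} = \frac{(k+1)(n-k)}{n+1}$ (using $|\lambda| = (k+1)(n-k)$). These disagree: a direct computation gives the identity
\[ \frac{n+1}{4} - \frac{(k+1)(n-k)}{n+1} = \frac{(n-2k-1)^2}{4(n+1)} \ge 0, \]
with equality if and only if $n = 2k+1$. Thus the normalized rook scheme overestimates the true edge density except at the extreme value $n = 2k+1$, which is exactly the statement that these trapezoidal lattices are CDE but \emph{not} tCDE for $n > 2k+1$. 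In particular the toggle machinery of Section~\ref{sec:shiftyounglat} provably cannot settle the conjecture in general: any proof must work with the specific distributions $\mathrm{uni}$ and $\mathrm{maxchain}$ rather than with arbitrary toggle-symmetric ones.

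The fallback plan is therefore to compute the two expected down-degrees separately and check they agree. For $\mathbb{E}(\mathrm{uni}_{[\varnothing,\lambda]_{\mathrm{shift}}};\mathrm{ddeg})$, which is the edge density $\#E/\#V$ of $J(P^{\mathrm{shift}}_\lambda)$, I would enumerate order ideals and cover relations of the trapezoid poset by a lattice-path transfer recursion down the rows (or by exploiting the anti-diagonal reflective symmetry of the trapezoid), targeting the closed form $\frac{(k+1)(n-k)}{n+1}$. For $\mathbb{E}(\mathrm{maxchain};\mathrm{ddeg})$ I would invoke the dictionary of Section~\ref{sec:tableaux}: maximal chains of $J(P^{\mathrm{shift}}_\lambda)$ are shifted standard tableaux of shape $\lambda$, whose number is given by the shifted hook-length formula, and the maxchain expected down-degree becomes (after this normalization) a count of shifted barely set-valued tableaux of shape $\lambda$, equivalently a coefficient of the Type~B/C stable Grothendieck analog of Ikeda--Naruse. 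The conjecture would then follow from an enumerative identity equating this count with $\frac{(k+1)(n-k)}{n+1}$ times the number of plain shifted tableaux. The step I expect to be the genuine obstacle is precisely this maxchain/set-valued-tableau identity for general $k$: unlike the uniform count it has no toggle-theoretic shortcut, and I do not anticipate a clean product formula making it tractable.

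What is within reach, and what I would present as the resolved ``very special case,'' is $n = 2k+1$. There the trapezoid is $\lambda = (2k+1,2k-1,\ldots,1) = \delta_{k+1} + \delta_k$, and since $\delta_k$ is a balanced straight shape of height and width both $k$ with $k < k+1$, this $\lambda$ is shifted-balanced of Type~(1) in the sense of Theorem~\ref{thm:shiftyounglatcde} (with that theorem's ``$n$'' equal to $k+1$ and its ``$k$'' equal to $k$). Theorem~\ref{thm:shiftyounglatcde}(1) then gives that $[\varnothing,\lambda]_{\mathrm{shift}}$ is even tCDE, with edge density $\frac{(k+1)+1+k}{4} = \frac{k+1}{2} = \frac{(k+1)^2}{2(k+1)} = \frac{|\lambda|}{n+1}$, confirming Conjecture~\ref{conj:trap} in this case. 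I would leave the general trapezoid as a conjecture, having shown that the toggle approach cannot prove it.
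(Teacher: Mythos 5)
The statement you were asked about is a conjecture: the paper does not prove it in general, and neither do you, so on that score you are in good company. Your diagnosis of why the shifted-rook machinery stalls is essentially the paper's own: conditions (a) and (b) force $\sum r_{ij}=\lambda_1+1=n+1$, so a successful placement would certify tCDE with edge density $\frac{n+1}{4}$, which (by your correct identity, or equivalently by comparing with the rank-distribution value $\frac{\#P}{r+2}=\frac{|\lambda|}{n+1}$ from Proposition~\ref{prop:togsymedgedensity}) is impossible unless $n=2k+1$. One logical slip: this mismatch only shows that no rook placement exists, not that the lattice fails to be tCDE; the paper establishes non-tCDE for $\lambda=(4,2)$ by exhibiting an explicit toggle-symmetric distribution with the wrong expectation (Example~\ref{ex:trapcounterex}), and neither you nor the paper proves non-tCDE for all $n>2k+1$. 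Your resolved special case $n=2k+1$, where $\lambda=\delta_{k+1}+\delta_k$ is shifted-balanced of Type~(1), is correct, but it is already an instance of Theorem~\ref{thm:shiftyounglatcde} and the paper explicitly treats such cases as ``already covered''; it is not the special case the paper actually resolves.

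The genuine missing idea is the paper's Proposition~\ref{prop:trap}, which settles the conjecture (indeed the mCDE version) for $k=1$ and all $n\geq 3$ --- precisely the regime where the lattice is \emph{not} tCDE. The trick is to weaken tCDE only slightly: one restricts to toggle-symmetric distributions $\mu$ satisfying the single extra linear constraint $\mathbb{P}(\mu;\varnothing)=\mathbb{P}(\mu;\lambda)$, which every $\mathrm{chain}(i)$ distribution satisfies. A rook placement with fractional coefficients then leaves two uncancelled error terms, $\frac{n-3}{n+1}\mathbb{E}(\mu;\mathcal{T}^{+}_{[1,1]})$ and $\frac{n-3}{n+1}\mathbb{P}(\nu\sim\mu;c_0\in\nu)$, and the point is that $\mathcal{T}^{+}_{[1,1]}(\nu)=1$ iff $\nu=\varnothing$ while the unique outward corner $c_0$ is contained in $\nu$ iff $\nu=\lambda$, so the constraint makes the two errors cancel. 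Your proposal declares that ``any proof must work with the specific distributions $\mathrm{uni}$ and $\mathrm{maxchain}$,'' which is exactly the overstatement this trick refutes: there is a strictly larger family of distributions on which the down-degree expectation is constant, and exploiting it is how the paper gets past the tCDE obstruction for $k=1$. Your fallback plan (separately enumerating edges/vertices and barely set-valued shifted tableaux) is not carried out and, as you acknowledge, is unlikely to close the general case.
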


All computational evidence suggests that ``CDE'' may be replaced by ``mCDE'' in Conjecture~\ref{conj:trap}. Note also that $P_{\lambda}^{\mathrm{shift}}$ is graded for all $\lambda$ appearing in Conjecture~\ref{conj:trap} and that~$\frac{|\lambda|}{n+1}$ agrees with the edge density that Proposition~\ref{prop:togsymedgedensity} predicts for~$[\varnothing,\lambda]_{\mathrm{shift}}$ if this distributive lattice were to be tCDE. However, we unfortunately are unable to resolve Conjecture~\ref{conj:trap} using the technology we developed above because the distributive lattices~$[\varnothing,\lambda]_{\mathrm{shift}}$ in question are \emph{not} tCDE  in general. This failure to be tCDE is somewhat surprising in light of Theorems~\ref{thm:younglatcde} and~\ref{thm:shiftyounglatcde}. We can see this failure to be tCDE already in the smallest case of Conjecture~\ref{conj:trap} not covered by Theorem~\ref{thm:younglatcde} or~\ref{thm:shiftyounglatcde}, as the next example demonstrates.

\begin{example} \label{ex:trapcounterex}
Let $\lambda := (4,2)$. So the shifted Young diagram of $\lambda$ is:
\[\ydiagram{4,1+2}\]
Define a probability distribution $\mu$ on $[\varnothing,\lambda]_{\mathrm{shift}}$ via the following table:
\begin{center} 
\renewcommand\arraystretch{1.4} 
\ytableausetup{boxsize=0.5em}
\begin{multicols}{2}
\begin{tabular}{c | c} 
$\rho \in [\varnothing,\lambda]_{\mathrm{shift}}$ & $\mathbb{P}(\mu;\rho)$ \\ \hline
$\varnothing$ & $1/11$ \\ \hline
\parbox{0.6em}{$\ydiagram{1}$} & $1/11$ \\ \hline
\parbox{1.1em}{$\ydiagram{2}$} & $1/11$ \\ \hline
\parbox{1.6em}{$\ydiagram{3}$} & $0$ \\ \hline
\parbox{1.1em}{$\ydiagram{2,1+1}$}  & $0$ \\ \hline
\end{tabular} \\ \columnbreak
\begin{tabular}{c | c} 
$\rho \in [\varnothing,\lambda]_{\mathrm{shift}}$ & $\mathbb{P}(\mu;\rho)$ \\ \hline
\parbox{2.2em}{$\ydiagram{4}$}  & $2/11$ \\ \hline
\parbox{1.6em}{$\ydiagram{3,1+1}$}  & $1/11$ \\ \hline
\parbox{2.2em}{$\ydiagram{4,1+1}$}  & $2/11$ \\ \hline
\parbox{1.6em}{$\ydiagram{3,1+2}$}  & $3/11$ \\ \hline
\parbox{2.2em}{$\ydiagram{4,1+2}$}  & $0$ \\ \hline
\end{tabular} 
\end{multicols} 
\ytableausetup{boxsize=1.5em}
\end{center}
We can verify that $\mu$ is toggle-symmetric. However,
\[\mathbb{E}(\mu;\mathrm{ddeg}) = \frac{13}{11} \neq \frac{6}{5} = \mathbb{E}(\mathrm{uni}_{[\varnothing,\lambda]_{\mathrm{shift}}};\mathrm{ddeg}) \]
and thus $[\varnothing,\lambda]_{\mathrm{shift}}$ is not tCDE. It is also easy to verify that $[\varnothing,\lambda]_{\mathrm{shift}}$ is mCDE (which anyways will follow from Proposition~\ref{prop:trap} below).
\end{example}

Example~\ref{ex:trapcounterex} shows that already in the case $k=1$ of Conjecture~\ref{conj:trap} a straightforward application of the rook approach cannot work. But we can actually prove Conjecture~\ref{conj:trap} for $k=1$ (and with ``mCDE'' instead of ``CDE'') by supplementing the rook approach with a certain trick.

\begin{prop} \label{prop:trap}
Let $\lambda := (n,n-2)$ for $n \geq 3$. Then $[\varnothing,\lambda]_{\mathrm{shift}}$ is mCDE with edge density
\[\mathbb{E}(\mathrm{uni}_{[\varnothing,\lambda]_{\mathrm{shift}}};\mathrm{ddeg}) = \frac{2(n-1)}{n+1}\]
\end{prop}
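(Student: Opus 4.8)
The plan is to prove that $[\varnothing,\lambda]_{\mathrm{shift}}$ is mCDE by running the rook method of Section~\ref{sec:shiftyounglat}, reducing everything to a single residual identity about the chain distributions that must then be checked by a direct computation. Write $P := P_\lambda^{\mathrm{shift}}$ for $\lambda = (n,n-2)$. This is a two-row poset: row $1$ is the chain of cells $[1,1],\dots,[1,n]$, row $2$ is the chain $[2,2],\dots,[2,n-1]$, with covering relations $[1,j]\lessdot[2,j]$; the main diagonal cells are $[1,1]$ and $[2,2]$, and $P$ is graded of rank $n-1$ with $\#P = 2(n-1)$. The key geometric observation is that the shifted Young diagram of $\lambda$ has a \emph{unique} outward corner $c\in C^{\mathrm{shift}}(\lambda)$, namely the one at the notch $(1,n-1)$ between the two rows, and that $c\in\nu$ holds exactly when $\nu=\lambda$ (the north step of $c$ forces $\nu_2 = n-2$ and its east step forces $\nu_1 = n$). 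The case $n=3$ is already covered by Theorem~\ref{thm:shiftyounglatcde} (then $\lambda=(3,1)$ is shifted-balanced), so assume $n>3$. By Proposition~\ref{prop:mcde} and Lemma~\ref{lem:chaintogsym} it suffices to show $\mathbb{E}(\mathrm{chain}(k);\mathrm{ddeg}) = \tfrac{2(n-1)}{n+1}$ for every $k$, where each $\mathrm{chain}(k)$ is toggle-symmetric on $J(P)$.

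Next I would choose integer rook coefficients $r_{ij}$ that equalize the attack-coefficients of \emph{all} cells except the row-$2$ diagonal cell $[2,2]$: take every column sum (columns $2,\dots,n$) equal to $3$, every row sum equal to $n$, and $r_{1,1}=3-n$ (consistent since $3\cdot n = n\cdot 3$, so that condition~(b) of the rook setup holds at $[1,1]$). Then by Lemma~\ref{lem:shiftrooktogsym} the coefficient of $\mathbb{E}(\mu;\mathcal{T}^-_{[i,j]})$ in $\mathbb{E}(\mu;\sum_{[i,j]}r_{ij}R^{\mathrm{shift}}_{ij})$ equals $C:=n+3$ for every cell \emph{except} $[2,2]$, whose coefficient works out to $6$. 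Hence for any toggle-symmetric $\mu$,
\[ \mathbb{E}\Big(\mu;\textstyle\sum_{[i,j]}r_{ij}R^{\mathrm{shift}}_{ij}\Big) = (n+3)\,\mathbb{E}(\mu;\mathrm{ddeg}) + (3-n)\,\mathbb{E}(\mu;\mathcal{T}^-_{[2,2]}). \]
On the other hand Lemma~\ref{lem:shiftrookexpectation} evaluates the same quantity as $\sum_{[i,j]}r_{ij} + E_c\,\mathbb{P}(\nu\sim\mu;c\in\nu)$, where $\sum r_{ij}=2n$ and $E_c=\sum_{j=1}^{n-1}r_{1,j} = n-3$ (only the row-$1$ cells left of the notch carry $c$ in their $C^{\mathrm{shift}}_{ij}(\lambda)$). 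Using $\mathbb{E}(\mu;\mathcal{T}^-_{[2,2]}) = \mathbb{P}(\mu;\nu_2=1)$ and $\mathbb{P}(c\in\nu)=\mathbb{P}(\mu;\nu=\lambda)$, equating the two expressions gives the master equation
\[ (n+3)\,\mathbb{E}(\mu;\mathrm{ddeg}) = 2n + (n-3)\big[\mathbb{P}(\mu;\nu=\lambda)+\mathbb{P}(\mu;\nu_2=1)\big]. \]

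Since $(n+3)\tfrac{2(n-1)}{n+1}-2n = \tfrac{2(n-3)}{n+1}$, the target value of the edge density is now equivalent (for $n>3$) to the single residual identity
\[ \mathbb{P}(\mu;\nu=\lambda)+\mathbb{P}(\mu;\nu_2=1) = \frac{2}{n+1}, \qquad \mu = \mathrm{chain}(k). \]
This is precisely where toggle-symmetry fails to suffice, which is the whole point of Example~\ref{ex:trapcounterex}: for $n=4$ that example supplies a toggle-symmetric $\mu$ with $\mathbb{P}(\mu;\nu=\lambda)+\mathbb{P}(\mu;\nu_2=1)=\tfrac{3}{11}\neq\tfrac{2}{5}$, which is exactly why $[\varnothing,\lambda]_{\mathrm{shift}}$ is not tCDE. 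So the identity must be extracted from the special structure of the chain distributions. By equation~(\ref{eqn:mmchaintochain}) and linearity it is enough to prove it for the modified multichain distributions $\widehat{\mathrm{mchain}}(m)$ for all $m\ge 0$; writing $N(\nu)$ for the number of pairs $(c,i)$ with $c$ an $m$-multichain of $J(P)$ and $c_i=\nu$, and $M$ for the number of $m$-multichains, this reads
\[ N(\lambda)+\sum_{\nu_2=1}N(\nu) = \frac{2}{n+1}\,(m+1)M. \]

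The trick, then, is to verify this last equation directly, which is the step I expect to be the main obstacle. I would use that $m$-multichains of $J(P)$ are the same as order-preserving maps $P\to\{0,1,\dots,m\}$, so that $M=\Omega_P(m+1)$ for the order polynomial $\Omega_P$, and $N(\lambda)=\sum_{t=1}^{m+1}\Omega_P(t)$ (since $\lambda=\hat 1$ forces $c_i=\cdots=c_m=\lambda$). Because $P$ consists of just two interleaved chains, both $\Omega_P$ and the restricted count $\sum_{\nu_2=1}N(\nu)$ admit explicit product/determinantal expressions (e.g.\ via Lindström--Gessel--Viennot applied to the two monotone coordinate-sequences with the constraint $\nu_2<\nu_1$), and I expect a short manipulation of these polynomials in $m$ to produce the factor $\tfrac{2}{n+1}$. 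I have checked both the rook reduction and the identity in small cases (e.g.\ $n=5$ uniform, and $n=4$ for the maximal-chain distribution, where $\tfrac{5}{35}+\tfrac{9}{35}=\tfrac{2}{5}$), which makes me confident the computation closes; the delicate part is carrying out the two-row enumeration cleanly enough that the ratio simplifies to $\tfrac{2}{n+1}$ uniformly in $m$.
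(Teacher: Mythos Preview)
Your rook reduction is carried out correctly, and the master equation
\[
(n+3)\,\mathbb{E}(\mu;\mathrm{ddeg}) \;=\; 2n + (n-3)\bigl[\mathbb{P}(\mu;\nu=\lambda)+\mathbb{P}(\mu;\nu_2=1)\bigr]
\]
is right. But the proof is genuinely incomplete: you never prove the residual identity
\[
\mathbb{P}(\mathrm{chain}(k);\nu=\lambda)+\mathbb{P}(\mathrm{chain}(k);\nu_2=1)=\tfrac{2}{n+1},
\]
and ``I expect a short manipulation'' plus small-case checks is not a proof. The order-polynomial computation you gesture at would need to be done for every $m$ and is not obviously clean.

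The paper closes the gap by a different, simpler trick that you are missing: it places the rook \emph{defect at the cell $[1,1]$ rather than at $[2,2]$}. Concretely, it uses rational rook coefficients with row sums $\tfrac{n-1}{n+1}$ and column sums $\tfrac{2}{n+1}$ (and $r_{1,1}=0$); then every cell has attack coefficient $1$ except $[1,1]$, whose coefficient is $1+\tfrac{n-3}{n+1}$. The resulting residual term is $\tfrac{n-3}{n+1}\,\mathbb{E}(\mu;\mathcal{T}^-_{[1,1]})$, and by toggle-symmetry $\mathbb{E}(\mu;\mathcal{T}^-_{[1,1]})=\mathbb{E}(\mu;\mathcal{T}^+_{[1,1]})=\mathbb{P}(\mu;\varnothing)$ (since $[1,1]$ is the unique minimum of $P$). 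The master equation then becomes
\[
\mathbb{E}(\mu;\mathrm{ddeg}) + \tfrac{n-3}{n+1}\,\mathbb{P}(\mu;\varnothing)\;=\;\tfrac{2(n-1)}{n+1}+\tfrac{n-3}{n+1}\,\mathbb{P}(\mu;\lambda),
\]
and the required residual fact is just $\mathbb{P}(\mu;\varnothing)=\mathbb{P}(\mu;\lambda)$, which is trivially true for every $\mathrm{chain}(k)$ (and more generally for any distribution symmetric under the order-reversing involution of $J(P)$). Your integral row/column sums ($n$ and $3$) force the defect to land on $[2,2]$; the whole point is that by rescaling the sums you can move the defect to $[1,1]$, where it pairs with the outward-corner term $\mathbb{P}(\mu;\lambda)$ via the obvious $\varnothing\leftrightarrow\lambda$ symmetry of chain distributions. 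That is the idea your argument lacks.
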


\begin{proof}
Let $\lambda := (n,n-2)$ for $n \geq 3$. We will prove that $[\varnothing,\lambda]_{\mathrm{shift}}$ has a property even stronger than mCDE, a property in fact approaching tCDE in strength. Let $\mu$ be a probability distribution on~$[\varnothing,\lambda]_{\mathrm{shift}}$ such that
\begin{itemize}
\item $\mu$ is toggle-symmetric;
\item $\mathbb{P}(\mu;\varnothing) = \mathbb{P}(\mu;\lambda)$.
\end{itemize}
We claim that in this case~$\mathbb{E}(\mu;\mathrm{ddeg}) = \frac{2(n-1)}{n+1}$. (It is easy to see that for any poset $P$ we have $\mathbb{P}(\mathrm{chain}(i)_{J(P)};\varnothing) = \mathbb{P}(\mathrm{chain}(i)_{J(P)};P)$ for all $i=0,1,\ldots,\#P$ so this claimed property is indeed stronger than mCDE.) To prove the claim we consider the following placement of rooks on the boxes of $P_{\lambda}^{\mathrm{shift}}$:
\begin{center}
\ytableausetup{boxsize=2em}
\begin{ytableau}
0 & 0 & 0 & \cdots & 0 & \frac{n-3}{n+1} & \frac{2}{n+1} \\
\none & \frac{2}{n+1} & \frac{2}{n+1} & \cdots & \frac{2}{n+1} & \frac{5-n}{n+1}
\end{ytableau}
\ytableausetup{boxsize=1.5em}
\end{center}
Let $r_{ij}$ be the coefficient written in box $[i,j]$ above. Because $\mu$ is toggle-symmetric, we can check how many times each box is attacked and deduce from Lemma~\ref{lem:shiftrooktogsym} that
\begin{align} \label{eqn:traprooktogsym}
\mathbb{E}\left(\mu;\sum_{[i,j]\in P_{\lambda}^{\mathrm{shift}}}r_{ij}R^{\mathrm{shift}}_{ij}\right) &=\sum_{[i,j]\in P_{\lambda}^{\mathrm{shift}}} \mathbb{E}(\mu;\mathcal{T}^{-}_{[i,j]}) + \frac{n-3}{n+1} \cdot \mathbb{E}(\mu;\mathcal{T}^{-}_{[1,1]}) \\ \nonumber
&= \mathbb{E}(\mu;\mathrm{ddeg}) +  \frac{n-3}{n+1}\cdot \mathbb{E}(\mu;\mathcal{T}^{+}_{[1,1]}).
\end{align}
On the other hand, denoting the unique outward corner of $P_{\lambda}^{\mathrm{shift}}$ by $c_0$, we deduce from Lemma~\ref{lem:shiftrookexpectation} that
\begin{align} \label{eqn:traprookexpectation}
\mathbb{E}\left(\mu;\sum_{[i,j]\in P_{\lambda}^{\mathrm{shift}}}r_{ij}R^{\mathrm{shift}}_{ij}\right) &= \sum_{[i,j] \in P_{\lambda}^{\mathrm{shift}}}r_{ij} + \sum_{[i,j]\in P_{\lambda}^{\mathrm{shift}}}r_{ij}\cdot\mathbb{P}(\nu \sim \mu;c_0 \in \nu) \\ \nonumber
&= \frac{2(n-1)}{n+1} + \frac{n-3}{n+1}\cdot\mathbb{P}(\nu \sim \mu;c_0 \in \nu).
\end{align}
The trick is this: we have
\begin{align*}
\mathcal{T}^{+}_{[1,1]}(\nu) = 1 &\Leftrightarrow \nu = \varnothing;\\
c_0 \in \nu &\Leftrightarrow \nu = \lambda.
\end{align*}
(The first bi-implication is true of any strict partition~$\lambda$; the second is a very particular to this $\lambda$.) Thus,
\begin{align} \label{eqn:trapemptyfullprobs}
\mathbb{E}(\mu;\mathcal{T}^{+}_{[1,1]}) &=  \mathbb{P}(\mu;\varnothing); \\ \nonumber
\mathbb{P}(\nu \sim \mu;c_0 \in \nu) &= \mathbb{P}(\mu;\lambda).
\end{align}
Combining~(\ref{eqn:traprooktogsym}),~(\ref{eqn:traprookexpectation}), and~(\ref{eqn:trapemptyfullprobs}) we obtain
\[ \mathbb{E}(\mu;\mathrm{ddeg}) +  \frac{n-3}{n+1}\cdot   \mathbb{P}(\mu;\varnothing) =  \frac{2(n-1)}{n+1} + \frac{n-3}{n+1}\cdot\mathbb{P}(\mu;\lambda).\]
Finally, we use the assumption $\mathbb{P}(\mu;\varnothing) = \mathbb{P}(\mu;\lambda)$ to conclude $\mathbb{E}(\mu;\mathrm{ddeg}) = \frac{2(n-1)}{n+1}$.
\end{proof}

The trick from Proposition~\ref{prop:trap} unfortunately does not apply to Conjecture~\ref{conj:trap} when~$k>1$ (or at least I do not see how to apply it!). I would personally be very interested in any progress on Conjecture~\ref{conj:trap}.

\begin{remark} \label{rem:shiftyounglatcde}
As with Remark~\ref{rem:younglatcde}, it is natural to ask for a converse to the results and conjectures about CDE initial intervals of the shifted Young's lattice. Computation with SAGE mathematical software tells us that for all strict partitions $\lambda$ with $|\lambda| \leq 18$, the initial interval $[\varnothing,\lambda]_{\mathrm{shift}}$ is CDE only for those $\lambda$ appearing in Theorem~\ref{thm:shiftyounglatcde} and Conjecture~\ref{conj:trap}. It is therefore reasonable to conjecture that the only strict partitions~$\lambda$ with $[\varnothing,\lambda]_{\mathrm{shift}}$ CDE are the $\lambda$ appearing in Theorem~\ref{thm:shiftyounglatcde} and Conjecture~\ref{conj:trap}. As mentioned, it is also reasonable to replace ``CDE'' with ``mCDE'' in Conjecture~\ref{conj:trap}. If all of this speculation were correct, it would mean that the mCDE property coincides with the CDE property for initial intervals of the shifted Young's lattice (although as we have seen in Example~\ref{ex:trapcounterex}, there are initial intervals that are CDE but not tCDE).
\end{remark}

\begin{remark}
Another obvious problem is to understand when arbitrary intervals of the shifted Young's lattice are CDE. Note however that if $\nu,\lambda$ are partitions (strict or otherwise) then $[\nu,\lambda] = [\delta_{\ell(\lambda)}+\nu,\delta_{\ell(\lambda)}+\lambda]_{\mathrm{shift}}$. Thus any complete description of the CDE intervals of the shifted Young's lattice has to include not just Theorem~\ref{thm:shiftyounglatcde} and Conjecture~\ref{conj:trap}, but also Theorem~\ref{thm:younglatcde}. Such an account could be quite involved.
\end{remark}

\begin{remark}
Proposition~\ref{prop:tcdedual}, which says the tCDE property is preserved under duality, adds nothing to Theorem~\ref{thm:younglatcde} because the class of intervals of Young's lattice corresponding to balanced skew shapes is closed under duality (which geometrically corresponds to $180^\circ$ rotation). But Proposition~\ref{prop:tcdedual} actually does add to Theorem~\ref{thm:shiftyounglatcde} because the class of initial intervals of the shifted Young's lattice corresponding to shifted-balanced strict partitions is not closed under duality.
\end{remark}

\section{Minuscule posets and minuscule lattices} \label{sec:minuscule}

Minuscule posets are certain posets that arise in the representation theory of semisimple Lie algebras and which enjoy many remarkable combinatorial properties (see for instance~\cite{proctor1984bruhat},~\cite{stembridge1994minuscule}, and~\cite{green2013combinatorics}). Minuscule posets are the only known examples of \emph{Gaussian posets}~\cite[\S11.3]{green2013combinatorics}; $P$ is Gaussian if the rank generating function for $m$-multichains of~$J(P)$ satisfies a certain algebraic relation. Reiner-Tenner-Yong proved~\cite[Theorem 2.10]{reiner2016poset} that if $P$ is a connected minuscule poset then $P$ is mCDE (and CDE, since all such posets are graded). They conjectured~\cite[Conjecture 2.12]{reiner2016poset} that if $P$ is an arbitrary minuscule poset then the associated distributive lattice $J(P)$ is mCDE. In this section we complete the case-by-case proof of this conjecture; in fact, we show that such~$J(P)$ are tCDE. Actually we have already done most of the work toward resolving this conjecture in our investigation of intervals of Young's lattice and the shifted Young's lattice in Sections~\ref{sec:younglat} and~\ref{sec:shiftyounglat}. We need only address one more, very simple infinite family of posets and two exceptional posets. Before saying exactly what a minuscule poset is, let us deal with this other infinite family.

\begin{figure}
\begin{center}
 \begin{tikzpicture}
	\SetFancyGraph
	\Vertex[LabelOut,Ldist=0.5,Lpos=0,x=0,y=0]{w_1}
	\Vertex[LabelOut,Ldist=0.5,Lpos=0,x=0,y=0.5]{w_2}
	\Vertex[LabelOut,Ldist=0.5,Lpos=-10,x=0,y=1.5]{w_a}
	\Vertex[LabelOut,Ldist=0.5,Lpos=180,x=-0.75,y=2]{x_1}
	\Vertex[LabelOut,Ldist=0.5,Lpos=180,x=-0.75,y=2.5]{x_2}
	\Vertex[LabelOut,Ldist=0.5,Lpos=180,x=-0.75,y=3.5]{x_b}
	\Vertex[LabelOut,Ldist=0.5,Lpos=0,x=0.75,y=2]{y_1}
	\Vertex[LabelOut,Ldist=0.5,Lpos=0,x=0.75,y=2.5]{y_2}
	\Vertex[LabelOut,Ldist=0.5,Lpos=0,x=0.75,y=3.5]{y_c}
	\Vertex[LabelOut,Ldist=0.5,Lpos=0,x=0,y=4]{z_1}
	\Vertex[LabelOut,Ldist=0.5,Lpos=0,x=0,y=4.5]{z_2}
	\Vertex[LabelOut,Ldist=0.5,Lpos=0,x=0,y=5.5]{z_d}
	\Edges[style={thick}](w_1,w_2)
	\Edges[style={thick,dashed}](w_2,w_a)
	\Edges[style={thick}](w_a,x_1)
	\Edges[style={thick}](x_1,x_2)
	\Edges[style={thick,dashed}](x_2,x_b)
	\Edges[style={thick}](w_a,y_1)
	\Edges[style={thick}](y_1,y_2)
	\Edges[style={thick,dashed}](y_2,y_c)
	\Edges[style={thick}](x_b,z_1)
	\Edges[style={thick}](y_c,z_1)
	\Edges[style={thick}](z_1,z_2)
	\Edges[style={thick,dashed}](z_2,z_d)
\end{tikzpicture}
\end{center}
\caption{The poset $P_{a,b,c,d}$.} \label{fig:posabcd}
\end{figure}

Define $P_{a,b,c,d}$ to be the poset parameterized by positive integers~$a,b,c,d$ depicted in Figure~\ref{fig:posabcd}. Reiner-Tenner-Yong~\cite[Example 2.9]{reiner2016poset} showed that $P_{a,b,c,d}$ is always CDE and mCDE. Here we are interested in the distributive lattice~$J(P_{a,b,c,d})$, which is not always CDE: for instance $J(P_{1,1,2,1})$ is already not CDE. However, we have the following proposition which is enough for our purposes.

\begin{prop} \label{prop:posabcdcde}
The distributive lattice $J(P_{a,1,1,d})$ is tCDE with edge-density one.
\end{prop}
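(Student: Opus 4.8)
First I would pin down exactly what $P_{a,1,1,d}$ looks like. Setting $b=c=1$ in Figure~\ref{fig:posabcd} collapses the two ``arms'' $x_1 < \cdots < x_b$ and $y_1 < \cdots < y_c$ each to a single element $x_1$ and $y_1$. So $P_{a,1,1,d}$ is a chain $w_1 \lessdot \cdots \lessdot w_a$, then $w_a$ is covered by two incomparable elements $x_1, y_1$, both of which are covered by $z_1$, followed by a chain $z_1 \lessdot \cdots \lessdot z_d$. In other words the poset is two chains joined by a single ``diamond'' (a copy of the Boolean lattice $B_2$) in the middle. This should be compared with the poset $P_{a,b,c,d}$ having the diamond structure only when $b=c=1$.

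**Strategy.** Since I may assume Theorem~\ref{thm:younglatcde}, the natural plan is to realize $J(P_{a,1,1,d})$ as an interval $[\nu,\lambda]$ of Young's lattice coming from a balanced skew shape, and then read off tCDE and the edge density directly. The poset $P_{a,1,1,d}$, drawn with covers going up-right and up-left, is exactly the poset $P_{\lambda/\nu}$ of a staircase-like skew shape: the two bottom chains and two top chains meeting at a single diamond correspond to a skew shape that is a ``bent strip'' of width-one pieces. Concretely I expect $P_{a,1,1,d}$ to be isomorphic to $P_{\lambda/\nu}$ for a skew shape $\lambda/\nu$ that is a disjoint-free connected ribbon whose only non-trivial feature is the single $2\times 2$-type branching; one should check this is balanced of height $a+1$ and width $d+1$ (or some such), in which case Theorem~\ref{thm:younglatcde} gives tCDE with edge density $\tfrac{ab}{a+b}$. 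I would verify that the resulting $a$ and $b$ in the theorem are equal, so that $\tfrac{ab}{a+b} = \tfrac{a}{2} = 1$ forces $a=2$; this suggests the cleaner route below.

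**The cleaner route via Proposition~\ref{prop:togsymedgedensity} and direct rook/toggle bookkeeping.** Because the claimed edge density is exactly $1$, I would instead argue directly using toggle-symmetry, mimicking the rook arguments of Sections~\ref{sec:younglat}--\ref{sec:shiftyounglat}. The poset $P_{a,1,1,d}$ is graded; its ranks have sizes $1,1,\ldots,1,2,1,\ldots,1$ with a single rank of size $2$ (the diamond level $\{x_1,y_1\}$) and all other ranks of size $1$. If $r$ is the rank of $P := P_{a,1,1,d}$, then $\#P = r+2$ (total elements $= (r+1)$ ranks, one of which is doubled). By Proposition~\ref{prop:togsymedgedensity}, \emph{once} I know $J(P)$ is tCDE, the edge density is forced to be $\tfrac{\#P}{r+2} = 1$, so the density claim is automatic. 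Hence the whole content is proving tCDE, i.e.\ that $\mathbb{E}(\mu;\mathrm{ddeg})$ is constant over toggle-symmetric $\mu$. For this I would write $\mathrm{ddeg}(I) = \sum_{p} \mathcal{T}^-_p(I)$ and exploit the near-chain structure: for every element $p$ except possibly those near the diamond, toggle-symmetry pins $\mathbb{E}(\mu;\mathcal{T}^-_p)$ telescopically, and the only subtlety is at the diamond. I expect the key identity to be that the two incomparable elements $x_1,y_1$ and their common cover $z_1$ and common lower cover $w_a$ interact so that the ``extra'' down-degree contributions cancel in expectation for any toggle-symmetric $\mu$ --- this is precisely the local rook placement that balances the single branching.

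**Main obstacle.** The hard part will be the local analysis at the diamond. Everywhere along the two chains the toggleability statistics telescope trivially (a length-$\ell$ chain contributes expected down-degree summing to a fixed constant under any toggle-symmetric distribution, as in Example~\ref{ex:chaintcde}), so the entire question reduces to showing the diamond $\{w_a, x_1, y_1, z_1\}$ contributes a fixed expected down-degree. I would handle this by choosing rook coefficients $r_p$ supported near the diamond so that $\sum_p r_p R_p$-type combinations (or, more elementarily, a direct linear combination of $\mathcal{T}^{\pm}$ statistics exploiting $\mathbb{E}(\mu;\mathcal{T}^+_p)=\mathbb{E}(\mu;\mathcal{T}^-_p)$) force $\sum_p \mathbb{E}(\mu;\mathcal{T}^-_p)$ to equal $\#P$. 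The cleanest presentation is probably to exhibit $P_{a,1,1,d}$ as $P_{\lambda/\nu}$ for an explicit balanced skew shape and simply invoke Theorem~\ref{thm:younglatcde}, reducing the obstacle to the purely combinatorial (and routine) verification of balancedness of that shape; I would lead with that identification and fall back on the direct toggle computation only if the shape turns out not to be balanced in the required sense.
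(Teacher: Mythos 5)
Your lead strategy does not work: $P_{a,1,1,d}$ is not isomorphic to $P_{\lambda/\nu}$ for any skew shape once $\max(a,d)\geq 2$, so Theorem~\ref{thm:younglatcde} cannot be invoked. The diamond $\{w_a,x_1,y_1,z_1\}$ forces a $2\times 2$ block of boxes, and the unique further box that could sit below its minimal corner is either directly north or directly west of it; in either case the resulting rows violate the partition inequalities defining a skew shape (already for $P_{2,1,1,1}$ one gets row data like $\lambda=(1,2,2)$ or $\nu=(0,1)$, which are not partitions). This is exactly why the paper treats this family separately from the Young's-lattice and shifted cases. Your observation that Proposition~\ref{prop:togsymedgedensity} forces the edge density to be $\#P/(r+2)=1$ once tCDE is known is correct and is a legitimate shortcut for the density claim.

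That leaves your fallback, which is the right kind of argument but is not carried out, and one of its supporting claims is false: toggle-symmetry does \emph{not} pin down the individual expectations $\mathbb{E}(\mu;\mathcal{T}^{-}_{w_i})$ along the chain portions (unlike in Example~\ref{ex:chaintcde}, where the whole poset is a chain and the only toggle-symmetric distribution is uniform). For $i<a$ one only gets the telescoping relations $\mathbb{E}(\mu;\mathcal{T}^{+}_{w_i})=\mathbb{E}(\mu;\mathcal{T}^{-}_{w_{i-1}})$, so $\sum_i\mathbb{E}(\mu;\mathcal{T}^{-}_{w_i})$ is not a fixed constant on its own; the chain contributions and the diamond contributions must be balanced against each other globally. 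The paper does this by exhibiting four explicit statistics $R_w,R_x,R_y,R_z$ (each a signed sum of toggleability statistics running over the whole poset, e.g.\ $R_x=\sum_{i=1}^{a}\mathcal{T}^{+}_{w_i}+\mathcal{T}^{+}_{x_1}+\mathcal{T}^{-}_{x_1}+\sum_{i=1}^{d}\mathcal{T}^{-}_{z_i}$) that are \emph{identically} $1$ on $J(P_{a,1,1,d})$, and whose expectations under a toggle-symmetric $\mu$ each equal $\mathbb{E}(\mu;\mathrm{ddeg})$ plus correction terms that cancel upon summing the four. Producing such identities (and verifying they hold pointwise) is the entire content of the proposition; your proposal names the target but supplies neither the coefficients nor the verification, so as written it is a plan rather than a proof.
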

\begin{proof}
Although it is probably overkill, we can use a ``rook''-style argument once again. So define the following four rook statistics $R_w, R_x, R_y, R_z\colon J(P_{a,1,1,d}) \to \mathbb{R}$:
\begin{align*}
R_w &:= \sum_{i=1}^{a} \mathcal{T}^{+}_{w_i} + \mathcal{T}^{-}_{w_a} + \mathcal{T}^{-}_{x_1} + \mathcal{T}^{-}_{y_1} - \mathcal{T}^{+}_{z_1} +  \sum_{i=1}^{d} \mathcal{T}^{-}_{z_i}, \\
R_x &:= \sum_{i=1}^{a} \mathcal{T}^{+}_{w_i} + \mathcal{T}^{+}_{x_1}  + \mathcal{T}^{-}_{x_1} + \sum_{i=1}^{d} \mathcal{T}^{-}_{z_i}, \\
R_y &:= \sum_{i=1}^{a} \mathcal{T}^{+}_{w_i} + \mathcal{T}^{+}_{y_1}  + \mathcal{T}^{-}_{y_1} + \sum_{i=1}^{d} \mathcal{T}^{-}_{z_i}, \\
R_z &:= \sum_{i=1}^{a} \mathcal{T}^{+}_{w_i} - \mathcal{T}^{-}_{w_a} + \mathcal{T}^{+}_{x_1} + \mathcal{T}^{+}_{y_1} + \mathcal{T}^{+}_{z_1} +  \sum_{i=1}^{d} \mathcal{T}^{-}_{z_i}. \\
\end{align*}
It is easy to check that for any order ideal~$I \in J(P_{a,1,1,d})$ we have
\[ R_w(I) = R_x(I) = R_y(I) = R_z(I) = 1\]
so that for any probability distribution $\mu$ on $J(P_{a,1,1,d})$ we have
\begin{equation} \label{eqn:posabcdtogsym}
 \mathbb{E}(\mu; R_w + R_x + R_y + R_z) = 4.
\end{equation}
On the other hand, if $\mu$ is toggle-symmetric then
\begin{align*}
\mathbb{E}(\mu;R_w) &= \mathbb{E}(\mu;\mathrm{ddeg}) + \mathbb{E}(\mu;\mathcal{T}^{-}_{w_a}) -  \mathbb{E}(\mu;\mathcal{T}^{-}_{z_1}), \\
\mathbb{E}(\mu;R_x) &= \mathbb{E}(\mu;\mathrm{ddeg}) + \mathbb{E}(\mu;\mathcal{T}^{-}_{x_1}) -  \mathbb{E}(\mu;\mathcal{T}^{-}_{y_1}), \\
\mathbb{E}(\mu;R_y) &= \mathbb{E}(\mu;\mathrm{ddeg}) + \mathbb{E}(\mu;\mathcal{T}^{-}_{y_1}) -  \mathbb{E}(\mu;\mathcal{T}^{-}_{x_1}), \\
\mathbb{E}(\mu;R_z) &= \mathbb{E}(\mu;\mathrm{ddeg}) + \mathbb{E}(\mu;\mathcal{T}^{-}_{z_1}) -  \mathbb{E}(\mu;\mathcal{T}^{-}_{w_a}),
\end{align*}
and so
\begin{equation} \label{eqn:posabcdexpectation}
 \mathbb{E}(\mu; R_w + R_x + R_y + R_z) = 4\cdot  \mathbb{E}(\mu;\mathrm{ddeg}).
\end{equation}
Comparing~(\ref{eqn:posabcdtogsym}) and~(\ref{eqn:posabcdexpectation}) shows that if $\mu$ is toggle-symmetric then $ \mathbb{E}(\mu;\mathrm{ddeg}) = 1$.
\end{proof}

Now we define what it means for a poset to be minuscule. For more details on the underlying Lie theory, consult the recent book~\cite[\S5]{green2013combinatorics}. Let $\mathfrak{g}$ be a finite-dimensional semisimple Lie algebra over the complex numbers with Weyl group $W$ and weight lattice $\Lambda$. We say that a finite-dimensional irreducible representation $V^{\lambda}$ of $\mathfrak{g}$ with highest weight $\lambda$ is \emph{minuscule} if $W$ acts transitively on the weights of $V^{\lambda}$. We call the weight $\lambda$ \emph{minuscule} in this case as well. Recall that (having chosen a set of simple roots) the weight lattice $\Lambda$ comes with a partial order whereby $\omega' \geq \omega$ iff $\omega - \omega'$ is a sum of simple roots. A \emph{minuscule lattice} is a poset of the form $W\lambda$ for $\lambda$ a minuscule weight with the partial order induced from $\Lambda$. Every such minuscule lattice is in fact a distributive lattice; that is, $W\lambda = J(P)$ for some poset $P$. The posets $P$ that arise this way are called \emph{minuscule posets}. The minuscule posets have been classified (see e.g.~\cite[Proposition 4.2]{proctor1984bruhat}). A poset is minuscule iff each of its connected components is minuscule; the connected minuscule posets are precisely the following:
\begin{enumerate}[label=(\alph*)]
\item the product $\mathbf{a} \times \mathbf{b}$ of two chain posets for $a,b \geq 1$;
\item the interval $[\varnothing,b^2]$ in Young's lattice for $b\geq 1$;
\item the special case~$P_{a,1,1,a}$ of the poset depicted in Figure~\ref{fig:posabcd} for $a \geq 1$;
\item the exceptional posets $P(E_6)$ and $P(E_7)$ depicted in Figure~\ref{fig:expectionalposets}.
\end{enumerate}

\begin{figure}
\begin{tikzpicture}[scale=0.8]
	\node[circle, draw=black, inner sep=0pt, thick, minimum size=18pt] (1) at (0.0,0.0) {\footnotesize $-4$};
	\node[circle, draw=black, inner sep=0pt, thick, minimum size=18pt] (2) at (-0.75,0.75) {\footnotesize $-5$};
	\node[circle, draw=black, inner sep=0pt, thick, minimum size=18pt] (3) at (-1.5,1.5) {\footnotesize $-6$};
	\node[circle, draw=black, inner sep=0pt, thick, minimum size=18pt] (4) at (-2.25,2.25) {\footnotesize $-4$};
	\node[circle, draw=black, inner sep=0pt, thick, minimum size=18pt] (5) at (-3,3) {\footnotesize $-2$};
	\node[circle, draw=black, inner sep=0pt, thick, minimum size=18pt] (6) at (-0.75,2.25) {\footnotesize $-3$};
	\node[circle, draw=black, inner sep=0pt, thick, minimum size=18pt] (7) at (-1.5,3) {\footnotesize $-3$};
	\node[circle, draw=black, inner sep=0pt, thick, minimum size=18pt] (8) at (-2.25,3.75) {\footnotesize $-1$};
	\node[circle, draw=black, inner sep=0pt, thick, minimum size=18pt] (9) at (-0.75,3.75) {\footnotesize $-2$};
	\node[circle, draw=black, inner sep=0pt, thick, minimum size=18pt] (10) at (-1.5,4.5) {\footnotesize $0$};
	\node[circle, draw=black, inner sep=0pt, thick, minimum size=18pt] (11) at (-2.25,5.25) {\footnotesize $0$};
	\node[circle, draw=black, inner sep=0pt, thick, minimum size=18pt] (12) at (0,4.5) {\footnotesize $-1$};
	\node[circle, draw=black, inner sep=0pt, thick, minimum size=18pt] (13) at (-0.75,5.25) {\footnotesize $1$};
	\node[circle, draw=black, inner sep=0pt, thick, minimum size=18pt] (14) at (-1.5,6) {\footnotesize $3$};
	\node[circle, draw=black, inner sep=0pt, thick, minimum size=18pt] (15) at (-2.25,6.75) {\footnotesize $2$};
	\node[circle, draw=black, inner sep=0pt, thick, minimum size=18pt] (16) at (-3,7.5) {\footnotesize $1$};
	
	\draw[thick] (1) -- (2) -- (3) -- (4) -- (5);
	\draw[thick] (3) -- (6);
	\draw[thick] (4) -- (7);
	\draw[thick] (5) -- (8);
	\draw[thick] (6) -- (7) -- (8);
	\draw[thick] (7) -- (9);
	\draw[thick] (8) -- (10);
	\draw[thick] (9) -- (10) -- (11);
	\draw[thick] (9) -- (12);
	\draw[thick] (10) -- (13);
	\draw[thick] (11) -- (14);
	\draw[thick] (12) -- (13) -- (14) -- (15) -- (16);
	
	\node at (-1.5,-0.75) {$P(E_6)$};
\end{tikzpicture} \qquad \qquad \begin{tikzpicture}[scale=0.8]
	\node[circle, draw=black, inner sep=0pt, thick, minimum size=18pt] (1) at (0.0,0.0) {\footnotesize $-3$};
	\node[circle, draw=black, inner sep=0pt, thick, minimum size=18pt] (2) at (-0.75,0.75) {\footnotesize $-4$};
	\node[circle, draw=black, inner sep=0pt, thick, minimum size=18pt] (3) at (-1.5,1.5) {\footnotesize $-5$};
	\node[circle, draw=black, inner sep=0pt, thick, minimum size=18pt] (4) at (-2.25,2.25) {\footnotesize $-6$};
	\node[circle, draw=black, inner sep=0pt, thick, minimum size=18pt] (5) at (-3,3) {\footnotesize $-4$};
	\node[circle, draw=black, inner sep=0pt, thick, minimum size=18pt] (6) at (-3.75,3.75) {\footnotesize $-2$};
	\node[circle, draw=black, inner sep=0pt, thick, minimum size=18pt] (7) at (-1.5,3) {\footnotesize $-3$};
	\node[circle, draw=black, inner sep=0pt, thick, minimum size=18pt] (8) at (-2.25,3.75) {\footnotesize $-4$};
	\node[circle, draw=black, inner sep=0pt, thick, minimum size=18pt] (9) at (-3,4.5) {\footnotesize $-2$};
	\node[circle, draw=black, inner sep=0pt, thick, minimum size=18pt] (10) at (-1.5,4.5) {\footnotesize $-3$};
	\node[circle, draw=black, inner sep=0pt, thick, minimum size=18pt] (11) at (-2.25,5.25) {\footnotesize $-2$};
	\node[circle, draw=black, inner sep=0pt, thick, minimum size=18pt] (12) at (-3,6) {\footnotesize $-1$};
	\node[circle, draw=black, inner sep=0pt, thick, minimum size=18pt] (13) at (-0.75,5.25) {\footnotesize $-2$};
	\node[circle, draw=black, inner sep=0pt, thick, minimum size=18pt] (14) at (-1.5,6) {\footnotesize $-1$};
	\node[circle, draw=black, inner sep=0pt, thick, minimum size=18pt] (15) at (-2.25,6.75) {\footnotesize $0$};
	\node[circle, draw=black, inner sep=0pt, thick, minimum size=18pt] (16) at (-3,7.5) {\footnotesize $0$};
	\node[circle, draw=black, inner sep=0pt, thick, minimum size=18pt] (17) at (-3.75,8.25) {\footnotesize $0$};
	\node[circle, draw=black, inner sep=0pt, thick, minimum size=18pt] (18) at (0,6) {\footnotesize $-1$};
	\node[circle, draw=black, inner sep=0pt, thick, minimum size=18pt] (19) at (-0.75,6.75) {\footnotesize $0$};
	\node[circle, draw=black, inner sep=0pt, thick, minimum size=18pt] (20) at (-1.5,7.5) {\footnotesize $1$};
	\node[circle, draw=black, inner sep=0pt, thick, minimum size=18pt] (21) at (-2.25,8.25) {\footnotesize $2$};
	\node[circle, draw=black, inner sep=0pt, thick, minimum size=18pt] (22) at (-3.0,9) {\footnotesize $2$};
	\node[circle, draw=black, inner sep=0pt, thick, minimum size=18pt] (23) at (-1.5,9) {\footnotesize $1$};
	\node[circle, draw=black, inner sep=0pt, thick, minimum size=18pt] (24) at (-2.25,9.75) {\footnotesize $4$};
	\node[circle, draw=black, inner sep=0pt, thick, minimum size=18pt] (25) at (-1.5,10.5) {\footnotesize $3$};
	\node[circle, draw=black, inner sep=0pt, thick, minimum size=18pt] (26) at (-0.75,11.25) {\footnotesize $2$};
	\node[circle, draw=black, inner sep=0pt, thick, minimum size=18pt] (27) at (0,12) {\footnotesize $1$};
	
	\draw[thick] (1) -- (2) -- (3) -- (4) -- (5) -- (6);
	\draw[thick] (4) -- (7);
	\draw[thick] (5) -- (8);
	\draw[thick] (6) -- (9);
	\draw[thick] (7) -- (8) -- (9);
	\draw[thick] (8) -- (10);
	\draw[thick] (9) -- (11);
	\draw[thick] (10) -- (11) -- (12);
	\draw[thick] (10) -- (13);
	\draw[thick] (11) -- (14);
	\draw[thick] (12) -- (15);
	\draw[thick] (13) -- (14) -- (15) -- (16) -- (17);
	\draw[thick] (13) -- (18);
	\draw[thick] (14) -- (19);
	\draw[thick] (15) -- (20);
	\draw[thick] (16) -- (21);
	\draw[thick] (17) -- (22);
	\draw[thick] (18) -- (19) -- (20) -- (21) -- (22);
	\draw[thick] (21) -- (23);
	\draw[thick] (22) -- (24);
	\draw[thick] (23) -- (24);
	\draw[thick] (24) -- (25) -- (26) -- (27);
	
	\node at (-1.85,-0.75) {$P(E_7)$};
\end{tikzpicture}
\caption{The exceptional minuscule posets $P(E_6)$ and $P(E_7)$ with elements labeled by coefficients appearing in the proof of Theorem~\ref{thm:minusculecde}.} \label{fig:expectionalposets}
\end{figure}

\begin{thm} \label{thm:minusculecde}
Let $J(P)$ be a minuscule lattice. Then $J(P)$ is tCDE.
\end{thm}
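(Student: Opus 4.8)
The plan is to reduce to connected minuscule posets and then handle, one at a time, the four families on the classification list, each time invoking a result already in hand. First I would use the fact that a poset is minuscule iff its connected components are: writing a minuscule poset as $P = P_1 + \cdots + P_k$ with each $P_i$ connected minuscule, we get $J(P) = J(P_1) \times \cdots \times J(P_k)$, and the third bullet of Proposition~\ref{prop:tcdeproduct} reduces the theorem to showing that $J(P_i)$ is tCDE for each connected $P_i$. So it suffices to treat the posets (a)--(d).

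Two of the families are immediate. For (a), the order ideals of $\mathbf{a}\times\mathbf{b}$ are exactly the partitions fitting inside an $a\times b$ box, so $J(\mathbf{a}\times\mathbf{b}) = [\varnothing,b^a]$; as the rectangle $b^a$ is a balanced straight shape, Theorem~\ref{thm:younglatcde} gives that this lattice is tCDE (with edge density $\frac{ab}{a+b}$). For (c), the poset $P_{a,1,1,a}$ is exactly the $d=a$ specialization of Proposition~\ref{prop:posabcdcde}, which already shows $J(P_{a,1,1,a})$ is tCDE with edge density one.

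Family (b) I would push into the shifted setting of Section~\ref{sec:shiftyounglat}. The point is the poset isomorphism $[\varnothing,b^2]\cong P^{\mathrm{shift}}_{\delta_{b+1}}$: sending the box $[i,j]$ of the shifted staircase $\delta_{b+1}$ to the partition $(j-1,i-1)$ matches the southeast order on staircase boxes with containment of partitions inside a $2\times b$ box. Hence $J([\varnothing,b^2]) \cong J(P^{\mathrm{shift}}_{\delta_{b+1}}) = [\varnothing,\delta_{b+1}]_{\mathrm{shift}}$. Since $\delta_{b+1}$ is shifted-balanced of Type~(1) (it is $\delta_n+\nu$ with $n=b+1$, $\nu=\varnothing$, and $k=0$), Theorem~\ref{thm:shiftyounglatcde}(1) shows that $[\varnothing,\delta_{b+1}]_{\mathrm{shift}}$, and therefore $J([\varnothing,b^2])$, is tCDE with edge density $\frac{b+2}{4}$.

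The hard part is family (d): the exceptional posets $P(E_6)$ and $P(E_7)$, which have no grid structure to exploit. Here I would argue by a direct constancy computation in the spirit of the rook method. Recall $\mathrm{ddeg} = \sum_p \mathcal{T}^-_p$ and that every toggle-symmetric $\mu$ satisfies $\mathbb{E}(\mu;\mathcal{T}^+_p) = \mathbb{E}(\mu;\mathcal{T}^-_p)$. Thus it suffices to exhibit real coefficients $\beta_p$, indexed by the elements $p$ of $P$, for which the statistic $\mathrm{ddeg} - \sum_p \beta_p\,(\mathcal{T}^+_p - \mathcal{T}^-_p)$ is identically equal to a constant $e$ on all of $J(P)$; taking expectations then yields $\mathbb{E}(\mu;\mathrm{ddeg}) = e$ for every toggle-symmetric $\mu$, so $J(P)$ is tCDE with edge density $e$. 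The integer labels on the vertices of $P(E_6)$ and $P(E_7)$ in Figure~\ref{fig:expectionalposets} are exactly such coefficients. To verify constancy I would check invariance of the statistic under each toggle $\tau_q$; since toggling at $q$ changes only $\mathcal{T}^{\pm}_q$ together with the toggleability statistics at the elements covering or covered by $q$, this is a local condition on the cover relations of $P$, and because $J(P)$ is connected under toggles it is enough to confirm it edge by edge. This verification is finite but laborious --- it must be done across all $16$ elements of $P(E_6)$ and all $27$ elements of $P(E_7)$ --- and it is the step I expect to be the main obstacle.
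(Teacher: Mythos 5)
Your proposal is correct and follows essentially the same route as the paper: the same reduction to connected minuscule posets via Proposition~\ref{prop:tcdeproduct}, the same identifications of families (a)--(c) with Theorem~\ref{thm:younglatcde}, Theorem~\ref{thm:shiftyounglatcde} (via $[\varnothing,b^2]=P^{\mathrm{shift}}_{\delta_{b+1}}$), and Proposition~\ref{prop:posabcdcde}, and for $P(E_6)$ and $P(E_7)$ the same device of exhibiting coefficients that make $\mathrm{ddeg}$ plus a linear combination of the signed toggleability statistics $\mathcal{T}^{-}_p-\mathcal{T}^{+}_p$ identically constant. The paper simply records the resulting identities (the displayed equations~(\ref{eqn:pe6}) and~(\ref{eqn:pe7}), with coefficients found by computer) rather than spelling out the toggle-by-toggle verification you describe.
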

\begin{proof}
First note that since $J(P_1 + P_2 + \cdots + P_k) = J(P_1) \times J(P_2) \times \cdots \times J(P_2)$ we can apply Proposition~\ref{prop:tcdeproduct} to reduce to the case where $P$ is connected. 

The proof for~$P$ connected is case-by-case.
\begin{enumerate}[label=(\alph*)]
\item For $P = \mathbf{a} \times \mathbf{b}$: note that $P = P_{b^a}$ and thus Theorem~\ref{thm:younglatcde} says $J(P)$ is tCDE.
\item For $P = [\varnothing,b^2]$: note that $P = P^{\mathrm{shift}}_{\delta_{b+1}}$ and thus Theorem~\ref{thm:shiftyounglatcde} says $J(P)$ is tCDE.
\item For $P = P_{a,1,1,a}$: we showed that $J(P)$ is tCDE in Proposition~\ref{prop:posabcdcde} above.
\item For $P = P(E_6)$: let $\kappa_p$ be the coefficient labeling $p \in P(E_6)$ in Figure~\ref{fig:expectionalposets}. We have the following equality of functions $J(P(E_6)) \to \mathbb{R}$:
\begin{equation} \label{eqn:pe6}
3\cdot \mathrm{ddeg} + \sum_{p \in P(E_6)} \kappa_p \cdot \left(\mathcal{T}^{-}_p - \mathcal{T}^{+}_p \right) = 4\cdot \mathbb{1}.
\end{equation}
By taking the expectation of both sides of~(\ref{eqn:pe6}) we see that $\mathbb{E}(\mu;\mathrm{ddeg}) = \frac{4}{3}$ for any toggle-symmetric distribution~$\mu$ on $J(P(E_6))$. Similarly, for $P=P(E_7)$: let $\kappa_p$ be the coefficient labeling $p \in P(E_7)$ in Figure~\ref{fig:expectionalposets}. We have the following equality of functions $J(P(E_7)) \to \mathbb{R}$:
\begin{equation} \label{eqn:pe7}
2\cdot \mathrm{ddeg} + \sum_{p \in P(E_7)} \kappa_p \cdot \left(\mathcal{T}^{-}_p - \mathcal{T}^{+}_p \right) = 3\cdot \mathbb{1}.
\end{equation}
By taking the expectation of both sides of~(\ref{eqn:pe7}) we see that $\mathbb{E}(\mu;\mathrm{ddeg}) = \frac{3}{2}$ for any toggle-symmetric distribution~$\mu$ on $J(P(E_7))$. 
\end{enumerate}
The coefficients in Figure~\ref{fig:expectionalposets} were found using SAGE mathematical software.
\end{proof}

\begin{remark} \label{rem:uniproof}
It would be desirable to give a uniform proof of Theorem~\ref{thm:minusculecde}. Such a proof seems within reach in light of the work of Rush and Shi~\cite{rush2013orbits}, who offered a Lie theoretic interpretation of the toggles $\tau_p$ acting on a minuscule lattice~$J(P)$: roughly speaking, acting by a toggle corresponds to multiplication by a simple reflection (see~\cite[Theorem 6.3]{rush2013orbits}). Rush and Shi found this interpretation in the course of studying rowmotion orbits (see Section~\ref{sec:homomesy}) and the cyclic sieving phenomenon~\cite{reiner2004cyclic} in minuscule posets via the theory of heaps~\cite[\S2]{green2013combinatorics}. As Reiner-Tenner-Yong mentioned immediately after stating their conjecture~\cite[Conjecture 2.12]{reiner2016poset}, it would also be interesting to give a geometric interpretation or proof of the fact that any minuscule lattice~$J(P)$ is mCDE; such an interpretation seems plausible because the multichains of $J(P)$ are significant in standard monomial theory (see~\cite{seshadri1978geometry} and~\cite[\S12]{proctor1984bruhat}).
\end{remark}

One interesting consequence of the classification of minuscule posets is that if $P$ is a connected minuscule poset then $P$ itself is a distributive lattice; we have
\begin{enumerate}[label=(\alph*)]
\item $\mathbf{a} \times \mathbf{b} = J(\mathbf{c} + \mathbf{d})$ where $c = a-1$, $d=b-1$;
\item $[\varnothing,b^2] = J(P_{b^2})$;
\item $P_{a,1,1,a} = J(P_{a-1,1,1,a-1})$ for $a \geq 2$, and $P_{1,1,1,1} = \mathbf{2} \times \mathbf{2} = J(\mathbf{1} + \mathbf{1})$;
\item $P(E_6) = J([\varnothing,4^2]) = J(P_{\delta_4}^{\mathrm{shift}})$ and $P(E_7) = J(P(E_6))$.
\end{enumerate}
The following result, which generalizes the aforementioned~\cite[Theorem 2.10]{reiner2016poset}, drops out of what we have already done.

\begin{thm} \label{thm:minusculecde2}
Let $P$ be a connected minuscule poset. Then $P$ is tCDE.
\end{thm}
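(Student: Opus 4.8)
The plan is to exploit the observation made in the enumeration (a)--(d) immediately preceding the statement: every connected minuscule poset $P$ is \emph{itself} a distributive lattice, so that the tCDE property is even meaningful for $P$, and moreover $P = J(Q)$ for a poset $Q$ of one of a short list of recognizable types. Consequently the whole proof reduces to matching each case against a tCDE result already established, and no new analytic work is needed. I would run through the four families case-by-case, with each case essentially immediate.

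First I would dispatch case (a), $P = \mathbf{a}\times\mathbf{b}$. Each chain $\mathbf{a}$ is a distributive lattice that is tCDE by Example~\ref{ex:chaintcde} (its only toggle-symmetric distribution is the uniform one), and $\mathbf{a}\times\mathbf{b}$ is a direct product of two such chains, hence tCDE by Proposition~\ref{prop:tcdeproduct}. For case (b), $P = [\varnothing,b^2]$ is the interval of Young's lattice attached to the rectangle $b^2$, which has height $2$ and width $b$ and is balanced, so Theorem~\ref{thm:younglatcde} gives tCDE directly. For case (c) I would use the identification $P_{a,1,1,a} = J(P_{a-1,1,1,a-1})$ valid for $a\geq 2$, so that Proposition~\ref{prop:posabcdcde} applies verbatim with parameters $a-1,1,1,a-1$; the remaining instance $P_{1,1,1,1} = \mathbf{2}\times\mathbf{2}$ falls under case (a).

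The most interesting case is (d), and in particular $P(E_7)$. For $P(E_6)$ I would use $P(E_6) = J(P^{\mathrm{shift}}_{\delta_4}) = [\varnothing,\delta_4]_{\mathrm{shift}}$; since $\delta_4 = \delta_4 + \varnothing$ is shifted-balanced of Type~(1) (take $n=4$, $k=0$, $\nu=\varnothing$), Theorem~\ref{thm:shiftyounglatcde}(1) yields tCDE. For $P(E_7)$ the clean move is to bootstrap on the theorem just proved: since $P(E_7) = J(P(E_6))$ and $P(E_6)$ is itself a connected minuscule poset, Theorem~\ref{thm:minusculecde}---already established for the distributive lattice of \emph{any} minuscule poset---immediately shows that $J(P(E_6)) = P(E_7)$ is tCDE. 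I do not expect a genuine obstacle: all the analytic content has been carried out in Sections~\ref{sec:younglat} and~\ref{sec:shiftyounglat} and in the proof of Theorem~\ref{thm:minusculecde}. The only points requiring care are keeping straight which poset plays the role of $P$ and which plays the role of the poset $Q$ with $P=J(Q)$ in each identification of (a)--(d), and noticing that $P(E_7)$ is most efficiently handled by invoking Theorem~\ref{thm:minusculecde} on $P(E_6)$ rather than by hunting for a fresh toggleability identity.
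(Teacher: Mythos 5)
Your proposal is correct and follows exactly the same route as the paper: use the identifications of each connected minuscule poset as a distributive lattice $J(Q)$ from the list (a)--(d), then quote Example~\ref{ex:chaintcde} with Proposition~\ref{prop:tcdeproduct} for $\mathbf{a}\times\mathbf{b}$, Theorem~\ref{thm:younglatcde} for $[\varnothing,b^2]=J(P_{b^2})$, Proposition~\ref{prop:posabcdcde} for $P_{a,1,1,a}=J(P_{a-1,1,1,a-1})$, Theorem~\ref{thm:shiftyounglatcde} for $P(E_6)=J(P_{\delta_4}^{\mathrm{shift}})$, and Theorem~\ref{thm:minusculecde} for $P(E_7)=J(P(E_6))$. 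The bootstrapping step for $P(E_7)$ via Theorem~\ref{thm:minusculecde} is precisely the paper's move as well.
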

\begin{proof}
The proof is case-by-case.
\begin{enumerate}[label=(\alph*)]
\item For $P = \mathbf{a} \times \mathbf{b}$: since $\mathbf{a} \times \mathbf{b} = J(\mathbf{c} + \mathbf{d})$ where $c = a-1$, $d=b-1$, Example~\ref{ex:chaintcde} together with Proposition~\ref{prop:tcdeproduct} say that $P$ is tCDE.
\item For $P = [\varnothing,b^2]$: since $P = J(P_{b^2})$, Theorem~\ref{thm:younglatcde} says $P$ is tCDE.
\item For $P = P_{a,1,1,a}$: if $a=1$ we addressed this case already in the first bullet point; otherwise $P_{a,1,1,a} = J(P_{a-1,1,1,a-1})$ and so we showed that $P$ is tCDE in Proposition~\ref{prop:posabcdcde} above.
\item For $P = P(E_6)$: since $P(E_6) = J(P_{\delta_4}^{\mathrm{shift}})$, Theorem~\ref{thm:shiftyounglatcde} says $P$ is tCDE. For $P=P(E_7)$: since $P(E_7) = J(P(E_6))$, Theorem~\ref{thm:minusculecde} says $P$ is tCDE.
\end{enumerate}
\end{proof}

In contrast with the discussion in Remark~\ref{rem:uniproof} above, it is much less clear to me what a uniform proof of Theorem~\ref{thm:minusculecde2} would look like.

\section{Formulas for standard barely set-valued tableaux} \label{sec:tableaux}

In the last two sections we discuss some applications of our main results. In this section, following Reiner-Tenner-Yong~\cite[\S5]{reiner2016poset}, we explain how the study of the CDE property for intervals of Young's lattice (and the shifted Young's lattice) leads to formulas for some particular coefficients of stable Grothendieck polynomials (and their Type~B/C analogs). Recall that the Grothendieck polynomials, introduced by Lascoux and Sch\"{u}ztenberger~\cite{lascoux1982structure}~\cite{lascoux1990anneau}, represent Schubert varieties in the $K$-theory of flag manifolds in the same way that Schubert polynomials represent Schubert varieties in the cohomology of flag manifolds. Thus in particular Grothendieck polynomials are indexed by permutations $w \in \mathfrak{S}_n$ in the symmetric group. Under a natural inclusion of symmetric groups the coefficients of Grothendieck polynomials stabilize~\cite{fomin1994grothendieck}~\cite{fomin1996yangbaxter}, allowing us to define the stable Grothendieck polynomials (which are actually formal power series and not polynomials) as their limit. We will be interested in stable Grothendieck polynomials corresponding to skew shapes rather than arbitrary permutations. These arise in the $K$-theory of the Grassmannian as opposed to more general flag manifolds. Stable Grothendieck polynomials corresponding to skew shapes have a combinatorial definition, due to Buch~\cite{buch2002littlewood}, in terms of set-valued tableaux.

\begin{definition} (See Reiner-Tenner-Yong~\cite[Definition 3.2]{reiner2016poset}).
Let $\lambda/\nu$ be a skew shape. A \emph{set-valued filling $T$ of shape $\lambda/\nu$} is an assignment to each box $u$ of~$\lambda/\nu$ of a finite, nonempty subset  $T(u)\subseteq\{1,2,\ldots\}$. To any set-valued filling $T$ we associate the monomial
\[ \mathbf{x}^T := \prod_{u \in \lambda/\nu} \; \prod_{i \in T(u)} x_i. \]
We define $|T|$ to be the degree of $\mathbf{x}^{T}$. A \emph{set-valued tableau\footnote{Sometimes these tableaux are called \emph{column-strict} or \emph{semi-standard}. We will not use these adjectives as all tableaux will be assumed to be column-strict.} $T$ of shape $\lambda/\nu$} is a set-valued filling $T$ satisfying
\begin{itemize}
\item $\mathrm{max}\,T(u) \leq \mathrm{min} \, T(v)$ if $v$ is east of and in the same row as $u$;
\item $\mathrm{max}\,T(u) < \mathrm{min} \, T(v)$ if $v$ is south of and in the same column as $u$.
\end{itemize}
This $T$ is \emph{standard} if $\mathbf{x}^{T} = x_1x_2\cdots x_N$ for some $N$. We say that $T$ is simply a \emph{tableau} if~$\#T(u)=1$ for all boxes $u$ of $\lambda/\nu$, while we say that $T$ is \emph{barely set-valued} if $\#T(u)=1$ for all boxes $u$ of $\lambda/\nu$ except for a unique $u_0$ which has $\#T(u_0) = 2$.
\end{definition}

Now for the combinatorial definition of the stable Grothendieck polynomials according to Buch~\cite[Theorem 3.1]{buch2002littlewood}. Let $\lambda/\nu$ be a skew shape. The associated Schur polynomial is
\[ s_{\lambda/\nu}(x_1,x_2,\ldots) := \sum_{\substack{\textrm{$T$ a tableau} \\ \textrm{of shape $\lambda/\nu$}}} \mathbf{x}^T.\]
while the associated stable Grothendieck polynomial is
\[ G_{\lambda/\nu}(x_1,x_2,\ldots) := \sum_{\substack{\textrm{$T$ a set-valued}\\ \textrm{tableau of shape $\lambda/\nu$}}} (-1)^{|T| -|\lambda| + |\nu|}\mathbf{x}^T.\]
Observe that $s_{\lambda/\nu}(x_1,x_2,\ldots) $ is the homogenous part of $G_{\lambda/\nu}(x_1,x_2,\ldots)$ of lowest degree. Also observe that the number of standard tableaux and standard barely-set valued tableaux are encoded as coefficients of $G_{\lambda/\nu}(x_1,x_2,\ldots)$:
\begin{align*}
[x_1x_2\ldots x_{|\lambda|-|\nu|}] G_{\lambda/\nu}(x_1,x_2,\ldots) &= \textrm{$\#$ of standard tableaux of shape $\lambda/\nu$}; \\
[x_1x_2\ldots x_{|\lambda|-|\nu|}x_{|\lambda|-|\nu|+1}] G_{\lambda/\nu}(x_1,x_2,\ldots) &= (-1)\cdot\parbox{2.2in}{\begin{center}$\#$ of standard barely set-valued tableaux of shape $\lambda/\nu$\end{center}}.
\end{align*}
As is customary, we use $f^{\lambda/\nu}$ to denote the number of standard tableaux of shape $\lambda/\nu$. There is a determinantal formula for $f^{\lambda/\nu}$ due to Aitken~\cite{aitken1943monomial}.

\begin{lemma}[{Aitken~\cite{aitken1943monomial}; see also~\cite[Corollary 7.16.3]{stanley1999ec2}}]
For any skew shape $\lambda/\nu$,
\[f^{\lambda/\nu} = (|\lambda|-|\nu|)! \; \mathrm{det}_{i,j=1}^{k} \left[ \frac{1}{(\lambda_i-i-\nu_j+j)!}\right].\]
\end{lemma}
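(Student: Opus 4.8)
The plan is to realize $f^{\lambda/\nu}$ as the coefficient of the squarefree monomial $x_1 x_2 \cdots x_n$ (where $n := |\lambda| - |\nu|$) in the skew Schur function $s_{\lambda/\nu}$, and then to extract that coefficient from the Jacobi--Trudi determinant. For the first reduction, observe directly from the combinatorial definition $s_{\lambda/\nu} = \sum_T \mathbf{x}^T$ (the sum being over tableaux $T$ of shape $\lambda/\nu$) that such a tableau contributes the monomial $x_1 x_2 \cdots x_n$ precisely when it is standard; hence $f^{\lambda/\nu} = [x_1 x_2 \cdots x_n]\, s_{\lambda/\nu}(x_1, x_2, \ldots)$, where $[\cdot]$ denotes coefficient extraction.

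Next I would invoke the Jacobi--Trudi identity (see e.g.~\cite[Theorem 7.16.1]{stanley1999ec2}), which expresses the skew Schur function as
\[
s_{\lambda/\nu} = \det_{i,j=1}^{k}\left( h_{\lambda_i - \nu_j - i + j}\right),
\]
where $h_m$ is the complete homogeneous symmetric function, with the conventions $h_0 = 1$ and $h_m = 0$ for $m < 0$. Expanding the determinant as a signed sum over $\sigma \in \mathfrak{S}_k$ rewrites $s_{\lambda/\nu}$ as a signed sum of products $\prod_{i=1}^{k} h_{m_i(\sigma)}$ with $m_i(\sigma) := \lambda_i - \nu_{\sigma(i)} - i + \sigma(i)$. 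Each such product is homogeneous of degree $\sum_i m_i(\sigma) = |\lambda| - |\nu| = n$, matching the degree of the monomial we wish to extract.

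The key computational step is a coefficient-extraction lemma: for nonnegative integers $m_1, \ldots, m_k$ summing to $n$, the coefficient of $x_1 \cdots x_n$ in $h_{m_1}\cdots h_{m_k}$ equals the multinomial coefficient $\frac{n!}{m_1! \cdots m_k!}$. This holds because the squarefree-degree-$m$ coefficient of $h_m$ is $1$ (distinct indices admit a unique weakly increasing arrangement), so extracting $x_1\cdots x_n$ from the product amounts to partitioning the $n$ variable indices into ordered blocks of sizes $m_1, \ldots, m_k$. Applying this term-by-term across the permutation expansion --- and noting that the convention $1/m! = 0$ for $m<0$ matches $h_m = 0$ --- yields
\[
f^{\lambda/\nu} = \sum_{\sigma \in \mathfrak{S}_k} \operatorname{sgn}(\sigma)\, \frac{n!}{\prod_{i=1}^{k} (\lambda_i - \nu_{\sigma(i)} - i + \sigma(i))!} = n!\, \det_{i,j=1}^{k}\left[\frac{1}{(\lambda_i - i - \nu_j + j)!}\right],
\]
which is exactly the claimed formula.

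The steps here are essentially bookkeeping, so the only place requiring genuine care --- and the step I would treat as the main obstacle --- is justifying that the squarefree coefficient extraction commutes with the full determinant expansion, together with the uniform handling of the degenerate cases (negative or zero values of $m_i(\sigma)$). An alternative, self-contained route avoiding Jacobi--Trudi would be to encode standard tableaux of shape $\lambda/\nu$ as families of non-intersecting lattice paths and apply the Lindström--Gessel--Viennot lemma directly; this produces the same determinant but requires carefully setting up the path model, so I would fall back on it only if citing Jacobi--Trudi were deemed undesirable.
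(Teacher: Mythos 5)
Your argument is correct: reducing $f^{\lambda/\nu}$ to the coefficient of $x_1\cdots x_n$ in $s_{\lambda/\nu}$, applying Jacobi--Trudi, and extracting the squarefree coefficient term-by-term (which is legitimate since coefficient extraction is linear over the finite signed sum, and the conventions $h_m=0$ and $1/m!=0$ for $m<0$ agree) is exactly the standard derivation. The paper itself gives no proof of this lemma --- it is quoted from Aitken and from Stanley's EC2, Corollary~7.16.3 --- and your proof is essentially the one found in that cited reference, so there is nothing to reconcile.
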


In the special case of a straight shape~$\lambda$, there is an even better formula for $f^{\lambda}$: the famous \emph{hook-length formula} of Frame-Robinson-Thrall~\cite{frame1954hook}. Here the \emph{hook-length} of a box $u \in P_{\lambda}$ is~$h_{\lambda}(u) := \#\{v \in P_{\lambda}\colon \textrm{$u\leq v$ and $v$ is in the same row or column as $u$}\}$.

\begin{lemma}[{Frame-Robinson-Thrall~\cite{frame1954hook}; see also~\cite[Corollary 7.21.6]{stanley1999ec2}}] \label{lem:hooklength}
For any straight shape $\lambda$,
\[f^{\lambda} = |\lambda|! \; \prod_{u \in P_{\lambda}} \frac{1}{h_{\lambda}(u)!}.\]
\end{lemma}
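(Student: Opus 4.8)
The plan is to derive Lemma~\ref{lem:hooklength} from Aitken's determinantal formula, which is already in hand as the immediately preceding lemma, specialized to a straight shape $\lambda$. Write $n := |\lambda|$ and $k := \ell(\lambda)$, and introduce the \emph{beta-numbers} $\beta_i := \lambda_i + k - i$ for $1 \le i \le k$, a strictly decreasing sequence of nonnegative integers. Setting $\nu = \varnothing$ in Aitken's formula, the $(i,j)$ entry becomes $1/(\beta_i - k + j)!$, so that $f^\lambda = n!\,\det_{i,j=1}^k[\,1/(\beta_i-k+j)!\,]$, with the usual convention that $1/m! = 0$ for $m<0$.

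First I would evaluate this determinant. Multiplying the $i$th row by $\beta_i!$ turns the $(i,j)$ entry into the falling factorial $\beta_i!/(\beta_i-k+j)! = \beta_i(\beta_i-1)\cdots(\beta_i-k+j+1)$, a polynomial in $\beta_i$ of degree $k-j$. Since the falling factorials of degrees $k-1,k-2,\ldots,0$ form a unitriangular change of basis from the monomials $\beta_i^{\,k-1},\ldots,\beta_i^{\,0}$, column reduction replaces each entry by $\beta_i^{\,k-j}$ without changing the determinant, leaving a Vandermonde matrix. Hence
\[ \det_{i,j=1}^k\Big[\tfrac{1}{(\beta_i-k+j)!}\Big] = \frac{1}{\prod_i \beta_i!}\,\det_{i,j=1}^k\big[\beta_i^{\,k-j}\big] = \frac{\prod_{1\le i<j\le k}(\beta_i-\beta_j)}{\prod_{i=1}^k \beta_i!},\]
so that $f^\lambda = n!\,\prod_{i<j}(\beta_i-\beta_j)\big/\prod_i\beta_i!$.

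Next comes the combinatorial heart of the argument, which I expect to be the main obstacle: converting this beta-number expression into the hook product. The key sub-lemma is that the multiset of hook lengths of the boxes lying in row $i$ of $\lambda$ is exactly $\{1,2,\ldots,\beta_i\}\setminus\{\beta_i-\beta_j : i<j\le k\}$. This is the standard fact that the first-column hook lengths are the beta-numbers and that the hooks in a row are governed by the ``edge sequence'' of $\lambda$; I would prove it by tracking, for each box in row $i$, the contribution of its arm and leg and matching these against the missing differences $\beta_i-\beta_j$. Granting the sub-lemma, taking the product over the boxes of row $i$ gives $\prod_{u\in\text{row }i} h_\lambda(u) = \beta_i!\big/\prod_{j>i}(\beta_i-\beta_j)$, and multiplying over all rows yields $\prod_{u\in P_\lambda} h_\lambda(u) = \prod_i\beta_i!\big/\prod_{i<j}(\beta_i-\beta_j)$.

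Combining the two displays gives $f^\lambda = n!\big/\prod_{u\in P_\lambda} h_\lambda(u)$, the asserted hook-length formula. I expect the determinant evaluation to be routine linear algebra (the only care needed is the overall sign, which is forced to be positive since both sides are positive); the genuine content, and the step most deserving of attention, is the beta-number hook identity, since it is where the Vandermonde factor $\prod_{i<j}(\beta_i-\beta_j)$ acquires its combinatorial meaning. As an alternative that sidesteps Aitken's formula entirely, one could instead give the self-contained probabilistic proof of Greene--Nijenhuis--Wilf via the random hook walk, whose crux is the analogous identity expressing the corner-arrival probabilities as ratios of hook products; but since Aitken's formula is already available, the determinant route seems most economical here.
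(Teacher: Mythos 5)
The paper does not prove this lemma at all: it is quoted as a classical result of Frame--Robinson--Thrall with a pointer to~\cite[Corollary 7.21.6]{stanley1999ec2}, so there is no ``paper proof'' to match against. That said, your derivation is a correct and standard route to the hook-length formula, and it is a natural one in context since Aitken's determinantal formula is the immediately preceding lemma. The determinant evaluation is sound: with $\beta_i=\lambda_i+k-i$ the entries $\beta_i!/(\beta_i-k+j)!$ are monic falling factorials of degree $k-j$ in $\beta_i$, the unitriangular column reduction to monomials is legitimate (and compatible with the convention $1/m!=0$ for $m<0$, since those entries correspond to falling factorials containing a zero factor), and the resulting Vandermonde $\prod_{i<j}(\beta_i-\beta_j)$ is positive because the $\beta_i$ are strictly decreasing, so no sign issue arises. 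The one place where you still owe a complete argument is the sub-lemma that the hook lengths in row $i$ form the multiset $\{1,\ldots,\beta_i\}\setminus\{\beta_i-\beta_j\colon j>i\}$; you correctly identify this as the combinatorial heart, and it is a true classical fact (the first-column hook length of row $i$ is exactly $\beta_i$, and the missing values $\beta_i-\beta_j$ account for the columns where row $j$ still has boxes), but as written it is asserted rather than proved. Filling in that verification, e.g.\ via the edge-sequence/abacus description you allude to, would make the proof complete. Your alternative suggestion of the Greene--Nijenhuis--Wilf hook walk is also valid but, as you say, less economical given that Aitken's formula is already on the table.
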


As Reiner-Tenner-Yong observed, the number of standard barely set-valued tableaux of shape $\lambda/\nu$ can be obtained from $f^{\lambda/\nu}$ via knowledge of $\mathbb{E}(\mathrm{maxchain}_{[\nu,\lambda]};\mathrm{ddeg})$. (They stated this lemma only for straight shapes but it applies equally to skew shapes and with the same proof.)

\begin{lemma}[{Reiner-Tenner-Yong~\cite[Corollary 3.7]{reiner2016poset}}] \label{lem:tableaux}
The number of standard barely set-valued tableaux of shape $\lambda/\nu$ is $(|\lambda|-|\nu|+1)f^{\lambda/\nu} \cdot \mathbb{E}(\mathrm{maxchain}_{[\nu,\lambda]};\mathrm{ddeg})$.
\end{lemma}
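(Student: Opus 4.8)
The plan is to interpret everything inside the distributive lattice $[\nu,\lambda] = J(P_{\lambda/\nu})$ and to reduce the claimed identity to an explicit bijection. First I would record the standard facts that $[\nu,\lambda]$ is graded of rank $N := |\lambda|-|\nu|$, that its maximal chains are exactly the saturated chains $\nu = \rho^{(0)} \lessdot \rho^{(1)} \lessdot \cdots \lessdot \rho^{(N)} = \lambda$, and that these biject with standard Young tableaux of shape $\lambda/\nu$ (the box added at step $j$ receiving the label $j$); in particular there are $f^{\lambda/\nu}$ of them, each consisting of $N+1$ elements. Feeding the definition of $\mathrm{maxchain}$ into the target quantity, the factor $(N+1)f^{\lambda/\nu}$ cancels the denominator $\#\{(c,q)\colon c \text{ a maximal chain}, q \in c\} = (N+1)f^{\lambda/\nu}$, leaving
\[(N+1)f^{\lambda/\nu} \cdot \mathbb{E}(\mathrm{maxchain}_{[\nu,\lambda]};\mathrm{ddeg}) = \sum_{c} \sum_{I \in c} \mathrm{ddeg}(I),\]
the outer sum being over maximal chains $c$. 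Since $\mathrm{ddeg}(I) = \#\mathrm{max}(I)$ for an order ideal $I$, the right-hand side counts triples $(c,I,u)$ where $c$ is a maximal chain, $I \in c$, and $u$ is a maximal element of the order ideal $I$. So it suffices to biject these triples with standard barely set-valued tableaux of shape $\lambda/\nu$.

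For the forward map, given such a tableau $T$ I would let $u_0$ be its unique doubly-filled box, with entries $\{a,b\}$ where $a<b$, and define $\rho^{(t)}/\nu := \{u\colon \min T(u) \le t\}$ for $t = 0,1,\ldots,N+1$. Reading the labels $1,\ldots,N+1$ in order, each label $t$ with $t = \min T(u)$ for some box $u$ adds that box, while the single label $t = b = \max T(u_0)$ adds nothing; thus the distinct shapes form a maximal chain $c$, and I would set $I := \rho^{(b-1)}$. The key point, which I expect to be the crux, is that $u_0$ is a maximal element of $I$: standardness forces all entries to be distinct, so the weak inequality $\max T(u_0) \le \min T(v)$ for an eastern neighbour $v$ of $u_0$ must in fact be strict (and likewise to the south); hence every box lying above $u_0$ in $P_{\lambda/\nu}$ has minimum entry $> b$ and so lies outside $I = \rho^{(b-1)}$. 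This produces the triple $(c, I, u_0)$.

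For the inverse I would start from a triple encoded as a standard Young tableau $S$ of shape $\lambda/\nu$, an index $i$ with $I = \rho^{(i)}$, and a maximal element $u$ of $I$ (so $S(u) \le i$), then increment by one every entry of $S$ exceeding $i$ and adjoin the label $i+1$ to the box $u$. The technical heart is checking that this yields a genuine standard barely set-valued tableau: the relabelling preserves the weak-row and strict-column inequalities, and the new entry $i+1$ at $u$ is compatible with its neighbours precisely because $u$ is maximal in $I$ — every box east or south of $u$ lies outside $I$, hence originally had $S$-entry $> i$ and so now carries an entry $\ge i+2$, while every box north or west of $u$ lies in $I$ and retains its small entry. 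One verifies directly that this reverses the forward construction (the doubled box is $u$, with entries $\{S(u), i+1\}$, so $b = i+1$ and $\rho^{(b-1)} = I$). With the bijection in hand, the number of standard barely set-valued tableaux equals $\sum_c \sum_{I \in c}\mathrm{ddeg}(I)$, which by the first paragraph is $(|\lambda|-|\nu|+1)\,f^{\lambda/\nu}\cdot\mathbb{E}(\mathrm{maxchain}_{[\nu,\lambda]};\mathrm{ddeg})$, as desired. The main obstacle throughout is the bookkeeping of the tableau inequalities across the distinguished box, where the distinctness forced by standardness is exactly what upgrades the weak row inequality to a strict one east of $u_0$ and thereby makes $u_0$ a removable (maximal) element.
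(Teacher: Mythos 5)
Your proposal is correct and is essentially the argument the paper relies on (via the citation to Reiner--Tenner--Yong and the parallel proof of the shifted analogue, Lemma~\ref{lem:shifttableaux}): identify $(|\lambda|-|\nu|+1)f^{\lambda/\nu}\cdot\mathbb{E}(\mathrm{maxchain};\mathrm{ddeg})$ with the count of triples (maximal chain, element of the chain, downward cover), and biject those triples with standard barely set-valued tableaux by incrementing the labels above the chosen index and adjoining the new label to the removable box. Your observation that standardness upgrades the weak row inequality to a strict one, making the doubled box a maximal element of the corresponding order ideal, is exactly the point the paper's version of the bijection also hinges on.
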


\begin{remark}
 Chan, L\'{o}pez Mart\'{i}n, Pflueger and Teixidor i Bigas essentially proved Lemma~\ref{lem:tableaux} as well. Specifically, a key lemma~\cite[Lemma 2.6]{chan2015genera} in their paper says that the number of edges of the \emph{Brill-Noether graph} associated to the skew shape~$\lambda/\nu$ is given by the formula appearing in Lemma~\ref{lem:tableaux}. The Brill-Noether graph~\cite[Definition~2.2]{chan2015genera} associated to $\lambda/\nu$ is the simplicial complex of all set-valued tableaux $T$ of shape~$\lambda/\nu$ which satisfy~$\mathbf{x}^{T} \mid x_{1}x_{2}\cdots x_{|\lambda|-|\nu|}x_{|\lambda|-|\nu|+1}$ and with $T \subseteq T'$ iff $T(u) \subseteq T'(u)$ for every box~$u \in \lambda/\nu$. The edges of this graph clearly correspond to standard barely set-valued tableaux of shape $\lambda/\nu$.
\end{remark}

Reiner-Tenner-Yong used Lemma~\ref{lem:tableaux} as a tool to compute $\mathbb{E}(\mathrm{maxchain}_{[\nu,\lambda]};\mathrm{ddeg})$. Our perspective will be somewhat the opposite, combining Lemma~\ref{lem:tableaux} with our prior computation of~$\mathbb{E}(\mathrm{maxchain}_{[\nu,\lambda]};\mathrm{ddeg})$ in order to deduce formulas for the number of these tableaux. That is to say, thanks to Lemma~\ref{lem:tableaux} we have the following immediate corollary of Theorem~\ref{thm:younglatcde}, i.e., corollary of the work of CHHM~\cite{chan2015expected}.

\begin{cor} \label{cor:tabeleaux}
For a balanced skew shape $\lambda/\nu$ of height $a$ and width $b$, the number of standard barely set-valued tableaux of shape $\lambda/\nu$ is $\frac{ab}{a+b}(|\lambda|-|\nu|+1) f^{\lambda/\nu}$.
\end{cor}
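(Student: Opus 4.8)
The plan is to combine the two tools already assembled in the excerpt: Lemma~\ref{lem:tableaux}, which reduces counting standard barely set-valued tableaux to computing an expected down-degree, and Theorem~\ref{thm:younglatcde}, which pins down that expected down-degree for balanced skew shapes. Because all the substantive work has been done, I expect this to be a one-line corollary rather than a genuine argument.

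First I would recall that by Lemma~\ref{lem:tableaux} the number of standard barely set-valued tableaux of shape $\lambda/\nu$ equals $(|\lambda|-|\nu|+1)f^{\lambda/\nu}\cdot\mathbb{E}(\mathrm{maxchain}_{[\nu,\lambda]};\mathrm{ddeg})$. Thus the only quantity left to identify is $\mathbb{E}(\mathrm{maxchain}_{[\nu,\lambda]};\mathrm{ddeg})$, and the entire content of the corollary is the evaluation of this single expectation.

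Next I would invoke Theorem~\ref{thm:younglatcde}: since $\lambda/\nu$ is balanced of height $a$ and width $b$ with $a,b\geq 1$, the distributive lattice $[\nu,\lambda]$ is tCDE with edge density $\mathbb{E}(\mathrm{uni}_{[\nu,\lambda]};\mathrm{ddeg})=\frac{ab}{a+b}$. The key point is that the maximal-chain distribution is itself toggle-symmetric: every distributive lattice is graded of rank $\#P$, so $\mathrm{maxchain}=\mathrm{chain}(\#P)$, and Lemma~\ref{lem:chaintogsym} guarantees that each $\mathrm{chain}(k)$ is toggle-symmetric. Hence the defining property of tCDE applies to $\mu=\mathrm{maxchain}$ and yields $\mathbb{E}(\mathrm{maxchain}_{[\nu,\lambda]};\mathrm{ddeg})=\mathbb{E}(\mathrm{uni}_{[\nu,\lambda]};\mathrm{ddeg})=\frac{ab}{a+b}$. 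Equivalently, one may simply cite the implication tCDE $\Rightarrow$ CDE, which directly gives $\mathbb{E}(\mathrm{maxchain};\mathrm{ddeg})=\mathbb{E}(\mathrm{uni};\mathrm{ddeg})$.

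Substituting $\frac{ab}{a+b}$ for $\mathbb{E}(\mathrm{maxchain}_{[\nu,\lambda]};\mathrm{ddeg})$ into the expression from Lemma~\ref{lem:tableaux} produces exactly $\frac{ab}{a+b}(|\lambda|-|\nu|+1)f^{\lambda/\nu}$, the claimed count. There is no real obstacle: the only things worth double-checking are the harmless bookkeeping that $[\nu,\lambda]$ meets the hypotheses of Theorem~\ref{thm:younglatcde} (a nonempty balanced shape automatically has $a,b\geq 1$) and that $\mathrm{maxchain}$ qualifies as toggle-symmetric, so that the full strength of tCDE rather than merely CDE is available; both are immediate from the earlier development.
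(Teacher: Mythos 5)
Your proof is correct and matches the paper's own argument: the paper likewise combines Theorem~\ref{thm:younglatcde} (giving $\mathbb{E}(\mathrm{maxchain}_{[\nu,\lambda]};\mathrm{ddeg})=\frac{ab}{a+b}$ via tCDE $\Rightarrow$ CDE) with Lemma~\ref{lem:tableaux}. Your extra remark that $\mathrm{maxchain}=\mathrm{chain}(\#P)$ is toggle-symmetric is harmless additional justification but not needed beyond the CDE implication.
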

\begin{proof}
Theorem~\ref{thm:younglatcde} tells us that in this case $[\nu,\lambda]$ is tCDE, in particular, CDE, with edge density $\frac{ab}{a+b}$. Thus $\mathbb{E}(\mathrm{maxchain}_{[\nu,\lambda]};\mathrm{ddeg}) = \frac{ab}{a+b}$. The formula for tableaux then follows from  Lemma~\ref{lem:tableaux}.
\end{proof}

We now explain the shifted version of the preceding story. To do that we need to review the Type B/C analogs of stable Grothendieck polynomials defined by Ikeda and Naruse~\cite{ikeda2013ktheoretic}.\footnote{In fact, Ikeda and Naruse defined equivariant versions of these Type B/C stable Grothendieck polynomials, but before them no one had defined even the non-equivariant versions. We will be concerned only with the non-equivariant versions.} For this we need to talk about shifted set-valued tableaux.

\begin{definition}(Ikeda and Naruse~\cite[\S9.1]{ikeda2013ktheoretic}).
Let $\lambda$ be a strict partition. A \emph{shifted set-valued filling $T$ of shape $\lambda$} is an assignment to each box $u$ of~$P^{\mathrm{shift}}_{\lambda}$ of a finite, nonempty subset  $T(u)\subseteq\{1,1',2,2',\ldots\}$. Here the set of unprimed and primed positive integers is given the total order $1 < 1' < 2 < 2' < \cdots$. To any such set-valued filling $T$ we associate the monomial
\[ \mathbf{x}^T := \prod_{u \in P_{\lambda}^{\mathrm{shift}}} \left( \prod_{i \in T(u)} x_i \right) \left( \prod_{i' \in T(u)} x_i \right). \]
We define $|T|$ to be the degree of $\mathbf{x}^{T}$. A \emph{shifted set-valued tableau $T$ of shape $\lambda$} is a set-valued filling $T$ satisfying
\begin{itemize}
\item $\mathrm{max}\,T(u) \leq \mathrm{min} \, T(v)$ for all $u \leq v \in P_{\lambda}^{\mathrm{shift}}$;
\item each unprimed integer appears at most once in each column;
\item each primed integer appears at most once in each row.
\end{itemize}
This $T$ is \emph{standard} if $\mathbf{x}^{T} = x_1x_2\cdots x_N$ for some $N$. We say that $T$ is simply a \emph{shifted tableau} if~$\#T(u)=1$ for all boxes $u$ of $\lambda/\nu$, while we say that $T$ is \emph{barely set-valued} if $\#T(u)=1$ for all boxes $u$ of $\lambda/\nu$ except for a unique $u_0$ which has $\#T(u_0) = 2$. We say that $T$ is \emph{unprimed} if no primed integers appear in $T$, while we say that $T$ is \emph{diagonally unprimed} if no primed integers appear in main diagonal boxes in $T$.
\end{definition}

Now for the combinatorial definition of the Type B/C stable Grothendieck polynomials according to Ikeda and Naruse~\cite[Theorem 9.1]{ikeda2013ktheoretic}. Let $\lambda$ be a strict partition. The corresponding Schur $P$- and $Q$-functions are
\begin{align*}
P_{\lambda}(x_1,x_2,\ldots) := \sum_{\substack{\textrm{$T$ a diagonally unprimed}\\ \textrm{shifted tableau} \\ \textrm{of shape $\lambda$}}} \mathbf{x}^{T} \\
Q_{\lambda}(x_1,x_2,\ldots) := \sum_{\substack{\textrm{$T$ a shifted tableau}\\ \textrm{of shape $\lambda$}}} \mathbf{x}^{T} \\
\end{align*}
while the corresponding Type B/C stable Grothendieck polynomials are
\begin{align*}
GP_{\lambda}(x_1,x_2,\ldots) := \sum_{\substack{\textrm{$T$ a diagonally unprimed}\\ \textrm{shifted set-valued tableau} \\ \textrm{of shape $\lambda$}}} (-1)^{|T| -|\lambda|}  \mathbf{x}^{T} \\
GQ_{\lambda}(x_1,x_2,\ldots) := \sum_{\substack{\textrm{$T$ a shifted set-valued}\\ \textrm{tableau of shape $\lambda$}}} (-1)^{|T| -|\lambda|} \mathbf{x}^{T} \\
\end{align*}
Observe that $P_{\lambda}(x_1,x_2,\ldots)$ (respectively, $Q_{\lambda}(x_1,x_2,\ldots)$) is the homogenous component of $GP_{\lambda}(x_1,x_2,\ldots)$ (resp., $GQ_{\lambda}(x_1,x_2,\ldots)$) of lowest degree. Also observe that we have~$P_{\lambda}(x_1,x_2,\ldots) = 2^{-\ell(\lambda)} Q_{\lambda}(x_1,x_2,\ldots)$, but that there is not as simple a relationship between $GP_{\lambda}(x_1,x_2,\ldots)$ and $GQ_{\lambda}(x_1,x_2,\ldots)$.

Let us give a quick summary of the difference between the ordinary objects and their shifted analogs:
\begin{itemize}
\item the Schur functions $s_{\lambda}(x_1,x_2,\ldots)$ arise in the ordinary representation theory of the symmetric group, while the Schur $P$- and $Q$-functions arise in the projective representation theory of the symmetric group (and indeed, this is why Schur~\cite{schur1911uber} originally introduced his $P$- and $Q$-functions; see also~\cite{stembridge1989shifted});
\item the Schur functions $s_{\lambda}(x_1,x_2,\ldots)$ are related to the cohomology of the ordinary Grassmannian, while the Schur $P$- and $Q$-functions are related to the cohomology of the orthogonal and symplectic Grassmannians (as was first observed in the seminal work of Pragacz~\cite{pragacz1991algebro});
\item the stable Grothendieck polynomials $G_{\lambda}(x_1,x_2,\ldots)$ come from the $K$-theory of the ordinary Grassmannian, while $GP_{\lambda}(x_1,x_2,\ldots)$ and $GQ_{\lambda}(x_1,x_2,\ldots)$ come from the $K$-theory of the orthogonal and symplectic Grassmannians (and this is why Ikeda and Naruse introduced them).
\end{itemize}
This explains why we refer to $GP_{\lambda}(x_1,x_2,\ldots)$ and $GQ_{\lambda}(x_1,x_2,\ldots)$ as Type B/C stable Grothendieck polynomials.

Again, the number of standard shifted tableaux and standard shifted barely set-valued tableaux can be extracted from $GP_{\lambda}(x_1,x_2,\ldots)$ and $GQ_{\lambda}(x_1,x_2,\ldots)$:
\begin{align*}
[x_1x_2\ldots x_{|\lambda|}] GP_{\lambda}(x_1,x_2,\ldots) &= \parbox{2.5in}{\begin{center}$\#$ of diagonally unprimed standard \\ shifted tableaux of shape $\lambda$\end{center}}; \\
[x_1x_2\ldots x_{|\lambda|}] GQ_{\lambda}(x_1,x_2,\ldots) &= \textrm{$\#$ of standard shifted tableaux of shape $\lambda$}; \\
[x_1x_2\ldots x_{|\lambda|}x_{|\lambda|+1}] GP_{\lambda}(x_1,x_2,\ldots) &= (-1)\cdot\parbox{2.5in}{\begin{center}$\#$ of diagonally unprimed standard \\\ shifted barely set-valued tableaux \\ of shape $\lambda$\end{center}}; \\
[x_1x_2\ldots x_{|\lambda|}x_{|\lambda|+1}] GQ_{\lambda}(x_1,x_2,\ldots) &= (-1)\cdot\parbox{2.2in}{\begin{center}$\#$ of standard shifted barely set-valued tableaux of shape $\lambda$\end{center}}. \\
\end{align*}
We use $g^{\lambda}$ to denote the number of unprimed standard shifted tableaux of shape $\lambda$. Note that the number of diagonally unprimed standard shifted tableaux of shape $\lambda$ is~$2^{|\lambda|-\ell(\lambda)}g^{\lambda}$ and the number of standard shifted tableaux of shape $\lambda$ is~$2^{|\lambda|}g^{\lambda}$. Remarkably, there is also a hook-length formula for $g^{\lambda}$ due to Thrall~\cite{thrall1952combinatorial}. For a strict partition~$\lambda$ and a box $u = [i,j] \in P_{\lambda}^{\mathrm{shift}}$, we define the \emph{shifted hook-length} of $u$ to be
\[ \scalebox{0.88}{$h_{\lambda}^{*}(u) := \#\left(\{u\} \cup \{[i,j']\in P_{\lambda}^{\mathrm{shift}}\colon j' > j\}  \cup \{[i',j]\in P_{\lambda}^{\mathrm{shift}}\colon i' > i\}  \cup \{[j+1,j']\in P_{\lambda}^{\mathrm{shift}}\colon j' > j\} \right).$} \]
For an explanation as to why this quantity is the size of a ``shifted hook'', see~\cite[\S2.2]{sagan1990ubiquitous}. At any rate, we have the following.

\begin{lemma}[{Thrall~\cite{thrall1952combinatorial}; see also~\cite[Theorem 2.2.1]{sagan1990ubiquitous}}] \label{lem:shifthooklength}
For any strict partition $\lambda$,
\[g^{\lambda} = |\lambda|! \; \prod_{u \in P^{\mathrm{shift}}_{\lambda}} \frac{1}{h^{*}_{\lambda}(u)!}.\]
\end{lemma}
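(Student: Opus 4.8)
The plan is to prove the formula by induction on $n := |\lambda|$, realizing $g^{\lambda}$ as a count of linear extensions and matching the obvious peeling recurrence against the corresponding recurrence for the hook-length product. An unprimed standard shifted tableau of shape $\lambda$ is precisely a bijective labelling of the boxes of $P^{\mathrm{shift}}_{\lambda}$ by $1,2,\ldots,n$ that increases along rows and columns, i.e.\ a linear extension of $P^{\mathrm{shift}}_{\lambda}$. The box receiving the label $n$ must be a \emph{removable corner} of $\lambda$ (a box whose deletion leaves the shifted diagram of a strict partition), and deleting it gives
\[ g^{\lambda} = \sum_{c} g^{\lambda \setminus c}, \]
where the sum runs over the removable corners $c$ of $\lambda$. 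Write $H(\lambda) := |\lambda|! \prod_{u \in P^{\mathrm{shift}}_{\lambda}} h^{*}_{\lambda}(u)^{-1}$ for the right-hand side of the claimed formula. Since $H(\varnothing)=1=g^{\varnothing}$, the induction reduces to showing that $H$ obeys the same recurrence; equivalently, that $\sum_{c} p_{c} = 1$, where $p_{c} := H(\lambda \setminus c)/H(\lambda)$.

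First I would record that $p_{c}$ is a very structured ratio: passing from $\lambda$ to $\lambda \setminus c$ changes the shifted hook length of a box $u$ only when the shifted hook of $u$ passes through $c$, so all factors cancel except those indexed by boxes lying in the row, the column, or the diagonal ``bend'' of $c$. Thus $p_{c}$ equals $\tfrac{1}{n}$ times an explicit product of factors of the form $h^{*}/(h^{*}-1)$ over exactly the cells whose shifted hook contains $c$. To prove $\sum_{c}p_{c}=1$ I would run the shifted hook walk, following Sagan~\cite{sagan1990ubiquitous}: choose a starting box uniformly among the $n$ boxes of $P^{\mathrm{shift}}_{\lambda}$, then repeatedly jump from the current box $u$ to a box chosen uniformly among the other cells of the shifted hook of $u$, stopping when a removable corner is reached. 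Each jump advances the walk in a fixed linear order on the cells it visits, so the process is absorbed at some removable corner with probability one.

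The heart of the argument is the absorption identity: the probability that the shifted hook walk terminates at a given removable corner $c$ is exactly $p_{c}$. As in the ordinary Greene--Nijenhuis--Wilf computation, one projects a trajectory onto its sequence of rows and its sequence of columns, evaluates the absorption probability as a telescoping product of the individual $1/(h^{*}-1)$ jump contributions, and then sums over all possible intermediate cells; this resummation rebuilds precisely the product $\prod h^{*}/(h^{*}-1)$ defining $p_{c}$. Summing the identity over all removable corners gives $\sum_{c}p_{c}=1$ and closes the induction.

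The step I expect to be the main obstacle is the behaviour of the walk at the main diagonal, which is exactly the feature distinguishing $h^{*}_{\lambda}$ from the ordinary hook length. The shifted hook of $[i,j]$ does not stop when its leg reaches the diagonal box $[j,j]$; instead it ``turns the corner'' and continues along row $j+1$ (the set $\{[j+1,j']\colon j'>j\}$ in the definition of $h^{*}_{\lambda}$). Correspondingly the walk must be allowed to transfer from descending column $j$ to moving rightward along row $j+1$ upon hitting the diagonal, and I would need to check that the extra jump contributed by this bend is precisely what inserts the additional row-$(j+1)$ cells into the telescoping product, and that corners adjacent to the diagonal are neither over- nor under-counted. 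Verifying that the row/column projection argument still telescopes cleanly across the diagonal bend is where the genuine work lies; an alternative, should the combinatorics of the bend prove unwieldy, would be to bypass the walk entirely and instead extract the formula algebraically from the lowest-degree term of the principal specialization of the Schur $Q$-function $Q_{\lambda}$.
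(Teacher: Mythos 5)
The paper does not prove this lemma at all: it is quoted as a classical result of Thrall, with Sagan's survey cited for a proof, so there is no in-paper argument to compare against. Your plan is essentially the proof in the cited reference: identify $g^{\lambda}$ with the number of linear extensions of $P^{\mathrm{shift}}_{\lambda}$, peel off the box labelled $n$ to get $g^{\lambda}=\sum_{c}g^{\lambda\setminus c}$ over removable corners, and verify that the hook-length product satisfies the same recurrence via the shifted hook walk. That reduction is correct, and you have correctly located the crux: the absorption identity $\mathbb{P}(\text{walk ends at }c)=p_{c}$, and in particular its behaviour when the leg of a hook crosses the main diagonal and continues into row $j+1$. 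But as written you have only asserted that identity; the telescoping row/column projection argument genuinely does not carry over verbatim from the straight-shape Greene--Nijenhuis--Wilf computation, because a trajectory can switch from a column to a lower row at the bend, and handling this (Sagan does it by splitting trajectories according to whether and where they hit the diagonal) is the entire technical content of the shifted case. So the proposal is a sound outline of the standard proof with the hardest step deferred rather than a complete argument; either carrying out that case analysis or falling back on your suggested alternative (extracting the formula from the principal specialization of $Q_{\lambda}$) would be needed to close it.
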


Now for the connection to CDE intervals of the shifted Young's lattice: we can compute the number of standard shifted barely set-valued tableaux of shape $\lambda$ from $g^{\lambda}$ via knowledge of $\mathbb{E}(\mathrm{maxchain}_{[\varnothing,\lambda]_{\mathrm{shift}}};\mathrm{ddeg})$.

\begin{lemma} \label{lem:shifttableaux}
Let $\lambda$ be a strict partition.
\begin{itemize}
\item The number of standard shifted barely set-valued tableaux of shape $\lambda$ is 
\[(|\lambda|+1) \, 2^{|\lambda|+1} g^{\lambda} \cdot \mathbb{E}(\mathrm{maxchain}_{[\varnothing,\lambda]_{\mathrm{shift}}};\mathrm{ddeg}). \]
\item The number of diagonally unprimed standard shifted barely set-valued tableaux of shape $\lambda$ is 
\[(|\lambda|+1) \, 2^{|\lambda|-\ell(\lambda)} \, g^{\lambda} \cdot \mathbb{E}\left(\mathrm{maxchain}_{[\varnothing,\lambda]_{\mathrm{shift}}};2\cdot\mathrm{ddeg}- \textstyle \sum_{i=1}^{\ell(\lambda)}\mathcal{T}^{-}_{[i,i]}\right). \]
\end{itemize}
\end{lemma}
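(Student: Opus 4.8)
The plan is to reduce both statements to the poset-theoretic bijection underlying Lemma~\ref{lem:tableaux} and then to account for the primings separately. Write $m := |\lambda|$ and $\ell := \ell(\lambda)$. Since $[\varnothing,\lambda]_{\mathrm{shift}} = J(P^{\mathrm{shift}}_\lambda)$ is graded of rank $m$, and since its maximal chains are exactly the linear extensions of $P^{\mathrm{shift}}_\lambda$ (equivalently, the unprimed standard shifted tableaux of shape $\lambda$), there are $g^\lambda$ maximal chains and each has $m+1$ elements. The bookkeeping identity I will use repeatedly is that for any statistic $h\colon J(P^{\mathrm{shift}}_\lambda) \to \mathbb{R}$,
\[ \sum_{C} \sum_{p \in C} h(p) = (m+1)\, g^\lambda \cdot \mathbb{E}(\mathrm{maxchain}_{[\varnothing,\lambda]_{\mathrm{shift}}}; h), \]
where $C$ ranges over maximal chains; this follows from $\mathbb{P}(\mathrm{maxchain};p) \propto \#\{C : p \in C\}$ together with $\sum_p \#\{C : p \in C\} = (m+1)g^\lambda$.

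The combinatorial core is a bijection between \emph{all-unprimed} standard shifted barely set-valued tableaux of shape $\lambda$ and triples $(C, t, u_0)$, where $C\colon \varnothing = I_0 \lessdot I_1 \lessdot \cdots \lessdot I_m = \lambda$ is a maximal chain, $0 \le t \le m$, and $u_0 \in \mathrm{max}(I_t)$ is a maximal box of the order ideal $I_t$. In one direction, reading the entries in increasing order produces a chain of shapes $\rho^0 \subseteq \cdots \subseteq \rho^{m+1} = \lambda$ in which exactly one containment is an equality, occurring when the second entry $t+1$ of the doubled box $u_0$ is read; deleting the repeat yields $C$, and $u_0$ is forced to be maximal in $I_t$, since otherwise placing $t+1$ into $u_0$ would violate the shifted tableau inequalities. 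Conversely, given $(C,t,u_0)$ one builds $I_0,\ldots,I_t$, inserts $t+1$ into $u_0$, and continues along $C$; this is well defined precisely because $u_0 \in \mathrm{max}(I_t)$. Hence the number of all-unprimed such tableaux equals $\sum_C \sum_{t=0}^m \#\mathrm{max}(I_t) = \sum_C \sum_{p \in C} \mathrm{ddeg}(p)$. I expect this bijection, the shifted avatar of the argument behind Lemma~\ref{lem:tableaux}, to be the main point to get right: the doubled box need \emph{not} carry consecutive values (only the \emph{revisited} box need be maximal), and the case-check that each insertion respects the shifted tableau conditions is the one genuinely fiddly step.

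Finally I will put the primings back. For the first bullet, each of the $m+1$ entries of an all-unprimed tableau may be primed independently, and every such assignment remains a valid shifted set-valued tableau: the values are distinct, so the total order is unaffected by primes and monotonicity along the poset as well as the column/row conditions hold automatically. This contributes a factor $2^{m+1}$, giving $2^{m+1}\sum_C\sum_{p\in C}\mathrm{ddeg}(p) = (m+1)\,2^{m+1} g^\lambda \cdot \mathbb{E}(\mathrm{maxchain};\mathrm{ddeg})$, as claimed. For the second bullet the main diagonal entries must stay unprimed, so the number of freely primable (off-diagonal) entries of the pattern coming from $(C,t,u_0)$ is $m-\ell+1$ when $u_0$ lies off the diagonal and $m-\ell$ when $u_0$ is a diagonal box. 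Writing $d(I) := \#\{i : [i,i] \in \mathrm{max}(I)\} = \sum_{i=1}^{\ell}\mathcal{T}^-_{[i,i]}(I)$ for the number of diagonal maximal boxes and splitting $\mathrm{max}(I_t)$ into diagonal and non-diagonal boxes, the priming count at a fixed $I_t$ is
\[ 2^{m-\ell+1}\bigl(\mathrm{ddeg}(I_t) - d(I_t)\bigr) + 2^{m-\ell} d(I_t) = 2^{m-\ell}\bigl(2\,\mathrm{ddeg}(I_t) - d(I_t)\bigr). \]
Summing over $(C,t)$ and applying the bookkeeping identity with $h = 2\,\mathrm{ddeg} - \sum_{i=1}^{\ell}\mathcal{T}^-_{[i,i]}$ yields exactly the stated expression $(m+1)\,2^{m-\ell} g^\lambda \cdot \mathbb{E}(\mathrm{maxchain}; 2\,\mathrm{ddeg} - \sum_{i=1}^{\ell}\mathcal{T}^-_{[i,i]})$.
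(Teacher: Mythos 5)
Your proposal is correct and follows essentially the same route as the paper: the paper's proof encodes a standard shifted barely set-valued tableau as a quintuple (maximal chain, set of boxes to prime, marked element of the chain, element it covers, primed-or-not choice for the added entry), which is exactly your decomposition into the all-unprimed pattern $(C,t,u_0)$ with $u_0\in\mathrm{max}(I_t)$ plus independent priming choices. The diagonal bookkeeping leading to the statistic $2\cdot\mathrm{ddeg}-\sum_{i=1}^{\ell(\lambda)}\mathcal{T}^{-}_{[i,i]}$ also matches the paper's count.
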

\begin{proof}
The proof is a minor variation on the proof of Lemma~\ref{lem:tableaux} given by Reiner-Tenner-Yong in~\cite[Corollary 3.7]{reiner2016poset}. The basic observation is that an unprimed standard shifted tableau of shape $\lambda$ is the same as a maximal chain of $[\varnothing,\lambda]_{\mathrm{shiftt}}$: treating an unprimed standard shifted tableau of shape $\lambda$ as a map $T\colon P_{\lambda}^{\mathrm{shift}} \to [|\lambda|]$, we map a tableau~$T$ to the chain $\varnothing = T^{-1}(\varnothing) \subseteq T^{-1}([1]) \subseteq T^{-1}([2]) \subseteq \cdots \subseteq T^{-1}([|\lambda|]) = \lambda$, or conversely we map a chain $c = c_0 < c_1 < \cdots < c_{|\lambda|}$ to the tableau $T$ that has $T(u) = i$ if $u = c_i \setminus c_{i-1}$. (These objects are also the same as linear extensions of $P_{\lambda}^{\mathrm{shift}}$.)

Now to establish the first bullet point: let us show that the set
\[\left\{ (c,S,\nu,\rho,\iota)\colon \parbox{3in}{\begin{center}$c$ is a maximal chain of $[\varnothing,\lambda]_{\mathrm{shift}}$, $S \subseteq P_{\lambda}^{\mathrm{shift}}$, \\ $\nu = c_{i-1}$ for some $i$, $\rho \lessdot \nu$, $\iota \in \{i,i'\}$ \end{center}} \right\}. \]
in in bijection with the set of standard shifted barely set-valued tableaux of shape~$\lambda$. Let $(c,S,\nu,\rho,\iota)$ be a quintuple as above. First map $c$ to an unprimed standard shifted tableau of shape $\lambda$ as described in first paragraph above. Then raise by one the value of every entry $j$ where $j \geq i$. Then turn every unprimed entry $j$ in any box~$u$ with~$u \in S$ into a primed entry~$j'$. Finally, add an entry of $\iota$ to the box $u_0 = \nu \setminus \rho$. For example, with~$\lambda = (3,2)$, the quintuple
\[ \ytableausetup{boxsize=0.75em} (\varnothing \subseteq \ydiagram{1} \subseteq \ydiagram{2} \subseteq \ydiagram{2,1+1} \subseteq \ydiagram{3,1+1} \subseteq \ydiagram{3,1+2}, \, \begin{ytableau} *(yellow) \bullet & *(yellow) \bullet & \, \\ \none & \, & *(yellow) \bullet \end{ytableau}, \, \ydiagram{3,1+1}, \,  \ydiagram{3}, \, 5'  ) \ytableausetup{boxsize=1.5em}\]
(where the boxes of $S$ are shaded in yellow) is mapped to the tableau
\[\ytableausetup{boxsize=2em} \begin{ytableau} 
1' & 2' & 4 \\ \none & 3,5' & 6'
\end{ytableau} \ytableausetup{boxsize=1.5em}\]
It is easy to see how to invert this procedure. The claimed formula for the number of standard shifted barely set-valued tableaux of shape $\lambda$ then follows from the fact that
\[ \mathbb{E}(\mathrm{maxchain}_{[\varnothing,\lambda]_{\mathrm{shift}}}; \mathrm{ddeg}) = \frac{\#\{(c,\nu,\rho)\colon \textrm{$c$ is a max.~chain of $[\varnothing,\lambda]_{\mathrm{shift}}$}, \nu \in c, \rho \lessdot \nu \}}{\#\{(c,\nu)\colon \textrm{$c$ is a max.~chain of $[\varnothing,\lambda]_{\mathrm{shift}}$}, \nu \in c\}}.\]

The second bullet point is only slightly more involved than the first. We claim that the set
\[\left\{ (c,S,\nu,\rho,\iota)\colon \parbox{3in}{\begin{center}$c$ is a maximal chain of $[\varnothing,\lambda]_{\mathrm{shift}}$, $S \subseteq P_{\lambda}^{\mathrm{shift}}$ \\ such that $S$ contains no main diagonal boxes, \\ $\nu = c_{i-1}$ for some $i$, $\rho \lessdot \nu$, $\iota \in \{i,i'\}$ \\ with $\iota = i$ if $\nu \setminus \rho$ is a main diagonal box \end{center}} \right\}. \]
is in bijection with the set of diagonally unprimed standard shifted barely set-valued tableaux of shape~$\lambda$. Indeed, the bijection is the same as the bijection described in the second paragraph above; we just need to restrict the allowable quintuples to account for the fact that main diagonal boxes cannot have primed entries. The claimed formula for the number of diagonally unprimed standard shifted barely set-valued tableaux of shape $\lambda$ then follows from the fact that $\mathbb{E}\left(\mathrm{maxchain}_{[\varnothing,\lambda]_{\mathrm{shift}}}; 2\cdot\mathrm{ddeg}- \textstyle \sum_{i=1}^{\ell(\lambda)}\mathcal{T}^{-}_{[i,i]}\right)$ is equal to
\[ \frac{\#\left\{(c,\nu,\rho)\colon \parbox{1.7in}{\begin{center}$c$ max.~chain of $[\varnothing,\lambda]_{\mathrm{shift}}$, \\ $\nu \in c$, $\rho \lessdot \nu$ \end{center}}\right\} \hspace{-0.1cm} +  \hspace{-0.1cm} \#\left\{(c,\nu,\rho)\colon \parbox{1.7in}{\begin{center}$c$ max.~chain of $[\varnothing,\lambda]_{\mathrm{shift}}$, $\nu \in c$, $\rho \lessdot \nu$, $\nu\setminus\rho$ is not a main diagonal box \end{center}}\right\}}{\#\{(c,\nu)\colon \textrm{$c$ is a max.~chain of $[\varnothing,\lambda]_{\mathrm{shift}}$}, \nu \in c\}}.\]
\end{proof}

Recalling from Section~\ref{subsec:shiftyounglatconj1} the definition of shifted-balanced strict partitions of Type~(1) and Type~(2), Lemma~\ref{lem:shifttableaux} leads to the following corollaries of Theorem~\ref{thm:shiftyounglatcde}.

\begin{cor}  \label{cor:shifttabeleaux1}
For a strict partition $\lambda$ that is shifted-balanced of Type~(1) or Type~(2), the number of standard shifted barely set-valued tableaux of shape $\lambda$ is 
\[ (\lambda_1+1) \, (|\lambda|+1) \, 2^{|\lambda|-1} \, g^{\lambda}. \]
\end{cor}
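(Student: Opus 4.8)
The plan is to read this off directly from Theorem~\ref{thm:shiftyounglatcde} and the first bullet point of Lemma~\ref{lem:shifttableaux}; there is essentially no real obstacle, since everything has been set up so that the two results dovetail. The one small thing worth checking at the outset is that the edge density of $[\varnothing,\lambda]_{\mathrm{shift}}$ is the \emph{same} expression, namely $\frac{\lambda_1+1}{4}$, for both types of shifted-balanced $\lambda$. Indeed, as already recorded in the proof of Theorem~\ref{thm:shiftyounglatcde}, for a Type~(1) partition we have $\lambda_1+1 = n+1+k$, so the edge density $\frac{n+1+k}{4}$ equals $\frac{\lambda_1+1}{4}$; and for a Type~(2) partition we have $\lambda_1+1 = 2n$, so the edge density $\frac{n}{2} = \frac{2n}{4}$ again equals $\frac{\lambda_1+1}{4}$. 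Thus uniformly
\[ \mathbb{E}(\mathrm{uni}_{[\varnothing,\lambda]_{\mathrm{shift}}};\mathrm{ddeg}) = \frac{\lambda_1+1}{4}. \]

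Next I would invoke tCDE-ness. Theorem~\ref{thm:shiftyounglatcde} asserts that $[\varnothing,\lambda]_{\mathrm{shift}}$ is tCDE, and since every distributive lattice is graded, tCDE implies mCDE implies CDE (this chain of implications was noted right after Lemma~\ref{lem:chaintogsym}). In particular the maximal-chain distribution and the uniform distribution give the same expected down-degree, so
\[ \mathbb{E}(\mathrm{maxchain}_{[\varnothing,\lambda]_{\mathrm{shift}}};\mathrm{ddeg}) = \mathbb{E}(\mathrm{uni}_{[\varnothing,\lambda]_{\mathrm{shift}}};\mathrm{ddeg}) = \frac{\lambda_1+1}{4}. \]

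Finally I would substitute this value into the first bullet point of Lemma~\ref{lem:shifttableaux}, which says that the number of standard shifted barely set-valued tableaux of shape $\lambda$ equals $(|\lambda|+1)\,2^{|\lambda|+1}\,g^{\lambda}\cdot\mathbb{E}(\mathrm{maxchain}_{[\varnothing,\lambda]_{\mathrm{shift}}};\mathrm{ddeg})$. Plugging in $\frac{\lambda_1+1}{4}$ and simplifying the power of two via $\frac{2^{|\lambda|+1}}{4} = 2^{|\lambda|-1}$ yields
\[ (|\lambda|+1)\,2^{|\lambda|+1}\,g^{\lambda}\cdot\frac{\lambda_1+1}{4} = (\lambda_1+1)\,(|\lambda|+1)\,2^{|\lambda|-1}\,g^{\lambda}, \]
which is exactly the claimed formula. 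The only mildly delicate point in the whole argument is the uniform rewriting of the two edge densities as $\frac{\lambda_1+1}{4}$; once that is observed, the corollary is immediate, so I do not anticipate any genuine difficulty.
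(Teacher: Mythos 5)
Your proposal is correct and is essentially identical to the paper's own proof: both invoke Theorem~\ref{thm:shiftyounglatcde} to get $\mathbb{E}(\mathrm{maxchain}_{[\varnothing,\lambda]_{\mathrm{shift}}};\mathrm{ddeg}) = \frac{\lambda_1+1}{4}$ (using the unification $\lambda_1+1 = n+1+k$ for Type~(1) and $\lambda_1+1=2n$ for Type~(2), which the paper records in the proof of that theorem) and then substitute into the first bullet point of Lemma~\ref{lem:shifttableaux}. You simply spell out the arithmetic that the paper leaves implicit.
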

\begin{proof}
Theorem~\ref{thm:shiftyounglatcde} tells us that in this case $[\nu,\lambda]$ is tCDE, in particular, CDE, with edge density $\frac{\lambda_1+1}{4}$. Thus $\mathbb{E}(\mathrm{maxchain}_{[\varnothing,\lambda]_{\mathrm{shift}}};\mathrm{ddeg}) = \frac{\lambda_1+1}{4}$. The formula for tableaux then follows from Lemma~\ref{lem:shifttableaux}.
\end{proof}

\begin{cor}  \label{cor:shifttabeleaux2}
For a strict partition $\lambda$ that is shifted-balanced of Type~(1), the number of diagonally unprimed standard shifted barely set-valued tableaux of shape~$\lambda$ is 
\[ \lambda_1 \, (|\lambda|+1) \, 2^{|\lambda|-\ell(\lambda)-1} \, g^{\lambda}. \]
\end{cor}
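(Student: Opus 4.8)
The plan is to feed the formula for the edge density of $[\varnothing,\lambda]_{\mathrm{shift}}$ (together with one auxiliary statistic) into the second bullet of Lemma~\ref{lem:shifttableaux}. That bullet expresses the desired count as $(|\lambda|+1)\,2^{|\lambda|-\ell(\lambda)}\,g^{\lambda}\cdot\mathbb{E}(\mathrm{maxchain}_{[\varnothing,\lambda]_{\mathrm{shift}}};2\cdot\mathrm{ddeg}-\sum_{i=1}^{\ell(\lambda)}\mathcal{T}^{-}_{[i,i]})$, so comparing with the target formula $\lambda_1(|\lambda|+1)2^{|\lambda|-\ell(\lambda)-1}g^{\lambda}$ it suffices to prove the single identity $\mathbb{E}(\mathrm{maxchain};\,2\cdot\mathrm{ddeg}-\sum_i\mathcal{T}^{-}_{[i,i]})=\tfrac{\lambda_1}{2}$. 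By linearity of expectation this splits into two pieces. For the first piece, since $\lambda$ is shifted-balanced of Type~(1) with $\lambda_1=n+k$, Theorem~\ref{thm:shiftyounglatcde} says $[\varnothing,\lambda]_{\mathrm{shift}}$ is tCDE, hence CDE, with $\mathbb{E}(\mathrm{maxchain};\mathrm{ddeg})=\frac{n+1+k}{4}=\frac{\lambda_1+1}{4}$, so $2\cdot\mathbb{E}(\mathrm{maxchain};\mathrm{ddeg})=\frac{\lambda_1+1}{2}$. Everything then reduces to showing $\sum_{i=1}^{\ell(\lambda)}\mathbb{E}(\mathrm{maxchain};\mathcal{T}^{-}_{[i,i]})=\tfrac12$.

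The key step is the evaluation of this sum over the main-diagonal toggleability statistics. Because $[\varnothing,\lambda]_{\mathrm{shift}}=J(P^{\mathrm{shift}}_{\lambda})$ is graded, $\mathrm{maxchain}=\mathrm{chain}(|\lambda|)$ is toggle-symmetric by Lemma~\ref{lem:chaintogsym}, whence $\mathbb{E}(\mathrm{maxchain};\mathcal{T}^{-}_{[i,i]})=\mathbb{E}(\mathrm{maxchain};\mathcal{T}^{+}_{[i,i]})$ for each $i$, and therefore
\[
\sum_{i=1}^{\ell(\lambda)}\mathbb{E}(\mathrm{maxchain};\mathcal{T}^{-}_{[i,i]})=\tfrac12\,\mathbb{E}\Bigl(\mathrm{maxchain};\textstyle\sum_{i=1}^{\ell(\lambda)}\bigl(\mathcal{T}^{+}_{[i,i]}+\mathcal{T}^{-}_{[i,i]}\bigr)\Bigr).
\]
I would then prove the pointwise claim that $\sum_{i}\bigl(\mathcal{T}^{+}_{[i,i]}(\nu)+\mathcal{T}^{-}_{[i,i]}(\nu)\bigr)=1$ for \emph{every} order ideal $\nu\in J(P^{\mathrm{shift}}_{\lambda})$, which immediately gives the value $\tfrac12$. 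The proof of this claim is a short case analysis using that a main diagonal box $[i,i]$ covers only $[i-1,i]$ and is covered only by $[i,i+1]$ and $[i+1,i]$ (the last of which is never a box of $P^{\mathrm{shift}}_{\lambda}$). Concretely: $\mathcal{T}^{-}_{[i,i]}(\nu)=1$ forces $\nu_i=1$, so at most one $i$ (namely $i=\ell(\nu)$ with smallest part $1$) contributes to the $\mathcal{T}^{-}$ sum; and $\mathcal{T}^{+}_{[i,i]}(\nu)=1$ forces either $\nu=\varnothing$ (then $i=1$) or $i=\ell(\nu)+1$ with $\nu_{\ell(\nu)}\ge 2$, so at most one $i$ contributes to the $\mathcal{T}^{+}$ sum. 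Splitting into the cases $\nu=\varnothing$, $\nu_{\ell(\nu)}=1$, and $\nu_{\ell(\nu)}\ge2$ shows exactly one term is nonzero in each case. The only delicate point, where Type~(1) is essential, is the sub-case $\nu_{\ell(\nu)}\ge2$: here I must produce a main diagonal box that can be toggled \emph{in}, namely $[\ell(\nu)+1,\ell(\nu)+1]$, which requires $\ell(\nu)+1\le\ell(\lambda)$; this follows because $\lambda=\delta_n+\nu'$ with $\ell(\nu')\le k<n$ forces the smallest part $\lambda_{\ell(\lambda)}=\lambda_n=1$, so $\nu_{\ell(\nu)}\ge2$ indeed implies $\ell(\nu)<\ell(\lambda)$.

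Assembling the pieces yields $\mathbb{E}(\mathrm{maxchain};2\cdot\mathrm{ddeg}-\sum_i\mathcal{T}^{-}_{[i,i]})=\frac{\lambda_1+1}{2}-\frac12=\frac{\lambda_1}{2}$, and substituting into Lemma~\ref{lem:shifttableaux} gives the claimed count $\lambda_1(|\lambda|+1)2^{|\lambda|-\ell(\lambda)-1}g^{\lambda}$. I expect the main obstacle to be the pointwise toggleability identity $\sum_i(\mathcal{T}^{+}_{[i,i]}+\mathcal{T}^{-}_{[i,i]})\equiv1$, and in particular keeping track of the boundary boxes and the role of $\lambda_{\ell(\lambda)}=1$. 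It is worth emphasizing in the writeup that this hypothesis $\lambda_{\ell(\lambda)}=1$ is precisely what fails for Type~(2) (where $\lambda_{\ell(\lambda)}=n-k$ can exceed $1$), which explains why Corollary~\ref{cor:shifttabeleaux2} is stated only for Type~(1); for Type~(2) the full diagram $\nu=\lambda$ would contribute $0$ rather than $1$ to the identity, breaking the clean value $\tfrac12$.
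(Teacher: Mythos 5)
Your proposal is correct and follows essentially the same route as the paper's proof: both feed the edge density $\frac{\lambda_1+1}{4}$ from Theorem~\ref{thm:shiftyounglatcde} and the identity $\sum_{i=1}^{\ell(\lambda)}(\mathcal{T}^{+}_{[i,i]}+\mathcal{T}^{-}_{[i,i]})=\mathbb{1}$ (valid because $\lambda_{\ell(\lambda)}=1$ for Type~(1)), combined with the toggle-symmetry of $\mathrm{maxchain}$ from Lemma~\ref{lem:chaintogsym}, into the second bullet of Lemma~\ref{lem:shifttableaux}. Your explicit case analysis of the pointwise identity, and your observation that it fails for Type~(2), are details the paper leaves implicit but are accurate.
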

\begin{proof}
Again, Theorem~\ref{thm:shiftyounglatcde} tells us that in this case $[\nu,\lambda]$ is tCDE, in particular, CDE, with edge density $\frac{\lambda_1+1}{4}$ so that~$\mathbb{E}(\mathrm{maxchain}_{[\varnothing,\lambda]_{\mathrm{shift}}};\mathrm{ddeg}) = \frac{\lambda_1+1}{4}$. But now we have to deal with the expectation of $\sum_{i=1}^{\ell(\lambda)}\mathcal{T}^{-}_{[i,i]}$ which also appears in Lemma~\ref{lem:shifttableaux}. However, note that because $\lambda$ is shifted-balanced of Type~(1) we must have~$\lambda_{\ell(\lambda)}=1$. Thus we have the following equality of functions $J(P_{\lambda}^{\mathrm{shift}}) \to \mathbb{R}$:
\begin{equation} \label{eq:diagtoginout}
\sum_{i=1}^{\ell(\lambda)}\mathcal{T}^{+}_{[i,i]} + \sum_{i=1}^{\ell(\lambda)}\mathcal{T}^{-}_{[i,i]}= \mathbb{1}.
\end{equation}
Since $\mathrm{maxchain}_{[\varnothing,\lambda]_{\mathrm{shift}}} = \mathrm{chain}(|\lambda|)_{[\varnothing,\lambda]_{\mathrm{shift}}}$ is toggle-symmetric as a consequence of Lemma~\ref{lem:chaintogsym}, we conclude from~(\ref{eq:diagtoginout}) that $\mathbb{E}(\mathrm{maxchain}_{[\varnothing,\lambda]_{\mathrm{shift}}};\sum_{i=1}^{\ell(\lambda)}\mathcal{T}^{-}_{[i,i]}) = \frac{1}{2}$, and
\[\mathbb{E}\left(\mathrm{maxchain}_{[\varnothing,\lambda]_{\mathrm{shift}}};2\cdot\mathrm{ddeg}- \textstyle \sum_{i=1}^{\ell(\lambda)}\mathcal{T}^{-}_{[i,i]}\right) = 2 \, \frac{\lambda_1+1}{4} - \frac{1}{2} = \frac{\lambda_1}{2}.\]
The formula for tableaux then follows from Lemma~\ref{lem:shifttableaux}.
\end{proof}

\begin{remark}
For any (strict) partition $\lambda$, there are simple product formulas for the coefficient of~$x_1x_2\cdots x_{|\lambda|}$ in $G_{\lambda}(x_1,x_2,\ldots)$ (or in  $GP_{\lambda}(x_1,x_2,\ldots)$ or $GQ_{\lambda}(x_1,x_2,\ldots)$): the hook-length formulas, Lemmas~\ref{lem:hooklength} and~\ref{lem:shifthooklength}. Meanwhile, the corollaries we deduce from the study of the CDE property in intervals of Young's lattice and the shifted Young's lattice, Corollaries~\ref{cor:tabeleaux},~\ref{cor:shifttabeleaux1}, and~\ref{cor:shifttabeleaux2}, tell us that there are again product formulas for the coefficient of $x_1x_2\cdots x_{|\lambda|}x_{|\lambda|+1}$ in the stable Grothendieck polynomial (or its Type~B/C analogs) for some special choices of~$\lambda$. It therefore seems reasonable to search for similar product formulas for the coefficient of $x_1x_2\ldots x_{N}$ in~$G_{\lambda}(x_1,x_2,\ldots)$ or in~$GP_{\lambda}(x_1,x_2,\ldots)$ or~$GQ_{\lambda}(x_1,x_2,\ldots)$ for larger values of $N$, while still restricting to special~$\lambda$ such as rectangles and staircases. It might also be worth looking for formulas for particular (square-free) coefficients of other related polynomials such as the \emph{dual stable Grothendieck polynomials} of Lam and Pylyavskyy~\cite{lam2007combinatorial}.
\end{remark}

\section{Antichain cardinality homomesy for rowmotion and gyration} \label{sec:homomesy}

In this section, reiterating some comments from~\cite[\S3.2]{chan2015expected}, we explain how the tCDE property is related to the homomesy paradigm of Propp and Roby~\cite{propp2015homomesy} for certain actions on sets of order ideals. So in this section $P$ will be a fixed connected poset and we will consider a couple of invertible maps $\Phi\colon J(P) \to J(P)$ on the poset of order ideals of~$P$. 

The first of these maps is rowmotion, which was introduced by Brouwer and Schrijver~\cite{brouwer1974period} and subsequently studied by several other authors~\cite{fonderflaass1993orbits}~\cite{cameron1995orbits}~\cite{striker2012promotion}. 

\begin{definition}
\emph{Rowmotion} $\Phi_{\mathrm{row}}\colon J(P) \to J(P)$ is the map defined by
\[\Phi_{\mathrm{row}}(I) := \{x \in P\colon \textrm{$x \leq y$ for some $y \in \mathrm{min}(P\setminus I)$}\}\]
for $I \in J(P)$.
\end{definition}

It may not be obvious that rowmotion so defined is invertible. Although it is certainly possible to explicitly describe the inverse, another way to see that rowmotion is invertible is is to observe that it is the composition of three simpler bijections between the sets~$J(P)$, $J(P^*)$ and $A(P)$:
\begin{align*}
J(P) &\xrightarrow{\sim}  J(P^*) \\
I &\mapsto P\setminus I; \\
J(P^*) &\xrightarrow{\sim} A(P) \\
I &\mapsto \mathrm{min}(I); \\
A(P) &\xrightarrow{\sim}  J(P) \\
A &\mapsto \{x \in P \colon  \textrm{$x \leq y$ for some $y \in A$}\}.
\end{align*}
One interesting feature of rowmotion is its tendency to have a small order. For instance, Brouwer and Schrijver~\cite{brouwer1974period} showed that the order of $\Phi_{\mathrm{row}}$ acting on $J(\mathbf{a} \times \mathbf{b})$ is $a+b$ (which is indeed small compared to $\#J(\mathbf{a} \times \mathbf{b})=\binom{a+b}{a}$). This tendency to have small order persists even for birational analogs of rowmotion~\cite{grinberg2016iterative}~\cite{grinberg2015iterative}.

\begin{example}
Let $P = P_{(3,2,1)}^{\mathrm{shift}}$. We represent an order ideal $\nu \in J(P_{(3,2,1)}^{\mathrm{shift}})$ by shading the boxes of $\nu$ in yellow. The orbit of $\nu = \varnothing  \in J(P_{(3,2,1)}^{\mathrm{shift}})$ under $\Phi_{\mathrm{row}}$ is
\[ \ytableausetup{boxsize=0.9em} \cdots \xrightarrow{\Phi_{\mathrm{row}}} \begin{ytableau} \, & \, & \, \\ \none & \, & \, \\ \none & \none & \, \end{ytableau} \xrightarrow{\Phi_{\mathrm{row}}} \begin{ytableau} *(yellow) \bullet & \, & \, \\ \none & \, & \, \\ \none & \none & \, \end{ytableau} \xrightarrow{\Phi_{\mathrm{row}}} \begin{ytableau} *(yellow) \bullet & *(yellow) \bullet & \, \\ \none & \, & \, \\ \none & \none & \, \end{ytableau} \xrightarrow{\Phi_{\mathrm{row}}} \begin{ytableau} *(yellow) \bullet & *(yellow) \bullet & *(yellow) \bullet \\ \none & *(yellow) \bullet & \, \\ \none & \none & \, \end{ytableau} \xrightarrow{\Phi_{\mathrm{row}}} \begin{ytableau} *(yellow) \bullet & *(yellow) \bullet & *(yellow) \bullet \\ \none & *(yellow) \bullet & *(yellow) \bullet \\ \none & \none & \, \end{ytableau} \xrightarrow{\Phi_{\mathrm{row}}} \begin{ytableau} *(yellow) \bullet & *(yellow) \bullet & *(yellow) \bullet \\ \none & *(yellow) \bullet & *(yellow) \bullet \\ \none & \none & *(yellow) \bullet \end{ytableau} \cdots \ytableausetup{boxsize=1.5em} \]
while the orbit of $\nu = (2,1) \in J(P_{(3,2,1)}^{\mathrm{shift}})$ under $\Phi_{\mathrm{row}}$ is
\[ \ytableausetup{boxsize=0.9em} \cdots \xrightarrow{\Phi_{\mathrm{row}}} \begin{ytableau} *(yellow) \bullet & *(yellow) \bullet & \, \\ \none & *(yellow) \bullet & \, \\ \none & \none & \, \end{ytableau} \xrightarrow{\Phi_{\mathrm{row}}} \begin{ytableau} *(yellow) \bullet & *(yellow) \bullet & *(yellow) \bullet \\ \none & \, & \, \\ \none & \none & \, \end{ytableau} \cdots \ytableausetup{boxsize=1.5em} \]
\end{example}

There is another description of rowmotion in terms of the toggle group due to Cameron and Fon-der-Flaass~\cite{fonderflaass1993orbits}. Recalling the definition of toggles and the toggle group from Section~\ref{sec:distlat},  Cameron and Fon-der-Flaass~\cite{fonderflaass1993orbits} showed that
\[\Phi_{\mathrm{row}} = \tau_{p_1} \circ \tau_{p_2} \circ \cdots \circ \tau_{p_{\#P}} \]
where $p_1,p_2,\ldots,p_{\#P}$ is any \emph{linear extension} of $P$, i.e., total ordering of the elements of $P$ compatible with its partial order in the sense that $p_i \leq p_j$ implies $i \leq j$. Note that $\tau_p$ and $\tau_q$ commute unless $p$ and $q$ are adjacent in the Hasse diagram of $P$ and therefore the above composition of toggles is in fact well-defined.

The second invertible map we will consider is gyration, which takes its name from an invertible map introduced by Wieland~\cite{wieland2000large} in his study of alternating sign matrices. Gyration of alternating sign matrices was crucially applied by Cantini and Sportiello~\cite{cantini2011proof} to resolve the Razumov-Stroganov conjecture~\cite{razumov2004combinatorial} from statistical mechanics. Striker and Williams~\cite[\S8.2]{striker2012promotion} explained how gyration acting on the set of alternating sign matrices is in equivariant bijection with some toggle group element acting on the set of order ideals of a certain poset. Striker~\cite[Definition~6.3]{striker2015toggle} subsequently extended the definition of this toggle group element to an arbitrary ranked poset.

\begin{definition}
Suppose that $P$ is ranked. Then \emph{gyration} $\Phi_{\mathrm{gyr}}\colon J(P) \to J(P)$ is the map $\Phi_{\mathrm{gyr}} := \tau_{o_1} \circ \tau_{o_2} \circ \cdots \circ \tau_{o_k} \circ \tau_{e_1} \circ \tau_{e_2} \circ \cdots \circ \tau_{e_j}$ where $\{e_1,\ldots,e_j\}$ is the set of elements of $P$ of even rank and $\{o_1,\ldots,o_k\}$ is the set of elements of odd rank. (Gyration is well-defined because, again, most toggles commute.)
\end{definition}

Rowmotion and gyration are conjugate elements of the toggle group and therefore have the same orbit structure (see~\cite[Proposition 6.4]{striker2015toggle} and~\cite[Lemma 4.1]{striker2012promotion}). On the other hand, as the following example demonstrates, it is not the case that every rowmotion orbit must also be equal as a set to some gyration orbit.

\begin{example}
Let $P = P_{(4,2)}$. The orbit of $\varnothing \in J(P_{(4,2)})$ under $\Phi_{\mathrm{row}}$ is
\begin{gather*} 
\ytableausetup{boxsize=0.9em} \cdots \xrightarrow{\Phi_{\mathrm{row}}} \begin{ytableau} \, & \, & \, & \, \\  \, & \, \end{ytableau} \xrightarrow{\Phi_{\mathrm{row}}} \begin{ytableau} *(yellow) \bullet & \, & \, & \, \\  \, & \, \end{ytableau} \xrightarrow{\Phi_{\mathrm{row}}} \begin{ytableau} *(yellow) \bullet & *(yellow) \bullet & \, & \, \\  *(yellow) \bullet & \, \end{ytableau} \xrightarrow{\Phi_{\mathrm{row}}} \begin{ytableau} *(yellow) \bullet & *(yellow) \bullet & *(yellow) \bullet & \, \\  *(yellow) \bullet & *(yellow) \bullet \end{ytableau}  \xrightarrow{\Phi_{\mathrm{row}}} \begin{ytableau} *(yellow) \bullet & *(yellow) \bullet & *(yellow) \bullet & *(yellow) \bullet \\ \, & \, \end{ytableau}   \\
\xrightarrow{\Phi_{\mathrm{row}}} \begin{ytableau} *(yellow) \bullet & \, & \, & \, \\  *(yellow) \bullet & \, \end{ytableau} \xrightarrow{\Phi_{\mathrm{row}}} \begin{ytableau} *(yellow) \bullet & *(yellow) \bullet & \, & \, \\  \, & \, \end{ytableau} \xrightarrow{\Phi_{\mathrm{row}}} \begin{ytableau} *(yellow) \bullet & *(yellow) \bullet & *(yellow) \bullet & \, \\  *(yellow) \bullet & \, \end{ytableau} \xrightarrow{\Phi_{\mathrm{row}}} \begin{ytableau} *(yellow) \bullet & *(yellow) \bullet & *(yellow) \bullet & *(yellow) \bullet \\  *(yellow) \bullet & *(yellow) \bullet \end{ytableau}  \cdots \ytableausetup{boxsize=1.5em}
\end{gather*}
while the orbit of $\varnothing \in J(P_{(4,2)})$ under $\Phi_{\mathrm{gyr}}$ is
\begin{gather*} 
\ytableausetup{boxsize=0.9em} \cdots \xrightarrow{\Phi_{\mathrm{gyr}}} \begin{ytableau} \, & \, & \, & \, \\  \, & \, \end{ytableau} \xrightarrow{\Phi_{\mathrm{gyr}}} \begin{ytableau} *(yellow) \bullet & *(yellow) \bullet & \, & \, \\  *(yellow) \bullet & \, \end{ytableau} \xrightarrow{\Phi_{\mathrm{gyr}}} \begin{ytableau} *(yellow) \bullet & *(yellow) \bullet & *(yellow) \bullet & *(yellow) \bullet \\  *(yellow) \bullet & *(yellow) \bullet \end{ytableau} \xrightarrow{\Phi_{\mathrm{gyr}}} \begin{ytableau} *(yellow) \bullet & *(yellow) \bullet & *(yellow) \bullet & \, \\  \, & \, \end{ytableau}  \xrightarrow{\Phi_{\mathrm{gyr}}} \begin{ytableau} *(yellow) \bullet & \, & \, & \, \\ *(yellow) \bullet & \, \end{ytableau}   \\
\xrightarrow{\Phi_{\mathrm{gyr}}} \begin{ytableau} *(yellow) \bullet & *(yellow) \bullet & \, & \, \\ \, & \, \end{ytableau} \xrightarrow{\Phi_{\mathrm{gyr}}} \begin{ytableau} *(yellow) \bullet & *(yellow) \bullet & *(yellow) \bullet & *(yellow) \bullet \\  *(yellow) \bullet & \, \end{ytableau} \xrightarrow{\Phi_{\mathrm{gyr}}} \begin{ytableau} *(yellow) \bullet & *(yellow) \bullet & *(yellow) \bullet & \, \\  *(yellow) \bullet & *(yellow) \bullet \end{ytableau} \xrightarrow{\Phi_{\mathrm{gyr}}} \begin{ytableau} *(yellow) \bullet & \, & \, & \, \\  \, & \, \end{ytableau}  \cdots \ytableausetup{boxsize=1.5em}
\end{gather*}
\end{example}

The connection between rowmotion/gyration and the tCDE property is via the following recent result of Striker~\cite{striker2015toggle}.

\begin{lemma}[{Striker~\cite[Lemma 6.2 and Theorem 6.7]{striker2015toggle}}] \label{lem:rowmotiontogsym}
Let $\mathcal{O}$ be either a $\Phi_{\mathrm{row}}$-orbit or a $\Phi_{\mathrm{gyr}}$-orbit (in the case that $P$ is ranked). Then the distribution on $J(P)$ that is uniform on $\mathcal{O}$ and $0$ outside of $\mathcal{O}$ is toggle-symmetric.
\end{lemma}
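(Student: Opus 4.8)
The plan is to unwind the definition of toggle-symmetry. Writing $\mu$ for the distribution that is uniform on $\mathcal{O}$ and zero elsewhere, we have $\mathbb{E}(\mu;\mathcal{T}^{+}_p) = \frac{1}{\#\mathcal{O}}\sum_{I \in \mathcal{O}} \mathcal{T}^{+}_p(I)$ and likewise for $\mathcal{T}^{-}_p$, so by Definition~\ref{def:togsym} it suffices to prove, for every fixed $p \in P$, that
\[ \sum_{I \in \mathcal{O}} \mathcal{T}^{+}_p(I) = \sum_{I \in \mathcal{O}} \mathcal{T}^{-}_p(I). \]
The engine driving both cases is that each of $\Phi_{\mathrm{row}}$ and $\Phi_{\mathrm{gyr}}$ is a product of toggles in which $\tau_p$ occurs exactly once. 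Since $\tau_p$ is the only toggle that can change whether $p$ belongs to an order ideal, as we traverse the orbit once and return to our starting ideal the element $p$ must be toggled in exactly as many times as it is toggled out. Thus the number of ``additions'' of $p$ around $\mathcal{O}$ equals the number of ``removals,'' and each addition (resp.\ removal) is recorded by $\mathcal{T}^{+}_p$ (resp.\ $\mathcal{T}^{-}_p$) evaluated at the \emph{intermediate} ideal $J_I$ obtained from $I$ by applying all toggles that precede $\tau_p$.

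The one real difficulty is that this balance is phrased in terms of the intermediate ideals $J_I$, whereas we need the sums over the genuine orbit elements $I$. To bridge the gap I would use the observation that $\mathcal{T}^{\pm}_p(K)$ depends only on whether $p \in K$ and on the membership in $K$ of the elements covering or covered by $p$; consequently, if no cover of $p$ is toggled before $\tau_p$ fires, then $J_I$ and $I$ agree on all data relevant to $p$, so $\mathcal{T}^{\pm}_p(J_I) = \mathcal{T}^{\pm}_p(I)$. For gyration this ``cover-shielding'' is automatic once we pick the right direction: because $P$ is ranked, every cover of $p$ lies in the batch of rank-parity opposite to that of $p$, so applying $\Phi_{\mathrm{gyr}}$ when $\mathrm{rk}(p)$ is even and $\Phi_{\mathrm{gyr}}^{-1}$ when $\mathrm{rk}(p)$ is odd---in either case applying $p$'s own parity batch first---places every cover of $p$ after $\tau_p$ in the toggle order, giving $\mathcal{T}^{+}_p(I) = \mathcal{T}^{+}_p(J_I)$ and $\mathcal{T}^{-}_p(I) = \mathcal{T}^{-}_p(J_I)$ simultaneously. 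The cyclic balance above then yields the desired equality, using that $\mathcal{O}$ is also an orbit for $\Phi_{\mathrm{gyr}}^{-1}$.

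For rowmotion I would instead exploit its geometric description directly. From $\Phi_{\mathrm{row}}(I) = \{x : x \le y \text{ for some } y \in \mathrm{min}(P \setminus I)\}$ one checks that $p \notin I$ and $p \in \Phi_{\mathrm{row}}(I)$ hold simultaneously exactly when $p$ is itself a minimal element of $P \setminus I$, that is, precisely when $\mathcal{T}^{+}_p(I) = 1$; dualizing via $I \mapsto P \setminus I$ (which intertwines $\Phi_{\mathrm{row}}$ on $J(P)$ with $\Phi_{\mathrm{row}}^{-1}$ on $J(P^{*})$) gives that $\mathcal{T}^{-}_p(I) = 1$ exactly when $p \in I$ and $p \notin \Phi_{\mathrm{row}}^{-1}(I)$. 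Hence $\sum_{I \in \mathcal{O}} \mathcal{T}^{+}_p(I)$ counts the additions of $p$ as we traverse $\mathcal{O}$ by $\Phi_{\mathrm{row}}$, while $\sum_{I \in \mathcal{O}} \mathcal{T}^{-}_p(I)$ counts the removals of $p$ as we traverse $\mathcal{O}$ by $\Phi_{\mathrm{row}}^{-1}$; reindexing one sum by the bijection $I \mapsto \Phi_{\mathrm{row}}(I)$ of $\mathcal{O}$ identifies the two counts. Equivalently, one may run the cover-shielding argument with two different linear extensions, since a single linear extension shields the lower covers of $p$ (which control $\mathcal{T}^{+}_p$) but not the upper covers (which control $\mathcal{T}^{-}_p$). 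I expect the main obstacle to be exactly this passage from the intermediate toggling ideals back to the orbit elements: getting the bookkeeping of which covers are toggled before $\tau_p$ right in each direction, and ensuring the reindexing of the orbit correctly matches additions with removals, is where the argument must be handled with care.
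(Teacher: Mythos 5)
Your proposal is correct, and it takes a genuinely different route from the one the paper relies on. The paper does not reprove this lemma (it cites Striker) but instead proves the stronger Lemma~\ref{lem:gengyrmotiontogsym} by adapting Striker's argument: order ideals are sorted into four cases according to the value of $\mathcal{T}_p = \mathcal{T}^{+}_p - \mathcal{T}^{-}_p$ and whether $p \in I$, and a transition diagram shows that between consecutive visits to Case 1 ($\mathcal{T}_p = -1$) the orbit must pass through Case 2 ($\mathcal{T}_p = +1$) and vice versa, so $\mathcal{T}_p$ sums to zero over the orbit. Your two arguments are both sound and arguably cleaner for the two specific operators at hand: for rowmotion, your observation amounts to the identity $\mathcal{T}^{+}_p(I) = \mathcal{T}^{-}_p(\Phi_{\mathrm{row}}(I))$ (equivalently $\mathrm{min}(P\setminus I) = \mathrm{max}(\Phi_{\mathrm{row}}(I))$), after which a single reindexing of the orbit finishes the proof, and notably this works for arbitrary $P$, not just ranked $P$; for gyration, the cover-shielding argument is valid because $\mathcal{T}^{+}_p$ (resp.\ $\mathcal{T}^{-}_p$) of an order ideal is determined by membership of $p$ and of its lower (resp.\ upper) covers, all covers of $p$ lie in the opposite parity batch, and only $\tau_p$ can alter membership of $p$, so running $\Phi_{\mathrm{gyr}}$ or $\Phi_{\mathrm{gyr}}^{\pm 1}$ with $p$'s own batch first makes additions and removals of $p$ around the orbit coincide with the counts of $\mathcal{T}^{+}_p$ and $\mathcal{T}^{-}_p$ at the actual orbit elements. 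What the paper's case-analysis buys, and what your shielding does not directly give, is the uniform extension to all rank-permuted rowmotions $\Phi_{\mathrm{row}(\sigma)}$: for a general $\sigma$ the ranks $\mathrm{rk}(p)-1$ and $\mathrm{rk}(p)+1$ can straddle $\mathrm{rk}(p)$ in the toggle order, so one of $\mathcal{T}^{+}_p,\mathcal{T}^{-}_p$ is evaluated at a genuinely perturbed intermediate ideal and neither direction of traversal shields both kinds of covers simultaneously --- a limitation you correctly flag in your closing sentence.
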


Striker stated her results in terms of homomesy rather than toggle-symmetry. Accordingly, let us review the definition of homomesy according to Propp and Roby~\cite{propp2015homomesy}. As they mention, homomesy is a kind of cousin of the cyclic sieving phenomenon~\cite{reiner2004cyclic}.

\begin{definition}(Propp and Roby~\cite[Definition 1]{propp2015homomesy}). 
Let $\mathcal{S}$ be a finite set of combinatorial objects, let $\Phi\colon \mathcal{S} \to \mathcal{S}$ be some invertible map, and let $f\colon \mathcal{S} \to \mathbb{R}$ be any statistic. Then we say that $(\mathcal{S},\Phi,f)$ \emph{exhibits homomesy} if there is some constant $c$ such that the average of $f$ along every $\Phi$-orbit is equal to $c$. In this case we also say that $f$ is homomesic (or more specifically, $c$-mesic) with respect to the action of $\Phi$ on $\mathcal{S}$.
\end{definition}

What Striker~\cite[Lemma 6.2 and Theorem 6.7]{striker2015toggle} showed was that for every $p \in P$, the \emph{signed toggleability statistic} $\mathcal{T}_p := \mathcal{T}^{+}_p - \mathcal{T}^{-}_p$ is $0$-mesic with respect to the action of~$\Phi_{\mathrm{row}}$ and~$\Phi_{\mathrm{gyr}}$. This is clearly equivalent to Lemma~\ref{lem:rowmotiontogsym} as we stated it above.

In fact, Striker's arguments can be adapted to show something slightly stronger than Lemma~\ref{lem:rowmotiontogsym}. Suppose that $P$ is ranked with $r := \mathrm{max}_{p \in P} \{\mathrm{rk}(p)\}$. For $i = 0,1,\ldots,r$ define $\tau_i \colon J(P) \to J(P)$ to be ``toggling rank $i$,'' i.e.~to be the composition of all toggles~$\tau_p$ of elements $p \in P$ with $\mathrm{rk}(p) = i$ (which is well-defined because these all commute). Then for any permutation $\sigma = (\sigma(0),\sigma(1),\ldots,\sigma(r))$ of $0,1,\ldots,r$ define the toggle group element $\Phi_{\mathrm{row}(\sigma)}\colon J(P) \to J(P)$ by~\mbox{$\Phi_{\mathrm{row}(\sigma)} := \tau_{\sigma(0)} \circ \tau_{\sigma(1)} \circ \cdots \circ \tau_{\sigma(r)}$}. In other words, $\Phi_{\mathrm{row}(\sigma)}$ toggles each of the ranks of $P$ according to some fixed order. We thus refer to $\Phi_{\mathrm{row}(\sigma)}$ as a \emph{rank-permuted} version of rowmotion. Observe that~$\Phi_{\mathrm{row}(0,1,2,\ldots)} = \Phi_{\mathrm{row}}$ and that~$\Phi_{\mathrm{row}(1,3,5,\ldots,0,2,4,\ldots)} = \Phi_{\mathrm{gyr}}$. Also note that all the~$\Phi_{\mathrm{row}(\sigma)}$ are conjugate elements of the toggle group, as was proved by Cameron and Fon-der-Flasss~\cite[Lemma 2]{cameron1995orbits}. We have the following extension of Lemma~\ref{lem:rowmotiontogsym}.

\begin{lemma} \label{lem:gengyrmotiontogsym}
Suppose that $P$ is ranked. Let $\mathcal{O}$ be a $\Phi_{\mathrm{row}(\sigma)}$-orbit for any permutation~$\sigma$ of the ranks of $P$. Then the distribution on $J(P)$ that is uniform on $\mathcal{O}$ and $0$ outside of $\mathcal{O}$ is toggle-symmetric.
\end{lemma}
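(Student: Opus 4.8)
The plan is to adapt Striker's proof of Lemma~\ref{lem:rowmotiontogsym}, reducing toggle-symmetry to a vanishing statement for the signed toggleability statistic $\mathcal{T}_p := \mathcal{T}^{+}_p - \mathcal{T}^{-}_p$. Since $\mu$ is the uniform distribution on $\mathcal{O}$, Definition~\ref{def:togsym} tells us that $\mu$ is toggle-symmetric exactly when $\sum_{I \in \mathcal{O}} \mathcal{T}_p(I) = 0$ for every $p \in P$. So I would fix an element $p$, set $i := \mathrm{rk}(p)$, and aim to prove this single identity.

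First I would record two elementary facts about a rank-toggle $\tau_j$, writing $\chi_p(I) = 1$ if $p \in I$ and $0$ otherwise. If $j \neq i$ then $\tau_j$ leaves $\chi_p$ unchanged, since $\tau_j$ only moves elements of rank $j$. If $j = i$, then the elements of rank $i$ are pairwise incomparable, so toggling the other rank-$i$ elements affects neither the status of $p$ itself nor the cover-neighbours of $p$ (which lie in ranks $i \pm 1$, and are precisely what determine $\mathcal{T}^{+}_p$ and $\mathcal{T}^{-}_p$); hence applying $\tau_i$ to any state $J$ changes $\chi_p$ by exactly $\mathcal{T}_p(J)$. Because $\Phi_{\mathrm{row}(\sigma)} = \tau_{\sigma(0)} \circ \cdots \circ \tau_{\sigma(r)}$ toggles rank $i$ exactly once, this yields the key identity $\chi_p(\Phi_{\mathrm{row}(\sigma)}(I)) - \chi_p(I) = \mathcal{T}_p(\psi(I))$, where $\psi$ denotes the product of the toggles applied before rank $i$ and $\psi(I)$ is the state immediately preceding the rank-$i$ toggle.

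Next I would telescope. Summing this identity over $I \in \mathcal{O}$, the left-hand side vanishes because $\Phi_{\mathrm{row}(\sigma)}$ permutes the finite set $\mathcal{O}$, giving $\sum_{I \in \mathcal{O}} \mathcal{T}_p(\psi(I)) = 0$, that is, $\sum_{J \in \psi(\mathcal{O})} \mathcal{T}_p(J) = 0$. The set $\psi(\mathcal{O})$ is an orbit of the conjugate map $\psi \circ \Phi_{\mathrm{row}(\sigma)} \circ \psi^{-1}$, which is again a rank-permuted rowmotion $\Phi_{\mathrm{row}(\sigma')}$ — namely the cyclic rotation of the toggle order that performs rank $i$ first.

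The main obstacle is exactly this last bookkeeping step: the telescoping delivers the vanishing sum over the \emph{shifted} orbit $\psi(\mathcal{O})$ rather than over $\mathcal{O}$ itself. To close the argument I would use that all the $\Phi_{\mathrm{row}(\sigma)}$ are conjugate in the toggle group via the partial products $\psi$ (Cameron--Fon-der-Flaass), so that $\psi$ carries orbits bijectively to orbits of the conjugate map. This is precisely the mechanism in Striker's gyration case, where $\psi = \tau_{\mathrm{even}}$ is an involution conjugating $\Phi_{\mathrm{gyr}}$ to $\Phi_{\mathrm{gyr}}^{-1}$ and hence permuting the $\Phi_{\mathrm{gyr}}$-orbits among themselves; the equivalence of ``$\sum_{\psi(\mathcal{O})} \mathcal{T}_p = 0$ for every orbit'' with ``$\sum_{\mathcal{O}} \mathcal{T}_p = 0$ for every orbit'' then follows by re-indexing. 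For a general permutation $\sigma$ the delicate point I expect to have to verify is that the rotation relating $\Phi_{\mathrm{row}(\sigma)}$ to its rank-$i$-first version preserves the orbit partition, so that the vanishing established for every orbit of the rank-$i$-first map transfers back to every orbit of $\Phi_{\mathrm{row}(\sigma)}$; small examples strongly suggest this orbit-partition invariance holds, and making it precise is where the real work lies.
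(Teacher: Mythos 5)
Your telescoping identity is correct and is a genuinely different mechanism from the paper's: writing $\Phi_{\mathrm{row}(\sigma)} = \phi \circ \tau_i \circ \psi$ with $i = \mathrm{rk}(p)$, the indicator of $p$ changes only at the rank-$i$ toggle and there by exactly $\mathcal{T}_p$ of the current state, so summing around an orbit $\mathcal{O}$ gives $\sum_{J \in \psi(\mathcal{O})} \mathcal{T}_p(J) = 0$. The gap is exactly where you place it, and unfortunately it cannot be closed in the way you propose: the claim that the cyclic rotation putting rank $i$ first preserves the orbit partition --- equivalently, that $\psi$ permutes the $\Phi_{\mathrm{row}(\sigma)}$-orbits --- is false, already for ordinary rowmotion. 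Take $P = P_{(4,2)}$ with $\Phi_{\mathrm{row}} = \tau_0 \circ \tau_1 \circ \tau_2 \circ \tau_3$, the example whose rowmotion orbits appear in Section~\ref{sec:homomesy}: one orbit of size $9$ containing $(4)$ and $(3,1)$, and the complementary orbit $\{(2,2),\,(3),\,(4,1)\}$ of size $3$. For $p = [2,2]$, which has rank $2$, your $\psi$ is $\tau_3$, and $\tau_3$ sends the size-$3$ orbit to $\{(2,2),\,(4),\,(3,1)\}$, a set meeting both rowmotion orbits and hence not a union of orbits. So the vanishing of $\sum \mathcal{T}_{[2,2]}$ over $\{(2,2),(4),(3,1)\}$, which your telescoping does deliver, cannot be re-indexed into the vanishing over $\{(2,2),(3),(4,1)\}$, which is what toggle-symmetry requires. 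What the telescoping proves unconditionally is the lemma for the single rank $\sigma(r)$ that is toggled first (there $\psi = \mathrm{id}$), and, via the involution $\tau_{\mathrm{even}}$, for gyration; but for one fixed $\sigma$ you need the vanishing for every $p$ on every orbit of that one map, and different $p$'s demand different, mutually incompatible rotations.

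The paper closes this by working on the fixed orbit directly: for each $p$ it sorts order ideals into four cases according to $\mathcal{T}_p(I)$ and whether $p \in I$, and checks (by cases on the relative positions of $\sigma^{-1}(i-1)$, $\sigma^{-1}(i)$, $\sigma^{-1}(i+1)$) that the resulting transition diagram under $\Phi_{\mathrm{row}(\sigma)}$ forces the states with $\mathcal{T}_p = 1$ and those with $\mathcal{T}_p = -1$ to alternate along the orbit. To rescue your argument you would need a statement of that kind about the given orbit itself, not about an orbit of a conjugated map.
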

\begin{proof}
The proof adapts arguments given by Striker~\cite[Lemma 6.2 and Theorem~6.7]{striker2015toggle}. Fix some $p \in P$ and a $\Phi_{\mathrm{row}(\sigma)}$-orbit $\mathcal{O}$. It suffices to show that the signed toggleability statistic $\mathcal{T}_p$ alternates in sign as we traverse the orbit $\mathcal{O}$. With respect to~$p$, an order ideal $I \in J(P)$ is always in exactly one of four ``cases'':
\begin{itemize}
\item Case 1: $\mathcal{T}_p(I) = -1$;
\item Case 2: $\mathcal{T}_p(I) = 1$;
\item Case 3: $\mathcal{T}_p(I) = 0$ and $p \in I$;
\item Case 4: $\mathcal{T}_p(I) = 0$ and $p \notin I$.
\end{itemize}
Striker's key observation is that given that $I$ in some case, the cases that $\Phi_{\mathrm{row}(\sigma)}(I)$ could potentially be in are rather restricted. Let $i :=\mathrm{rk}(p)$ and suppose for the moment that~$i \neq 0$ and~$i \neq \mathrm{max}_{p \in P}\{\mathrm{rk}(p)\}$. How $\Phi_{\mathrm{row}(\sigma)}$ affects the various cases depends on the relative order of $\sigma^{-1}(i)$ with respect to $\sigma^{-1}(i-1)$ and $\sigma^{-1}(i+1)$. Consider first the situation where~$\sigma^{-1}(i-1) < \sigma^{-1}(i) < \sigma^{-1}(i+1)$. Then the way the cases of order ideals evolve under $\Phi_{\mathrm{row}(\sigma)}$ looks like:
\begin{center}
 \begin{tikzpicture}[thick, ->, >=stealth]
 	\node (1) at (0,0) {Case 1};
	\node (2) at (2,-1.3) {Case 2};
	\node (3) at (0,-1.3) {Case 3};
	\node (4) at (2,0) {Case 4};
	\draw (1) [bend left=10] to  (2);
	\draw (2) [bend left=10] to  (1);
	\draw (1) to (3);
	\draw (1) to (4);
	\draw (3) to (2);
	\draw (4) to (2);
	\draw (3) to (4);
	\draw (3) [out=165,in=195,looseness=5] to (3);
	\draw (4) [out=15,in=-15,looseness=5] to (4);
\end{tikzpicture}
\end{center}
In other words, we could have that $I$ is in Case 1 and $\Phi_{\mathrm{row}(\sigma)}(I)$ in Case 4, but cannot have that $I$ is in Case 4 and $\Phi_{\mathrm{row}(\sigma)}(I)$ in Case 1, and so on. For more details consult the proofs of~\cite[Lemma 6.2 and Theorem~6.7]{striker2015toggle}. The crucial property of this diagram is that to travel from the node labeled Case 1 back to itself we have to pass through Case 2 at some point, and vice-versa, which shows that the sign of $\mathcal{T}_p$ indeed alternates along $\mathcal{O}$. In the situation $\sigma^{-1}(i-1) > \sigma^{-1}(i) > \sigma^{-1}(i+1)$ the way the cases evolve under $\Phi_{\mathrm{row}(\sigma)}$ looks like the previous diagram but with the arrows reversed. So now consider the situation $\sigma^{-1}(i-1) < \sigma^{-1}(i)$ and $\sigma^{-1}(i+1) < \sigma^{-1}(i)$. In this situation the way the cases evolve under $\Phi_{\mathrm{row}(\sigma)}$ looks like:
\begin{center}
 \begin{tikzpicture}[thick, ->, >=stealth]
 	\node (1) at (0,0) {Case 1};
	\node (2) at (2,-1.3) {Case 2};
	\node (3) at (0,-1.3) {Case 3};
	\node (4) at (2,0) {Case 4};
	\draw (1) [bend left=10] to  (2);
	\draw (2) [bend left=10] to  (1);
	\draw (3) to (1);
	\draw (1) to (4);
	\draw (2) to (3);
	\draw (4) to (2);
	\draw (3) [out=165,in=195,looseness=5] to (3);
	\draw (4) [out=15,in=-15,looseness=5] to (4);
\end{tikzpicture}
\end{center}
In the situation $\sigma^{-1}(i-1) > \sigma^{-1}(i)$ and $\sigma^{-1}(i+1) > \sigma^{-1}(i)$ the way the cases evolve under $\Phi_{\mathrm{row}(\sigma)}$ looks like the previous diagram but with the arrows reversed. If $p$ is of minimal or maximal rank the argument is basically the same.
\end{proof}

\begin{remark}
Rowmotion~$\Phi_{\mathrm{row}}$ and the rank-permuted versions of rowmotion~$\Phi_{\mathrm{row}(\sigma)}$ are examples of \emph{Coxeter elements} in the toggle group of~$P$, that is, they are compositions of all the toggles~$\tau_p$ for $p \in P$ in some order. It is quite common when looking for homomesies to consider Coxeter elements;  see for instance the recent paper~\cite{einstein2015noncrossing}. In light of Lemmas~\ref{lem:rowmotiontogsym} and~\ref{lem:gengyrmotiontogsym}, one might wonder whether the uniform distribution on the orbit of any Coxeter element is toggle-symmetric. However, this is not the case. Indeed, Striker~\cite[\S6]{striker2015toggle} offered the following very simple example of $\mathcal{T}_p$ failing to be $0$-mesic with respect to the action of some Coxeter element. Let~$P$ be the following poset:
\begin{center}
 \begin{tikzpicture}
	\SetFancyGraph
	\Vertex[LabelOut,Ldist=0.5,Lpos=90,x=0,y=0]{c}
	\Vertex[LabelOut,Ldist=0.5,Lpos=180,x=-0.5,y=-0.5]{a}
	\Vertex[LabelOut,Ldist=0.5,Lpos=0,x=0.5,y=-0.5]{b}
	\Edges[style={thick}](a,c)
	\Edges[style={thick}](b,c)
\end{tikzpicture}
\end{center}
Set $\Phi := \tau_b \circ \tau_c \circ \tau_a$ (which is ``promotion'' in the sense of Stiker and Williams~\cite{striker2012promotion}). Then~$\mathcal{T}_c$ is not $0$-mesic with respect to $\Phi$, because, for instance, one $\Phi$-orbit is $\{\varnothing, \{a,b\}\}$.
\end{remark}

One of the major examples~\cite[Theorem 27]{propp2015homomesy} of homomesy exhibited by Propp and Roby is the following: we take $\mathcal{S} := J(\mathbf{a} \times \mathbf{b})$ to be the set of order ideals of the product of two chain posets, we take $\Phi := \Phi_{\mathrm{row}}$ to be rowmotion, and we take $f(I) := \#\mathrm{max}(I)$ to be the \emph{antichain cardinality statistic}. (We give $f$ this name because it is equal to the cardinality of the antichain $\mathrm{max}(I)$ associated to the order ideal~$I$.) Note that for any~$I \in J(P)$ we have~$\#\mathrm{max}(I) = \mathrm{ddeg}(I)$. Thus the tCDE property, together with Lemma~\ref{lem:rowmotiontogsym}, leads to many variations on this homomesy example of Propp and Roby.

\begin{cor} \label{cor:homomesy}
Let $P$ be a poset such that $J(P)$ is tCDE with edge density $c$. Then the antichain cardinality statistic is $c$-mesic with respect to the action of $\Phi_{\mathrm{row}}$ on $J(P)$. If~$P$ is ranked, the antichain cardinality statistic is also $c$-mesic with respect to the action of $\Phi_{\mathrm{row}(\sigma)}$ on $J(P)$ for any permutation~$\sigma$ of the ranks of $P$.
\end{cor}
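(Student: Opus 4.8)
The plan is to recognize that ``the average of a statistic $f$ along a $\Phi$-orbit $\mathcal{O}$'' is nothing but the expectation $\mathbb{E}(\mu_{\mathcal{O}};f)$, where $\mu_{\mathcal{O}}$ is the distribution that is uniform on $\mathcal{O}$ and $0$ elsewhere, and then to feed this through the tCDE hypothesis. The entire corollary is thus an immediate combination of the definition of tCDE with Striker's toggle-symmetry lemmas (Lemmas~\ref{lem:rowmotiontogsym} and~\ref{lem:gengyrmotiontogsym}); all the real work has already been done in establishing those lemmas.

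Concretely, I would first recall the identity $\#\mathrm{max}(I) = \mathrm{ddeg}(I)$, noted in the paragraph preceding the corollary, so that the antichain cardinality statistic $f(I) := \#\mathrm{max}(I)$ coincides as a function on $J(P)$ with the down-degree statistic $\mathrm{ddeg}$. Next I would fix a $\Phi_{\mathrm{row}}$-orbit $\mathcal{O}$ and let $\mu_{\mathcal{O}}$ denote the uniform distribution on $\mathcal{O}$. By definition, the average of $f$ along $\mathcal{O}$ equals
\[ \frac{1}{\#\mathcal{O}}\sum_{I \in \mathcal{O}} f(I) = \mathbb{E}(\mu_{\mathcal{O}};f) = \mathbb{E}(\mu_{\mathcal{O}};\mathrm{ddeg}). \]
By Lemma~\ref{lem:rowmotiontogsym} the distribution $\mu_{\mathcal{O}}$ is toggle-symmetric, and since $J(P)$ is tCDE with edge density $c = \mathbb{E}(\mathrm{uni}_{J(P)};\mathrm{ddeg})$, we get $\mathbb{E}(\mu_{\mathcal{O}};\mathrm{ddeg}) = c$. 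As this holds for every $\Phi_{\mathrm{row}}$-orbit $\mathcal{O}$, the average of $f$ is the constant $c$ along every orbit, which is precisely the assertion that $f$ is $c$-mesic with respect to $\Phi_{\mathrm{row}}$.

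For the second statement I would simply repeat this argument verbatim, now assuming $P$ ranked and invoking Lemma~\ref{lem:gengyrmotiontogsym} in place of Lemma~\ref{lem:rowmotiontogsym}: that lemma guarantees that the uniform distribution on any $\Phi_{\mathrm{row}(\sigma)}$-orbit is toggle-symmetric, so the identical chain of equalities again yields average $c$ along every $\Phi_{\mathrm{row}(\sigma)}$-orbit. I do not anticipate any genuine obstacle here; the only point requiring care is the conceptual translation between Propp--Roby's orbit-average formulation of homomesy and the expected-value formulation of tCDE, namely observing that an orbit-average is literally the expectation with respect to the orbit-uniform distribution. Everything else is a direct appeal to previously established results.
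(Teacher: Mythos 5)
Your proposal is correct and is essentially identical to the paper's proof: both fix an orbit $\mathcal{O}$, observe that the orbit-uniform distribution $\mu_{\mathcal{O}}$ is toggle-symmetric by Lemmas~\ref{lem:rowmotiontogsym} and~\ref{lem:gengyrmotiontogsym}, apply the tCDE hypothesis to conclude $\mathbb{E}(\mu_{\mathcal{O}};\mathrm{ddeg})=c$, and identify this expectation with the orbit-average of the antichain cardinality statistic via $\#\mathrm{max}(I)=\mathrm{ddeg}(I)$. No differences worth noting.
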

\begin{proof}
Let $\mathcal{O}$ be a $\Phi_{\mathrm{row}}$-orbit or a $\Phi_{\mathrm{row}(\sigma)}$-orbit (in the case that $P$ is ranked). Let~$\mu_{\mathcal{O}}$ denote the the distribution on $J(P)$ that is uniform on $\mathcal{O}$ and $0$ outside of $\mathcal{O}$. By Lemmas~\ref{lem:rowmotiontogsym} and~\ref{lem:gengyrmotiontogsym}, $\mu_{\mathcal{O}}$ is toggle-symmetric. Thus by the assumption that $J(P)$ is tCDE with edge density $c$, we have $\mathbb{E}(\mu_{\mathcal{O}};\mathrm{ddeg}) = c$. But $\mathbb{E}(\mu_{\mathcal{O}};\mathrm{ddeg})$ is precisely the average of the antichain cardinality statistic along~$\mathcal{O}$.
\end{proof}

The results of Sections~\ref{sec:younglat},~\ref{sec:shiftyounglat}, and~\ref{sec:minuscule} together with Corollary~\ref{cor:homomesy} yield the following.

\begin{cor}[{CHMM~\cite[Corollary 3.11]{chan2015expected}}]
Let $\lambda/\nu$ be a balanced skew shape of height $a$ and width $b$. Then the  antichain cardinality statistic is $\frac{ab}{a+b}$-mesic with respect to the action of $\Phi_{\mathrm{row}(\sigma)}$ on $J(P_{\lambda/\nu})$ for any~$\sigma$.
\end{cor}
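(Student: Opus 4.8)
The plan is to deduce this statement as an immediate corollary of Theorem~\ref{thm:younglatcde} together with Corollary~\ref{cor:homomesy}, so that no new combinatorial work is required; all of the substance has already been carried out upstream. First I would recall the identification established in Section~\ref{sec:younglat}, namely that $[\nu,\lambda] = J(P_{\lambda/\nu})$, where $P_{\lambda/\nu}$ is the poset of boxes of the skew shape ordered by the weakly-southeast relation. Since $\lambda/\nu$ is balanced of height $a$ and width $b$ (and balancedness includes connectedness, with $a,b \geq 1$), Theorem~\ref{thm:younglatcde} applies directly and tells us that $J(P_{\lambda/\nu})$ is tCDE with edge density $\frac{ab}{a+b}$. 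This is exactly the hypothesis needed to feed into Corollary~\ref{cor:homomesy}, with the constant $c = \frac{ab}{a+b}$.

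The only point that requires a moment of care is the rank-permuted clause ``for any $\sigma$.'' The operators $\Phi_{\mathrm{row}(\sigma)}$ are defined only when $P$ is ranked, and the rank-permuted half of Corollary~\ref{cor:homomesy} carries that same hypothesis. So the second step of the plan is to verify that $P_{\lambda/\nu}$ is ranked. This is guaranteed by the observation made in Section~\ref{sec:younglat}: the poset $P_{\lambda/\nu}$ attached to any skew shape is always ranked (one may take the rank of a box to be its anti-diagonal index), even though it need not be graded. Consequently the rank-permuted rowmotions $\Phi_{\mathrm{row}(\sigma)}$ are well-defined for every permutation $\sigma$ of the ranks of $P_{\lambda/\nu}$, and the rank-permuted statement of Corollary~\ref{cor:homomesy} is in force.

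Finally I would simply invoke Corollary~\ref{cor:homomesy} with $P = P_{\lambda/\nu}$ and $c = \frac{ab}{a+b}$ to conclude that the antichain cardinality statistic is $\frac{ab}{a+b}$-mesic with respect to $\Phi_{\mathrm{row}(\sigma)}$ on $J(P_{\lambda/\nu})$ for every $\sigma$, which is the assertion. There is essentially no obstacle to overcome at this stage: the genuine content lies in Theorem~\ref{thm:younglatcde} (the tCDE property, proved via the rook placements of Lemma~\ref{lem:rookplacement}) and in Lemma~\ref{lem:gengyrmotiontogsym} (that the orbit-uniform distribution for any rank-permuted rowmotion is toggle-symmetric), both of which underlie Corollary~\ref{cor:homomesy}. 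The ``hard part,'' such as it is, is purely bookkeeping: confirming that the balancedness hypothesis matches the input of Theorem~\ref{thm:younglatcde} and that rankedness of $P_{\lambda/\nu}$ licenses the full range of $\sigma$ rather than only $\Phi_{\mathrm{row}}$ itself.
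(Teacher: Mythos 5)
Your proposal is correct and is exactly the paper's argument: the paper derives this corollary immediately from Theorem~\ref{thm:younglatcde} (which gives that $J(P_{\lambda/\nu})$ is tCDE with edge density $\frac{ab}{a+b}$) combined with Corollary~\ref{cor:homomesy}, using the observation from Section~\ref{sec:younglat} that $P_{\lambda/\nu}$ is always ranked so that $\Phi_{\mathrm{row}(\sigma)}$ is defined for every $\sigma$. No differences worth noting.
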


\begin{cor}
Let $\lambda$ be a strict partition that is shifted-balanced of Type~(1) or Type~(2). Then the antichain cardinality statistic is $\frac{\lambda_1+1}{4}$-mesic with respect to the action of $\Phi_{\mathrm{row}(\sigma)}$ on $J(P_{\lambda}^{\mathrm{shift}})$ for any~$\sigma$.
\end{cor}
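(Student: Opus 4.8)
The plan is to obtain this corollary as a direct combination of Theorem~\ref{thm:shiftyounglatcde} and Corollary~\ref{cor:homomesy}, since all the substantive work has already been done. First I would set $P := P_{\lambda}^{\mathrm{shift}}$ so that $J(P) = [\varnothing,\lambda]_{\mathrm{shift}}$, and recall that $P_{\lambda}^{\mathrm{shift}}$ is always ranked (as noted in Section~\ref{sec:shiftyounglat}); this is precisely the hypothesis needed to make sense of the rank-permuted maps $\Phi_{\mathrm{row}(\sigma)}$ and to invoke the second half of Corollary~\ref{cor:homomesy}.

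The one small bookkeeping step is to check that the edge density coming out of Theorem~\ref{thm:shiftyounglatcde} is $\frac{\lambda_1+1}{4}$ in both the Type~(1) and Type~(2) cases, so that a single uniform constant can be stated. For $\lambda$ shifted-balanced of Type~(1), we have $\lambda_1 = n+k$, whence the edge density $\frac{n+1+k}{4}$ equals $\frac{\lambda_1+1}{4}$; for Type~(2), we have $\lambda_1 = 2n-1$, whence the edge density $\frac{n}{2}$ equals $\frac{2n}{4} = \frac{\lambda_1+1}{4}$. Both of these identities were already recorded inside the proof of Theorem~\ref{thm:shiftyounglatcde}, where $\lambda_1 + 1$ appears (see equation~(\ref{eqn:shiftrooktotal})) as the total sum of the rook coefficients, so this step requires no new computation.

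With these observations in hand, the argument closes immediately: by Theorem~\ref{thm:shiftyounglatcde} the distributive lattice $J(P_{\lambda}^{\mathrm{shift}})$ is tCDE with edge density $c := \frac{\lambda_1+1}{4}$, and since $P_{\lambda}^{\mathrm{shift}}$ is ranked, Corollary~\ref{cor:homomesy} applies to yield that the antichain cardinality statistic is $c$-mesic with respect to $\Phi_{\mathrm{row}(\sigma)}$ for every permutation $\sigma$ of the ranks of $P_{\lambda}^{\mathrm{shift}}$. There is no genuine obstacle here; the only point requiring any care is the unification of the two separate edge-density formulas into the single expression $\frac{\lambda_1+1}{4}$, and even that is a one-line check.
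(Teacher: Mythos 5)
Your proposal is correct and follows exactly the paper's route: the corollary is obtained by combining Theorem~\ref{thm:shiftyounglatcde} (tCDE with the stated edge density) with Corollary~\ref{cor:homomesy}, and the unification of the two edge-density formulas into $\frac{\lambda_1+1}{4}$ via $\lambda_1 = n+k$ (Type~(1)) and $\lambda_1 = 2n-1$ (Type~(2)) is the same one-line check already recorded in the paper's proof of that theorem. Nothing is missing.
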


\begin{cor} \label{cor:minusculehomo}
Let $P$ be a connected minuscule post. Then the antichain cardinality statistic is homomesic with respect to the action of $\Phi_{\mathrm{row}(\sigma)}$ on $J(P)$ for any~$\sigma$.
\end{cor}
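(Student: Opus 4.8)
The plan is to deduce this corollary immediately from Theorem~\ref{thm:minusculecde} and Corollary~\ref{cor:homomesy}, so that the only real work is checking that the hypotheses of those two results hold for $J(P)$ when $P$ is a connected minuscule poset. First I would observe that a connected minuscule poset is in particular a minuscule poset, so that $J(P)$ is a minuscule lattice in the sense of Section~\ref{sec:minuscule}. Theorem~\ref{thm:minusculecde} then tells us that $J(P)$ is tCDE; write $c := \mathbb{E}(\mathrm{uni}_{J(P)};\mathrm{ddeg})$ for its edge density. (One could even read off the exact value of $c$ from the case-by-case analysis in the proof of Theorem~\ref{thm:minusculecde}, but since the corollary only asserts homomesy, not a particular mesic constant, this is unnecessary.)

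Next I would verify that the operators $\Phi_{\mathrm{row}(\sigma)}$ are actually defined on $J(P)$, which requires $P$ to be ranked. This is precisely where the connectedness hypothesis is used: a connected minuscule poset is graded (see~\cite{proctor1984bruhat}), hence ranked, so the rank-toggles $\tau_i$ and therefore the rank-permuted rowmotions $\Phi_{\mathrm{row}(\sigma)} = \tau_{\sigma(0)} \circ \tau_{\sigma(1)} \circ \cdots \circ \tau_{\sigma(r)}$ are well-defined for every permutation $\sigma$ of the ranks $P_0, P_1, \ldots, P_r$.

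With both facts in hand, Corollary~\ref{cor:homomesy} applies verbatim: since $J(P)$ is tCDE with edge density $c$ and $P$ is ranked, the antichain cardinality statistic $I \mapsto \#\mathrm{max}(I) = \mathrm{ddeg}(I)$ is $c$-mesic with respect to the action of $\Phi_{\mathrm{row}(\sigma)}$ on $J(P)$ for any~$\sigma$, which is exactly the claim. Because everything reduces to invoking prior results, there is no genuine obstacle here; the single point requiring care is the appeal to the gradedness of connected minuscule posets, which is what guarantees the rank-permuted rowmotion operators make sense in the first place. (This also explains why the statement is phrased for connected $P$: an arbitrary, possibly disconnected, minuscule poset need not be ranked, so $\Phi_{\mathrm{row}(\sigma)}$ would not even be defined in that generality.)
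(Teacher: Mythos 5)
Your proof is correct and is exactly the argument the paper intends: $J(P)$ is tCDE by Theorem~\ref{thm:minusculecde}, a connected minuscule poset is graded hence ranked, and Corollary~\ref{cor:homomesy} then gives the homomesy for every $\Phi_{\mathrm{row}(\sigma)}$. (Your closing parenthetical is slightly off, though: a disjoint union of ranked posets is still ranked, so the restriction to connected $P$ reflects the standing convention of Section~\ref{sec:homomesy} rather than $\Phi_{\mathrm{row}(\sigma)}$ failing to be defined in the disconnected case.)
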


We remark that the part of this last corollary concerning ordinary rowmotion $\Phi_{\mathrm{row}}$ recovers a recent result of Rush and Wang~\cite[Theorem 1.4]{rush2015orbits}. Extending the earlier work of Rush and Shi~\cite{rush2013orbits}, Rush and Wang proved uniformly that the antichain cardinality statistic is homomesic with respect to the action of rowmotion on a minuscule lattice. As discussed in Remark~\ref{rem:uniproof}, it might be possible to use technology developed by Rush and his coauthors~\cite{rush2013orbits}~\cite{rush2015orbits} to prove uniformly that a minuscule lattice $J(P)$ is tCDE. A uniform proof that minuscule lattice are tCDE would immediately yield a uniform proof of Corollary~\ref{cor:minusculehomo}.

\bibliography{cde}{}

\begin{thebibliography}{10}

\bibitem{aitken1943monomial}
A.~C. Aitken.
\newblock The monomial expansion of determinantal symmetric functions.
\newblock {\em Proc. Roy. Soc. Edinburgh. Sect. A.}, 61:300--310, 1943.

\bibitem{brouwer1974period}
A.~E. Brouwer and A.~Schrijver.
\newblock {\em On the period of an operator, defined on antichains}.
\newblock Mathematisch Centrum, Amsterdam, 1974.
\newblock Mathematisch Centrum Afdeling Zuivere Wiskunde ZW 24/74.

\bibitem{buch2002littlewood}
Anders~Skovsted Buch.
\newblock A {L}ittlewood-{R}ichardson rule for the {$K$}-theory of
  {G}rassmannians.
\newblock {\em Acta Math.}, 189(1):37--78, 2002.

\bibitem{cameron1995orbits}
P.~J. Cameron and D.~G. Fon-Der-Flaass.
\newblock Orbits of antichains revisited.
\newblock {\em European J. Combin.}, 16(6):545--554, 1995.

\bibitem{cantini2011proof}
Luigi Cantini and Andrea Sportiello.
\newblock Proof of the {R}azumov-{S}troganov conjecture.
\newblock {\em J. Combin. Theory Ser. A}, 118(5):1549--1574, 2011.

\bibitem{chan2015expected}
Melody Chan, Shahrzad Haddadan, Sam Hopkins, and Luca Moci.
\newblock The expected jaggedness of order ideals.
\newblock \arxiv{1507.00249}, 2015.

\bibitem{chan2015genera}
Melody Chan, Alberto L\'{o}pez~Mart\'{i}n, Nathan Pflueger, and Montserrat
  Teixidor~i Bigas.
\newblock Genera of {B}rill-{N}oether curves and staircase paths in {Y}oung
  tableaux.
\newblock \arxiv{1506.00516}, 2015.

\bibitem{einstein2015noncrossing}
David Einstein, Miriam Farber, Emily Gunawan, Michael Joseph, Matthew Macauley,
  James Propp, and Simon Rubinstein-Salzedo.
\newblock Noncrossing partitions, toggles, and homomesies.
\newblock \arxiv{1510.06362}, 2015.

\bibitem{eisenbud1987kodaira}
David Eisenbud and Joe Harris.
\newblock The {K}odaira dimension of the moduli space of curves of
  genus~{$\geq~23$}.
\newblock {\em Invent. Math.}, 90(2):359--387, 1987.

\bibitem{fomin1994grothendieck}
Sergey Fomin and Anatol~N. Kirillov.
\newblock Grothendieck polynomials and the {Y}ang-{B}axter equation.
\newblock In {\em Formal power series and algebraic combinatorics/{S}\'eries
  formelles et combinatoire alg\'ebrique}, pages 183--189. DIMACS, Piscataway,
  NJ, 1994.

\bibitem{fomin1996yangbaxter}
Sergey Fomin and Anatol~N. Kirillov.
\newblock The {Y}ang-{B}axter equation, symmetric functions, and {S}chubert
  polynomials.
\newblock In {\em Proceedings of the 5th {C}onference on {F}ormal {P}ower
  {S}eries and {A}lgebraic {C}ombinatorics ({F}lorence, 1993)}, volume 153,
  pages 123--143, 1996.

\bibitem{fonderflaass1993orbits}
D.~G. Fon-Der-Flaass.
\newblock Orbits of antichains in ranked posets.
\newblock {\em European J. Combin.}, 14(1):17--22, 1993.

\bibitem{frame1954hook}
J.~S. Frame, G.~de~B. Robinson, and R.~M. Thrall.
\newblock The hook graphs of the symmetric groups.
\newblock {\em Canadian J. Math.}, 6:316--324, 1954.

\bibitem{green2013combinatorics}
R.~M. Green.
\newblock {\em Combinatorics of minuscule representations}, volume 199 of {\em
  Cambridge Tracts in Mathematics}.
\newblock Cambridge University Press, Cambridge, 2013.

\bibitem{grinberg2015iterative}
Darij Grinberg and Tom Roby.
\newblock Iterative properties of birational rowmotion {II}: rectangles and
  triangles.
\newblock {\em Electron. J. Combin.}, 22(3):Paper 3.40, 49, 2015.

\bibitem{grinberg2016iterative}
Darij Grinberg and Tom Roby.
\newblock Iterative properties of birational rowmotion {I}: Generalities and
  skeletal posets.
\newblock {\em Electron. J. Combin.}, 23(1):Paper 1.33, 40, 2016.

\bibitem{ikeda2013ktheoretic}
Takeshi Ikeda and Hiroshi Naruse.
\newblock {$K$}-theoretic analogues of factorial {S}chur {$P$}- and
  {$Q$}-functions.
\newblock {\em Adv. Math.}, 243:22--66, 2013.

\bibitem{lam2007combinatorial}
Thomas Lam and Pavlo Pylyavskyy.
\newblock Combinatorial {H}opf algebras and {$K$}-homology of {G}rassmannians.
\newblock {\em Int. Math. Res. Not. IMRN}, (24):Art. ID rnm125, 48, 2007.

\bibitem{lascoux1990anneau}
Alain Lascoux.
\newblock Anneau de {G}rothendieck de la vari\'et\'e de drapeaux.
\newblock In {\em The {G}rothendieck {F}estschrift, {V}ol.\ {III}}, volume~88
  of {\em Progr. Math.}, pages 1--34. Birkh\"auser Boston, Boston, MA, 1990.

\bibitem{lascoux1982structure}
Alain Lascoux and Marcel-Paul Sch{\"u}tzenberger.
\newblock Structure de {H}opf de l'anneau de cohomologie et de l'anneau de
  {G}rothendieck d'une vari\'et\'e de drapeaux.
\newblock {\em C. R. Acad. Sci. Paris S\'er. I Math.}, 295(11):629--633, 1982.

\bibitem{pirola1985chern}
Gian~Pietro Pirola.
\newblock Chern character of degeneracy loci and curves of special divisors.
\newblock {\em Ann. Mat. Pura Appl. (4)}, 142:77--90 (1986), 1985.

\bibitem{pragacz1991algebro}
Piotr Pragacz.
\newblock Algebro-geometric applications of {S}chur {$S$}- and
  {$Q$}-polynomials.
\newblock In {\em Topics in invariant theory ({P}aris, 1989/1990)}, volume 1478
  of {\em Lecture Notes in Math.}, pages 130--191. Springer, Berlin, 1991.

\bibitem{proctor1984bruhat}
Robert~A. Proctor.
\newblock {B}ruhat lattices, plane partition generating functions, and
  minuscule representations.
\newblock {\em European J. Combin.}, 5(4):331--350, 1984.

\bibitem{propp2015homomesy}
James Propp and Tom Roby.
\newblock Homomesy in products of two chains.
\newblock {\em Electron. J. Combin.}, 22(3):Paper 3.4, 29, 2015.

\bibitem{razumov2004combinatorial}
A.~V. Razumov and Yu.~G. Stroganov.
\newblock Combinatorial nature of the ground-state vector of the {$\rm O(1)$}
  loop model.
\newblock {\em Teoret. Mat. Fiz.}, 138(3):395--400, 2004.

\bibitem{reiner2004cyclic}
V.~Reiner, D.~Stanton, and D.~White.
\newblock The cyclic sieving phenomenon.
\newblock {\em J. Combin. Theory Ser. A}, 108(1):17--50, 2004.

\bibitem{reiner2016poset}
Victor Reiner, Bridget~Eileen Tenner, and Alexander Yong.
\newblock Poset edge densities, nearly reduced words, and barely set-valued
  tableaux.
\newblock \arxiv{1603.09589}, 2016.

\bibitem{rush2013orbits}
David~B. Rush and XiaoLin Shi.
\newblock On orbits of order ideals of minuscule posets.
\newblock {\em J. Algebraic Combin.}, 37(3):545--569, 2013.

\bibitem{rush2015orbits}
David~B. Rush and Kelvin Wang.
\newblock On orbits of order ideals of minuscule posets {I}{I}: Homomesy.
\newblock \arxiv{1509.08047}, 2015.

\bibitem{sagan1990ubiquitous}
Bruce~E Sagan.
\newblock The ubiquitous {Y}oung tableau.
\newblock In {\em Invariant theory and tableaux}, volume~19 of {\em The IMA
  Volumes in Mathematics and its Applications}, pages 262--298.
  Springer-Verlag, New York, 1990.

\bibitem{schur1911uber}
J.~Schur.
\newblock \"{U}ber die {D}arstellung der symmetrischen und der alternierenden
  {G}ruppe durch gebrochene lineare {S}ubstitutionen.
\newblock {\em J. Reine Angew. Math.}, 139:155--250, 1911.

\bibitem{seshadri1978geometry}
C.~S. Seshadri.
\newblock Geometry of {$G/P$}. {I}. {T}heory of standard monomials for
  minuscule representations.
\newblock In {\em C. {P}. {R}amanujam---a tribute}, volume~8 of {\em Tata Inst.
  Fund. Res. Studies in Math.}, pages 207--239. Springer, Berlin-New York,
  1978.

\bibitem{stanley1999ec2}
Richard~P. Stanley.
\newblock {\em Enumerative combinatorics. {V}ol. 2}, volume~62 of {\em
  Cambridge Studies in Advanced Mathematics}.
\newblock Cambridge University Press, Cambridge, 1999.

\bibitem{stanley1996ec1}
Richard~P. Stanley.
\newblock {\em Enumerative combinatorics. {V}ol. 1}, volume~49 of {\em
  Cambridge Studies in Advanced Mathematics}.
\newblock Cambridge University Press, Cambridge, second edition, 2012.

\bibitem{stembridge1989shifted}
John~R. Stembridge.
\newblock Shifted tableaux and the projective representations of symmetric
  groups.
\newblock {\em Adv. Math.}, 74(1):87--134, 1989.

\bibitem{stembridge1994minuscule}
John~R. Stembridge.
\newblock On minuscule representations, plane partitions and involutions in
  complex {L}ie groups.
\newblock {\em Duke Math. J.}, 73(2):469--490, 1994.

\bibitem{striker2015toggle}
Jessica Striker.
\newblock The toggle group, homomesy, and the {R}azumov-{S}troganov
  correspondence.
\newblock {\em Electron. J. Combin.}, 22(2):Paper 2, 57, 2015.

\bibitem{striker2012promotion}
Jessica Striker and Nathan Williams.
\newblock Promotion and rowmotion.
\newblock {\em European J. Combin.}, 33(8):1919--1942, 2012.

\bibitem{thrall1952combinatorial}
R.~M. Thrall.
\newblock A combinatorial problem.
\newblock {\em Michigan Math. J.}, 1:81--88, 1952.

\bibitem{wieland2000large}
Benjamin Wieland.
\newblock A large dihedral symmetry of the set of alternating sign matrices.
\newblock {\em Electron. J. Combin.}, 7:Research Paper 37, 13 pp. (electronic),
  2000.

\end{thebibliography}
\bibliographystyle{plain}

\end{document}